\tikzset
{
  ->-/.style={
    decoration={markings, mark=at position .57 with {\arrow{Latex}}},postaction={decorate}
  }
}
\newcommand{\<}{\discretionary{}{}{}}
\newcommand{\cA}{\mathcal{A}}
\newcommand{\cC}{\mathcal{C}}
\newcommand{\cD}{\mathcal{D}}
\newcommand{\cE}{\mathcal{E}}
\newcommand{\A}{\mathbb{A}}
\newcommand{\bE}{\mathbb{E}}
\newcommand{\cF}{\mathcal{F}}
\newcommand{\cK}{\mathcal{K}}
\newcommand{\cL}{\mathcal{L}}
\newcommand{\cM}{\mathcal{M}}
\newcommand{\cN}{\mathcal{N}}
\newcommand{\cO}{\mathcal{O}}
\newcommand{\cP}{\mathcal{P}}
\newcommand{\cQ}{\mathcal{Q}}
\newcommand{\R}{\mathrm{R}}
\newcommand{\cS}{\mathcal{S}}
\newcommand{\cT}{\mathcal{T}}
\newcommand{\Z}{\mathbb{Z}}
\newcommand{\Aut}{\mathrm{Aut}}
\newcommand{\CAlg}{\mathrm{CAlg}}
\newcommand{\Cat}{\mathrm{Cat}}
\newcommand{\Ch}{\mathrm{Ch}}
\newcommand{\coev}{\mathrm{coev}}
\newcommand{\dR}{\mathrm{dR}}
\newcommand{\Tate}{\mathrm{Tate}}
\newcommand{\dual}{\mathrm{dual}}
\newcommand{\ev}{\mathrm{ev}}
\newcommand{\Fr}{\mathrm{Fr}}
\newcommand{\Fun}{\mathrm{Fun}}
\newcommand{\Hom}{\mathrm{Hom}}
\newcommand{\id}{\mathrm{id}}
\newcommand{\iHom}{\underline{\mathrm{Hom}}}
\newcommand{\Map}{\mathrm{Map}}
\newcommand{\Mod}{\mathrm{Mod}}
\newcommand{\Mon}{\mathrm{Mon}}
\newcommand{\op}{\mathrm{op}}
\newcommand{\oplax}{\mathrm{oplax}}
\newcommand{\Prst}{\mathcal{P}\mathrm{r}^{\mathrm{St}}}
\newcommand{\bPrst}{\mathbf{Pr}^{\mathrm{St}}}
\newcommand{\pt}{\mathrm{pt}}
\newcommand{\QCoh}{\mathrm{QCoh}}
\newcommand{\Ind}{\mathrm{Ind}}
\newcommand{\Coh}{\mathrm{Coh}}
\newcommand{\dmod}{\mathrm{\textnormal{-}mod}}
\newcommand{\Sch}{\mathrm{Sch}}
\newcommand{\Sm}{\mathrm{Sm}}
\newcommand{\IndCoh}{\mathrm{IndCoh}}
\newcommand{\rig}{\mathrm{rig}}
\newcommand{\tens}{\otimes}
\newcommand{\Td}{\mathrm{Td}}
\newcommand{\tTate}{\mathrm{tTate}}
\newcommand{\Tr}{\mathrm{Tr}}
\newcommand{\bu}{\mathbf{1}}
\newcommand{\Var}{\mathrm{Var}}
\newcommand{\Bl}{\mathrm{Bl}}
\newcommand{\Sym}{\mathrm{Sym}}
\newcommand{\twocell}[1]{\ar@{}[#1]^(.37){}="a"^(.63){}="b" \ar@{=>} "a";"b"}
\newcommand{\twocelllabel}[2]{\ar@{}[#1]^(.37){}="a"^(.63){}="b" \ar@{=>}^{#2} "a";"b"}
\newcommand{\defterm}[1]{\textbf{\emph{#1}}}
\newcommand{\adj}[2]{
\xymatrix{
#1 \ar@<.5ex>[r] & #2 \ar@<.5ex>[l]
}
}
\numberwithin{equation}{section}
\newtheorem{theorem}[equation]{Theorem}
\newtheorem{corollary}[equation]{Corollary}
\newtheorem{proposition}[equation]{Proposition}
\newtheorem{lemma}[equation]{Lemma}
\theoremstyle{definition}
\newtheorem{definition}[equation]{Definition}
\theoremstyle{remark}
\newtheorem{remark}[equation]{Remark}
\newtheorem{example}[equation]{Example}
\def\HH{\mathrm{HH}}
\newcommand{\ch}{\mathrm{ch}}
\def\Perf{\mathrm{Perf}}
\def\Mot{\mathrm{Mot}}
\def\MOT{\mathbb M\mathrm{ot}}
\def\mon{\mathrm{mon}}
\def\sat{\mathrm{sat}}
\def\Sp{\mathrm{Sp}}
\newcommand{\cB}{\mathcal{B}}
\def\H{\mathrm H}
\def\K{\mathbb{K}}
\let\scr=\mathcal
  \numberwithin{equation}{section}
\theoremstyle{plain}
\newenvironment{customthm}[1]
{\innercustomthm}
  {\endinnercustomthm}
\newtheorem*{theorem*}{Theorem}
\begin{document}
\title{The categorified Grothendieck--Riemann--Roch theorem}

\begin{abstract}
In this paper we prove a categorification of the Grothendieck--Riemann--Roch theorem.  Our result  implies in particular  a Grothendieck--Riemann--Roch theorem for To\"en and 
Vezzosi's secondary Chern character.    
As a main application, we establish a comparison between the To\"en--Vezzosi Chern character and the classical Chern character, and show that  the 
categorified Chern character  recovers the classical de Rham realization. 
\end{abstract}

\author{Marc Hoyois}
\email{marc.hoyois@ur.de}
\address{Fakultät für Mathematik\\ Universität Regensburg\\ 93040 Regensburg\\ Germany}

\author{Pavel Safronov}
\email{pavel.safronov@math.uzh.ch}
\address{Institut f\"{u}r Mathematik\\ Universit\"{a}t Z\"urich\\ Winterthurerstrasse 190\\ 8051 Zurich\\ Switzerland}

\author{Sarah Scherotzke}
\email{sarah.scherotzke@uni.lu}
\address{Department of Mathematics\\
Universit\'{e} du Luxembourg \\
Maison du Nombre\\
6, Avenue de la Fonte\\
L-4364 Esch-sur-Alzette, Luxembourg}

\author[Nicol\`o Sibilla]{Nicol\`o Sibilla}
\email{ nsibilla@sissa.it, N.Sibilla@kent.ac.uk}
\address{Sissa\\ Via Bonomea 265, 34136 Trieste TS
Italy\\
and
University of Kent\\ 
Canterbury, Kent CT2 7NF\\
UK}

\maketitle

\vspace{-1em}
{\small \tableofcontents}

\section{Introduction}
In this paper we prove a Grothendieck--Riemann--Roch theorem for the categorified Chern 
character defined in \cite{TV2} and in \cite{HSS}. Our result yields in particular a Grothendieck--Riemann--Roch theorem for To\"en and 
Vezzosi's secondary Chern character, thus answering a question raised in \cite{TV1}. Our main applications include 
\begin{enumerate}
\item a proof that the To\"en--Vezzosi's Chern character \cite{TV1,TV2}  matches the classical Chern character \cite{mccarthy,keller1999cyclic}, 
\item a proof that, in the geometric  setting, the categorified Chern character recovers the de Rham realization of smooth   algebraic varieties. \end{enumerate}  Throughout the paper we will work over a 
 fixed   connective $\mathbb{E}_\infty$ ring spectrum $k.$
 
\subsection{The categorified Chern character}
Categorified invariants arise naturally   
in homotopy theory.  
Over the last thirty years  a rich picture  relating 
the chromatic hierarchy of cohomology theories to  categorification has emerged.  On the first step of the chromatic ladder, 
 $K$-theory  classifies vectors bundles, which are one-categorical objects.   
Cohomology theories of higher chromatic depth, 
such as elliptic cohomology,  are expected to classify 
 higher-categorical geometric structures.  The literature on these aspects is vast:  
we refer the reader, for instance, to 
\cite{BDR} for an account of 
the connections between  elliptic cohomology 
and 
the theory of 2-vector bundles.  

A different source of motivations for 
studying categorified invariants  
comes from representation theory.  
From a modern perspective, the Deligne--Lusztig theory of  character sheaves can be viewed as an early pointer to the existence 
of an interesting 
picture of categorified characters.  
In the last decade   categorical actions have become a mainstay of geometric representation theory. 
As shown by Khovanov--Lauda and Rouquier \cite{khovanov2009diagrammatic, khovanov2011diagrammatic, rouquier20082}, categorical actions encode subtle positivity properties.   
Further, they   
play a key role in recent approaches to the geometric Langlands program 
due to  Ben-Zvi and Nadler \cite{BNlsc, BN3, BN2}, Gaitsgory, Arinkin, and Rozenblyum \cite{Ga1, GR}. A comprehensive character theory of categorical actions of finite groups was developed by Ganter and Kapranov 
in \cite{Ganter20082268}, extending  earlier results  
of Hopkins, Kuhn, and Ravenel \cite{hopkins1992morava}.

  In this paper we build  on the theory of categorified invariants of  stacks developed  by To\"en and Vezzosi in \cite{TV1} and \cite{TV2}.   
If $X$ is a scheme (or stack), the  
Chern character  is an assignment mapping vector bundles over $X$ to classes  in the Chow group  
or in  any other  incarnation of its cohomology,  such as its Hochschild homology $\HH(X)$.  
 To\"en and Vezzosi's \emph{secondary Chern character} is a categorification of the ordinary Chern character. It takes as input  a type of categorified bundles,   given by 
fully dualizable sheaves of categories over $X$ locally tensored over $\Perf(X),$ and lands in a higher version of Hochschild homology.

  More precisely,   
  fully dualizable 
 sheaves of categories  over $X$ 
form an  $\infty$-category denoted by $\mathrm{ShvCat}^{\mathrm{sat}}(X).$ The  
To\"en--Vezzosi's secondary Chern character is a morphism 
\begin{equation}
\label{secochernchar}
\ch^{(2)}\colon \iota_0(\mathrm{ShvCat}^{\mathrm{sat}}(X)) \rightarrow \cO(\cL^2 X)
\end{equation}
where $\iota_0(\mathrm{ShvCat}^{\mathrm{sat}}(X))$ is  the maximal 
$\infty$-subgroupoid of $\mathrm{ShvCat}^{\mathrm{sat}}(X),$ and the  target $\cO(\cL^2 X)$  is the \emph{secondary Hochschild homology} of $X.$

As explained in \cite{HSS}  the secondary Chern character is 
the shadow of a much richer categorified  
character theory 
encoded in a symmetric monoidal functor of $\infty$-categories 
\begin{equation}
\label{catchcatchfirst}
 \Ch\colon 
\mathrm{ShvCat}^{\mathrm{sat}}(X) \longrightarrow \Perf(\cL X).   
\end{equation}  
 We can recover  the 
 secondary Chern character 
  by taking maximal $\infty$-subgroupoids in (\ref{catchcatchfirst}), and then applying the ordinary Chern character
$$
\xymatrix{
\iota_0\mathrm{ShvCat}^{\mathrm{sat}}(X) 
\ar@/^ 18pt/@{->}[rr]^ -{\ch^{(2)}}  \ar[r]^-{\iota_0(\Ch)} &  \iota_0\Perf(\cL X) 
\ar[r]^-{\ch} & \HH(\cL X) \simeq \cO(\cL^2 X). 
}
$$

In this paper we carry forward the investigation of the categorified Chern character. 
Our main result is  a categorified   Grothendieck--Riemann--Roch  theorem for $\Ch$.

\subsection{The categorified GRR  theorem} 
We actually work in a setting which differs slightly from  (\ref{catchcatchfirst}). Technical issues  compel us to restrict to the \emph{affine} context,  where categorical sheaves are captured globally through the action of a symmetric monoidal category. As we explain in Section \ref{inmanycases}  below, however, affineness in the categorified setting is a much less severe restriction  than in   ordinary algebraic geometry.  

Our formalism 
applies to any presentable and  stable symmetric monoidal $\infty$-category $\cC$. Let $\cL \cC:=S^1 \otimes \cC \,$ be the \emph{loop space} of $\cC.$  
The loop space $\cL \cC \simeq \cC \otimes_{\cC \otimes \cC}\cC$ is  in a precise sense  the  Hochschild homology  of the commutative algebra $\cC.$  
It is therefore  the natural 
receptacle of Chern classes of \emph{dualizable $\cC$-modules}. 
These are presentable categories carrying an action of $\cC,$ which are $\cC$-linearly 
dualizable.  
They form an $\infty$-category denoted by $\Mod_\cC^\dual.$ The categorified Chern character is a symmetric  monoidal functor    
\begin{equation}
\label{catchsymmmon}
\mathrm{Ch}\colon \Mod_\cC^\dual \to  \cL\cC.
\end{equation}

 In classical homological algebra, 
the Chern character  factors through the fixed locus for the canonical  $S^1$-action  on the Hochschild complex. 
This is a manifestation of    general  
 rotation invariance  properties 
 of  trace maps,  which are themselves a     special instance  
 of the vast array of symmetries encoded  
in a TQFT,  
see \cite{TV2}, \cite{HSS} and \cite{BN3}.  This feature persists at the categorified level,  and  we will consider the 
$S^1$-equivariant refinement of the categorified Chern character    
\begin{equation}
\label{catchsymmmons1}
\mathrm{Ch}^{S^1}\colon \Mod_\cC^\dual \longrightarrow  (\cL\cC)^{S^1}.
\end{equation}  

 \subsubsection{The main theorem}
\label{subsubmainthrm}
The  Grothendieck--Riemann--Roch (GRR) theorem 
encodes the compatibility between  Chern character and pushforward. As explained in \cite{M},   the  classical GRR theorem 
 can be  
viewed as the conflation of two distinct commutativity 
statements.  It is the first of these  two statements  
which is especially relevant for the purposes of categorification. 
 To clarify this, let us briefly review  
 the setting of 
the classical  GRR theorem. 

Let $f\colon X \to Y$ be a proper map between smooth and quasi-projective schemes over a field, and let 
$ \, 
f_*\colon \Perf(X) \to \Perf(Y)
\,$  
be the pushforward. 
The first half of the 
GRR theorem consists of the claim that the diagram 
\begin{equation}
\label{classicalGRR}
\begin{gathered}
\xymatrix{
\iota_0\Perf(X) \ar[r]^-{\ch} \ar[d]_-{f_*} &  \cO(\cL X) \simeq \HH(\Perf(X)) \ar[d]^-{f_*} \\
\iota_0\Perf(Y) 
\ar[r]^-{\ch}  &  \cO(\cL Y) \simeq  \HH(\Perf(Y))}
\end{gathered}
\end{equation}
commutes.   Next  
 we 
can reformulate the commutativity of (\ref{classicalGRR})  in terms of differential forms 
%
via the HKR isomorphism  
$$
 \HH(\Perf(X)) \xrightarrow{\simeq}  \H^*(X, \oplus_{i \geq 0} \Omega^{i}_X), \quad 
  \HH(\Perf(Y)) \xrightarrow{\simeq}  \H^*(Y, \oplus_{i \geq 0} \Omega^{i}_Y).  
 $$
The second half of the  GRR theorem is  about the interplay between the pushforward and the HKR equivalence: they fail to commute, but this can be obviated by turning on a correction term given by the Todd class.

Our main theorem is a categorification  
of the \emph{first half} of the GRR theorem. We refer the reader to Remark \ref{secondhalf} for a discussion the second half of the GRR theorem in the categorified setting. Let $f\colon \cD \to \cC$ be a \emph{rigid} symmetric monoidal functor between presentable and stable symmetric monoidal categories. The map  
$f$ induces a functor between loop spaces $ \, \cL f\colon \cL \cD \to \cL \cC \, $ with right-adjoint $\, 
\cL f^\R.$ 
 Rigidity  implies that there is a well-defined pushforward of dualizable modules $$
f_*\colon \Mod_\cC^\dual \longrightarrow \Mod_\cD^\dual. 
$$ 
 We are ready to state our main result. 
\begin{customthm}{A}[The categorified GRR Theorem, Theorem \ref{thm:GRR}]
\label{main1}
There is a commutative square   of $\infty$-categories 
\begin{equation}
\label{eqmainmainmainmainmain}
\begin{gathered}
\xymatrix{  \Mod_\cC^\dual \ar[r]^-{\Ch^{S^1} } 
\ar[d]_-{f_*} 
& (\cL \cC)^{S^1}    \ar[d]^-{\cL f^\R}  \\  
 \Mod_\cD^\dual \ar[r] ^-{\Ch^{S^1}} 
&  (\cL \cD)^{S^1}.  }
\end{gathered}
\end{equation}
\end{customthm} 
In Section \ref{grrandmotives} and \ref{inmanycases}  we reformulate  Theorem \ref{main1} in the more specialized settings of non-commutative motives and monoidal categories of geometric origin. Next, in Section \ref{applicationsoftheGRR} we explain some of its applications. 

\subsubsection{GRR and motives} 
\label{grrandmotives}
In addition to  $S^1$-invariance  the  Chern character inherits a second important property of trace maps: it is  \emph{additive}, so it factors through K-theory.  One categorical level up, Verdier localizations of $\cC$-linear categories replace short exact sequences,  
 and the category of  \emph{noncommutative  $\mathcal{\cC}$-motives}  takes up the role 
of K-theory.  The category of 
$\cC$-motives was introduced in \cite{HSS}, building on the work of Cisinski--Tabuada and Blumberg--Gepner--Tabuada \cite{CT, BGT}, see also \cite{robalo} for closely related constructions. 
The theory applies in a more limited  generality than Theorem \ref{main1}, as we require $\cC$   
to be generated by its subcategory of compact objects $\cC^\omega.$ 

Let $f\colon \cD \to \cC$ be a rigid functor of compactly generated symmetric monoidal categories. There is a well-defined pushforward functor between categories of localizing noncommutative motives  
$$ 
f_*\colon\MOT(\cC^\omega) \longrightarrow \MOT(\cD^\omega).
$$ 
Then Theorem \ref{main1} specializes to the following statement, which closely parallels the classical K-theoretic formulation of the GRR theorem.  
\begin{customthm}{B}[Theorem \ref{Motivic GRR}]
\label{mainB}
There is a  commutative square   
of $\infty$-categories 
$$ 
\xymatrix{ 
\MOT(\cC^\omega) \ar[r]^-{\Ch^{S^1} } 
\ar[d]_-{f_*} 
& (\cL \cC)^{S^1}    \ar[d]^-{\cL f^\R} \\ 
\MOT(\cD^\omega) \ar[r] ^-{\Ch^{S^1}} 
&  (\cL \cD)^{S^1}.}
$$ 
\end{customthm} 

 \subsubsection{The geometric setting}
\label{inmanycases} 
Our results hold in the categorified  affine setting:  we consider  modules over symmetric monoidal categories   rather than general categorical sheaves on stacks. Surprisingly, however, this encompasses many  examples of geometric interest.  
In fact the global sections functor  $$ 
\Gamma\colon \mathrm{ShvCat}^{\mathrm{dual}}(X) \rightarrow \Mod^{\mathrm{dual}}_{\QCoh(X)} \,,   
$$ 
although not an equivalence in general, is an equivalence for a large class of derived stacks   called \emph{1-affine stacks}.   Gaitsgory proves in \cite{Ga1} that quasi-compact and   
quasi-separated schemes and semi-separated
Artin stacks of  finite type (in characteristic zero) are all  examples of  1-affine stacks. 
For 1-affine stacks  Theorem \ref{main22} below captures the full geometric picture of the categorified GRR theorem. 

If $X$ is a  derived stack 
there is a natural $S^1$-equivariant map 
$ \, 
\cL \QCoh(X) \to  \QCoh(\cL X) \, , 
 $  
 where $\cL X$ is the free loop stack of $X$.
%
%
If $f: X \to Y$ is a map of derived stacks, we denote by $f$ 
also the symmetric monoidal pullback functor
$ \, 
f: \QCoh(Y) \to \QCoh(X) \, .
$   
Following Gaitsgory,  we introduce  \emph{passable}   maps of stacks. Passability is a relatively minor assumption, and is satisfied in most cases of  geometric interest.  

In Proposition \ref{passableimpliesrigid} of the main text we show that  pullback   functors along  passable morphisms are rigid. This together with  Theorem \ref{main1} immediately implies the following statement. \begin{customthm}{C}
\label{main22}
Let $X \stackrel{f} \to Y$ be a passable morphism of derived stacks. Then there is a commutative diagram of $\infty$-categories   
$$ 
\xymatrix{
\Mod_{\QCoh(X)}^{\mathrm{dual}} \ar[r]^-{\Ch^{S^1} } 
\ar[d]_-{f_*} 
& \QCoh(\cL X)  ^{S^1} \ar[d]^-{\cL f_*} \\ 
\Mod_{\QCoh(Y)}^{\mathrm{dual}} \ar[r] ^-{\Ch^{S^1}  } 
&  \QCoh(\cL Y) ^{S^1}.
}
$$
\end{customthm} 
  \begin{remark}
\label{secondhalf}
The statement which we called 
 in Section \ref{subsubmainthrm}  the  \emph{second half} of the  GRR theorem can also be categorified, 
but becomes essentially trivial.    
Let $X$ be a semi-separated derived Artin stack in characteristic zero. By \cite[Theorem 6.9]{BNlsc}  the HKR isomorphism lifts to an equivalence  
$\exp$ of formal stacks  between 
\begin{itemize}
\item the shifted tangent complex of $X$ completed at the zero section, $\widehat{\mathbb{T}_X[-1]} $  
\item and  the loop stack of 
$X$ completed at the constant loops, $\widehat{\cL X}$.
\end{itemize}
The categorified HKR consists of the statement that $\exp$ induces an equivalence   
\begin{equation}
\label{categorifiedhkr}
\exp^*\colon\QCoh(\widehat{\cL X}) \stackrel{\simeq} \longrightarrow \QCoh(\widehat{\mathbb{T}_X[-1]}).
\end{equation}
As $\exp^*$ is a pullback, it is  compatible with  pullbacks and pushforwards along maps of stacks: contrary to the classical setting, incorporating the HKR equivalence (\ref{categorifiedhkr}) does not alter the commutativity of the GRR diagram (\ref{eqmainmainmainmainmain}), which stays commutative on the nose. \end{remark}
 \subsection{Applications of the categorified GRR}
 \label{applicationsoftheGRR} 
 Theorem \ref{main1},  \ref{mainB} and \ref{main22} have several interesting consequences. They provide powerful tools to establish  
 comparison results for the ordinary and categorified Chern character. Our applications fall into  three main areas: 
 \begin{enumerate}
 \item \emph{The ordinary  Chern character}. To\"en and Vezzosi give an alternative construction of the  Chern character, which is the one we use throughout the paper. Theorem \ref{main1} implies 
 that it matches the classical definition. 
 \item  \emph{The secondary Chern character}. Theorem \ref{mainB} implies a GRR statement for the  
 secondary Chern character. This yields a comparison  between  secondary Chern character and motivic character maps that had already appeared in  the literature. 
 \item \emph{The de Rham realization}.  Theorem \ref{main22}  implies that  in the geometric setting   $\Ch^{S^1}$ 
 matches the de Rham realization. This shows  in particular  that the Gauss--Manin connection is  of non-commutative origin.   
 \end{enumerate}
 
 \subsubsection{The ordinary Chern character}
 The classical definition of the 
 Chern character for $k$-linear  categories  is due to McCarthy \cite{mccarthy} and Keller \cite{keller1999cyclic}, and rests on 
 the naturality   of Hochschild homology. Let 
 $\cA$ be a stable $k$-linear category. If $x$ is an object of $\cA,$ let $ \, \phi_x\colon 
 \Perf(k)  \to \cA
 \, $ be the unique $k$-linear functor 
mapping $k$ to $x.$ Then the Chern character   is defined by the formula 
\begin{equation}
\label{classicalchernchar}
 x \in \mathrm{Ob}(\cA) \mapsto \mathrm{HH}(\phi_x)(1) \in \mathrm{HH}_0(\cA). 
\end{equation}  
In \cite{BN3} Ben--Zvi and Nadler  
revisit  (\ref{classicalchernchar}) from the vantage point of the functoriality properties of traces in symmetric monoidal $(\infty,2)$-categories. Let $\Mod_k$ be the symmetric monoidal 
$\infty$-category of $k$-modules, and let $\cC$ be a dualizable $\Mod_k$-module. The Hochschild homology  of $\cC$ 
coincides with the \emph{trace of} $\cC$  as a dualizable $\Mod_k$-module 
$$ \mathrm{HH}(\cC) \simeq \Tr(\cC) \in \mathrm{Mod}_k .$$   
The trace is functorial and thus, under standard identifications, it yields a map of $\infty$-groupoids $\,\cC^\dual \to \HH(\cC) \, $:  %
\begin{equation}
\begin{gathered}
\label{bznchbzn}
\xymatrix{
\cC^{\mathrm{dual}} \simeq  \Hom_{\mathrm{Mod}^{\mathrm{dual}}_{\Mod_k}}(\mathrm{Mod}_k, \cC) \ar[d]^-{\mathrm{Tr}(-)} \\   \mathrm{Hom}_{\Mod_k}(\Tr(\Mod_k), \Tr(\cC)) \simeq  \mathrm{Hom}_{\Mod_k} (k, \HH(\cC))  \simeq \HH(\cC). }
\end{gathered}
\end{equation}
Ben-Zvi and Nadler take (\ref{bznchbzn}) as the definition of the Chern character. Passing to sets of connected components in  (\ref{bznchbzn})  gives back  (\ref{classicalchernchar}).  
 
To\"en and Vezzosi  give a different     definition of  the  Chern character, 
which requires additionally that $\cC$ carries a symmetric monoidal structure.  
The objects of $\cC,$ pulled back to the loop space via the map 
$$  
\cC \to S^1 \otimes \cC =  \cL \cC \quad \text{induced by the inclusion} \quad \pt \to S^1 \, ,
$$ 
acquire a canonical auto-equivalence, called \emph{monodromy}. 
To\"en and Vezzosi define 
the  Chern character  as the trace of the monodromy auto-equivalence, and this is the definition we use throughout the paper. The reader can find in \cite{TV1} an explanation of the beautiful geometric heuristics motivating 
To\"en and Vezzosi's approach.  Their  construction   yields a map of $\infty$-groupoids landing in the endomorphisms of the unit  object of $\cL \cC$  \begin{equation}
\label{TVuncatChern}
\ch: \cC^\dual \longrightarrow \Omega \cL \cC.
\end{equation}

\begin{customthm}{D}[Theorem \ref{TVchern=BZNchern}]
\label{mainD}
Under the canonical identification 
$ \, 
\Omega \cL \cC \simeq \HH(\cC)
\, ,$ 
To\"en and Vezzosi's Chern character (\ref{TVuncatChern}) coincides with (\ref{bznchbzn}). \end{customthm}
 \subsubsection{The secondary Chern character} 
 In the main text  
 some of the results in this section will be formulated more generally  for compactly generated symmetric monoidal categories, but we will  limit our present exposition to the geometric setting.   

Let $X$ be a derived stack. The \emph{secondary K-theory}  of $X$    is a kind of categorification of  algebraic K-theory introduced independently by To\"en  and  Bondal--Larsen--Lunts  \cite{BLL}.  The group of connected components of $K^{(2)}(X)$ is spanned by equivalence classes of objects in $\mathrm{ShvCat}^{\mathrm{sat}}(X)$ under the relation   
$$  [\cB] = [\cA] + [\cC] \quad \text{if there is a Verdier localization} \quad 
\cA \to \cB \to \cC.
$$
Secondary $K$-theory encodes subtle geometric and arithmetic information: 
 if $X$ is a smooth variety (in characteristic $0$), it is the recipient of highly non-trivial maps from the Grothendieck ring of varieties over $X$ and from the cohomological Brauer group  
\begin{equation}
\label{eqbrauersecondary}
K_0(\Var_X) \rightarrow  K^{(2)}_0(X) \, , \quad  
\H^ 2_{\text{\'et}}(X, \mathbb{G}_m) \to  K ^{(2)}_0(X).
\end{equation} 
Also, by \emph{additivity},   the 
secondary Chern character factors through  secondary K-theory 
\begin{equation*}
\label{secochernchar2}
\ch^{(2)}\colon K^ {(2)}(X)  \rightarrow \cO(\cL^2 X)^{(S^1 \times S^1)}.
\end{equation*}

Let $f\colon X \to Y$ be a map of derived stacks. Under appropriate assumptions on $f$, Theorem \ref{mainB} implies a GRR theorem for the secondary Chern character.   
\begin{customthm}{E}[Theorem \ref{Secondary motivic GRR} and Example \ref{ex:smoothproper}]
\label{mainE}
Let $f\colon X \to Y$ be a morphism of perfect stacks which is representable, proper, and fiber smooth.
Then there is a 
commutative diagram of spectra  
$$ 
\xymatrix{
K^{(2)}(X) 
\ar[r]^-{\ch^{(2)}} \ar[d]_-{f_*}  
& \cO(\cL^2X)^{(S^1 \times S^1)} \ar[d]^-{\int_{\cL f} }  \\ 
K^{(2)}(Y) 
\ar[r]^-{\ch^{(2)}}   
& \cO(\cL^2Y)^{(S^1 \times S^1)}.  
}
$$
\end{customthm} 
Let now $k$ be a field of characteristic $0.$  Unlike in   (\ref{eqbrauersecondary}),  
 assume that $X$ is  a \emph{singular  variety} over $k$. The Grothendieck ring of varieties over $X$ 
   maps to a  
   variant of   secondary 
   K-theory,  
   generated by saturated $k$-linear categories proper  over $X,$  which is  
 denoted by $K^{(2)}_{\mathrm{BM},0}(X).$ The definition of the secondary Chern character has to be recalibrated accordingly. 
It lifts to a morphism  out of 
 $K^{(2)}_{\mathrm{BM}}(X)$ which  
 takes values in the $G$-theory of $\cL X,$ instead of its $K$-theory (or Hochschild homology).  
As $G$-theory is insensitive to derived thickenings, we obtain a map
 $$
\ch^{(2)}_{\mathrm{BM}}\colon  K^{(2)}_{\mathrm{BM}}(X) \longrightarrow G(\cL X) \stackrel{\simeq} \longrightarrow G(X). 
$$ 

In  \cite{brasselet2005hirzebruch},   
Brasselet, Sch\"urmann and Yokura  introduced the \emph{motivic Chern class} 
$$
mC_*\colon K_0(\Var_X) \longrightarrow  G_0(X) \otimes \mathbb{Z}[y] \, 
$$ 
with the purpose of unifying 
several different invariants of interest in singularity theory. 
The motivic Chern class recovers   MacPherson's total Chern class of singular varieties \cite{macpherson1974chern} and is closely related to the Cappel--Shaneson homology L-class   \cite{cappell1991stratifiable}.  We show that $\ch^{(2)}_{\mathrm{BM}}$ matches the specialization of the motivic Chern class at $y=-1.$ This follows from  an analogue of  Theorem \ref{mainE} for $\ch^{(2)}_{\mathrm{BM}}.$

\begin{customthm}{F}[Theorem \ref{motchern}]
\label{mainF}
There is a commutative diagram  of abelian groups 
\begin{equation}
\begin{gathered}
\xymatrix{
K_0(\Var_X) \ar[r]^-{mC_*} \ar[d]  & G_0(X) \otimes \mathbb{Z}[y] \ar[d]^-{y=-1} \\ 
K^{(2)}_{\mathrm{BM},0}(X) \ar[r]^-{\ch^{(2)}_{\mathrm{BM}}} & G_0(X).
}
\end{gathered}
\end{equation}
\end{customthm}
 
 \subsubsection{The de Rham realization} 
We keep the assumption that  $k$ is a field of characteristic $0$. 
 
The classical Riemann--Roch theorem  
states that the Euler characteristic of line bundles on curves can be computed in terms of their degree and  the genus of the curve. Delicate  algebraic  information is revealed to depend only on the underlying  topology.  
All subsequent extensions of the Riemann--Roch theorem 
can be viewed as finer  articulations of this principle, which persists in the categorified setting. 
It takes the shape of a dictionary relating  
the categorified Chern character 
 of categorical sheaves of geometric origin (an \emph{algebraic} invariant) and the classical  de Rham realization  (which is \emph{topological} in nature).


 Let $X$ be a smooth $k$-scheme, and let $\mathrm{Sm}_{X}$ be the category of smooth $X$-schemes.
 The de Rham realization is a functor 
 \begin{equation}
 \label{derhamderham}
\dR_X\colon \Sm_X^\op \longrightarrow \cD_X\dmod
\end{equation}
which sends  a smooth map 
$f\colon Y \to X$ to  the flat vector bundle over $X$ encoding the fiberwise de Rham cohomology of $f$ equipped with the 
 \emph{Gauss--Manin connection}.

The map 
  $f\colon Y \to X$  gives rise to a sheaf of $\infty$-categories over $X$: as $X$ is 1-affine, 
  this can be encoded as the $\QCoh(X)$-module structure on $\QCoh(Y)$.
  Letting $f$ range  over $\mathrm{Sm}_{X}$, we obtain a functor 
$$
\QCoh_X\colon  \Sm_X^\op \longrightarrow \mathrm{Mod}_{\QCoh(X)}^{\mathrm{dual}}, \quad  \quad  \QCoh_X(Y \stackrel{f} \to X) = \QCoh(Y) \circlearrowleft 
\QCoh(X).
$$

Comparing the de Rham realization and the categorified Chern character  
requires a finer understanding of the sheaf theory of loop spaces.  
By a categorified 
form of the HKR equivalence \cite{BNlsc}, quasi-coherent sheaves on $\cL X$ are closely related to $\cD_X$-modules. For our purposes  the most relevant result in this direction is an 
equivalence, obtained in \cite{preygel2014ind}, between the \emph{Tate construction} of   
 $\mathrm{IndCoh}(\cL X)$ and the $\mathbb{Z}/2$-folding of the category of D-modules 
\begin{equation}
\label{tate}
 \mathrm{Ind}(\Coh(\cL X)^{S^1}) \otimes_{k[[u]]}k((u)) \xrightarrow{\simeq} \cD_X\dmod_{\mathbb{Z}/2}.
\end{equation}
 Leveraging the equivalence (\ref{tate}) we can reformulate the 
 categorified Chern character as a functor landing  
 in the 2-periodic category of D-modules 
\begin{equation*}
\label{chernderhamcharacter}
\mathrm{Ch}^ {\mathrm{dR}}\colon \mathrm{Mod}_{\QCoh(X)}^{\mathrm{dual}}  \longrightarrow \cD_X\dmod_{\mathbb{Z}/2}.
\end{equation*}

Theorem \ref{main22}  is the main ingredient in the proof of Theorem \ref{mainG} below. In the statement of the theorem, 
$\dR_X$ stands for the $2$-periodization of   de Rham realization functor (\ref{derhamderham}). 
 
\begin{customthm}{G}[Theorem \ref{Ch-dR-comparison}]
\label{mainG}
For $X$ a smooth $k$-scheme, there is a commutative diagram  of $\infty$-categories 
\begin{equation*}
\xymatrix{
 \Sm_X^\op \ar[d]_-{\QCoh_X} \ar[r]^ -{\dR_X}  &  \mathcal{D}_{X}\dmod_{\mathbb{Z}/2}.   \\ 
 \mathrm{Mod}_{\QCoh(X)}^{\mathrm{dual}}   \ar[ur]_-{\Ch^{\mathrm{dR}}} &  
}
\end{equation*}
\end{customthm}

In fact, we will prove a generalization of Theorem~\ref{mainG} for $X$ an arbitrary derived $k$-scheme, replacing the $\infty$-category of $\cD_X$-modules by that of \emph{crystals} over $X$.

If $Y \to X$ is a smooth map,  Theorem \ref{mainG} yields an equivalence natural in $Y$ 
$$ 
\dR_X (Y \to X) \simeq \mathrm{Ch}^{\mathrm{dR}}(\QCoh(Y)). 
$$ 
This implies in particular that, up to $\mathbb{Z}/2$-folding, the Gauss--Manin connection on the cohomology of the fibers of a smooth map $f$ is of \emph{non-commutative origin}. That is, it only depends on $\QCoh(Y)$ and its 
 $\QCoh(X)$-linear structure. 

\begin{remark}
Evaluating $\Ch^{\mathrm{dR}}$ on a dualizable sheaf of categories 
over $X$ equips its relative \emph{periodic cyclic homology}  with a natural Gauss--Manin connection.  In the more restricted setting of sheaves of algebras, such a Gauss--Manin connection  
was introduced by Getzler in \cite{getzler1993cartan}. We believe that $\Ch^{\mathrm{dR}}$ recovers Getzler's prescription, and we plan to return to this question in a future work.
\end{remark}

  \subsection*{Acknowledgments}
We thank David Ben-Zvi, Dennis Gaitsgory, J\"{o}rg Sch\"{u}rmann and Bertrand To\"{e}n for useful discussions. P.S. was supported by the NCCR SwissMAP grant of the Swiss National Science Foundation. N.S. thanks the Max Planck Institute for Mathematics, where much of this work was carried out, for excellent working conditions.

\subsection*{Conventions}

Throughout the paper, a connective $\bE_\infty$ ring spectrum $k$ is fixed. 

We use the following notation:
\begin{itemize}
\item If $\cC$ is an $(\infty,n)$-category and $m< n$, $\iota_m\cC$ denotes the underlying $(\infty,m)$-category of $\cC$, obtained by discarding non-invertible $k$-morphisms for $k>m$.

\item If $\cC$ is an $(\infty, 2)$-category, $h_2\cC$ denotes the homotopy 2-category of $\cC$.

\item If $\cC$ is a symmetric monoidal $(\infty,2)$-category, $\cC^\dual$ denotes the non-full subcategory of $\iota_1\cC$ whose objects are the 1-dualizable objects and whose morphisms are the right-adjointable morphisms. In particular, if $\cC$ is a symmetric monoidal $(\infty,1)$-category, then $\cC^\dual$ is the $\infty$-groupoid of dualizable objects in $\cC$.

\item $\Prst$ is the symmetric monoidal $\infty$-category of stable presentable $\infty$-categories and colimit-preserving functors.

\item $\bPrst$ is the symmetric monoidal $(\infty, 2)$-category of stable presentable $\infty$-categories, so that $\iota_1 \bPrst\cong \Prst$.

\item We denote by double arrows $\Rightarrow$ possibly non-invertible 2-morphisms in diagrams. In the absence of such a symbol, the diagram is assumed to commute up to an invertible 2-morphism.
\end{itemize}

\section{Preliminaries}

\subsection{Ambidexterity}

Throughout the paper by an $(\infty, 2)$-category $\cC$ we will mean a complete 2-fold Segal space. We refer to \cite[Section 6]{JFS} for a definition of a symmetric monoidal $(\infty, 2)$-category, so that $h_2\cC$ becomes a symmetric monoidal 2-category.

\begin{definition}
Suppose $\cE_1$ and $\cE_2$ are $(\infty, 2)$-categories. An \defterm{adjunction}
\[\adj{F\colon \cE_1}{\cE_2\colon G}\]
is an adjunction in the homotopy 2-category of $(\infty, 2)$-categories. It is called \defterm{ambidextrous} if the unit $\eta\colon \id \to GF$ and the counit $\epsilon\colon FG\to\id$ have right adjoints $\eta^\R\colon GF\to \id$ and $\epsilon^\R\colon \id\to FG$.
\end{definition}

\begin{remark}
We refer to \cite{RiehlVerity} for a comparison of the above definition of adjunctions for $(\infty, 1)$-categories and that given by Lurie in \cite{HTT}.
\end{remark}

\begin{remark}
It is easy to see that the transformation $\epsilon^\R\colon \id\to FG$ exhibits $F$ as right adjoint to $G$. In other words, $G$ is both left and right adjoint to $F$, which explains the terminology.
On the other hand, the notion of ambidexterity itself comes in left and right variants, and the choice made in the above definition is motivated by our main example (see Proposition~\ref{prop:rigidambidextrous}).
\end{remark}

\begin{remark}\label{rm:swallowtail}
Recall that if $\eta\colon \id\to GF$ exhibits $G$ as right adjoint to $F$, then we can find a counit $\epsilon\colon FG\to\id$ and invertible modifications
\[
\xymatrix{
F(\cM) \ar^{F(\eta_{\cM})}[r] \ar@/_2pc/@{=}[rr] & FGF(\cM) \ar^{\epsilon_{F(\cM)}}[r] \ar@{}[d]^(.2){}="a"^(.63){}="b" \ar^{\tau_1}@{=>} "a";"b" & F(\cM) \\
&&
}
\qquad
\xymatrix{
G(\cM) \ar^{\eta_{G(\cM)}}[r] \ar@/_2pc/@{=}[rr] & GFG(\cM) \ar^{G(\epsilon_\cM)}[r] \ar@{}[d]^(.2){}="a"^(.63){}="b" \ar^{\tau_2}@{=>} "a";"b" & G(\cM) \\
&&
}
\]
called \defterm{triangulators}, satisfying the \defterm{swallowtail axioms}:
\[
\xymatrix@C=0cm{
& FGFG(\cM) \ar^{\epsilon_{FG(\cM)}}[rr] \ar^{FG(\epsilon_\cM)}[dr] \ar@{}[d]^(.2){}="a"^(.9){}="b" \ar^{\tau_2}@{=>} "a";"b" && FG(\cM) \ar^{\epsilon_\cM}[dr] & \\
FG(\cM) \ar^{F(\eta_{G(\cM)})}[ur] \ar@{=}[rr] && FG(\cM) \ar^{\epsilon_\cM}[rr] && \cM
}
=
\xymatrix@C=0cm{
& FGFG(\cM) \ar^{\epsilon_{FG(\cM)}}[rr] \ar@{}[dr]^(.1){}="a"^(.5){}="b" \ar_{\tau_1}@{=>} "a";"b" && FG(\cM) \ar^{\epsilon_\cM}[dr] & \\
FG(\cM) \ar^{F(\eta_{G(\cM)})}[ur] \ar@{=}[rr] \ar@{=}[urrr] && FG(\cM) \ar^{\epsilon_\cM}[rr] && \cM
}
\]

\[
\xymatrix@C=0cm{
& GF(\cM) \ar^{\eta_{GF(\cM)}}[rr] && GFGF(\cM) \ar^{G(\epsilon_{F(\cM)})}[dr] \ar@{}[d]^(.2){}="a"^(.9){}="b" \ar^{\tau_1}@{=>} "a";"b"  & \\
\cM \ar^{\eta_\cM}[rr] \ar^{\eta_\cM}[ur] && GF(\cM) \ar^{GF(\eta_\cM)}[ur] \ar@{=}[rr] && GF(\cM)
}
=
\xymatrix@C=0cm{
& GF(\cM) \ar^{\eta_{GF(\cM)}}[rr] \ar@{=}[drrr] && GFGF(\cM) \ar^{G(\epsilon_{F(\cM)})}[dr] \ar@{}[dl]^(.1){}="a"^(.5){}="b" \ar^{\tau_2}@{=>} "a";"b" & \\
\cM \ar^{\eta_\cM}[rr] \ar^{\eta_\cM}[ur] && GF(\cM) \ar@{=}[rr] && GF(\cM)
}
\]
in the homotopy 2-categories $h_2\cE_1$ and $h_2\cE_2$. See, for example, \cite[Remark 2.2]{Gurski}.
\end{remark}

\begin{definition}
Suppose $\cE_1,\cE_2$ are symmetric monoidal $(\infty, 2)$-categories. 
A \defterm{symmetric monoidal ambidextrous adjunction}
\[\adj{F\colon \cE_1}{\cE_2\colon G}\]
is an ambidextrous adjunction where $F$ is symmetric monoidal, satisfying the \defterm{projection formula}: for any objects $\cM_1\in\cE_1$ and $\cM_2\in\cE_2$ the composite
\[\cM_1\otimes G\cM_2\xrightarrow{\eta} GF(\cM_1\otimes G\cM_2)\cong G(F\cM_1\otimes FG\cM_2)\xrightarrow{\id\otimes\epsilon} G(F\cM_1\otimes \cM_2)\]
is an equivalence.
\end{definition}

\begin{remark}
The projection formula isomorphism $\cM_1\otimes G\cM_2\xrightarrow{\sim} G(F\cM_1\otimes \cM_2)$ satisfies various compatibilities with the natural transformations $\eta,\epsilon$. These will be implicit in the diagrams we draw.
\end{remark}

Since $G$ is right adjoint to $F$, it has a natural lax monoidal structure and since it is also left adjoint to $F$, it has a natural oplax monoidal structure. Let us now work out compatibilities between the two.

The lax monoidal structure on $G$ is given by the composite
\[G\cM_1\otimes G\cM_2\xrightarrow{\sim}G(FG\cM_1\otimes \cM_2)\xrightarrow{\epsilon\otimes\id} G(\cM_1\otimes\cM_2)\]
that we denote by $\alpha$. Similarly, the oplax monoidal structure on $G$ is given by the composite
\[G(\cM_1\otimes \cM_2) \xrightarrow{\epsilon^\R\otimes\id} G(FG\cM_1\otimes \cM_2)\xrightarrow{\sim} G\cM_1\otimes G\cM_2\]
that we denote by $\alpha^\R$.

The lax compatibility with the units is expressed by the morphisms
\[1_{\cE_1}\xrightarrow{\eta} GF(1_{\cE_1})\cong G(1_{\cE_2}),\qquad G(1_{\cE_2})\cong GF(1_{\cE_1})\xrightarrow{\eta^\R} 1_{\cE_1}.\]

For a triple of objects $\cM_1,\cM_2,\cM_3\in\cE_2$ we have a 2-isomorphism
\[
\xymatrix{
G(\cM_1\otimes \cM_2)\otimes G(\cM_3) \ar^{\alpha}[d] \ar^-{\epsilon^\R\otimes\id\otimes\id}[r] & G(FG(\cM_1)\otimes\cM_2)\otimes G(\cM_3) \ar^{\sim}[r] \ar^{\alpha}[d] & G(\cM_1)\otimes G(\cM_2)\otimes G(\cM_3) \ar^{\id\otimes\alpha}[d]
\\
G(\cM_1\otimes \cM_2\otimes\cM_3) \ar^-{\epsilon^\R\otimes\id\otimes\id}[r] & G(FG(\cM_1)\otimes \cM_2\otimes \cM_3) \ar^{\sim}[r] & G(\cM_1)\otimes G(\cM_2\otimes \cM_3)
}
\]
which gives rise to a modification
\begin{equation}
(\id\otimes\alpha)\circ(\alpha^\R\otimes\id)\cong \alpha^\R\circ\alpha.
\label{eq:modification1}
\end{equation}

For a pair of objects $\cM_1,\cM_2\in\cE_2$ we have a 2-isomorphism
\[
\xymatrix{
FG(\cM_1)\otimes \cM_2 \ar^{\id\otimes\epsilon^\R}[r] \ar^{\epsilon\otimes\id}[d] \ar@/_1pc/_{\epsilon^\R}[rr] & FG\cM_1\otimes FG\cM_2 \ar^{\sim}[r] & FG(FG\cM_1\otimes \cM_2) \ar^{\epsilon\otimes\id}[d] \\
\cM_1\otimes \cM_2 \ar^{\epsilon^\R}[rr] && FG(\cM_1\otimes \cM_2)
}
\]
which gives rise to a modification
\begin{equation}
\alpha\circ (\id\otimes\epsilon^\R)\cong \epsilon^\R\circ (\epsilon\otimes\id).
\label{eq:modification2}
\end{equation}

Similarly, we have a 2-isomorphism
\[
\xymatrix{
FG(\cM_1\otimes \cM_2) \ar^{\epsilon}[r] \ar^{\epsilon^\R\otimes\id}[d] & \cM_1\otimes \cM_2 \ar^{\epsilon^\R\otimes\id}[dd] \\
FG(FG\cM_1\otimes \cM_2) \ar^{\sim}[d] \ar^{\epsilon}[dr] & \\
FG\cM_1\otimes FG\cM_2 \ar^{\id\otimes\epsilon}[r] & FG\cM_1\otimes \cM_2
}
\]
which gives rise to a modification
\begin{equation}
(\id\otimes\epsilon)\circ \alpha^\R\cong (\epsilon^\R\otimes\id)\circ \epsilon.
\label{eq:modification3}
\end{equation}

\begin{lemma}
The modifications \eqref{eq:modification2} and \eqref{eq:modification3} intertwine units and counits, i.e. we have equalities of 2-morphisms
\begin{enumerate}
\item
\[
\xymatrix{
FG\cM_1\otimes FG\cM_2 \ar^{\epsilon\otimes\id}[r] \ar@{=}[d] \ar@/^2pc/^{\epsilon\circ\alpha}[rr] & \cM_1\otimes FG\cM_2 \ar^{\id\otimes\epsilon}[r] \ar^{\epsilon^\R\otimes\id}[d] & \cM_1\otimes\cM_2 \ar^{\epsilon^\R}[d] \\
FG\cM_1\otimes FG\cM_2 \ar@{=}[r] \twocell{ur} & FG\cM_1\otimes FG\cM_2 \ar^{\alpha}[r] & FG(\cM_1\otimes \cM_2)
}
=
\xymatrix{
FG\cM_1\otimes FG\cM_2 \ar^{\epsilon\circ\alpha}[r] \ar@{=}[d] & \cM_1\otimes \cM_2 \ar^{\epsilon^\R}[d] \\
FG\cM_1\otimes FG\cM_2 \ar^{\alpha}[r] \twocell{ur} & FG(\cM_1\otimes \cM_2)
}
\]
\item
\[
\xymatrix{
FG(\cM_1\otimes\cM_2) \ar^{\epsilon}[r] \ar^{\alpha^\R}[d] & \cM_1\otimes\cM_2 \ar^{\epsilon^\R\otimes\id}[d] \ar@{=}[r] & \cM_1\otimes \cM_2 \ar@{=}[d] \\
FG\cM_1\otimes FG\cM_2 \ar^{\id\otimes\epsilon}[r] \ar@/_2pc/_{\epsilon\circ\alpha}[rr] & FG\cM_1\otimes \cM_2 \ar^-{\epsilon\otimes\id}[r] \twocell{ur} & \cM_1\otimes\cM_2
}
=
\xymatrix{
FG(\cM_1\otimes\cM_2) \ar^{\epsilon}[r] \ar^{\alpha^\R}[d] & \cM_1\otimes\cM_2 \ar@{=}[d] \\
FG\cM_1\otimes FG\cM_2 \ar^{\epsilon\circ\alpha}[r] \twocell{ur} & \cM_1\otimes\cM_2
}
\]
\end{enumerate}
in $h_2\cE_2$.
\label{lm:modification3intertwiner}
\end{lemma}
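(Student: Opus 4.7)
The plan is to verify both 2-cell equalities by unpacking the modifications \eqref{eq:modification2} and \eqref{eq:modification3} into their defining pasting diagrams (the ones drawn immediately before each equation), and then reducing to identities that follow from naturality of the projection formula together with the swallowtail axioms from Remark~\ref{rm:swallowtail}. Throughout, I rely on the adjunction $\epsilon\dashv\epsilon^\R$ in $\cE_2$, whose unit and counit 2-cells supply the anonymous double arrows in the pasting squares.

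For part~(1), I first observe that the right-hand side is simply the unit 2-cell of $\epsilon\dashv\epsilon^\R$ at $\cM_1\otimes\cM_2$, whiskered with $\alpha$. The left-hand side, by contrast, pastes the same unit applied \emph{only on the first tensor factor} together with the (symmetric variant of the) modification \eqref{eq:modification2} on the right-hand square. To compare them, I would pass to adjuncts: a 2-cell $\beta\colon\alpha\Rightarrow\epsilon^\R\circ\epsilon\circ\alpha$ is uniquely determined by its post-composition $\epsilon\cdot\beta$ collapsed through the counit of $\epsilon\dashv\epsilon^\R$. For the RHS this reduction is exactly a triangle identity, yielding the identity 2-cell on $\epsilon\circ\alpha$. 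For the LHS, unfolding the defining diagram of \eqref{eq:modification2} shows that the same reduction collapses---using strong monoidality of $F$, which identifies $\epsilon\circ\alpha$ with $\epsilon_{\cM_1}\otimes\epsilon_{\cM_2}$---to the identity 2-cell on $\epsilon\otimes\epsilon$. The two sides thus agree.

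Part~(2) is formally dual: the RHS is the counit of $\epsilon\dashv\epsilon^\R$ whiskered with $\alpha^\R$, while the LHS splits it using \eqref{eq:modification3} on a single factor. The same adjunction-reduction argument proves the equality with the roles of unit and counit exchanged. Alternatively, part~(2) can be deduced from part~(1) by passing to right adjoints throughout, which interchanges $\alpha$ with $\alpha^\R$ and $\epsilon$ with $\epsilon^\R$ in the appropriate slots, so that the two statements become mates of one another under the ambidexterity.

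The main technical obstacle is the bookkeeping of 2-cells in a symmetric monoidal $(\infty,2)$-category: both pasting diagrams involve several horizontal and vertical compositions, and one must trace each cell carefully through the projection formula isomorphism and verify its coherence with the unit/counit cells. No new structural input is required, however, beyond the axioms of a symmetric monoidal ambidextrous adjunction, the strong monoidality of $F$, and the swallowtail identities of Remark~\ref{rm:swallowtail}.
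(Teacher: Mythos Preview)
The paper states this lemma without proof, so there is no argument to compare against. Your sketch is along the right lines: both equalities are coherence statements that unwind to the triangle identities for $\epsilon\dashv\epsilon^\R$ once one plugs in the defining diagrams of the modifications \eqref{eq:modification2} and \eqref{eq:modification3}, together with the identification $\epsilon\circ\alpha\cong\epsilon\otimes\epsilon$ (which the paper itself later calls ``obvious'' and uses freely). Your strategy of passing to adjuncts under $\epsilon\dashv\epsilon^\R$ is the cleanest way to organize this, and your observation that (2) is the mate of (1) under the ambidexterity is correct and saves work.

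Two points where more care is warranted. First, in part~(1) you invoke ``the (symmetric variant of the) modification \eqref{eq:modification2}'' for the right-hand square of the LHS. You should make explicit that this means applying \eqref{eq:modification2} after swapping the roles of $\cM_1$ and $\cM_2$ via the symmetry, and check that the resulting 2-isomorphism matches the one implicitly used in the square; this is where most of the bookkeeping you allude to actually lives. Second, your reduction of the LHS adjunct to the identity on $\epsilon\otimes\epsilon$ requires tracing through the specific pasting diagram that \emph{defines} \eqref{eq:modification2} (the one displayed just before it in the paper), not merely the existence of some 2-isomorphism with that source and target. As you note, no ingredients beyond the projection formula, strong monoidality of $F$, and the triangulators of Remark~\ref{rm:swallowtail} are needed, but the verification is not entirely formal and a complete proof would spell out the relevant pasting equality.
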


\begin{proposition}
Let $\cE_1,\cE_2$ be symmetric monoidal $(\infty,2)$-categories and $F\colon \cE_1\rightleftarrows \cE_2\colon G$ a symmetric monoidal ambidextrous adjunction. Then $G\colon \cE_2\rightarrow \cE_1$ preserves dualizable objects. Moreover, given a dualizable object $\cM\in\cE_2$ with the dual $\cM^\vee$, the dual of $G(\cM)$ is given by $G(\cM^\vee)$ with the evaluation map given by
\[G(\cM)\otimes G(\cM^\vee)\xrightarrow{\alpha} G(\cM\otimes \cM^\vee)\xrightarrow{\ev} G(1_{\cE_2})\xrightarrow{\eta^\R} 1_{\cE_1}\]
and the coevaluation map given by
\[1_{\cE_1}\xrightarrow{\eta} G(1_{\cE_2})\xrightarrow{\coev} G(\cM\otimes\cM^\vee)\xrightarrow{\alpha^\R} G(\cM)\otimes G(\cM^\vee).\]
\label{prop:ambidextrousduality}
\end{proposition}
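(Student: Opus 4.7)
The plan is to verify the two triangle (zigzag) identities for the proposed duality data; this simultaneously shows that $G(\cM)$ is dualizable with dual $G(\cM^\vee)$ and identifies its evaluation and coevaluation with the specified formulas, since such data is determined up to coherent equivalence by the triangle identities. I focus on the identity at $G(\cM)$; the identity at $G(\cM^\vee)$ follows by the symmetric argument.

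First I would substitute the proposed formulas for $\ev_{G(\cM)}$ and $\coev_{G(\cM)}$ into the zigzag composite
\[
G(\cM) \longrightarrow G(\cM) \otimes G(\cM^\vee) \otimes G(\cM) \longrightarrow G(\cM).
\]
This unfolds into a string featuring $\eta$, $G(\coev)$, $\alpha^\R$, $\alpha$, $G(\ev)$, and $\eta^\R$, interleaved with unitors and projection-formula isomorphisms. The crucial reduction step is to apply modification \eqref{eq:modification1}---which identifies $(\id\otimes\alpha)\circ(\alpha^\R\otimes\id)$ with $\alpha^\R\circ\alpha$---in order to permute the central occurrences of $\alpha$ and $\alpha^\R$. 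This regroups the stretch $G(\coev)$, $\alpha^\R$, $\alpha$, $G(\ev)$ inside a single application of $G$, now applied to the zigzag composite $\cM\to \cM\otimes\cM^\vee\otimes \cM\to \cM$ in $\cE_2$. By dualizability of $\cM$ in $\cE_2$, this inner composite is $\id_\cM$, so the middle of the expression collapses to $\id_{G(\cM)}$.

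What remains is a short composite involving only the unit $\eta$ and its right adjoint $\eta^\R$ evaluated on the tensor-unit factor, which is precisely the triangle identity for the ambidextrous partner adjunction $G\dashv F$ guaranteed by the swallowtail axioms in Remark \ref{rm:swallowtail}. This finishes the zigzag identity.

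The main obstacle will be the $(\infty,2)$-categorical bookkeeping of 2-cells: at each step one must check that the coherence modifications have been composed in the correct order and on the correct strands. The most delicate point is the initial application of \eqref{eq:modification1}, which must be threaded through the projection-formula isomorphism that aligns $G(\cM\otimes \cM^\vee)$ with $G(\cM)\otimes G(\cM^\vee)$. The modifications \eqref{eq:modification2}--\eqref{eq:modification3} and Lemma \ref{lm:modification3intertwiner} should intervene only if one expands $\alpha, \alpha^\R$ in terms of $\epsilon, \epsilon^\R$ and reasons at that finer level; at the higher level of $\alpha$-manipulations, they are packaged away.
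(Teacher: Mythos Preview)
Your approach is correct and essentially the same as the paper's: the paper builds the triangulators by exactly the diagram you describe, using modification \eqref{eq:modification1} to collapse the middle $(\alpha^\R\otimes\id,\id\otimes\alpha)$ stretch so that the inner zigzag for $\cM$ appears inside $G$, and then handling the two corners separately. One small imprecision: after the collapse, what remains at each end is not literally an $\eta/\eta^\R$ triangle for the $G\dashv F$ adjunction, but the unit compatibility of the lax (resp.\ oplax) monoidal structure on $G$---the paper unwinds $\alpha$ via the projection formula and reduces the left corner to the triangulator $\epsilon_{F}\circ F(\eta)\cong\id$ of the original $F\dashv G$ adjunction (and dually for the right corner).
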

\begin{proof}
We construct triangulators using the diagrams
\[
\xymatrix{
G(\cM) \ar^{\eta\otimes\id}[d] \ar@{=}[dr] \\
G(1_{\cE_2})\otimes G(\cM) \ar^{\coev\otimes\id}[d] \ar^{\alpha}[r] & G(\cM) \ar^{\coev\otimes\id}[d] \ar@{=}[dr] \\
G(\cM\otimes \cM^\vee) \otimes G(\cM) \ar^{\alpha^\R\otimes\id}[d] \ar^{\alpha}[r] & G(\cM\otimes\cM^\vee\otimes\cM) \ar^{\alpha^\R}[d] \ar^-{\id\otimes\ev}[r] & G(\cM) \ar^{\alpha^\R}[d] \ar@{=}[dr] \\
G(\cM)\otimes G(\cM^\vee)\otimes G(\cM) \ar^-{\id\otimes\alpha}[r] & G(\cM)\otimes G(\cM^\vee\otimes\cM) \ar^-{\id\otimes\ev}[r] & G(\cM)\otimes G(1_{\cE_2}) \ar_-{\id\otimes\eta^\R}[r] & G(\cM)
}
\]
and
\[
\xymatrix{
G(\cM^\vee) \ar^-{\id\otimes\eta}[r] \ar@{=}[dr] & G(\cM^\vee)\otimes G(1_{\cE_2}) \ar^-{\id\otimes\coev}[r] \ar^{\alpha}[d] & G(\cM^\vee)\otimes G(\cM\otimes\cM^\vee) \ar^-{\id\otimes\alpha^\R}[r] \ar^{\alpha}[d] & G(\cM^\vee)\otimes G(\cM)\otimes G(\cM^\vee) \ar^{\alpha\otimes\id}[d] \\
& G(\cM^\vee) \ar^-{\id\otimes\coev}[r] \ar@{=}[dr] & G(\cM^\vee\otimes \cM\otimes \cM^\vee) \ar^-{\alpha^\R}[r] \ar^{\ev\otimes\id}[d] & G(\cM^\vee\otimes\cM) \otimes G(\cM^\vee) \ar^{\ev\otimes\id}[d] \\
&&G(\cM^\vee) \ar^{\alpha^\R}[r] \ar@{=}[dr] & G(1_{\cE_2})\otimes G(\cM^\vee) \ar^{\eta^\R\otimes\id}[d] \\
&&& G(\cM^\vee)
}
\]
using~\eqref{eq:modification1}. Here the corner 2-isomorphisms are constructed as
\[
\xymatrix{
& G(\cM) \ar_{\eta\otimes\id}[dl] \ar_{\eta\otimes\id}[d] \ar@{=}[dr] & \\
G(1_{\cE_2})\otimes G(\cM) \ar^-{\sim}[r] & G(FG(1_{\cE_2})\otimes \cM) \ar_-{\epsilon\otimes\id}[r] & G(\cM)
}
\qquad
\xymatrix{
&& G(\cM^\vee)\otimes G(1_{\cE_1}) \ar^{\sim}[d] \\
& G(\cM^\vee) \ar^{\id\otimes\eta}[ur] \ar^{\eta}[r] \ar@{=}[dr] & GFG(\cM^\vee) \ar^{\epsilon}[d] \\
&& G(\cM^\vee)
}
\]
\end{proof}

If $\cE$ is a symmetric monoidal $(\infty,2)$-category, we denote by $\cE^{\dual}\subset \iota_1\cE$ the non-full subcategory whose objects are the 1-dualizable objects and whose morphisms are the right-adjointable morphisms.

\begin{proposition}
If $F\colon \cE_1\rightleftarrows \cE_2\colon G$ is a symmetric monoidal ambidextrous adjunction, it restricts to an adjunction
\[F\colon \cE_1^{\dual}\rightleftarrows \cE_2^{\dual}\colon G.\]
\label{prop:ambidextrousdualadjoint}
\end{proposition}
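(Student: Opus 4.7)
The plan is to verify the three ingredients of an adjunction of $(\infty,1)$-categories on the restricted subcategories: that both functors map the dualizable/right-adjointable subcategories to each other, that the unit and counit restrict, and that the triangle identities are inherited from the ambient adjunction.

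First I would check that $F$ and $G$ both send $\cE_i^\dual$ to $\cE_{i+1}^\dual$ (indices mod 2). On objects, $F$ preserves dualizability because it is symmetric monoidal and hence preserves all duality data, while $G$ preserves dualizability by the previous Proposition~\ref{prop:ambidextrousduality}. On 1-morphisms, recall that $\cE_i^\dual$ consists of right-adjointable 1-morphisms in $\iota_1\cE_i$; since $F$ and $G$ underlie $(\infty,2)$-functors on $\cE_1$ and $\cE_2$, they preserve adjunctions of 1-morphisms, and hence send right-adjointable morphisms to right-adjointable morphisms. Thus $F$ and $G$ restrict to functors $F^\dual\colon \cE_1^\dual \to \cE_2^\dual$ and $G^\dual\colon \cE_2^\dual \to \cE_1^\dual$.

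Next I would check that the unit $\eta\colon \id_{\cE_1} \to GF$ and counit $\epsilon\colon FG \to \id_{\cE_2}$ restrict to natural transformations between the appropriate composites of $F^\dual$ and $G^\dual$. The components $\eta_\cM$ and $\epsilon_\cN$ live a priori in $\iota_1\cE_1$ and $\iota_1\cE_2$; for them to lie in $\cE_1^\dual$ and $\cE_2^\dual$ we need them to be right-adjointable. This is precisely the content of the ambidexterity hypothesis: by definition $\eta^\R$ and $\epsilon^\R$ exist, exhibiting these 1-morphisms as right-adjointable. Naturality in the restricted setting is inherited from naturality in the ambient $(\infty,2)$-categories.

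Finally, the triangle identities for the restricted adjunction $F^\dual \dashv G^\dual$ are the images of the triangle identities for $F \dashv G$ under the inclusions $\cE_i^\dual \hookrightarrow \iota_1\cE_i$, so they hold automatically. I do not expect any serious obstacle here: the work was done in establishing ambidexterity (which forces the unit and counit themselves to be right-adjointable) and in Proposition~\ref{prop:ambidextrousduality} (which supplied the dual of $G(\cM)$); the present statement merely repackages these facts as the assertion that the adjunction descends to the dualizable subcategories.
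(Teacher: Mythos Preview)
Your proof is correct and follows essentially the same approach as the paper's: both argue that $F$ and $G$ preserve dualizable objects (by monoidality and Proposition~\ref{prop:ambidextrousduality} respectively) and right-adjointable morphisms (as $(\infty,2)$-functors), and that the unit and counit are right-adjointable by the ambidexterity hypothesis. The only difference is that you spell out the inheritance of naturality and the triangle identities, which the paper leaves implicit.
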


\begin{proof}
	Since $F$ and $G$ are $(\infty,2)$-functors, they preserve right-adjointable morphisms.
	The functor $F$ preserves dualizable objects since it is symmetric monoidal, and the functor $G$ preserves dualizable objects by Proposition \ref{prop:ambidextrousduality}.

It remains to check that $\eta\colon \id\rightarrow G F$ and $\epsilon\colon F G\rightarrow \id$ are right-adjointable on dualizable objects, but this holds by the assumption of ambidexterity.
\end{proof}

In the future we will also need a certain ``coherent'' version of duality.

\begin{definition}
\label{def:coherentduality}
Suppose $\cE$ is a symmetric monoidal $\infty$-category. A \defterm{coherent dual pair} is given by the following data:
\begin{itemize}
\item Objects $\cM,\cM^\vee\in\cE$.

\item 1-morphisms $\coev\colon 1\rightarrow \cM\otimes \cM^\vee$ and $\ev\colon \cM^\vee\otimes\cM\rightarrow 1$.

\item Invertible 2-morphisms
\[
\xymatrix{
\cM \ar@/_2pc/@{=}_{\id}[rr] \ar^-{\coev\otimes\id}[r] & \cM\otimes \cM^\vee\otimes \cM \ar^-{\id\otimes\ev}[r] \ar@{}[d]^(.2){}="a"^(.63){}="b" \ar^{T_1}@{=>} "a";"b" & \cM \\
&&
}
\qquad
\xymatrix{
\cM^\vee \ar@/_2pc/@{=}_{\id}[rr] \ar^-{\id\otimes\coev}[r] & \cM^\vee\otimes \cM\otimes \cM^\vee \ar^-{\ev\otimes\id}[r] \ar@{}[d]^(.2){}="a"^(.63){}="b" \ar^{T_2}@{=>} "a";"b" & \cM^\vee \\
&&
}
\]
\end{itemize}

These are required to satisfy the \defterm{swallowtail axioms}:
\begin{itemize}
\item
\[
\xymatrix@C=0.5cm@R=1cm{
& 1 \ar_{\coev}[dl] \ar^{\coev}[dr] & \\
\cM\otimes\cM^\vee \ar^{\id\otimes\coev}[dr] \ar@/_3pc/@{=}_{\id}[ddr] && \cM\otimes\cM^\vee \ar_{\coev\otimes\id}[dl] \ar@/^3pc/@{=}^{\id}[ddl] \\
& \cM\otimes\cM^\vee\otimes\cM\otimes\cM^\vee \ar^(0.7){\id\otimes\ev\otimes\id}[d] \ar@{}[dl]^(.2){}="a1"^(.63){}="b1" \ar^{T_2}@{=>} "a1";"b1" \ar@{}[dr]^(.2){}="a2"^(.63){}="b2" \ar^{T_1}@{=>} "a2";"b2" & \\
& \cM\otimes\cM^\vee &
}
=
\xymatrix@C=0.5cm@R=1cm{
& 1 \ar_{\coev}[dl] \ar^{\coev}[dr] & \\
\cM\otimes\cM^\vee \ar@{=}_{\id}[rr] && \cM\otimes\cM^\vee
}
\]

\item
\[
\xymatrix@C=0.5cm@R=1cm{
& 1 & \\
\cM^\vee\otimes\cM \ar^{\ev}[ur] \ar@/_3pc/@{=}_{\id}[ddr] && \cM^\vee\otimes\cM \ar_{\ev}[ul] \ar@/^3pc/@{=}^{\id}[ddl] \\
& \cM^\vee\otimes\cM\otimes\cM^\vee\otimes\cM \ar_{\ev\otimes\id}[ul] \ar^{\id\otimes\ev}[ur]  \ar@{}[dl]^(.2){}="a1"^(.63){}="b1" \ar^{T_2}@{=>} "a1";"b1" \ar@{}[dr]^(.2){}="a2"^(.63){}="b2" \ar^{T_1}@{=>} "a2";"b2" & \\
& \cM\otimes\cM^\vee \ar_(0.3){\id\otimes\coev\otimes\id}[u] &
}
=
\xymatrix@C=0cm{
& 1 & \\
\cM^\vee\otimes\cM \ar@{=}_{\id}[rr] \ar^{\ev}[ur] && \cM^\vee\otimes\cM \ar_{\ev}[ul]
}
\]
\end{itemize}
which are understood as equalities in the homotopy 2-category $h_2\cE$.
\end{definition}

By \cite[Theorem 2.14]{Pstragowski}, every dualizable object is part of a coherent dual pair.

\subsection{Rigidity}

In this section we define the notion of a rigid symmetric monoidal functor, generalizing the discussion of \cite[Section D]{Ga1}.

Let $\Mod_k=\Mod_k(\mathrm{Sp})\in\Prst$ be the $\infty$-category of $k$-modules. The $\infty$-category \[\Prst_k=\Mod_{\Mod_k}(\Prst)\] has an induced symmetric monoidal structure. Let us also introduce the symmetric monoidal $(\infty, 2)$-category
\[\bPrst_k = \Mod_{\Mod_k}(\bPrst).\]

\begin{definition}
A \defterm{$k$-linear symmetric monoidal $\infty$-category} is a commutative algebra object in $\Prst_k$.
\end{definition}

Let  $\cC$ be a $k$-linear symmetric monoidal $\infty$-category. 
Then $\cC$ is an object in 
\[
\mathrm{CAlg}(\Prst_k) = \mathrm{CAlg}(\iota_1\bPrst_k).\]  
We denote by $\Mod_\cC = \Mod_\cC(\Prst_k)$ 
be its $\infty$-category of modules. 
As explained in  \cite[Section 4.4]{HSS} there exists a functor  
\[
\mathrm{CAlg}(\iota_1\bPrst_k) \to \mathrm{Cat}^\otimes_{(\infty,2)} \quad  \cC \mapsto \Mod_\cC(\bPrst_k)
\]
sending $\cC$ to the monoidal $(\infty,2)$-category of $\cC$-modules in $\bPrst_k$. Further, as shown in \cite[Section 4.4]{HSS}, this construction has the property that
\[
\iota_1 \Mod_\cC(\bPrst_k) \simeq \Mod_\cC. 
\]
Thus $\Mod_\cC(\bPrst_k)$ gives a natural   $(\infty, 2)$-categorical enhancement of $\Mod_\cC$.

Given a symmetric monoidal functor $f\colon \cD\rightarrow \cC$ we get an induced adjunction
\[\adj{f^*\colon \Mod_\cD(\bPrst_k)}{\Mod_\cC(\bPrst_k)\colon f_*},\]
where the functor $f^*$ sends a $\cD$-module category $\cM$ to $\cC\otimes_\cD \cM$ and $f_*$ is the forgetful functor. The counit of the adjunction
\[\epsilon_\cM\colon f^*f_*\cM\cong \cC\otimes_\cD\cM\longrightarrow \cM\]
is given by the action map and the unit
\[\eta_\cN\colon \cN\longrightarrow f_*f^* \cN\cong \cN\otimes_\cD \cC\]
is given by $n\mapsto n\boxtimes 1_\cC$.

Let
\[\Delta\colon \cC\otimes_\cD\cC\longrightarrow \cC\]
be the tensor product functor. It can naturally be enhanced to a morphism in $\Mod_{\cC\otimes_\cD\cC}$. Since the underlying functor preserves colimits, it has a possibly discontinuous right adjoint. Moreover, the right adjoint a priori is only lax compatible with the action of $\cC\otimes_\cD\cC$.

\begin{definition}
Let $f\colon \cD\rightarrow \cC$  be a symmetric monoidal functor of $k$-linear symmetric monoidal $\infty$-categories. We call $f$ \defterm{rigid} if
\begin{enumerate}
\item the morphism $\Delta\colon \cC\otimes_\cD\cC\rightarrow \cC$ in $\Mod_{\cC\otimes_\cD\cC}(\bPrst_k)$ is right adjointable.

\item $f\colon \cD\rightarrow \cC$ in $\Mod_\cD(\bPrst_k)$ is right adjointable.
\end{enumerate}
\end{definition}

\begin{definition}\label{def: rigid}
We say a $k$-linear symmetric monoidal $\infty$-category $\cC$ is \defterm{rigid} if the unit functor $\Mod_k\rightarrow \cC$ is rigid in the above sense.
\end{definition}

Given a rigid symmetric monoidal functor $f\colon \cD\rightarrow \cC$ we denote the corresponding right adjoints by
\[f^\R\colon \cC\longrightarrow \cD,\qquad \Delta^\R\colon \cC\longrightarrow \cC\otimes_\cD\cC.\]

\begin{proposition}\label{prop:selfduality}
The functors
\[\ev\colon \cC\otimes_{\cD}\cC\stackrel{\Delta}\longrightarrow \cC\stackrel{f^\R}\longrightarrow \cD\]
and
\[\coev\colon \cD\stackrel{f}\longrightarrow \cC\stackrel{\Delta^\R}\longrightarrow \cC\otimes_\cD\cC\]
exhibit a self-duality $\cC\simeq  \cC^\vee$ in $\Mod_{\cD}$.
\end{proposition}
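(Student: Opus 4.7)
The plan is to verify the two triangle identities of the proposed self-duality by a direct diagram chase, using systematically (a) the projection-formula isomorphisms supplied by the rigidity of $\Delta$ and of $f$, and (b) the unit--counit data of the two underlying adjunctions $\Delta \dashv \Delta^R$ in $\Mod_{\cC \otimes_\cD \cC}(\bPrst_k)$ and $f \dashv f^R$ in $\Mod_\cD(\bPrst_k)$.

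First I record the relevant projection formulas. The $\cC \otimes_\cD \cC$-linearity of $\Delta^R$ yields a canonical natural equivalence
\[
(\id_\cC \otimes_\cD \Delta)\circ (\Delta^R \otimes_\cD \id_\cC) \;\simeq\; \Delta^R \circ \Delta
\]
of endofunctors of $\cC \otimes_\cD \cC$, together with its mirror obtained by swapping the two tensor factors; these are the Beck--Chevalley isomorphisms attached to the associativity square for $\Delta$. Similarly, the $\cD$-linearity of $f^R$ produces base-change isomorphisms relating $f^R$ with the multiplication maps of $\cC$.

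For the first triangle identity, I unfold
\[
T_1 := (\id_\cC \otimes_\cD \ev)\circ (\coev \otimes_\cD \id_\cC) : \cC \longrightarrow \cC
\]
as the chain
\[
\cC \xrightarrow{\sim} \cD \otimes_\cD \cC \xrightarrow{f \otimes \id} \cC \otimes_\cD \cC \xrightarrow{\Delta^R \otimes \id} \cC \otimes_\cD \cC \otimes_\cD \cC \xrightarrow{\id \otimes \Delta} \cC \otimes_\cD \cC \xrightarrow{\id \otimes f^R} \cC \otimes_\cD \cD \xrightarrow{\sim} \cC.
\]
Applying the projection-formula equivalence to the middle pair $(\id\otimes\Delta)\circ(\Delta^R\otimes\id)$ replaces it with $\Delta^R\circ\Delta$, and the preceding composite $\Delta \circ (f\otimes \id)\circ(\sim)$ collapses to $\id_\cC$ by the unit isomorphism $\cD\otimes_\cD \cC \simeq \cC$. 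Thus $T_1$ becomes canonically equivalent to the composite $\cC \xrightarrow{\Delta^R} \cC \otimes_\cD \cC \xrightarrow{\id \otimes f^R} \cC \otimes_\cD \cD \xrightarrow{\sim} \cC$. A second application of the projection-formula machinery for $f^R$, combined with the triangle identities for both adjunctions $\Delta\dashv\Delta^R$ and $f\dashv f^R$, identifies this residual composite with $\id_\cC$, producing the desired invertible $2$-cell $T_1 \Rightarrow \id_\cC$. The second triangle identity $T_2 \simeq \id_\cC$ is established by the mirror argument, exchanging the roles of the two tensor factors by means of the symmetry of $\cC$.

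The principal difficulty is not the 1-categorical skeleton above but the coherent 2-categorical bookkeeping. One must verify that the projection-formula isomorphisms, together with the adjunction triangulators of the two adjunctions, assemble to the swallowtail data of a coherent dual pair in the sense of Definition~\ref{def:coherentduality}, rather than a mere equivalence of underlying 1-morphisms. This coherence is supplied by working with the rigidity hypotheses inside the $(\infty,2)$-categories $\Mod_{\cC \otimes_\cD \cC}(\bPrst_k)$ and $\Mod_\cD(\bPrst_k)$, so that every diagram manipulation preserves the relevant 2-cell structure throughout.
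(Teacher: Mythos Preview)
Your approach is essentially the paper's: both use the $\cC\otimes_\cD\cC$-linearity of $\Delta^\R$ to rewrite $(\id\otimes\Delta)\circ(\Delta^\R\otimes\id)\simeq\Delta^\R\circ\Delta$, then collapse $\Delta\circ(f\otimes\id)\simeq\id_\cC$ and finish. Two small tightenings: your residual composite $(\id\otimes f^\R)\circ\Delta^\R$ is simply the right adjoint in $\Mod_\cD(\bPrst_k)$ of $\Delta\circ(\id\otimes f)\simeq\id_\cC$, hence is itself $\id_\cC$---no separate ``projection formula for $f^\R$'' or triangle identities are needed there; and your final paragraph on swallowtail coherence is unnecessary, since the proposition only asserts dualizability in $\Mod_\cD$, for which the triangle identities up to invertible $2$-cell suffice (the upgrade to a coherent dual pair is a separate general fact, not part of what is being claimed).
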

\begin{proof}
We have to check that the composite
\begin{align*}
\cC&\xrightarrow{\id\otimes f}  \cC\otimes_{\cD} \cC \\
&\xrightarrow{\id\otimes\Delta^\R}  \cC\otimes_{\cD}\cC\otimes_{\cD} \cC \\
&\xrightarrow{\Delta\otimes\id}  \cC\otimes_{\cD}\cC \\
&\xrightarrow{f^\R\otimes\id}  \cC
\end{align*}
is naturally isomorphic to the identity. Indeed, by the first axiom of rigidity we know that $\Delta^\R$ lies in $\Mod_{\cC\otimes_\cD \cC}$. Using the canonical algebra map 
$ \cC \to \cC\otimes_\cD \cC$ given by $x\mapsto x\boxtimes 1$, we see that $\Delta^\R$ also lies in $\Mod_\cC$. Hence we have a commutative diagram of $\infty$-categories
\[
\xymatrixcolsep{5pc}
\xymatrix{
\cC\otimes_{\cD} \cC \ar^{\id\otimes\Delta^\R}[r] \ar^{\Delta}[d] & \cC\otimes_{\cD} \cC\otimes_{\cD} \cC \ar^{\Delta\otimes\id}[d] \\
\cC \ar^{\Delta^\R}[r] & \cC\otimes_{\cD}\cC
}
\]
and the claim follows since the composite
\[\cC\xrightarrow{\id\otimes f} \cC\otimes_{\cD}\cC \xrightarrow{\Delta} \cC\]
is naturally isomorphic to the identity.
\end{proof}

Given an object $x\in\cC$, the functor $\cD\rightarrow \cC$ given by $d\mapsto f(d)\otimes x$ preserves colimits, so it admits a right adjoint $\iHom(x, -)\colon \cC\to\cD$, which is a lax $\scr D$-module functor.

\begin{proposition}
Let $\cD\rightarrow \cC$ be a rigid symmetric monoidal functor. An object $x\in\cC$ is dualizable iff $\iHom(x, -)$ preserves colimits and is $\cD$-linear.
\label{prop:compactdualizable}
\end{proposition}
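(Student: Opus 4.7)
The plan is to prove the two implications separately. For the \emph{only if} direction, assume $x\in\cC$ is dualizable with dual $x^\vee$. Chaining the duality adjunction on $\cC$ with $f\dashv f^\R$ gives natural equivalences
\[\Map_\cD(d,\iHom(x,y))\simeq\Map_\cC(f(d)\otimes x,y)\simeq \Map_\cC(f(d),y\otimes x^\vee)\simeq \Map_\cD(d,f^\R(y\otimes x^\vee))\]
for $d\in\cD$ and $y\in\cC$, so that $\iHom(x,y)\simeq f^\R(y\otimes x^\vee)$ naturally in $y$. Rigidity ensures that $f^\R$ is continuous and $\cD$-linear, while $-\otimes x^\vee$ is $\cC$-linear (hence $\cD$-linear) and continuous; their composite $\iHom(x,-)$ is therefore continuous and $\cD$-linear.

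For the converse, the hypothesis says the adjunction $(f(-)\otimes x)\dashv \iHom(x,-)$ is an adjunction in $\Mod_\cD(\bPrst_k)$. I base change it along the symmetric monoidal functor $\cC\otimes_\cD(-)\colon \Mod_\cD(\bPrst_k)\to \Mod_\cC(\bPrst_k)$ to obtain an adjunction in $\Mod_\cC(\bPrst_k)$ with left adjoint $-\boxtimes x\colon \cC\simeq \cC\otimes_\cD\cD \to \cC\otimes_\cD\cC$ and right adjoint $\cC\otimes_\cD\iHom(x,-)\colon \cC\otimes_\cD\cC\to\cC$. By the first rigidity axiom, $\Delta\colon \cC\otimes_\cD\cC\to\cC$ admits a $\cC\otimes_\cD\cC$-linear right adjoint $\Delta^\R$, so in particular $\Delta\dashv\Delta^\R$ is an adjunction in $\Mod_\cC(\bPrst_k)$. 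Composing the two yields an adjunction in $\Mod_\cC(\bPrst_k)$
\[-\otimes x \;=\; \Delta\circ (-\boxtimes x) \;\;\dashv\;\; G \;:=\; (\cC\otimes_\cD\iHom(x,-))\circ \Delta^\R.\]

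Being continuous and $\cC$-linear, $G$ is necessarily of the form $-\otimes x^\vee$ for $x^\vee:=G(\mathbf{1}_\cC)$, and evaluating the unit/counit of $(-\otimes x)\dashv(-\otimes x^\vee)$ at $\mathbf{1}_\cC$ produces the coevaluation $\mathbf{1}_\cC\to x\otimes x^\vee$ and evaluation $x^\vee\otimes x\to \mathbf{1}_\cC$ exhibiting $x^\vee$ as a dual of $x$; the triangle identities follow formally from the adjunction. The main technical hurdle is the bookkeeping involved in base-changing the adjunction and composing with $\Delta\dashv \Delta^\R$, and in verifying that the resulting composite left adjoint is indeed $-\otimes x$. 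Both clauses in the definition of rigidity are used once: the $f^\R$-axiom powers the $\iHom$ computation in the forward direction, while the $\Delta^\R$-axiom is what promotes the base-changed adjunction to an endo-adjunction of $\cC$ in the converse direction.
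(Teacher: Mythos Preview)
Your proof is correct and follows essentially the same strategy as the paper: both base-change $\iHom(x,-)$ along $\cC\otimes_\cD(-)$ to obtain a $\cC$-linear functor $\iHom_2(x,-)\colon \cC\otimes_\cD\cC\to\cC$, compose with $\Delta^\R$, and set $x^\vee = \iHom_2(x,\Delta^\R(1_\cC))$. The only difference is packaging: you phrase the converse as composing two adjunctions in $\Mod_\cC(\bPrst_k)$ and then invoking the equivalence $\Fun_\cC(\cC,\cC)\simeq\cC$ to read off the duality data, whereas the paper writes out the evaluation and coevaluation maps explicitly and checks the triangle identities by hand.
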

\begin{proof}
Suppose $x\in\cC$ is dualizable. Then we have a sequence of equivalences
\begin{align*}
\Map_{\cC}(f(d)\otimes x, y)&\simeq \Map_{\cC}(f(d), x^\vee\otimes y) \\
&\simeq \Map_{\cD}(d, f^\R(x^\vee\otimes y))
\end{align*}
and hence $\iHom(x, y)\simeq f^\R(x^\vee\otimes y)$. Therefore, $\iHom(x, -)$ preserves colimits since $f^\R$ does, and it is $\cD$-linear since $f^\R$ is.

Conversely, suppose $\iHom(x, -)$ preserves colimits and is $\cD$-linear. Consider the functor
\[\iHom_2(x, -)\colon \cC\otimes_{\cD}\cC\rightarrow \cC\]
obtained from $\iHom(x,-)$ by extending scalars from $\scr D$ to $\scr C$.

We define the duality data as follows. Let
\[x^\vee = \iHom_2(x, \Delta^\R(1)).\]
By construction we have an evaluation morphism $x^\vee\boxtimes x\rightarrow \Delta^\R(1)$.

We define the coevaluation to be the composite
\begin{align*}
1 &\longrightarrow \iHom_2(x, 1\boxtimes x) \\
&\longrightarrow \iHom_2(x, \Delta^\R(x)) \\
&\stackrel{\sim}\longleftarrow \iHom_2(x, x\otimes_1 \Delta^\R(1)) \\
&\simeq x\otimes \iHom_2(x, \Delta^\R(1)) \\
&= x\otimes x^\vee
\end{align*}

Here we use the unit $1\boxtimes x\rightarrow \Delta^\R\circ \Delta(1\boxtimes x)\simeq \Delta^\R(x)$ in the second line, the first axiom of rigidity in the third line, and the $\cC$-linearity of $\iHom_2(x,-)$ in the fourth line.

The evaluation is defined to be the composite
\begin{align*}
x^\vee\otimes x &= \Delta(x^\vee\boxtimes x) \\
&\longrightarrow \Delta\circ \Delta^\R(1) \\
&\longrightarrow 1
\end{align*}

The duality axioms follow from the naturality of the unit and counit morphisms.
\end{proof}

\begin{corollary}\label{cor:rigid}
Let $\cC$ be a rigid symmetric monoidal $\infty$-category. Then compact objects coincide with dualizable objects in $\cC$.
\end{corollary}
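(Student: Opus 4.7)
The plan is to invoke Proposition \ref{prop:compactdualizable} for the rigid symmetric monoidal functor $f\colon \Mod_k \to \cC$ (rigidity of $\cC$ being exactly rigidity of this unit map), which characterizes dualizability of $x \in \cC$ as the conjunction of two properties of the functor $\iHom(x,-)\colon \cC \to \Mod_k$: colimit-preservation and $\Mod_k$-linearity. The two implications compact $\Leftrightarrow$ dualizable are then handled separately.

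For dualizable $\Rightarrow$ compact, the key preliminary observation is that $1_\cC$ itself is compact. Rigidity condition (2) forces $f$ to be right adjointable in $\Mod_{\Mod_k}(\bPrst_k)$, i.e.\ $f^\R\colon \cC \to \Mod_k$ is colimit-preserving and $\Mod_k$-linear. Since $\Map_\cC(1_\cC,-) \simeq \Omega^\infty f^\R(-)$ and $\Omega^\infty \simeq \Map_{\Mod_k}(k,-)$ preserves filtered colimits, so does the composite, giving compactness of $1_\cC$. For an arbitrary dualizable $x$, the identification $\Map_\cC(x,y) \simeq \Map_\cC(1_\cC, x^\vee \otimes y)$ together with the fact that $x^\vee \otimes -$ preserves colimits yields compactness of $x$.

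For compact $\Rightarrow$ dualizable, I would verify the two hypotheses of Proposition \ref{prop:compactdualizable} in turn. For colimit preservation: $\iHom(x,-)$ is exact (being a right adjoint between stable $\infty$-categories), hence preserves finite colimits; for filtered colimits, I would invoke the standard fact that an exact functor $F\colon \cC \to \Mod_k$ preserves filtered colimits if and only if $\Omega^\infty F = \Map_{\Mod_k}(k, F(-))$ does (this uses that $\Mod_k$ is compactly generated by shifts of $k$), and then $\Omega^\infty \iHom(x,-) \simeq \Map_\cC(x,-)$ together with the compactness of $x$ finishes the step. For $\Mod_k$-linearity: $\iHom(x,-)$ carries a canonical lax $\Mod_k$-linear structure, and the comparison map $M \otimes \iHom(x,y) \to \iHom(x, M \otimes y)$ is an equivalence at $M = k$. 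Both sides preserve colimits in $M$ (the right side by the colimit preservation just established, the left by continuity of $\otimes$), so since $\Mod_k$ is generated under colimits by $k$, the comparison map is an equivalence for every $M$.

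The main obstacle I expect is the filtered-colimit lifting in the colimit-preservation step: passing from preservation of filtered colimits by $\Omega^\infty \iHom(x,-)$ to preservation by $\iHom(x,-)$ itself. This is the one point where the argument uses genuine structural features of $\Mod_k$ (compact generation by $k$, together with exactness of $\iHom(x,-)$) rather than purely formal adjunction manipulations; once it is in place, the rest of the proof is a clean consequence of Proposition \ref{prop:compactdualizable} and the definition of rigidity.
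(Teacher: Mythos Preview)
Your proposal is correct and takes essentially the approach implicit in the paper: the corollary is stated without proof immediately after Proposition~\ref{prop:compactdualizable}, so the intended argument is precisely to specialize that proposition to the unit functor $\Mod_k\to\cC$ and identify the resulting conditions on $\iHom(x,-)$ with compactness of $x$. Your write-up fills in the details the paper omits, including the reduction of $\Mod_k$-linearity to colimit-preservation and the lifting of filtered-colimit preservation from $\Omega^\infty\iHom(x,-)$ to $\iHom(x,-)$ via exactness and compact generation of $\Mod_k$ by $k$; both steps are handled correctly.
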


Conversely, one has the following statement.

\begin{proposition}
Suppose $\cC$ is a compactly generated $k$-linear symmetric monoidal $\infty$-category. Then it is rigid iff the following conditions are satisfied:
\begin{itemize}
\item The unit object $1_\cC$ is compact.

\item Every compact object admits a dual.
\end{itemize}
\end{proposition}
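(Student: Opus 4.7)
The plan is first to dispatch the forward direction and then verify each of the two clauses in the definition of rigidity for the unit functor $u\colon\Mod_k\to\cC$. If $\cC$ is rigid, then by Corollary~\ref{cor:rigid} the compact and dualizable objects of $\cC$ coincide, so $1_\cC$ (which is always dualizable) is compact and every compact object admits a dual.

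For the converse, assume $1_\cC$ is compact and every compact object is dualizable. The first step will be to upgrade these hypotheses to the assertion that compact and dualizable objects in $\cC$ coincide: for $x$ dualizable with dual $x^\vee$, the identification
$$\Map_\cC(x,-)\simeq \Map_\cC(1_\cC,x^\vee\otimes -)$$
exhibits $\Map_\cC(x,-)$ as a composite of filtered-colimit-preserving functors, hence $x$ is compact.

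I would then verify the second clause of rigidity for $u$. Its right adjoint $u^\R\simeq\Map_\cC(1_\cC,-)$ preserves filtered colimits by compactness of $1_\cC$, and hence all colimits since $\cC$ is stable. Its $\Mod_k$-linearity is automatic, since the mate transformation $V\otimes u^\R(c)\to u^\R(u(V)\otimes c)$ commutes with colimits in $V$ and is patently an equivalence when $V=k$.

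The bulk of the work lies in the first clause: that $\Delta\colon\cC\otimes_k\cC\to\cC$ is right adjointable in $\Mod_{\cC\otimes_k\cC}(\bPrst_k)$. Continuity of $\Delta^\R$ is equivalent to $\Delta$ preserving compact objects; since $\cC\otimes_k\cC$ is compactly generated by $x\boxtimes y$ for $x,y\in\cC^\omega$ and $\Delta(x\boxtimes y)=x\otimes y$ is dualizable (hence compact by the first step), this is immediate. The hard part will be the $\cC\otimes_k\cC$-linearity of $\Delta^\R$, i.e.\ the projection formula
$$(a\boxtimes b)\otimes\Delta^\R(x)\xrightarrow{\sim}\Delta^\R(ab\otimes x).$$
Both sides commute with colimits in $a$ and $b$, so it suffices to check on compact generators; for $a,b$ compact and hence dualizable, $a\boxtimes b$ is dualizable in $\cC\otimes_k\cC$ with dual $a^\vee\boxtimes b^\vee$, and mapping a compact generator $y\boxtimes z$ into either side reduces via duality and the $(\Delta,\Delta^\R)$-adjunction to $\Map_\cC(a^\vee b^\vee yz,x)$. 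The main obstacle I foresee is upgrading this pointwise equivalence to the statement that the canonical mate of $\Delta$'s $\cC\otimes_k\cC$-linearity is invertible, though this should follow formally from naturality of the unit and counit of the $(\Delta,\Delta^\R)$-adjunction.
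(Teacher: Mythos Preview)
The paper does not actually include a proof of this proposition; it is stated without proof, presumably as a known fact imported from \cite[Section D]{Ga1}. So there is no paper proof to compare against.

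Your argument is correct. The forward direction is immediate from Corollary~\ref{cor:rigid}. For the converse, your verification of the second clause of rigidity (continuity and $\Mod_k$-linearity of $u^\R$) is clean. For the first clause, your reduction of the projection formula to compact generators $a\boxtimes b$ is the standard move, and your concern in the final paragraph is unfounded: once $a\boxtimes b$ is dualizable in $\cC\otimes_k\cC$, the projection formula for $\Delta^\R$ against $a\boxtimes b$ is automatic. Indeed, tensoring with a dualizable object $P$ is both left and right adjoint to tensoring with $P^\vee$, and a routine diagram chase (or the calculus of mates) shows that the canonical map $P\otimes \Delta^\R(x)\to \Delta^\R(\Delta(P)\otimes x)$ is inverse to the composite $\Delta^\R(\Delta(P)\otimes x)\to P\otimes P^\vee\otimes \Delta^\R(\Delta(P)\otimes x)\to P\otimes \Delta^\R(\Delta(P^\vee)\otimes\Delta(P)\otimes x)\to P\otimes\Delta^\R(x)$. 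You do not need to map in test objects $y\boxtimes z$ separately; the dualizability of $a\boxtimes b$ already does the work.
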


Let us state several important properties of rigid symmetric monoidal functors which we will need.

\begin{proposition}
Suppose $f\colon \cD\rightarrow \cC$ is rigid. Then the adjunction
\[\adj{f^*\colon \Mod_\cD(\bPrst_k)}{\Mod_\cC(\bPrst_k)\colon f_*}\]
is a symmetric monoidal ambidextrous adjunction.
\label{prop:rigidambidextrous}
\end{proposition}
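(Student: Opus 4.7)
The plan is to verify the three ingredients required by the definition: that $f^*$ is symmetric monoidal, that the projection formula holds, and that the unit $\eta$ and counit $\epsilon$ are right-adjointable as $2$-morphisms in $\Mod_\cD(\bPrst_k)$ and $\Mod_\cC(\bPrst_k)$, respectively. The symmetric monoidality of $f^* = \cC\otimes_\cD(-)$ is automatic since $\cC$ is a commutative algebra in $\Mod_\cD(\bPrst_k)$. The projection formula is also automatic: for $\cM\in\Mod_\cD$ and $\cN\in\Mod_\cC$ one has
\[f^*\cM\otimes_\cC \cN \simeq (\cC\otimes_\cD\cM)\otimes_\cC\cN \simeq \cM\otimes_\cD \cN \simeq \cM\otimes_\cD f_*\cN,\]
and this equivalence is what both sides of the projection formula compute once one unwinds $\eta$ and $\epsilon$. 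So the real content is the ambidexterity.

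For the right-adjointability of $\epsilon$, I would exploit the fact that for $\cM\in\Mod_\cC(\bPrst_k)$ one has a canonical identification $f^*f_*\cM \simeq (\cC\otimes_\cD\cC)\otimes_\cC\cM$ under which the counit is expressed as $\epsilon_\cM\simeq \Delta\otimes_\cC \id_\cM$. By hypothesis (1) of rigidity, $\Delta$ admits a right adjoint $\Delta^\R$ in $\Mod_{\cC\otimes_\cD\cC}(\bPrst_k)$, so in particular it is $\cC$-linear through the first factor. Tensoring this adjunction over $\cC$ with $\id_\cM$ produces the desired right adjoint $\Delta^\R\otimes_\cC\id_\cM$ to $\epsilon_\cM$ in $\Mod_\cC(\bPrst_k)$, and the resulting transformation is natural in $\cM$ because the whole construction is functorial in the $\cC$-module $\cM$.

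The right-adjointability of $\eta$ is dual in spirit: for $\cN\in\Mod_\cD(\bPrst_k)$ the unit is $\eta_\cN\simeq \id_\cN\otimes_\cD f\colon \cN\to \cN\otimes_\cD\cC$. Hypothesis (2) of rigidity says that $f\colon \cD\to\cC$ has a $\cD$-linear right adjoint $f^\R$ in $\Mod_\cD(\bPrst_k)$, and tensoring this adjunction over $\cD$ with $\id_\cN$ produces a right adjoint $\id_\cN\otimes_\cD f^\R$ to $\eta_\cN$ that is $\cD$-linear and natural in $\cN$.

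The point I expect to require the most care is not the existence of the right adjoints but the verification that the various tensorings-with-identities are genuinely legitimate inside $\Mod_\cC(\bPrst_k)$ and $\Mod_\cD(\bPrst_k)$, i.e.\ that the $\cC\otimes_\cD\cC$-linearity of $\Delta^\R$ (and not just its continuity) is what allows the tensor product $\Delta^\R\otimes_\cC\id_\cM$ to be formed and to remain $\cC$-linear. This is precisely the role played by the fact that the two clauses in the definition of rigidity are imposed in the $(\infty,2)$-categories $\Mod_{\cC\otimes_\cD\cC}(\bPrst_k)$ and $\Mod_\cD(\bPrst_k)$ rather than merely on the level of underlying functors; once one grants this, the naturality and compatibilities with the symmetric monoidal structure follow formally.
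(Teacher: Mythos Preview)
Your proof is correct and follows essentially the same strategy as the paper: identify $\epsilon\cong \Delta\otimes_\cC\id_{(-)}$ and $\eta\cong f\otimes_\cD\id_{(-)}$, then use the two rigidity axioms to produce the right adjoints $\Delta^\R\otimes_\cC\id_{(-)}$ and $f^\R\otimes_\cD\id_{(-)}$ as strict natural transformations. The only place the paper is slightly more explicit is the projection formula, where it checks the composite map $\cM_1\otimes_\cD f_*\cM_2\to f_*(f^*\cM_1\otimes_\cC\cM_2)$ is an equivalence by passing to $\Prst_k$ and unwinding it to the identity via the unit axiom; your sketch asserts this equivalence more summarily, but the content is the same.
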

\begin{proof}
Recall that the first axiom of rigidity states that $\Delta\colon \cC\otimes_\cD\cC\rightarrow \cC$ has a right adjoint $\Delta^\R$ in $\Mod_{\cC\otimes_\cD \cC}(\bPrst_k)$. In particular, it has a right adjoint in $\Mod_\cC(\bPrst_k)$, where $\cC\rightarrow \cC\otimes_\cD\cC$ is given by $c\mapsto 1\otimes c$. We can identify $\epsilon\cong \Delta\otimes_\cC\id_{(-)}$, therefore it has a right adjoint given by $\Delta^\R\otimes_\cC\id_{(-)}$ which is obviously a strict natural transformation.

Similarly, the second axiom of rigidity states that $f\colon \cD\rightarrow \cC$ has a right adjoint $f^\R$ in $\Mod_\cD(\bPrst_k)$. But we can identify $\eta\cong f\otimes_\cD\id_{(-)}$ and hence it has a right adjoint given by $f^\R\otimes_\cD\id_{(-)}$.

Finally, given $\cM_1\in\Mod_\cD$ and $\cM_2\in\Mod_\cC$ the composite
\[\cM_1\otimes_\cD f_*\cM_2\xrightarrow{\eta} f_*f^*(\cM_1\otimes_\cD f_*\cM_2)\cong f_*(f^*\cM_1\otimes_\cC f^*f_*\cM_2)\xrightarrow{\id\otimes\epsilon} f_*(f^*\cM_1\otimes_\cC \cM_2)\]
is an equivalence iff it is so in $\Prst_k$. But its image in $\Prst_k$ is
\[\cM_1\otimes \cM_2\xrightarrow{1\otimes\id\otimes\id} \cC\otimes \cM_1\otimes \cM_2\cong \cM_1\otimes (\cC\otimes \cM_2)\xrightarrow{\id\otimes\epsilon}\cM_1\otimes\cM_2\]
which is equivalent to the identity by the unit axiom.
\end{proof}

Let us now show that rigid functors are stable under compositions and pushouts.

\begin{proposition}
Suppose $\cE\rightarrow \cD$ and $\cD\rightarrow \cC$ are rigid. Then the composition $\cE\rightarrow \cD\rightarrow \cC$ is rigid.
\label{prop:rigidcomposition}
\end{proposition}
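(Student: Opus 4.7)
The plan is to verify the two axioms of rigidity for the composite $g = f\circ h\colon \cE\to\cD\to\cC$. The second axiom is straightforward: rigidity of $f$ provides a $\cD$-linear right adjoint $f^\R\colon \cC\to\cD$, which is a fortiori $\cE$-linear via restriction along $h$, and rigidity of $h$ provides an $\cE$-linear right adjoint $h^\R\colon\cD\to\cE$; their composite $h^\R\circ f^\R$ gives a right adjoint to $g$ in $\Mod_\cE(\bPrst_k)$.

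The heart of the argument is the first axiom, namely that $\Delta_{\cC/\cE}\colon \cC\otimes_\cE\cC\to\cC$ admits a right adjoint in $\Mod_{\cC\otimes_\cE\cC}(\bPrst_k)$. The key observation is that $\Delta_{\cC/\cE}$ factors as
\[
\cC\otimes_\cE\cC \xrightarrow{p} \cC\otimes_\cD\cC \xrightarrow{\Delta_{\cC/\cD}} \cC,
\]
where $p$ is the canonical map identifying the two relative tensor products. Rigidity of $f$ furnishes a right adjoint $\Delta_{\cC/\cD}^\R$ in $\Mod_{\cC\otimes_\cD\cC}(\bPrst_k)$, which is in particular $\cC\otimes_\cE\cC$-linear via the algebra map $\cC\otimes_\cE\cC\to\cC\otimes_\cD\cC$. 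For $p$, I would use the identification $\cC\otimes_\cD\cC \simeq (\cC\otimes_\cE\cC)\otimes_{\cD\otimes_\cE\cD}\cD$ to exhibit $p$ as the base change of $\Delta_{\cD/\cE}\colon \cD\otimes_\cE\cD\to\cD$ along the algebra map $\cD\otimes_\cE\cD\to\cC\otimes_\cE\cC$ induced by $f\otimes f$. Rigidity of $h$ then provides a $\cD\otimes_\cE\cD$-linear right adjoint $\Delta_{\cD/\cE}^\R$, which base-changes to a $\cC\otimes_\cE\cC$-linear right adjoint $p^\R$. The composite $p^\R\circ\Delta_{\cC/\cD}^\R$ is the desired right adjoint to $\Delta_{\cC/\cE}$ in $\Mod_{\cC\otimes_\cE\cC}(\bPrst_k)$.

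The main obstacle is bookkeeping at the $(\infty,2)$-categorical level: one must verify that right adjoints in $\Mod_\cA(\bPrst_k)$ are stable under base change along an algebra map $\cA\to\cB$, producing genuine right adjoints in $\Mod_\cB(\bPrst_k)$, and that the factorization of $\Delta_{\cC/\cE}$ described above respects the appropriate module structures at each stage. These are routine but nontrivial coherence checks in the theory of presentable $(\infty,2)$-categorical modules; once they are carried out, the rigidity of the composite follows formally from the rigidity hypotheses on $f$ and $h$.
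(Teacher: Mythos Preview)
Your proposal is correct and follows essentially the same approach as the paper: factor $\Delta_{\cC/\cE}$ through $\cC\otimes_\cD\cC$, identify the first map as the base change of $\Delta_{\cD/\cE}$ along $\cD\otimes_\cE\cD\to\cC\otimes_\cE\cC$ (via the same isomorphism $\cC\otimes_\cD\cC\simeq \cD\otimes_{\cD\otimes_\cE\cD}(\cC\otimes_\cE\cC)$ you use), and handle the second axiom by composing the $\cE$-linear right adjoints. The coherence point you flag about base change preserving right-adjointability is exactly what the paper uses implicitly here and establishes in the companion Proposition on pushouts.
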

\begin{proof}
Since $\cD\rightarrow \cC$ is right-adjointable as a $\cD$-module, it is also right-adjointable as an $\cE$-module. Therefore, the composite $\cE\rightarrow \cD\rightarrow \cC$ is right-adjointable in $\Mod_\cE(\bPrst_k)$.

The tensor product $\cC\otimes_\cE\cC\rightarrow \cC$ can be written as a composite
\[\cC\otimes_\cE\cC\rightarrow \cC\otimes_\cD\cC\rightarrow \cC.\]

The first functor can be identified with
\[\cC\otimes_\cE\cC\rightarrow \cD\otimes_{\cD\otimes_\cE\cD}(\cC\otimes_{\cE} \cC).\]
Since $\cD\otimes_\cE \cD\rightarrow \cD$ is right-adjointable as an $\cD\otimes_\cE\cD$-module, we therefore see that $\cC\otimes_\cE\cC\rightarrow \cC\otimes_\cD\cC$ is right-adjointable as a $\cC\otimes_\cE\cC$-module.

The second functor $\cC\otimes_\cD \cC\rightarrow \cC$ is right-adjointable as a $\cC\otimes_\cD\cC$-module and hence as a $\cC\otimes_\cE\cC$-module.
\end{proof}

\begin{proposition}
Suppose $f\colon \cD\rightarrow \cC$ is rigid and $\cD\rightarrow \cE$ is an arbitrary symmetric monoidal functor. Then
\[\cE\longrightarrow \cE\otimes_\cD \cC\]
is rigid.
\label{prop:rigidpushout}
\end{proposition}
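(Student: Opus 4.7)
The plan is to base-change the rigidity data for $f\colon \cD\to\cC$ along the symmetric monoidal functor $g\colon \cD\to \cE$. More precisely, let $f'\colon \cE\to \cE\otimes_\cD\cC$ denote the functor obtained by extension of scalars. The key observation is that the relative tensor product functor
\[(-)\otimes_\cD \cE \colon \Mod_\cD(\bPrst_k)\longrightarrow \Mod_\cE(\bPrst_k)\]
is an $(\infty,2)$-functor, and in particular it preserves adjunctions; moreover, it is symmetric monoidal, so it induces $(\infty,2)$-functors on module $(\infty,2)$-categories for any algebra object. I would formulate each of the two rigidity axioms as a base change of an adjunction in a module $(\infty,2)$-category over $\cD$ (resp.\ over $\cC\otimes_\cD\cC$) and then transport along this functor.

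For axiom~(2), the second rigidity axiom for $f$ says that $f\colon \cD\to \cC$ admits a right adjoint $f^\R$ in $\Mod_\cD(\bPrst_k)$. Applying $(-)\otimes_\cD\cE$, and using that the functor is an $(\infty,2)$-functor, we obtain that
\[f' = f\otimes_\cD \id_\cE \colon \cE\longrightarrow \cE\otimes_\cD \cC\]
admits the right adjoint $f^\R\otimes_\cD\id_\cE$ in $\Mod_\cE(\bPrst_k)$. For axiom~(1), identify the multiplication of $\cE\otimes_\cD\cC$ over $\cE$ with
\[\Delta'\colon (\cE\otimes_\cD\cC)\otimes_\cE (\cE\otimes_\cD\cC)\simeq \cE\otimes_\cD \cC\otimes_\cD \cC\xrightarrow{\id_\cE\otimes_\cD \Delta} \cE\otimes_\cD \cC.\]
The first rigidity axiom for $f$ says that $\Delta\colon \cC\otimes_\cD\cC\to \cC$ has a right adjoint $\Delta^\R$ in $\Mod_{\cC\otimes_\cD\cC}(\bPrst_k)$. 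Base-changing along the symmetric monoidal functor $\cC\otimes_\cD \cC\to (\cE\otimes_\cD\cC)\otimes_\cE (\cE\otimes_\cD\cC)$, or equivalently tensoring with $\cE$ over $\cD$ in the appropriate module $(\infty,2)$-category, shows that $\Delta'$ has right adjoint $\id_\cE \otimes_\cD \Delta^\R$ in $\Mod_{(\cE\otimes_\cD\cC)\otimes_\cE (\cE\otimes_\cD\cC)}(\bPrst_k)$.

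The main subtle point is to verify that "right-adjointability in $\Mod_A(\bPrst_k)$" is genuinely preserved under the symmetric monoidal base change functor; this is where a careful use of the $(\infty,2)$-categorical enhancement matters. Since $(-)\otimes_\cD\cE$ is a symmetric monoidal $(\infty,2)$-functor between presentable symmetric monoidal $(\infty,2)$-categories, it sends adjunctions to adjunctions and hence right-adjointable morphisms in $\Mod_A(\bPrst_k)$ to right-adjointable morphisms in $\Mod_{A\otimes_\cD\cE}(\bPrst_k)$. Once this formal point is in place, the two adjoints constructed above witness both axioms of rigidity for $f'$, completing the proof.
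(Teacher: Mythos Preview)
Your proposal is correct and follows essentially the same route as the paper: both arguments use that the extension-of-scalars functor $\cE\otimes_\cD(-)$ is an $(\infty,2)$-functor and hence carries the adjunction witnessing axiom~(2) for $f$ to one for $f'$, and similarly, after identifying $\cP\otimes_\cE\cP\simeq \cE\otimes_\cD(\cC\otimes_\cD\cC)$, carries the $\cC\otimes_\cD\cC$-linear adjunction for $\Delta$ to the $\cP\otimes_\cE\cP$-linear adjunction for $\Delta'$. The paper's proof is more terse and makes the identification $\cP\otimes_\cE\cP\cong \cE\otimes_\cD\cD\otimes_{\cD\otimes\cD}(\cC\otimes\cC)$ explicit, but the content is the same.
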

\begin{proof}
The morphism $\cD\rightarrow \cC$ is right-adjointable in $\Mod_\cD$. The functor
\[\cE\otimes_\cD(-)\colon \Mod_\cD\rightarrow \Mod_\cE\]
sends it to $\cE\rightarrow \cE\otimes_\cD\cC$ which is therefore also right-adjointable.

Let $\cP = \cE\otimes_\cD \cC$. We can identify
\[\cP\otimes_\cE\cP\cong \cE\otimes_{\cE\otimes\cE}(\cP\otimes\cP)\cong \cE\otimes_\cD \cD\otimes_{\cD\otimes \cD} (\cC\otimes\cC).\]
Therefore, we can upgrade $\cE\otimes_{\cD}(-)$ to a functor
\[\Mod_{\cC\otimes_\cD\cC}\rightarrow \Mod_{\cP\otimes_\cE \cP}.\]
It sends the tensor functor $\cC\otimes_\cD\cC\rightarrow \cC$ to the tensor functor $\cP\otimes_\cE\cP\rightarrow \cP$ which is therefore right-adjointable.
\end{proof}

\subsection{Loop spaces}
\label{subloopspaces}
Since the $\infty$-category $\CAlg(\Prst_k)$ has all small colimits, it is naturally tensored over spaces.

\begin{definition}
Let $\cC$ be a $k$-linear symmetric monoidal $\infty$-category. Its \defterm{loop space} is defined to be the $k$-linear symmetric monoidal $\infty$-category
\[\cL\cC = S^1\otimes \cC\cong \cC\otimes_{\cC\otimes \cC} \cC.\]
\end{definition}

\begin{remark}
We can identify $\cL\cC$ as a $\cC$-module with $\Delta^*_\cC\Delta_{\cC, *}\cC$.
\end{remark}

The inclusion of the basepoint $\pt\rightarrow S^1$ gives rise to a symmetric monoidal functor
\[p_\cC\colon \cC\longrightarrow \cL\cC.\]

Given a functor of symmetric monoidal $\infty$-categories $f\colon \cD\rightarrow \cC$, we get an induced symmetric monoidal functor
\[\cL f\colon \cL\cD\longrightarrow \cL\cC.\]

Note that we can identify it with the composite
\begin{equation}
\cL\cD\rightarrow \cL\cD\otimes_\cD\cC\cong \cC\otimes_{\cC\otimes\cC}(\cC\otimes_\cD\cC)\rightarrow \cC\otimes_{\cC\otimes\cC}\cC\cong \cL\cC.
\label{eq:loopcomposite}
\end{equation}

We will also denote by
\[p\colon \cL\cD\otimes_\cD\cC\rightarrow \cL\cC\]
the functor induced by $\cL f$ and $p_\cC$.

\begin{proposition}\label{prop:right-adjoint}
Let $f\colon \cD\rightarrow \cC$ be a symmetric monoidal functor.

\begin{enumerate}
\item If $f\colon \cD\rightarrow \cC$ is rigid, $\cL f\colon\cL\cD\rightarrow \cL\cC$ is right-adjointable in $\Mod_{\cL\cD}(\bPrst_k)$

\item If $f\colon \cD\rightarrow \cC$ and $\Delta\colon\cC\otimes_\cD\cC\rightarrow \cC$ are rigid, so is $\cL f\colon\cL\cD\rightarrow \cL\cC$.
\end{enumerate}
\label{prop:Lfrigid}
\end{proposition}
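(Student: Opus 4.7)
The plan is to exploit the factorization \eqref{eq:loopcomposite} of $\cL f$,
$$\cL\cD \to \cL\cD\otimes_\cD\cC \cong \cC\otimes_{\cC\otimes\cC}(\cC\otimes_\cD\cC) \to \cC\otimes_{\cC\otimes\cC}\cC \cong \cL\cC,$$
and to show that each factor is the image of either $f$ or $\Delta$ under a base-change $(\infty,2)$-functor between module $(\infty,2)$-categories. The formal fact that makes this strategy work is that any base-change functor $\cE'\otimes_\cE(-)\colon \Mod_\cE(\bPrst_k)\to\Mod_{\cE'}(\bPrst_k)$ along a symmetric monoidal map $\cE\to\cE'$ preserves right-adjointable $1$-morphisms, since it is a genuine $(\infty,2)$-functor.

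For part (1) I would treat the two factors separately. The first map $\cL\cD\to\cL\cD\otimes_\cD\cC$ is the image of $f$ under $\cL\cD\otimes_\cD(-)$, so the second rigidity axiom for $f$ makes it right-adjointable in $\Mod_{\cL\cD}$; this is essentially a special case of Proposition~\ref{prop:rigidpushout}. Under the given isomorphism, the second factor $\cC\otimes_{\cC\otimes\cC}(\cC\otimes_\cD\cC)\to\cC\otimes_{\cC\otimes\cC}\cC$ is the image of $\Delta$ under the base-change functor $\cC\otimes_{\cC\otimes\cC}(-)\colon \Mod_{\cC\otimes_\cD\cC}\to\Mod_{\cL\cD\otimes_\cD\cC}$ (equivalently, along the canonical algebra map $\cC\otimes_\cD\cC\to\cL\cD\otimes_\cD\cC$), so the first rigidity axiom for $f$ gives right-adjointability in $\Mod_{\cL\cD\otimes_\cD\cC}$, and hence in $\Mod_{\cL\cD}$. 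Composing two right-adjointable morphisms yields the conclusion.

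For part (2), the second rigidity axiom for $\cL f$ is precisely what part (1) delivers. The remaining first rigidity axiom asks for right-adjointability of the multiplication $\cL\cC\otimes_{\cL\cD}\cL\cC\to\cL\cC$ in $\Mod_{\cL\cC\otimes_{\cL\cD}\cL\cC}$. The key identification here is
$$\cL\cC\otimes_{\cL\cD}\cL\cC \simeq \cL(\cC\otimes_\cD\cC),$$
which holds because $S^1\otimes(-)\colon\CAlg(\Prst_k)\to\CAlg(\Prst_k)$ is a left adjoint and therefore preserves the pushout defining $\cC\otimes_\cD\cC$; under this identification, the multiplication becomes $\cL\Delta$. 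Applying part (1) with $\Delta$ in the role of $f$ (which is legitimate because $\Delta$ is rigid by hypothesis, so both its own rigidity axioms hold) yields right-adjointability of $\cL\Delta$ in $\Mod_{\cL(\cC\otimes_\cD\cC)}$, which is exactly the first rigidity axiom for $\cL f$. The main obstacle I anticipate is the coherence-level bookkeeping: checking rigorously that each factor of \eqref{eq:loopcomposite} is a genuine base change of $f$ or $\Delta$ in the $(\infty,2)$-categorical sense, and that the equivalence $\cL\cC\otimes_{\cL\cD}\cL\cC\simeq\cL(\cC\otimes_\cD\cC)$ intertwines the two multiplication maps. Both are formal manipulations with colimits in $\CAlg(\Prst_k)$ lifted to their module $(\infty,2)$-categories, but they require careful tracking of the symmetric monoidal structures.
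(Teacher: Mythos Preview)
Your argument for part (1) is essentially the paper's: both use the factorization \eqref{eq:loopcomposite}, obtain right-adjointability of the first factor from the second rigidity axiom for $f$ (the paper phrases this as an instance of Proposition~\ref{prop:rigidpushout}), and of the second factor from the first rigidity axiom for $f$ via base change along $\cC\otimes_\cD\cC\to\cL\cD\otimes_\cD\cC$.

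For part (2) your route is correct but genuinely different from the paper's. The paper stays with the same factorization of $\cL f$, shows that each factor is now \emph{rigid} (the first by Proposition~\ref{prop:rigidpushout} applied to $f$, the second by Proposition~\ref{prop:rigidpushout} applied to $\Delta$), and concludes via Proposition~\ref{prop:rigidcomposition} that the composite $\cL f$ is rigid. You instead verify the two rigidity axioms for $\cL f$ directly: the second is part (1), and for the first you identify the diagonal of $\cL f$ with $\cL\Delta$ via $\cL\cC\otimes_{\cL\cD}\cL\cC\simeq\cL(\cC\otimes_\cD\cC)$ and then reapply part (1) with $\Delta$ in place of $f$. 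Your approach is more conceptual and avoids invoking the closure of rigid functors under composition, at the cost of the coherence check you flag (that $S^1\otimes(-)$ preserves the pushout and carries the multiplication to $\cL\Delta$); the paper's approach reuses the factorization already in hand and the two stability propositions, so it is slightly more mechanical but requires no new identification.
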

\begin{proof}
Suppose $\cD\rightarrow \cC$ is rigid. Therefore, $\cL\cD\rightarrow \cL\cD\otimes_\cD\cC$ is rigid by Proposition \ref{prop:rigidpushout}. Similarly, since $\cC\otimes_\cD\cC\rightarrow \cC$ is right-adjointable in $\Mod_{\cC\otimes_\cD\cC}(\bPrst_k)$,
\[\cL\cD\otimes_\cD\cC\cong \cC\otimes_{\cC\otimes\cC}(\cC\otimes_\cD\cC)\rightarrow \cC\otimes_{\cC\otimes\cC} \cC\]
is right-adjointable in $\Mod_{\cL\cD\otimes_\cD\cC}(\bPrst_k)$. Therefore, the composite \eqref{eq:loopcomposite} is right-adjointable in $\Mod_{\cL\cD}(\bPrst_k)$.

Now suppose in addition $\cC\otimes_\cD\cC\rightarrow \cC$ is rigid. Then $\cC\otimes_{\cC\otimes\cC}(\cC\otimes_\cD\cC)\rightarrow \cC\otimes_{\cC\otimes\cC} \cC$ is rigid by Proposition \ref{prop:rigidpushout}. Therefore, by Proposition \ref{prop:rigidcomposition} the functor $\cL\cD\rightarrow \cL\cC$ is rigid as well.
\end{proof}

Let $\lambda\colon \id\rightarrow (\cL f)_*(\cL f)^*$ be the unit of the adjunction $\adj{(\cL f)^*\colon \Mod_{\cL\cD}}{\Mod_{\cL\cC}\colon (\cL f)_*}$

\begin{proposition}
Suppose $f\colon \cD\rightarrow \cC$ is a rigid symmetric monoidal functor. Then $\lambda$ admits a right adjoint $\lambda^\R$ which is a strict natural transformation.
\label{prop:lambdarightadjoint}
\end{proposition}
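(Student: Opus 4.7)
The plan is to reduce the statement to the right-adjointability of $\cL f\colon \cL\cD \to \cL\cC$ as a morphism in $\Mod_{\cL\cD}(\bPrst_k)$, exactly as in the proof of Proposition~\ref{prop:rigidambidextrous}. The key observation is that for any $\cN \in \Mod_{\cL\cD}(\bPrst_k)$, the unit $\lambda_\cN\colon \cN \to (\cL f)_*(\cL f)^*\cN \simeq \cN \otimes_{\cL\cD} \cL\cC$ can be identified with $\id_\cN \otimes_{\cL\cD} \cL f$, i.e.\ it is obtained by applying the functor $\cN \otimes_{\cL\cD}(-)$ to the morphism $\cL f\colon \cL\cD \to \cL\cC$ in $\Mod_{\cL\cD}(\bPrst_k)$.

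By Proposition~\ref{prop:Lfrigid}(1), since $f\colon \cD \to \cC$ is rigid, the induced functor $\cL f\colon \cL\cD \to \cL\cC$ is right-adjointable in $\Mod_{\cL\cD}(\bPrst_k)$; denote the right adjoint by $(\cL f)^\R$. Tensoring with $\id_\cN$ over $\cL\cD$ is a $2$-functor $\Mod_{\cL\cD}(\bPrst_k) \to \bPrst_k$, so it preserves right adjoints. Consequently, $\lambda_\cN$ admits a right adjoint given by $\id_\cN \otimes_{\cL\cD}(\cL f)^\R$.

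The final point is that the assignment $\cN \mapsto \id_\cN \otimes_{\cL\cD}(\cL f)^\R$ is a strict natural transformation. This is immediate from the fact that the construction $\cN \mapsto \cN \otimes_{\cL\cD}(-)$ is functorial in $\cN$ before passing to adjoints, so tensoring a single fixed $1$-morphism $(\cL f)^\R$ with varying identities produces a strictly natural family. The main subtlety to be mindful of is that the right-adjointability in step two must be checked in the $(\infty,2)$-categorical module category rather than just in $\bPrst_k$, which is exactly what Proposition~\ref{prop:Lfrigid}(1) provides; there is no additional obstacle beyond invoking it.
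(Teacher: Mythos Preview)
Your proof is correct and follows essentially the same approach as the paper's: identify $\lambda$ with $\cL f\otimes_{\cL\cD}\id_{(-)}$, invoke Proposition~\ref{prop:Lfrigid}(1) to get that $\cL f$ is right-adjointable in $\Mod_{\cL\cD}(\bPrst_k)$, and conclude that $\lambda^\R\cong (\cL f)^\R\otimes_{\cL\cD}\id_{(-)}$ is a strict natural transformation.
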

\begin{proof}
Note that $\lambda\cong \cL f\otimes_{\cL\cD} \id_{(-)}$.
Since $f$ is rigid, $\cL f$ is right-adjointable as an $\cL\cD$-module functor, by Proposition \ref{prop:Lfrigid}. Therefore, the transformation $\lambda$ has a strictly natural right adjoint given by $\lambda^\R\cong (\cL f)^\R\otimes_{\cL\cD} \id_{(-)}$.
\end{proof}

\subsection{Smooth and proper modules}

In this section we introduce further finiteness conditions on functors and modules relevant to the uncategorified GRR theorem. The notions of smooth, proper and saturated category go back to works of Bondal, Kapranov and others in the setting of classical triangulated categories, see for instance \cite{bondal1990representable}. We reefer the reader to  \cite[Section 4.6.4]{HA} for a discussion of closely related concepts in the $\infty$-categorical setting. 

\begin{definition}\label{def-proper-smooth}
	Let $f\colon \cD\to\cC$ be a $k$-linear symmetric monoidal functor. We say that $f$ is:
	\begin{enumerate}
		\item \defterm{proper} if $f\colon \cD\rightarrow \cC$ is rigid and $f^\R$ admits a right adjoint in $\Mod_\cD(\bPrst_k)$.
		\item \defterm{smooth} if $\Delta\colon \cC\otimes_\cD\cC\rightarrow \cC$ is rigid and $\Delta^\R$ admits a right adjoint in $\Mod_{ \cC \otimes_{\cD} \cC}(\bPrst_k)$.
	\end{enumerate}
\end{definition}

\begin{definition}\label{def-saturated}
Let $\cC$ be a $k$-linear symmetric monoidal $\infty$-category. We say that a dualizable $\cC$-module $\cM \in \Mod^{\dual}_\cC$ is 
	\begin{enumerate}
		\item \defterm{proper} if the evaluation  map $\ev_\cM$ admits a right adjoint in  $\Mod_\cC(\bPrst_k)$;
		\item \defterm{smooth} if the coevaluation map $\coev_{\cM}$ admits a right adjoint in  $\Mod_\cC(\bPrst_k)$;
		\item \defterm{saturated} if $\cM$ is smooth and proper. 
	\end{enumerate}
\end{definition}

\begin{lemma}
	Let $\cM$ be a $\cC$-module. Then $\cM$ is saturated if and only if it is fully dualizable in the symmetric monoidal $(\infty,2)$-category $\Mod_\cC(\bPrst_k)$.
\end{lemma}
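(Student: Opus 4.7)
My plan is to prove both implications of the equivalence. The forward direction is immediate from the definition of full dualizability in a symmetric monoidal $(\infty,2)$-category: a fully dualizable object $\cM$ is dualizable, and both $\ev_\cM$ and $\coev_\cM$ admit left and right adjoints as $1$-morphisms. In particular, both admit right adjoints, which is precisely the conjunction of properness and smoothness.

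The reverse direction is the substantive content. Assuming $\cM$ is dualizable and both $\ev_\cM$ and $\coev_\cM$ admit right adjoints, the task is to produce \emph{left} adjoints for these two morphisms. The key observation is a ``swap between left and right adjoints under dualization'': in a symmetric monoidal $(\infty,2)$-category, if $f\colon A\to B$ is a $1$-morphism between dualizable objects, then $f$ admits a right adjoint if and only if the dual $1$-morphism $f^\vee\colon B^\vee\to A^\vee$ admits a left adjoint. Indeed, dualization of $1$-morphisms reverses the direction of $2$-morphisms, and hence exchanges units and counits of adjunctions.

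Applied to our setting, the object $\cM^\vee$ is also dualizable with dual $\cM$, and its evaluation and coevaluation coincide, up to the symmetry $\sigma$ of the monoidal structure, with $\coev_\cM$ and $\ev_\cM$ respectively. Dualizing $\ev_\cM$ with respect to the duality $\cM\dashv \cM^\vee$ therefore yields (up to $\sigma$) a morphism equivalent to $\coev_\cM$, and vice versa. Hence the existence of a right adjoint to $\ev_\cM$ produces a left adjoint to $\coev_\cM$, and the existence of a right adjoint to $\coev_\cM$ produces a left adjoint to $\ev_\cM$. Combined with the hypothesized right adjoints, this shows that $\ev_\cM$ and $\coev_\cM$ each admit both left and right adjoints, so $\cM$ is fully dualizable.

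The main obstacle is to make the principle ``dualization exchanges left and right adjoints'' precise at the $(\infty,2)$-categorical level, where one cannot simply manipulate strict equalities. A clean way to do so is via coherent dual pairs (Definition~\ref{def:coherentduality}): by \cite[Theorem 2.14]{Pstragowski}, every dualizable object is part of a coherent dual pair, and the invertible triangulators $T_1, T_2$ together with the symmetry $\sigma$ then provide the coherence data needed to construct the units and counits of the new adjunctions explicitly from those of the given ones, with the swallowtail axioms ensuring the adjunction identities hold.
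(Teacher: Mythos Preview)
Your argument is correct. The paper's proof is a one-line citation of \cite[Theorem 3.9]{Pstragowski}, which states exactly that full dualizability in a symmetric monoidal $(\infty,2)$-category is equivalent to dualizability together with right-adjointability of $\ev$ and $\coev$. What you have written is essentially a sketch of the proof of that cited theorem: the core mechanism is the same ``dualization swaps left and right adjoints'' argument you describe, together with the identification $(\ev_\cM)^\vee \simeq \sigma\circ\coev_\cM$ (and its analogue for $\coev_\cM$). So your approach is not a different route so much as an unpacking of the reference the paper invokes.

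One small remark: your final paragraph slightly overstates the role of coherent dual pairs. The step that actually needs care in the $(\infty,2)$-setting is the functoriality of $(-)^\vee$ on hom-categories and the identification $(gf)^\vee\simeq f^\vee g^\vee$, which is what lets you transport the unit/counit to a new adjunction; the swallowtail axioms are not what verify the new triangle identities (those follow by applying the dualization functor to the old triangle identities). The coherent dual pair is useful for making the identification $(\ev_\cM)^\vee\simeq\sigma\circ\coev_\cM$ precise, but that is a separate (and easier) point. None of this affects the correctness of your outline.
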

\begin{proof}
Indeed, by \cite[Theorem 3.9]{Pstragowski}, $\cM$ is fully dualizable if and only if it is dualizable with right-adjointable evaluation and coevaluation maps.
\end{proof}

\begin{lemma}\label{lem:smoothproper}
	Let $f\colon \cD\to\cC$ be a rigid morphism of $k$-linear symmetric monoidal $\infty$-categories.
	\begin{enumerate}
		\item If $f$ is proper, then $f_*\colon \Mod_\cC \to \Mod_\cD$ preserves proper $\infty$-categories.
		\item If $f$ is smooth, then $f_*\colon \Mod_\cC \to \Mod_\cD$ preserves smooth $\infty$-categories.
		\item If $f$ is smooth and proper, then $f_*\colon \Mod_\cC \to \Mod_\cD$ preserves saturated $\infty$-categories.
	\end{enumerate}
\end{lemma}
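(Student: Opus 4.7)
The plan is to use Proposition~\ref{prop:ambidextrousduality} to write explicit formulas for the evaluation and coevaluation of $f_*\cM$ and then check, factor by factor, that the required adjoints exist and are $\cD$-linear. Since $f$ is rigid, Proposition~\ref{prop:rigidambidextrous} tells us that $f^*\dashv f_*$ is a symmetric monoidal ambidextrous adjunction, so for every dualizable $\cC$-module $\cM$ Proposition~\ref{prop:ambidextrousduality} provides an equivalence $(f_*\cM)^\vee\simeq f_*(\cM^\vee)$ together with
\[
\ev_{f_*\cM}=\bigl(f_*\cM\otimes_\cD f_*\cM^\vee\xrightarrow{\alpha} f_*(\cM\otimes_\cC\cM^\vee)\xrightarrow{f_*(\ev_\cM)} f_*(\cC)\xrightarrow{\eta^\R}\cD\bigr),
\]
\[
\coev_{f_*\cM}=\bigl(\cD\xrightarrow{\eta}f_*(\cC)\xrightarrow{f_*(\coev_\cM)}f_*(\cM\otimes_\cC\cM^\vee)\xrightarrow{\alpha^\R}f_*\cM\otimes_\cD f_*\cM^\vee\bigr).
\]
In each case the task reduces to showing that each of the three factors is right-adjointable in $\Mod_\cD(\bPrst_k)$, since right-adjointable morphisms are closed under composition.

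For part (1), assume $f$ is proper and $\cM\in\Mod_\cC^\dual$ is proper, and consider the three factors of $\ev_{f_*\cM}$. The map $\alpha$ factors through the projection formula equivalence as $f_*(\epsilon\otimes\id)$, and since ambidexterity provides a strictly natural $\epsilon^\R$, the functor $\alpha$ admits the $\cD$-linear right adjoint $\alpha^\R\simeq f_*(\epsilon^\R\otimes\id)$. Next, since $f_*$ is an $(\infty,2)$-functor and $\ev_\cM$ is right-adjointable in $\Mod_\cC$ by properness of $\cM$, the morphism $f_*(\ev_\cM)$ is right-adjointable in $\Mod_\cD$. Finally, under the identification $f_*(\cC)\simeq \cC$ as a $\cD$-module, $\eta^\R$ is the functor $f^\R\colon\cC\to\cD$; the assumption that $f$ is proper says exactly that $f^\R$ admits a right adjoint in $\Mod_\cD(\bPrst_k)$. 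Composing yields a $\cD$-linear right adjoint to $\ev_{f_*\cM}$, so $f_*\cM$ is proper.

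For part (2), assume $f$ is smooth and $\cM$ is smooth, and analyse the three factors of $\coev_{f_*\cM}$. The factor $\eta$ is right-adjointable by the second axiom of rigidity. The factor $f_*(\coev_\cM)$ is right-adjointable in $\Mod_\cD$ because $\coev_\cM$ is right-adjointable in $\Mod_\cC$ by smoothness of $\cM$, and $f_*$ preserves right-adjointability. The only delicate factor is $\alpha^\R$, for which we need $(\alpha^\R)^\R$ to exist and be $\cD$-linear. Using the formulas in the proof of Proposition~\ref{prop:rigidambidextrous}, we identify $\epsilon\cong\Delta\otimes_\cC\id$ and $\epsilon^\R\cong\Delta^\R\otimes_\cC\id$; smoothness of $f$ asserts that $\Delta^\R$ admits a right adjoint in $\Mod_{\cC\otimes_\cD\cC}(\bPrst_k)$, which by base change furnishes a right adjoint $(\epsilon^\R)^\R$ in $\Mod_\cC(\bPrst_k)$, and hence via $f_*$ a right adjoint $(\alpha^\R)^\R$ in $\Mod_\cD(\bPrst_k)$. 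Thus $\coev_{f_*\cM}$ is right-adjointable and $f_*\cM$ is smooth.

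Part (3) then follows by combining (1) and (2), since saturated means smooth and proper. The main technical obstacle is the interaction between the $(\infty,2)$-categorical naturality of the unit/counit and the projection-formula identification, which is what lets us translate the rigidity and smoothness hypotheses on $f$ (stated in terms of $f^\R$ and $\Delta^\R$) into the required adjointability of $\eta^\R$ and $\alpha^\R$; all other steps are formal consequences of the fact that $f_*$ is an $(\infty,2)$-functor and preserves right-adjointable morphisms.
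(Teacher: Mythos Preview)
Your proposal is correct and is precisely the unpacking of the one-line proof in the paper, which simply says that the claim follows from the definitions together with Proposition~\ref{prop:ambidextrousduality}. The key content is exactly what you wrote: the explicit formulas for $\ev_{f_*\cM}$ and $\coev_{f_*\cM}$ from that proposition, together with the identifications $\eta^\R\simeq f^\R$ and $\epsilon^\R\simeq \Delta^\R\otimes_\cC\id$, make each of the three factors right-adjointable under the respective hypotheses.
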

\begin{proof}
	This follows immediately from the definitions and Proposition~\ref{prop:ambidextrousduality}.
\end{proof}

\begin{lemma}
Suppose $f\colon \cD\rightarrow \cC$ is a symmetric monoidal functor which is smooth and proper. Then $\cL f\colon\cL\cD\rightarrow \cL\cC$ is proper.
\label{lem:proper}
\end{lemma}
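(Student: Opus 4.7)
The plan is to factor $\cL f$ as a composite of two proper symmetric monoidal functors and conclude via stability of properness under composition. Using the decomposition~\eqref{eq:loopcomposite}, write $\cL f = F_2\circ F_1$ where
\[F_1\colon\cL\cD\longrightarrow\cL\cD\otimes_\cD\cC,\qquad F_2\colon\cL\cD\otimes_\cD\cC\cong\cC\otimes_{\cC\otimes\cC}(\cC\otimes_\cD\cC)\longrightarrow\cC\otimes_{\cC\otimes\cC}\cC\cong\cL\cC.\]
Here $F_1$ is the base change of $f\colon\cD\to\cC$ along the $\bE_\infty$-$k$-algebra map $\cD\to\cL\cD$, and $F_2$ is the base change of $\Delta\colon\cC\otimes_\cD\cC\to\cC$ along $\cC\otimes\cC\to\cC$. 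Rigidity of $\cL f$ is already granted by Proposition~\ref{prop:Lfrigid}(2), so the only content lies in producing a further right adjoint to $(\cL f)^\R\cong F_1^\R\circ F_2^\R$ in $\Mod_{\cL\cD}(\bPrst_k)$.

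I would first isolate two auxiliary stability principles for properness. \emph{Stability under base change:} if $g\colon\cA\to\cB$ is proper and $\cA\to\cE$ is an arbitrary symmetric monoidal functor, then $\cE\to\cE\otimes_\cA\cB$ is proper. Rigidity is Proposition~\ref{prop:rigidpushout}; the further right adjoint exists because extension of scalars $\cE\otimes_\cA(-)\colon\Mod_\cA(\bPrst_k)\to\Mod_\cE(\bPrst_k)$ is a symmetric monoidal $(\infty,2)$-functor, and any such functor transports the ambidextrous adjunction $g\dashv g^\R\dashv g^{\R\R}$ to an ambidextrous adjunction in the target. \emph{Stability under composition:} if $g_1\colon\cA\to\cB$ and $g_2\colon\cB\to\cC$ are proper, then so is $g_2\circ g_1$; rigidity is Proposition~\ref{prop:rigidcomposition}, and a right adjoint to $g_1^\R\circ g_2^\R$ is given by $g_2^{\R\R}\circ g_1^{\R\R}$, where restriction of scalars along $g_1$ lets us view $g_2^{\R\R}\in\Mod_\cB(\bPrst_k)$ inside $\Mod_\cA(\bPrst_k)$ without losing the adjunction data.

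With these two principles in hand the conclusion is immediate. Since $f$ is proper, $F_1$ is proper by base change. Since $f$ is smooth, Definition~\ref{def-proper-smooth} is precisely the statement that $\Delta\colon\cC\otimes_\cD\cC\to\cC$ is itself proper as a symmetric monoidal functor, so $F_2$ is proper by base change. Hence $\cL f=F_2\circ F_1$ is a composite of proper functors, and therefore proper. The main technical hurdle is verifying that the base-change operations $\cL\cD\otimes_\cD(-)$ and $\cC\otimes_{\cC\otimes\cC}(-)$ genuinely underlie symmetric monoidal $(\infty,2)$-functors between the relevant module $(\infty,2)$-categories, so that iterated right adjoints and the ambidexterity data transport correctly; this is where the proof needs the most careful bookkeeping, though it should reduce to standard facts about extension of scalars in $\bPrst_k$.
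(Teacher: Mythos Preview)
Your proposal is correct and follows essentially the same route as the paper: the same factorization $\cL f=F_2\circ F_1$ via~\eqref{eq:loopcomposite}, with $F_1$ inheriting twice-right-adjointability from properness of $f$ by base change along $\cD\to\cL\cD$, and $F_2$ inheriting it from smoothness of $f$ (i.e.\ properness of $\Delta$) by base change along $\cC\otimes\cC\to\cC$. The only difference is packaging: you isolate ``properness is stable under base change and composition'' as named principles, whereas the paper argues each piece is twice right adjointable directly and invokes Proposition~\ref{prop:Lfrigid}(2) for rigidity of the composite; both arguments rest on the same fact that extension of scalars is an $(\infty,2)$-functor and hence preserves iterated adjunctions.
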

\begin{proof}
Since $\cD\rightarrow \cC$ and $\cC\otimes_\cD\cC\rightarrow \cC$ are rigid, $\cL f$ is rigid by Proposition \ref{prop:Lfrigid}.

Decompose $\cL f$ using \eqref{eq:loopcomposite} as
\[\cL\cD\rightarrow \cL\cD\otimes_\cD\cC\cong \cC\otimes_{\cC\otimes\cC}(\cC\otimes_\cD\cC)\rightarrow \cC\otimes_{\cC\otimes\cC} \cC.\]
Since $\cD\rightarrow \cC$ is twice right adjointable in $\Mod_{\cD}(\bPrst_k)$ by properness of $f$, $\cL\cD\rightarrow \cL\cD\otimes_\cD\cC$ is twice right adjointable in $\Mod_{\cL\cD}(\bPrst_k)$.

Similarly, since $\cC\otimes_\cD\cC\rightarrow \cC$ is twice right adjointable in $\Mod_{\cC\otimes_\cD\cC}(\bPrst_k)$ by smoothness of $f$,
\[\cC\otimes_{\cC\otimes\cC}(\cC\otimes_\cD\cC)\rightarrow \cC\otimes_{\cC\otimes\cC}\cC\]
is also twice right adjointable in $\Mod_{\cL\cD\otimes_\cD\cC}(\bPrst_k)$.
\end{proof}

\subsection{Geometric setting}

Recall that a derived prestack is a functor from the $\infty$-category of connective $\bE_\infty$ algebras over $k$ to the $\infty$-category of spaces. Our main source of rigid symmetric monoidal functors is given by considering passable morphisms of derived prestacks.

If $f\colon X\rightarrow Y$ is a morphism of prestacks, then the pullback $f^*\colon \QCoh(Y)\rightarrow \QCoh(X)$ is a symmetric monoidal functor.

\begin{definition}
A morphism of prestacks $f\colon X\rightarrow Y$ is \defterm{passable} if the following conditions are satisfied:
\begin{enumerate}
\item The diagonal $X\rightarrow X\times_Y X$ is quasi-affine.

\item The pullback $f^*\colon \QCoh(Y)\rightarrow \QCoh(X)$ admits a right adjoint $f_*\colon \QCoh(X)\rightarrow \QCoh(Y)$ in $\Mod_{\QCoh(Y)}(\bPrst_k)$.

\item The $\infty$-category $\QCoh(X)$ is dualizable as a $\QCoh(Y)$-module.
\end{enumerate}
\label{def:passable}
\end{definition}

\begin{proposition}
\label{passableimpliesrigid}
Suppose $f\colon X\rightarrow Y$ is a passable morphism of prestacks and $Y$ has a quasi-affine diagonal. Then the pullback functor $f^*\colon \QCoh(Y)\rightarrow \QCoh(X)$ is a rigid symmetric monoidal functor.
\label{prop:passablerigid}
\end{proposition}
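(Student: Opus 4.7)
The plan is to verify the two conditions in the definition of rigidity for the symmetric monoidal functor $f^*\colon \QCoh(Y)\to\QCoh(X)$.

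The second condition — that $f^*$ is right-adjointable in $\Mod_{\QCoh(Y)}(\bPrst_k)$ — is built directly into the notion of passability by condition (2) of Definition~\ref{def:passable}, which furnishes $f_*$ as a $\QCoh(Y)$-linear right adjoint. So the whole substance of the proof lies in verifying the first axiom for the multiplication functor
\[
\Delta\colon \QCoh(X)\otimes_{\QCoh(Y)}\QCoh(X)\longrightarrow \QCoh(X).
\]

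The key step is to identify $\Delta$ geometrically. First I would establish the base-change equivalence
\[
\QCoh(X)\otimes_{\QCoh(Y)}\QCoh(X)\;\simeq\;\QCoh(X\times_Y X),
\]
under which $\Delta$ corresponds to the pullback $\Delta_{X/Y}^*$ along the relative diagonal $\Delta_{X/Y}\colon X\to X\times_Y X$. This base-change statement is precisely where the two hypotheses enter together: the quasi-affineness of the diagonal of $Y$ plus the passability of $f$ ensure that $X$ and $X\times_Y X$ sit in a setting where Gaitsgory's $\QCoh$-affineness machinery produces the tensor-product formula (it is an instance of the general principle that passable morphisms satisfy base change and that their fiber products are computed by tensor products at the level of $\QCoh$).

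Once $\Delta$ is identified with $\Delta_{X/Y}^*$, I would use that by passability condition (1) the map $\Delta_{X/Y}$ is itself quasi-affine. Pushforward along a quasi-affine morphism is continuous and satisfies the projection formula, hence $(\Delta_{X/Y})_*$ is a right adjoint to $\Delta_{X/Y}^*$ in $\Mod_{\QCoh(X\times_Y X)}(\bPrst_k)$, which is exactly what the first axiom of rigidity demands.

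The main obstacle is the base-change equivalence $\QCoh(X)\otimes_{\QCoh(Y)}\QCoh(X)\simeq\QCoh(X\times_Y X)$: this is the non-formal input, and it is precisely the clause that makes both hypotheses (quasi-affine diagonal of $Y$ and passability of $f$) necessary, since without them the comparison map from the tensor product to $\QCoh$ of the fiber product need not be an equivalence. With this in hand, the verification of both rigidity axioms reduces to standard properties of quasi-affine morphisms and to a direct reading of the definition of passability.
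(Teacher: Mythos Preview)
Your proposal is correct and follows essentially the same route as the paper: verify axiom (2) of rigidity directly from passability condition (2), then establish the equivalence $\QCoh(X)\otimes_{\QCoh(Y)}\QCoh(X)\simeq\QCoh(X\times_Y X)$ and use quasi-affineness of $\Delta_{X/Y}$ to get continuity and the projection formula for $(\Delta_{X/Y})_*$. The one point the paper makes explicit that you leave implicit is \emph{how} passability condition (3) enters the base-change equivalence: dualizability of $\QCoh(X)$ over $\QCoh(Y)$ is what lets $\QCoh(X)\otimes_{\QCoh(Y)}(-)$ commute with the limit $\QCoh(X)\simeq\lim_{S\to X}\QCoh(S)$ over affines, after which the quasi-affine diagonal of $Y$ makes each $S\to Y$ quasi-affine and \cite[Proposition B.1.3]{Ga1} gives the termwise equivalence $\QCoh(S)\otimes_{\QCoh(Y)}\QCoh(X)\simeq\QCoh(S\times_Y X)$.
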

\begin{proof}
The proof is similar to the proof of \cite[Proposition 5.1.7]{Ga1}.

First of all, the second axiom of passability for $f\colon X\rightarrow Y$ is exactly the second axiom of rigidity for $f^*\colon \QCoh(Y)\rightarrow \QCoh(X)$.

Since $\QCoh(X)$ is dualizable as a $\QCoh(Y)$-module category, the functor $\QCoh(X)\otimes_{\QCoh(Y)}(-)$ preserves limits. Therefore, the functor
\begin{align*}
\QCoh(X)\otimes_{\QCoh(Y)}\QCoh(X) &= \left(\lim_{S\rightarrow X} \QCoh(S)\right)\otimes_{\QCoh(Y)} \QCoh(X) \\
&\rightarrow \lim_{S\rightarrow X} \left(\QCoh(S)\otimes_{\QCoh(Y)} \QCoh(X)\right)
\end{align*}
is an equivalence where the limit is over affine derived schemes $S$ with a morphism to $X$. Since the diagonal of $Y$ is quasi-affine, the morphism $S\rightarrow X\rightarrow Y$ is quasi-affine as well. But then by \cite[Proposition B.1.3]{Ga1}, the functor $\QCoh(S)\otimes_{\QCoh(Y)} \QCoh(X)\rightarrow \QCoh(S\times_Y X)$ is an equivalence. Since
\[\QCoh(X\times_Y X)\simeq \lim_{S\rightarrow X} \QCoh(S\times_Y X),\]
this proves that the natural functor $\QCoh(X)\otimes_{\QCoh(Y)} \QCoh(X)\rightarrow \QCoh(X\times_Y X)$ is an equivalence.

Since the diagonal $X\rightarrow X\times_Y X$ is quasi-compact and representable, the functor
\[\Delta_*\colon \QCoh(X)\rightarrow \QCoh(X)\otimes_{\QCoh(Y)} \QCoh(X)\simeq \QCoh(X\times_Y X)\]
is continuous and satisfies the projection formula. This immediately implies the first axiom of rigidity of $f^*\colon \QCoh(Y)\rightarrow \QCoh(X)$.
\end{proof}

\begin{example} \label{ex:smoothproper}
	Let $f\colon X\to Y$ be a morphism between weakly perfect stacks, in the sense of \cite[Definition 9.4.3.3]{SAG}.
	\begin{enumerate}
		\item If $f$ is representable by quasi-compact quasi-separated spectral algebraic spaces, then $f$ is passable. Condition (2) follows from \cite[Corollary 6.3.4.3]{SAG}, and condition (3) follows from \cite[Corollary 9.4.3.6]{SAG}.
		In particular, $f^*$ is rigid.
		\item If $f$ is representable, proper, of finite Tor-amplitude, and locally almost of finite presentation, then $f^*$ is proper.
		This follows from (1) and \cite[Proposition 6.4.2.1 and Corollary 6.4.2.7]{SAG}.
	\item If $f$ is representable, proper, and fiber smooth, then $f^*$ is smooth and proper. This follows from (2) as both $f$ and its diagonal have finite Tor-amplitude \cite[Lemma 11.3.5.2]{SAG}.
	\end{enumerate}
\end{example}

\begin{example}
Let us mention an example of a stack that is \emph{not} passable. Let $k$ be a field of positive characteristic, and let  $\mathbb{G}_a$ be the additive group over $\mathrm{Spec}(k)$. Then the classifying stack $X = B\mathbb{G}_a$ is not passable. Indeed one of the necessary conditions for being  passable is that the structure sheaf $\mathcal{O}_X$ is compact. However by \cite[Proposition 3.1]{hall2019one} the only compact object in the derived category of quasi-coherent sheaves on $X$ is $0$. \end{example}

\section{The Chern character}
\subsection{Traces} 
\label{traces} 
In this section we recall the definition of the trace functor given in Section 2.2 of \cite{HSS} and state some of its properties.

 Let $\cC$ be a symmetric monoidal 
$(\infty,n)$-category. In \cite[Section 2.3]{HSS} we define a 
symmetric monoidal $(\infty, n-1)$-category  $\Aut(\cC),$ which carries  a canonical  
$S^1$-action.    
 The objects and 1-morphisms of $\Aut(\cC)$ can be described as follows: 
\begin{enumerate}
\item An object of  $\Aut(\cC)$ is a pair 
$(A, a),$ where $A$ is a dualizable object of $\cC,$   and $a$ is an  automorphism of $A.$  
\item 
A $1$-morphism $(A, a) \to (B,b)$ in $\Aut(\cC)$ is a commutative diagram 
\begin{tikzmath}
		\diagram{A & B \\ A & B  \\   };
		\arrows (11-) edge node[above]{$\phi$} (-12) (11) edge node[left]{$a$} (21) (12) edge node[right]{$b$} (22) (21-) edge node[below]{$\phi$} (-22)
		(11) edge[font=\normalsize,draw=none] node[rotate=45]{$\Longrightarrow$} (22)
		(11) edge[draw=none] node[pos=.3]{$\alpha$} (22);
	\end{tikzmath}
	where $\phi: A \to B$ is right-dualizable, and $\alpha$ is an invertible 2-cell. 
 \end{enumerate}
 
\begin{example}
Let $\cC$ be a $k$-linear symmetric monoidal $\infty$-category. Then
\begin{enumerate}
\item $\Aut(\cC)\cong \Fun(S^1, \cC^\dual)$.

\item $\Aut(\Mod_\cC)\cong \Fun(S^1, \Mod_\cC^{\dual})$.
\end{enumerate}
The $S^1$-action on $\Aut(\cC)$ and $\Aut(\Mod_\cC)$  is induced by the action of $S^1$ on itself. 
\end{example}
We also define a \emph{trace functor} 
$$
\Tr\colon \Aut(\cC) \longrightarrow \Omega\cC
$$
which is symmetric monoidal and natural in $\cC$ \cite[Definitions 2.9 and 2.11]{HSS}.
  
\begin{proposition}[\cite{HSS} Lemma 2.4]
\label{prophsshsss}
Let $\cC$ be a symmetric monoidal $(\infty,n)$-category. Then: 
\begin{enumerate}
\item The functor $\Tr$ sends an object $(A,a)$ in $\Aut(\cC)$ to the composite
$$
1_\cC \xrightarrow{\coev_A} A \otimes A^{\vee}  \xrightarrow{a \otimes \id } 
A \otimes A^{\vee}  \xrightarrow{\ev_A} 1_\cC  
$$

\item The functor $\Tr$ sends a 
 $1$-morphism in $\Aut(\cC)$ 
 $$
(\phi, \alpha) :  \,  (A, a)  \longrightarrow (B,b) 
$$  to the 2-cell in $\Omega\cC$ given by the composite 
\begin{tikzequation}\label{eqn:2trace}
	\def\rowsep{1em}
	\diagram{
	& A\tens A^\vee & & & A\tens A^\vee & \\
	1_\cC & & & & & 1_\cC 
	\\
	& B\tens B^\vee & & & B\tens B^\vee & \\
	};
	\arrows (21) edge (12) edge (32) (12-) edge node[above]{$a\tens \id$} (-15) (32-) edge node[below]{$b\tens \id$} (-35) (15) edge (26) (35) edge (26) (12) edge node[right]{$\phi\tens\phi^{\R\vee}$} (32) (15) edge node[left]{$\phi\tens\phi^{\R\vee}$} (35)
	(21) edge[font=\normalsize,draw=none] node[rotate=45]{$\Longleftarrow$} node[pos=.13,rotate=45]{$\Longleftarrow$} node[pos=.87,rotate=45]{$\Longleftarrow$} (26)
	(21) edge[draw=none] node[above left]{$\alpha\tens \id$} (26)
	;
\end{tikzequation}
where the triangular $2$-cells on the left and on the right 
are given by  
$$
(\phi\tens\phi^{\R\vee})\coev_A = (\phi\phi^\R\tens\id)\coev_B \stackrel\epsilon\to \coev_B, \quad \text{and} \quad 	\ev_A \stackrel\eta\to \ev_A(\phi^\R\phi\tens\id) = \ev_B(\phi\tens\phi^{\R\vee}).
$$
\end{enumerate}
\end{proposition}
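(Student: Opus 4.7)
The plan is to prove both parts by unwinding the definition of the trace functor $\Tr\colon \Aut(\cC) \to \Omega\cC$ from HSS Definitions~2.9 and~2.11 and matching it against the explicit formulas. Since the statement is quoted from HSS Lemma 2.4, the argument amounts to checking that the general construction of $\Tr$ reduces on objects and on 1-morphisms to the stated composites.

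For part (1), the claim is essentially that $\Tr$ restricted to the underlying $\infty$-groupoid of $\Aut(\cC)$ recovers the standard categorical trace of an endomorphism. Given a dualizable object $A$ equipped with a coherent dual pair $(A^\vee, \coev_A, \ev_A, T_1, T_2)$ in the sense of Definition~\ref{def:coherentduality} and an automorphism $a$, one reads off from the construction that $\Tr(A,a)$ is the composite $\ev_A \circ (a \otimes \id_{A^\vee}) \circ \coev_A$. Well-definedness, i.e.\ independence from the choice of coherent dual pair, follows from the contractibility of the space of such data together with the functoriality of $\Tr$.

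For part (2), one has to construct the 2-cell in $\Omega\cC$ associated to a 1-morphism $(\phi,\alpha)\colon (A,a) \to (B,b)$ with $\phi$ right-adjointable. The 2-cell of diagram~\eqref{eqn:2trace} decomposes into three contributions: a left triangle encoding the comparison $(\phi\tens\phi^{\R\vee})\coev_A \simeq \coev_B$ produced by the counit $\epsilon\colon \phi\phi^\R \to \id_B$; a right triangle dually encoding how $\ev_A$ relates to $\ev_B$ via the unit $\eta\colon \id_A \to \phi^\R \phi$; and a central square containing the 2-cell $\alpha \otimes \id$ coming from the given coherence between $b\phi$ and $\phi a$. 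Assembling these pieces indeed yields a 2-cell between $\Tr(A,a)$ and $\Tr(B,b)$ in $\Omega\cC$, with coherence guaranteed by the swallowtail axioms of the coherent dual pair.

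The main obstacle is verifying that the assembled 2-cell is precisely the one produced by $\Tr$ as a functor of $(\infty,n-1)$-categories, rather than merely a plausible natural candidate. This requires tracing through the original definition in HSS, which constructs $\Tr$ via the $S^1$-equivariant functoriality of the symmetric monoidal cobordism framework. The key identification is that $\phi^{\R\vee}\colon B^\vee \to A^\vee$ is the correctly oriented comparison on duals induced by the right adjoint of $\phi$, and that its compatibility with evaluation and coevaluation is exactly what produces the triangular 2-cells. Once these identifications are made, the formula follows by direct inspection of the construction.
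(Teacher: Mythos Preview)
The paper does not give its own proof of this proposition: the statement is attributed directly to \cite[Lemma~2.4]{HSS} in the proposition heading, and no proof environment follows. So there is nothing in the present paper to compare your proposal against; the authors simply import the result.

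Your sketch is a reasonable outline of how the verification in \cite{HSS} proceeds---unwinding the definition of $\Tr$ as evaluation at the universal trace in $\Omega\Fr^\rig(S^1)$ and checking that on objects and 1-morphisms this yields the displayed composites. You correctly identify that the nontrivial content lies in part~(2), where one must match the abstract functoriality of $\Tr$ with the explicit 2-cell built from $\eta$, $\epsilon$, and $\alpha$. Your acknowledgment that ``the main obstacle is verifying that the assembled 2-cell is precisely the one produced by $\Tr$'' is honest but also means your proposal is more of a roadmap than a proof: you have described what needs to be checked without actually carrying out the identification. If you were to write this up as a self-contained argument, you would need to exhibit the universal 1-morphism in $\Aut(\Fr^\rig(S^1))$ and compute its image under $\Tr$ explicitly, or else appeal directly to the construction in \cite{HSS}.
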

 
\begin{proposition}[\cite{HSS} Theorem 2.14]
\label{prop:S1trace}
The symmetric monoidal trace functor
$$
\Tr\colon \Aut(\cC) \longrightarrow \Omega\cC
$$
 is  $S^1$-invariant   
with respect to the canonical $S^1$-action on 
$\Aut(\cC)$  and the trivial $S^1$-action on $\Omega\cC.$  
\end{proposition}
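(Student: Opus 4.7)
The goal is to produce a factorization of the symmetric monoidal functor $\Tr\colon \Aut(\cC) \to \Omega\cC$ through the homotopy orbits $\Aut(\cC)_{hS^1}$, equivalently to exhibit a coherent family of 2-morphisms $\Tr \simeq \Tr \circ \sigma$ indexed by $\sigma \in S^1$ together with all higher coherences. The plan is to first produce invariance under individual rotations using the cyclic symmetry of trace, and then upgrade this to full $S^1$-equivariance by exploiting the tautological self-action of the circle.

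Using the explicit formula in Proposition~\ref{prophsshsss}, for $(A,a) \in \Aut(\cC)$ the trace is $\ev_A \circ (a \otimes \id) \circ \coev_A$. For any factorization $a = bc$, a zigzag of 2-cells built from $\coev$, $\ev$ and the triangulators of a coherent dual pair (Definition~\ref{def:coherentduality}) assembles into the cyclic identity $\Tr(bc) \simeq \Tr(cb)$. Applied with $b = a$ and $c = \id$, this yields the 2-cell witnessing invariance of $\Tr(A,a)$ under a single rotation; part (2) of Proposition~\ref{prophsshsss}, describing the 2-trace diagram \eqref{eqn:2trace}, already provides the coherent version of this identity at the level of 1-morphisms in $\Aut(\cC)$.

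To assemble these into full $S^1$-equivariance, I would identify the canonical $S^1$-action on $\Aut(\cC)$ with the one induced from the translation action of $S^1$ on itself, via the description $\Aut(\cC) \simeq \Fun(S^1, \cC^{\dual})$ in the $k$-linear case. Under this identification, $\Tr$ is realized as the value on $S^1$ of a natural functor built from the duality data; since $S^1$ is itself canonically an $S^1$-space with trivial orbit, the evaluation inherits an intrinsic $S^1$-invariance. A clean route is to factor $\Tr$ through the loop space $\cL\cC = S^1 \otimes \cC$ carrying its tautological $S^1$-action arising from the tensor structure of $\CAlg(\Prst_k)$ over spaces, and to observe that the further composite into $\Omega\cC$ is $S^1$-equivariant by construction.

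The main obstacle is verifying all higher coherences beyond the 1-cell cyclic identity. This requires either a careful bar-construction computation realizing $\cL\cC = \cC \otimes_{\cC \otimes \cC}\cC$ as an $S^1$-object with controlled $S^1$-action, or an appeal to the universal property of $\Tr$ in the $(\infty,n)$-categorical cobordism formalism, under which $S^1$-invariance becomes a reflection of the circle being a principal $S^1$-bundle over a point. The proof referenced from \cite{HSS} takes the former route, isolating the coherence as a universal statement about loops in symmetric monoidal $(\infty,n)$-categories rather than as a calculation tailored to each $\cC$.
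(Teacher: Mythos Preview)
Your proposal is a plan rather than a proof, and it conflates two distinct settings. The statement concerns an arbitrary symmetric monoidal $(\infty,n)$-category $\cC$, yet your suggested route goes through $\cL\cC = S^1\otimes\cC$ and the identification $\Aut(\cC)\simeq\Fun(S^1,\cC^\dual)$, both of which are only available in the $k$-linear presentable setting (they require $\cC\in\CAlg(\Prst_k)$). For a general $(\infty,n)$-categorical $\cC$ there is no tensoring over spaces and no loop space in this sense, so your ``clean route'' does not apply to the proposition as stated.

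More substantively, you correctly locate the real difficulty in the higher coherences, but you do not indicate how to produce them; the cyclic identity $\Tr(bc)\simeq\Tr(cb)$ with $b=a$, $c=\id$ gives only the $\pi_1$-level datum. You then offer two possible endings and assert that \cite{HSS} takes the bar-construction route. This is backwards. As one can see from the proof of Proposition~\ref{prop:tracedecategorified} in the present paper, the argument in \cite{HSS} is of the \emph{universal} type: one introduces the free rigid symmetric monoidal $(\infty,n)$-category $\Fr^\rig(S^1)$ on an object with an automorphism, so that $\Aut(\cC)\simeq\iota_{n-1}\Fun_\otimes^{\oplax}(\Fr^\rig(S^1),\cC)$ and $\Tr$ is evaluation at the single element $\Tr(u)\in\Omega\Fr^\rig(S^1)$, the trace of the universal automorphism. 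The $S^1$-equivariance of $\Tr$ for every $\cC$ is then reduced to exhibiting an $S^1$-invariant refinement of that one element $\Tr(u)$. This is closer in spirit to your second alternative (a universal argument), though it uses the free rigid category rather than a cobordism hypothesis, and it is precisely what makes the coherence problem tractable: one checks it once, in the walking case, rather than uniformly in $\cC$.
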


\begin{proposition}
Let $\cC$ be a symmetric monoidal $(\infty, n)$-category. The composite
\[\Aut(\Omega\cC)\cong \Omega\Aut(\cC)\xrightarrow{\Omega\Tr}\Omega^2\cC\]
is equivalent to the trace functor $\Tr\colon \Aut(\Omega\cC)\rightarrow \Omega(\Omega\cC)$ as an $S^1$-equivariant symmetric monoidal functor.
\label{prop:tracedecategorified}
\end{proposition}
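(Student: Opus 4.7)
The plan is to verify the claimed equivalence on objects using the explicit description of $\Tr$ recalled in Proposition~\ref{prophsshsss}, and then to upgrade this pointwise agreement to a symmetric monoidal $S^1$-equivariant equivalence of functors via naturality and the structural properties of both sides.

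First, I unpack the identification $\Aut(\Omega\cC) \cong \Omega\Aut(\cC)$. The monoidal unit of $\Aut(\cC)$ is $(1_\cC, \id)$, and from the description of morphisms in $\Aut(\cC)$, a 1-endomorphism of this unit is a pair $(\phi, \alpha)$ where $\phi\colon 1_\cC \to 1_\cC$ is right-dualizable (equivalently, a dualizable object of $\Omega\cC$) and $\alpha$ is an invertible 2-cell $\phi \Rightarrow \phi$ (i.e.~an automorphism of $\phi$ in $\Omega\cC$). The analogous analysis of higher morphisms yields a natural equivalence of symmetric monoidal $(\infty, n-2)$-categories, which is automatically $S^1$-equivariant since both sides inherit their $S^1$-action from the action of $S^1$ on itself.

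Next, I compare the two traces on objects. On the one hand, $\Tr(\phi, \alpha)$ computed directly in $\Aut(\Omega\cC)$ is, by Proposition~\ref{prophsshsss}(1), the composite
\[1_{\Omega\cC} \xrightarrow{\coev_\phi} \phi \otimes \phi^\vee \xrightarrow{\alpha \otimes \id} \phi \otimes \phi^\vee \xrightarrow{\ev_\phi} 1_{\Omega\cC}\]
in $\Omega\cC$, i.e.~a 2-endomorphism of $\id_{1_\cC}$. On the other hand, applying $\Tr\colon \Aut(\cC)\to\Omega\cC$ to $(\phi, \alpha)$ regarded as a 1-endomorphism of $(1_\cC, \id)$ yields the 2-cell described by diagram~\eqref{eqn:2trace}. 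In the present case, where both source and target are the unit $(1_\cC, \id)$, the triangular 2-cells on the left and right of~\eqref{eqn:2trace} trivialize (their ingredients involve the unit/counit of the right-adjunction for $\phi$ composed with the canonical duality data of the unit object), and the resulting 2-cell collapses to the above composite.

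Finally, to promote this pointwise agreement to an equivalence of symmetric monoidal $S^1$-equivariant functors, I invoke the structural properties of the trace. Both $\Omega\Tr$ and the trace on $\Aut(\Omega\cC)$ are $S^1$-invariant by Proposition~\ref{prop:S1trace} (in the first case by applying $\Omega$ to the $S^1$-invariant $\Tr\colon \Aut(\cC) \to \Omega\cC$, in the second case by direct application to $\Omega\cC$), and both are symmetric monoidal by construction. Combined with the naturality of $\Tr$ in $\cC$ from \cite[Definitions 2.9 and 2.11]{HSS}, the pointwise agreement forces the two functors to coincide at the level of symmetric monoidal $S^1$-equivariant functors. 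The principal obstacle is the bookkeeping in Step 2, namely verifying that the auxiliary 2-cells in~\eqref{eqn:2trace} really do degenerate to canonical ones when the source and target are the monoidal unit, so that only the essential contribution $\alpha\otimes\id$ survives.
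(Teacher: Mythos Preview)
Your argument has a genuine gap in the final step. In the setting of symmetric monoidal $(\infty,n)$-categories, verifying that two functors agree on objects (even together with the computation on 1-morphisms you carry out via diagram~\eqref{eqn:2trace}) does not suffice to produce an equivalence of \emph{functors}, let alone an equivalence of $S^1$-equivariant symmetric monoidal functors. The sentence ``Combined with the naturality of $\Tr$ in $\cC$ \dots\ the pointwise agreement forces the two functors to coincide'' is not a valid inference: naturality of $\Tr$ in the variable $\cC$ is orthogonal to the question of whether, for a fixed $\cC$, the two maps $\Aut(\Omega\cC)\to\Omega^2\cC$ agree as structured functors. One needs to exhibit the higher coherences of the comparison, and nothing you have written supplies them.

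The paper sidesteps this difficulty entirely by using the representing object. Recall from \cite{HSS} that $\Aut(-)$ is corepresented by the free rigid symmetric monoidal $(\infty,n)$-category $\Fr^\rig(S^1)$ on an object with automorphism, and that the trace functor is \emph{defined} as evaluation at the element $\Tr(u)\in\Omega\Fr^\rig(S^1)$, with its $S^1$-equivariance coming from a fixed $S^1$-invariant refinement of $\Tr(u)$. The proof then observes that applying $\Omega$ to $\Fun_\otimes^\oplax(\Fr^\rig(S^1),-)$ and restricting along $\Omega\Fr^\rig(S^1)\to\Fr^\rig(S^1)$ commutes with evaluation at $\Tr(u)$, so both vertical composites in the resulting diagram are the respective trace functors \emph{by construction}. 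This yields the equivalence with all symmetric monoidal and $S^1$-equivariant structure for free, because both sides are literally obtained by evaluating at the same universal element. Your explicit calculation is a useful sanity check, but to close the argument you would need to replace Step~3 with this universal-property reasoning.
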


\begin{proof}
Consider the diagram
\[
\xymatrix{
\iota_{n-2}\Fun_\tens^\oplax(\Fr^\rig(S^1), \Omega\cC) \ar^{\sim}[r] \ar^{\Omega}[d] & \Omega\iota_{n-1}\Fun_\tens^\oplax(\Fr^\rig(S^1), \cC) \ar^{\Omega}[d] \\
\Fun_\tens^\oplax(\Omega\Fr^\rig(S^1), \Omega^2\cC) \ar^-{\sim}[r] \ar[d] & \Omega\iota_{n-1}\Fun_\tens^\oplax(\Omega\Fr^\rig(S^1), \Omega\cC) \ar[d] \\
\Omega^2\cC \ar@{=}[r] & \Omega^2\cC,
}
\]
where the lower vertical maps are evaluation at the trace $\Tr(u)\in\Omega\Fr^\rig(S^1)$ of the universal automorphism $u$. By construction, the vertical composites are the respective trace functors, whose $S^1$-equivariance is induced by an $S^1$-invariant refinement of $\Tr(u)$ (see the proof of \cite[Theorem 2.14]{HSS}). This is therefore a commutative diagram of $S^1$-equivariant symmetric monoidal functors, which proves the claim.
\end{proof}

\subsection{Chern character}
\label{sect:uncatchern}

Let $\cC$ be a $k$-linear symmetric monoidal 
$\infty$-category. The identity functor
\[
S^1 \otimes \cC \simeq \cL \cC   \to  \cL \cC
\]
induces by adjunction a symmetric monoidal functor $\cC\to\Fun(S^1, \cL\cC)$ and a functor $S^1\to\Fun^\otimes(\cC, \cL\cC)$. 
Choosing once and for all a basepoint $p\colon \pt\to S^1$, the latter is equivalent to the following data:
\begin{itemize}
\item a symmetric monoidal functor $p_\cC\colon \cC \to \cL \cC$, which is induced by the inclusion of the basepoint;
\item a natural equivalence of symmetric monoidal functors
$ 
\mon\colon p_\cC \xrightarrow{\simeq}  p_\cC.
$
\end{itemize}
\begin{definition}
We call  $\mon$  the 
\defterm{monodromy automorphism}.
\end{definition}

\begin{definition}
The \defterm{Chern character} of $\cC$ is the composite
$$
\ch\colon \cC^\dual \longrightarrow 
\Fun(S^1, (\cL \cC)^\dual) \simeq 
\Aut(\cL \cC) \stackrel{\Tr} \longrightarrow 
\Omega \cL \cC.
$$
 \end{definition}
 
 By Proposition~\ref{prop:S1trace}, $\ch$ is $S^1$-equivariant and hence factors through the fixed points of the $S^1$-action on $\Omega \cL \cC$:
 \[
 \ch\colon \cC^\dual \longrightarrow (\Omega\cL\cC)^{S^1}.
 \]
 
We will now relate the monodromy automorphism to certain coCartesian diagrams. Let $p\colon \pt\rightarrow S^1$ be the basepoint. We have $\Aut(p) = \Z$. Consider morphisms of spaces
\begin{equation}
\xymatrix{
\pt\coprod \pt \ar[r] \ar[d] & \pt \ar[d] \\
\pt \ar[r] & S^1.
}
\label{eq:2ptcircle}
\end{equation}
The two composites $\pt\coprod\pt\rightarrow \pt\rightarrow S^1$ are equal, so the space of two-cells completing this diagram to a square is given by $\Aut(p)\times \Aut(p)\cong \Z\times\Z$. For every pair of integers $(n, m)$, we thus obtain a commutative square of the form above. 
Given $\cC\in\CAlg(\Prst_k)$, the above square of spaces gives rise to a square
\[
\xymatrix{
\cC\otimes\cC \ar^{\Delta}[r] \ar^{\Delta}[d] & \cC \ar^{p}[d] \\
\cC \ar^{p}[r] & \cL\cC
}
\]
in $\CAlg(\Prst_k)$.

\begin{proposition}
Let $\cC\in\CAlg(\Prst_k)$ be a $k$-linear symmetric monoidal $\infty$-category. The $(1, 0)$ square
\[
\xymatrix{
\cC\otimes\cC \ar^{\Delta}[r] \ar^{\Delta}[d] & \cC \ar^{p}[d] \\
\cC \ar^{p}[r] & \cL\cC
}
\]
is coCartesian.
\label{prop:10cocartesian}
\end{proposition}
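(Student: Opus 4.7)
The plan is to reduce the statement to a corresponding assertion about spaces, which can then be verified by an explicit degree computation. First, I would invoke the fact that $\CAlg(\Prst_k)$ is tensored over spaces, so the functor $K \mapsto K \otimes \cC$ from $\mathrm{Spaces}$ to $\CAlg(\Prst_k)$ preserves all small colimits. Under this functor, $\pt \otimes \cC = \cC$, $(\pt \coprod \pt) \otimes \cC \cong \cC \otimes \cC$ (since coproducts in $\CAlg(\Prst_k)$ are tensor products), and $S^1 \otimes \cC = \cL\cC$, while the fold map $\pt \coprod \pt \to \pt$ and the basepoint inclusion $\pt \to S^1$ become $\Delta\colon \cC \otimes \cC \to \cC$ and $p\colon \cC \to \cL\cC$, respectively. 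Hence the square in the proposition is the image under $(-) \otimes \cC$ of the corresponding space-level $(1, 0)$ square, and it suffices to show that the latter is coCartesian.

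Next, I would analyze the space-level situation directly. The homotopy pushout of $\pt \leftarrow \pt \coprod \pt \to \pt$ in $\mathrm{Spaces}$ is $S^1$, realized by gluing two arcs (one for each point of $\pt \coprod \pt$) at their endpoints (one for each copy of $\pt$). For any 2-cell $(n, m) \in \Aut(p) \times \Aut(p) \cong \Z \times \Z$, the universal property of this pushout determines a self-map $f_{n, m}\colon S^1 \to S^1$: both endpoints map to the basepoint $p$, and the two arcs map to loops at $p$ with winding numbers $n$ and $m$. Traversing the pushout $S^1$ once (along one arc, then backward along the other) gives a loop in the target $S^1$ of total winding $n - m$, so the degree of $f_{n, m}$ equals $n - m$.

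Taking $(n, m) = (1, 0)$ yields a self-map of $S^1$ of degree $\pm 1$, which is a homotopy equivalence. Therefore the space-level $(1, 0)$ square is coCartesian, and applying $(-) \otimes \cC$ concludes the argument. The main obstacle is essentially bookkeeping: one must pin down the convention by which $\Aut(p) \times \Aut(p)$ is identified with $\Z \times \Z$ (via winding numbers), and track orientations carefully enough to be sure that the $(1, 0)$ choice produces a degree $\pm 1$ self-map rather than, say, a degree $0$ map as the $(0, 0)$ 2-cell would. Once this convention is fixed, the remainder of the argument reduces to routine unwinding of colimits and universal properties.
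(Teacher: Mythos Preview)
Your proposal is correct and follows essentially the same approach as the paper: both reduce to the $\infty$-category of spaces (the paper tersely, you via the colimit-preserving tensoring $K\mapsto K\otimes\cC$), then identify the pushout $\pt\coprod_{\pt\sqcup\pt}\pt$ with $S^1$ and check that the $(1,0)$ 2-cell yields an equivalence. The paper does this last step pictorially, sending one arc to the generating loop and the other to the identity, while you compute the degree $n-m$; these are the same argument.
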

\begin{proof}
It is enough to prove the claim in the $\infty$-category $\cS$ of spaces.

We have a homotopy pushout diagram of groupoids
\begin{center}
\begin{tikzpicture}
  \filldraw (0, 0) circle (2pt);

  \filldraw (-0.4, 2) circle (2pt);
  \filldraw (0.4, 2) circle (2pt);
  
  \filldraw (2.6, 2) circle (2pt);

  \filldraw (2.2, 0) circle (2pt);
  \filldraw (3.0, 0) circle (2pt);
  \draw (2.6, 0) circle (0.4);

  \draw[->] (0.4, 0) -- (1.8, 0);
  \draw[->] (0, 1.8) -- (0, 0.6);
  \draw[->] (0.6, 2) -- (2, 2);
  \draw[->] (2.6, 1.8) -- (2.6, 0.6);
\end{tikzpicture}
\end{center}
which is a coCartesian square in $\cS$.

We can complete it to a diagram
\begin{center}
\begin{tikzpicture}
  \filldraw (0, 0) circle (2pt);

  \filldraw (-0.4, 2) circle (2pt) node[above] {$a$};
  \filldraw (0.4, 2) circle (2pt) node[above] {$b$};

  \filldraw (2.6, 2) circle (2pt);

  \filldraw (2.2, 0) circle (2pt);
  \filldraw (3.0, 0) circle (2pt);
  \draw (2.6, 0) circle (0.4);
  \draw (2.8, 0.5) node {$a$};
  \draw (2.8, -0.5) node {$b$};

  \filldraw (2.6, -2.4) circle (2pt);
  \draw (2.6, -2) circle (0.4);
  
  \draw[->] (0.4, 0) -- (1.4, 0);
  \draw[->] (0.2, -0.2) -- (2, -2);
  \draw[->] (2.6, -0.6) -- (2.6, -1.4) node[midway, right] {$\sim$};
  \draw[->] (0, 1.8) -- (0, 0.6);
  \draw[->] (0.6, 2) -- (2, 2);
  \draw[->] (2.6, 1.4) -- (2.6, 0.6);
\end{tikzpicture}
\end{center}
where the isomorphism at the bottom sends the morphism $a$ to the nontrivial automorphism of the point and the morphism $b$ to the identity. The total diagram is exactly a diagram \eqref{eq:2ptcircle} of type $(1, 0)$ which proves the claim.
\end{proof}

\begin{remark}
\label{monwhisk}
Note that whiskering the $(1, 0)$ square
\[
\xymatrix{
\cC\otimes\cC \ar^{\Delta}[r] \ar^{\Delta}[d] & \cC \ar^{p}[d] \\
\cC \ar^{p}[r] & \cL\cC
}
\]
along $\cC\rightarrow \cC\otimes \cC$ given by $x\mapsto x\boxtimes 1$ we obtain the monodromy automorphism of $p_\cC\colon \cC\rightarrow \cL\cC$. Whiskering the same $(1, 0)$ square along $x\mapsto 1\boxtimes x$ we obtain the identity. 
\end{remark}
\begin{remark}
One may similarly show that the $(\pm1, 0)$ and $(0, \pm1)$ squares are coCartesian. However, the $(0, 0)$ square is not coCartesian.
\end{remark}

\subsection{Categorified Chern character}
\label{ss: Cat-Mon}
\label{monchch}
Let $\cC$ be a $k$-linear symmetric monoidal $\infty$-category, 
and let 
\begin{equation}
\label{moneqmon}
\xymatrix{
& \twocelllabel{dd}{\mathrm{mon}} & \\
\cC \ar@/^1pc/^{p_\cC}[rr] \ar@/_1pc/_{p_\cC}[rr] && \cL\cC \\
&&
}
\end{equation}
be the monodromy automorphism. Applying the functor 
$\Mod(-)$ to (\ref{moneqmon}) gives a natural equivalence 
\begin{equation}
\xymatrix{
& \twocelllabel{dd}{\mathrm{Mon}} & \\
\Mod_\cC \ar@/^1pc/^{p_\cC^*}[rr] \ar@/_1pc/_{p_\cC^*}[rr] && \Mod_{\cL\cC} \\
&&
}
\end{equation}

\begin{definition}
We call  $\mathrm{Mon}$  the 
\defterm{categorified monodromy automorphism}. \end{definition}
\begin{remark}
It follows from Remark \ref{monwhisk} that the categorified monodromy automorphism  can  also be obtained by whiskering the $(1, 0)$ square
\begin{equation}
\begin{gathered}
\label{sqwhiskcat}
\xymatrix{
\Mod_{\cC\otimes\cC} \ar^{\Delta^*}[r] \ar^{\Delta^*}[d] &\Mod_\cC \ar^{p^*}[d] \\
\Mod_ \cC \ar^{p^*}[r] & \Mod_{\cL\cC}
}
\end{gathered}
\end{equation}
along $\pi_1^* : \Mod_\cC\rightarrow \Mod_ {\cC\otimes \cC}.$ In particular, evaluating (\ref{sqwhiskcat}) on $\cM \boxtimes \cN\in \Mod_{\cC \otimes \cC}$ induces the automorphism 
$$ \Mon_{\cM} \otimes \id: p^*\cM  \otimes_{\cL \cC} p^* \cN  \to p^* \cM  \otimes_{\cL \cC} p^* \cN .$$
\end{remark}

Restricting to dualizable objects we get an  equivalence
\[
\xymatrix{
& \twocelllabel{dd}{\mathrm{Mon}} & \\
\Mod_\cC^\dual \ar@/^1pc/^{p_\cC^*}[rr] \ar@/_1pc/_{p_\cC^*}[rr] && \Mod_{\cL\cC}^\dual \\
&&
}
\]
and this determines a map
$$
B\mathbb{Z} \simeq S^1 \to \Fun^\otimes(\Mod_{\cC}^\mathrm{dual},\Mod_{\cL\cC}^\mathrm{dual}). 
$$
By adjunction we obtain a symmetric monoidal functor
$$
\Mod^{\mathrm{dual}}_{\cC}   
\to \Fun(S^1, \Mod^{\mathrm{dual}}_{\cL \cC}). 
$$ 
\begin{definition}
The \defterm{categorified    Chern character} is   the composite  
$$
\Ch: \Mod^{\mathrm{dual}}_{\cC}  \longrightarrow \Fun(S^1, \Mod^{\mathrm{dual}}_{\cL \cC}) \cong
\Aut(\Mod^ \dual _{\cL \cC}) \xrightarrow{\Tr} \cL \cC.
$$  
\end{definition}

 By Proposition~\ref{prop:S1trace}, $\Ch$ is $S^1$-equivariant and hence factors through the fixed points of the $S^1$-action on $\cL \cC$, i.e., the $\infty$-category of $S^1$-equivariant objects of $\cL\cC$:
 \[
\Ch: \Mod^{\mathrm{dual}}_{\cC} \longrightarrow (\cL\cC)^{S^1}.
 \]

\subsection{Decategorifying the Chern character}

Note that the categorified Chern character being symmetric monoidal induces a map of spaces
\[\Ch\colon \Omega\Mod_\cC^{\dual}\longrightarrow \Omega(\cL\cC)^{S^1}\simeq (\Omega\cL\cC)^{S^1}.\]
We will show that this map coincides with the uncategorified Chern character.

\begin{lemma}
The composite
\[\CAlg(\Prst_k)\xrightarrow{\Mod}\CAlg(\Cat_{(\infty, 2)})\xrightarrow{\Omega} \CAlg(\Cat_{(\infty, 1)})\]
is equivalent to the forgetful functor.
\label{lm:moddecategorified}
\end{lemma}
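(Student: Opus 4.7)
The plan is to exhibit a natural symmetric monoidal equivalence between $\Omega\Mod_\cC$ and $\cC$ itself, realizing $\cC$ as the endomorphism $\infty$-category of the unit of $\Mod_\cC(\bPrst_k)$. For any symmetric monoidal $(\infty,2)$-category $\cE$ with unit $1_\cE$, one has $\Omega\cE \simeq \End_\cE(1_\cE)$ as an $\bE_1$-algebra in $\Cat_{(\infty,1)}$, and when $\cE$ is itself symmetric monoidal this promotes to an $\bE_\infty$-algebra structure via Dunn additivity. The unit of $\Mod_\cC(\bPrst_k)$ is $\cC$ viewed as a free $\cC$-module on one generator, so I would first establish that
\[
\End_{\Mod_\cC(\bPrst_k)}(\cC)\;\simeq\;\Fun_\cC(\cC,\cC)\;\simeq\;\cC,
\]
where the second equivalence sends a $\cC$-linear endofunctor $F$ to $F(1_\cC)$ and has inverse $x\mapsto (-)\otimes x$. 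This is the universal property of the free module: a cocontinuous $\cC$-linear functor out of $\cC$ is determined by its value on the generator. Under this equivalence, composition of endofunctors goes to the tensor product on $\cC$, so the equivalence is one of $\bE_\infty$-algebras in $\Prst_k$.

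Next I would upgrade this pointwise equivalence to a natural transformation of functors $\CAlg(\Prst_k)\to\CAlg(\Cat_{(\infty,1)})$. Given a symmetric monoidal functor $f\colon\cC\to\cD$, the induced symmetric monoidal $(\infty,2)$-functor $\Mod(f)\colon\Mod_\cC\to\Mod_\cD$ sends the unit $\cC$ to the extension of scalars $\cD\otimes_\cC\cC\simeq\cD$, and on endomorphisms takes the $\cC$-linear functor $(-)\otimes x\colon\cC\to\cC$ to $(-)\otimes f(x)\colon\cD\to\cD$. Under the identifications $\Omega\Mod_\cC\simeq\cC$ and $\Omega\Mod_\cD\simeq\cD$, the induced map $\Omega\Mod(f)$ is therefore $f$ itself. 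Naturality in $f$ together with the $\bE_\infty$-refinement yields the desired equivalence of functors.

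The main technical obstacle is making this identification coherent, rather than merely object-by-object. The cleanest route is to invoke the (already-used) fact that $\Mod(-)$ is part of a functor $\CAlg(\Prst_k)\to\CAlg(\Cat_{(\infty,2)}^{\mathrm{pt}})$ landing in pointed symmetric monoidal $(\infty,2)$-categories (the basepoint being the unit), with the forgetful functor back to $\CAlg(\Prst_k)$ given by $\Omega$ applied to the basepoint. This forgetful functor is right adjoint to $\Mod$, and the unit of the adjunction at $\cC$ is exactly the equivalence $\cC\simeq\Omega\Mod_\cC$ constructed above. Thus the equivalence is implemented by a unit of a well-defined $\infty$-categorical adjunction, giving the required naturality for free and avoiding ad hoc coherence chases.
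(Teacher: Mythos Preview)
Your proposal is correct and uses the same core identification as the paper, namely $\Omega\Mod_\cC\simeq \Fun_{\Mod_\cC}(\cC,\cC)\simeq \cC$; the paper's proof consists of precisely this one line, with no further discussion of naturality or monoidal compatibility. Your additional care about coherence (via the adjunction in your final paragraph) goes beyond what the paper writes down, though the adjunction you invoke is not explicitly set up in the paper and would need a reference if you wanted to rely on it.
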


\begin{proof}
We have $\Omega\Mod_\cC\simeq \Fun_{\Mod_{\cC}}(\cC,\cC)\simeq \cC$.
\end{proof}

\begin{theorem}
Let $\cC$ be a $k$-linear symmetric monoidal $\infty$-category. The composite
\[\cC^\dual \cong \Omega\Mod_\cC^{\dual}\xrightarrow{\Ch} \Omega\cL\cC\]
is equivalent to the Chern character
\[\ch\colon \cC^\dual \longrightarrow \Omega\cL\cC\]
as $S^1$-equivariant $\bE_\infty$ maps.
\label{thm:cherndecategorified}
\end{theorem}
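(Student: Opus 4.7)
The plan is to decompose the categorified Chern character into its two constituent pieces—construction of an automorphism from the monodromy, followed by the trace—and show that each piece decategorifies correctly to the corresponding piece of the uncategorified Chern character. Concretely, I will show that $\Omega$ applied to the monodromy-induced map $\Mod_\cC^\dual \to \Aut(\Mod_{\cL\cC}^\dual)$ agrees with the analogous map $\cC^\dual \to \Aut(\cL\cC)$, and that $\Omega$ applied to the categorified trace agrees with the uncategorified trace via Proposition~\ref{prop:tracedecategorified}.

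For the first piece, Lemma~\ref{lm:moddecategorified} gives $\Omega\Mod_\cC \simeq \cC$ as symmetric monoidal $\infty$-categories, and one checks this restricts to an equivalence $\Omega\Mod_\cC^\dual \simeq \cC^\dual$ on the dualizable subcategories. The categorified monodromy $\Mon$ is, by Remark~\ref{monwhisk}, obtained by whiskering the image under $\Mod(-)\colon \CAlg(\Prst_k)\to \CAlg(\Cat_{(\infty,2)})$ of the $(1,0)$ coCartesian square of Proposition~\ref{prop:10cocartesian} along $\cC\to \cC\otimes \cC$, $x\mapsto x\boxtimes 1$. Since $\Omega\circ\Mod$ is equivalent to the forgetful functor on $\CAlg$ by Lemma~\ref{lm:moddecategorified}, taking $\Omega$ of this whiskered square recovers the very same $(1,0)$ square in $\CAlg(\Prst_k)$ whiskered in the same way, which by the analogous remark for $\cC$ is exactly the uncategorified monodromy $\mon$. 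Hence $\Omega$ applied to the adjoint map $\Mod_\cC^\dual \to \Fun(S^1, \Mod_{\cL\cC}^\dual)$ is identified with the adjoint map $\cC^\dual\to \Fun(S^1,(\cL\cC)^\dual)$ used in the definition of $\ch$.

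For the second piece, I apply Proposition~\ref{prop:tracedecategorified} with the symmetric monoidal $(\infty,2)$-category $\Mod_{\cL\cC}$. That proposition identifies the composite $\Aut(\Omega\Mod_{\cL\cC})\simeq \Omega\Aut(\Mod_{\cL\cC})\xrightarrow{\Omega\Tr}\Omega^2\Mod_{\cL\cC}$ with the uncategorified trace functor $\Aut(\cL\cC)\to \Omega\cL\cC$, as $S^1$-equivariant symmetric monoidal functors. Splicing this with the first piece yields the desired identification of $\Omega\Ch$ with $\ch$ as $\bE_\infty$ maps, with the $S^1$-equivariance supplied by Proposition~\ref{prop:tracedecategorified}.

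The main technical obstacle is ensuring that all identifications are coherent as $S^1$-equivariant symmetric monoidal maps, rather than just on underlying spaces. The monoidal compatibility is automatic since every construction in sight is applied to symmetric monoidal functors and preserves the relevant $\bE_\infty$-structures. The $S^1$-equivariance is the more delicate point: it is ultimately reduced to the $S^1$-invariant refinement of the universal trace $\Tr(u)\in \Omega\Fr^{\mathrm{rig}}(S^1)$ used in \cite[Theorem 2.14]{HSS}, which is precisely the ingredient extracted by Proposition~\ref{prop:tracedecategorified}. Once this is in place, the two pieces assemble into the commutative diagram of $S^1$-equivariant $\bE_\infty$ maps claimed.
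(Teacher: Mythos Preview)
Your proposal is correct and follows essentially the same approach as the paper: decompose $\Ch$ into the monodromy map followed by the trace, use Lemma~\ref{lm:moddecategorified} to identify $\Omega\Mon$ with $\mon$, and use Proposition~\ref{prop:tracedecategorified} to identify $\Omega\Tr$ with $\Tr$, assembling these into a commutative diagram of $S^1$-equivariant $\bE_\infty$ maps. The only cosmetic difference is that the paper argues the monodromy step directly by applying $\Omega\circ\Mod\simeq\id$ to the map $S^1\to\Fun^\otimes(\cC,\cL\cC)$, rather than via the $(1,0)$ coCartesian square of Remark~\ref{monwhisk}, but these amount to the same thing.
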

\begin{proof}
The composite
\[S^1\xrightarrow{\mon}\Fun^{\otimes}(\cC, \cL\cC)\xrightarrow{\Mod}\Fun^{\otimes}(\Mod_\cC, \Mod_{\cL\cC})\xrightarrow{\Omega} \Fun^{\otimes}(\cC, \cL\cC)\]
is equivalent by Lemma \ref{lm:moddecategorified} to $\mon\colon S^1\rightarrow \Fun^{\otimes}(\cC, \cL\cC)$. Therefore, by adjunction we get a commutative diagram
\[
\xymatrix{
\cC \ar^{\sim}[r] \ar^{\mon}[d] & \Omega\Mod_\cC \ar^{\Mon}[d] \\
\Fun(S^1, \cL\cC) \ar^{\sim}[r] & \Omega\Fun(S^1, \Mod_{\cL\cC})
}
\]
of $S^1$-equivariant symmetric monoidal $\infty$-categories.

Consider the diagram
\[
\xymatrix{
\cC^\dual \ar^{\sim}[r] \ar^{\mon}[d] \ar^{\sim}[r] & \Omega\Mod_\cC^{\dual} \ar[d] \ar^{\Mon}[d] \\
\Fun(S^1, (\cL\cC)^\dual) \ar^{\Tr}[d] \ar^{\sim}[r] & \Omega\Fun(S^1, \Mod_{\cL\cC}^{\dual}) \ar^{\Tr}[d] \\
\Omega\cL\cC \ar@{=}[r] & \Omega\cL\cC
}
\]
of $S^1$-equivariant $\bE_\infty$ spaces.
The bottom square commutes by Proposition \ref{prop:tracedecategorified}. The vertical composite on the left coincides with the Chern character $\ch\colon \cC^\dual\rightarrow \Omega\cL\cC$ and the claim follows from the commutativity of the diagram.
\end{proof}

\section{The categorified Grothendieck--Riemann--Roch theorem}

\subsection{Statement}

We have an obvious functoriality of the Chern character with respect to symmetric monoidal functors.

\begin{proposition}
\label{compatiblewithpull}
Let $f\colon \cD\rightarrow \cC$ be a symmetric monoidal functor. Then there is a commutative diagram of $\infty$-categories
\begin{equation}
\xymatrix{
\Mod^\dual_\cC \ar^-{\Ch}[r] & (\cL\cC)^{S^1} \\
\Mod^\dual_\cD \ar^{f^*}[u] \ar^-{\Ch}[r] & (\cL\cD)^{S^1} \ar^{\cL f}[u]
}
\label{eq:chernpullback}
\end{equation}
\end{proposition}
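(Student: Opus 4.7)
The plan is to factor $\Ch$ into its two defining ingredients, namely the categorified monodromy assembled into a functor to $\Aut(\Mod^\dual_{\cL\cC})$ followed by the trace, and then verify that each ingredient is natural in $\cC$ with respect to the base change $f^*$ on the source and $\cL f$ on the target.

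First I would establish naturality of the uncategorified monodromy. The functor $p_\cC$ and the natural transformation $\mon_\cC$ arise by adjunction from the identity of $\cL\cC = S^1 \otimes \cC$, and the assignment $\cC \mapsto \cL\cC$ is functorial in $\cC \in \CAlg(\Prst_k)$. Consequently a symmetric monoidal functor $f\colon \cD\to\cC$ produces an equality $\cL f \circ p_\cD = p_\cC \circ f$ together with the identification $\cL f \cdot \mon_\cD = \mon_\cC \cdot f$ of natural automorphisms (the dots denoting whiskering). Applying the symmetric monoidal $2$-functor $\Mod(-)$ and restricting to dualizable modules yields the corresponding compatibility $(\cL f)^* \circ \Mon_\cD = \Mon_\cC \circ f^*$ for the categorified monodromies; after $S^1$-adjunction this becomes a commutative square
\[
\xymatrix{
\Mod^\dual_\cD \ar[r] \ar^{f^*}[d] & \Aut(\Mod^\dual_{\cL\cD}) \ar^{(\cL f)^*}[d] \\
\Mod^\dual_\cC \ar[r] & \Aut(\Mod^\dual_{\cL\cC}).
}
\]

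Next I would invoke the naturality of the trace. For any symmetric monoidal functor $F\colon \cE_1 \to \cE_2$ of symmetric monoidal $(\infty,2)$-categories, the square
\[
\xymatrix{
\Aut(\cE_1) \ar^-{\Tr}[r] \ar^{\Aut(F)}[d] & \Omega\cE_1 \ar^{\Omega F}[d] \\
\Aut(\cE_2) \ar^-{\Tr}[r] & \Omega\cE_2
}
\]
commutes, and this commutativity is $S^1$-equivariant by Proposition \ref{prop:S1trace}. Applied to $F = (\cL f)^*\colon \Mod_{\cL\cD} \to \Mod_{\cL\cC}$ and combined with the identification $\Omega\Mod_{\cL\cC} \simeq \cL\cC$ of Lemma \ref{lm:moddecategorified}, this produces a commutative square
\[
\xymatrix{
\Aut(\Mod^\dual_{\cL\cD}) \ar^-{\Tr}[r] \ar^{(\cL f)^*}[d] & (\cL\cD)^{S^1} \ar^{\cL f}[d] \\
\Aut(\Mod^\dual_{\cL\cC}) \ar^-{\Tr}[r] & (\cL\cC)^{S^1}.
}
\]
Concatenating the two squares yields exactly \eqref{eq:chernpullback}.

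The main obstacle is essentially bookkeeping: one must verify that the naturality data of $\mon$, of the trace, and of the $S^1$-equivariance assemble coherently at the level of $\infty$-categories rather than merely pointwise on objects. All of this is furnished by the $2$-functoriality of $\Mod(-)$ and by the construction in \cite{HSS} of $\Tr$ as an $S^1$-equivariant symmetric monoidal functor that is natural in its argument; no genuine computation is involved beyond unwinding these definitions.
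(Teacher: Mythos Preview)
Your argument is correct. The paper does not actually give a proof of this proposition: it is introduced with the sentence ``We have an obvious functoriality of the Chern character with respect to symmetric monoidal functors'' and left at that. Your decomposition into naturality of the categorified monodromy (which follows from functoriality of $\cC\mapsto S^1\otimes\cC$ and of $\Mod(-)$) followed by naturality of the trace in its argument is exactly the unwinding the authors have in mind, so there is nothing to compare.
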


If $f$ is moreover rigid, by Propositions \ref{prop:ambidextrousdualadjoint} and \ref{prop:Lfrigid} we can pass to right adjoints of the vertical functors in \eqref{eq:chernpullback}; the resulting diagram a priori only commutes up to a natural transformation.

\begin{theorem}\label{thm:GRR}
Let $f\colon \cD\rightarrow \cC$ be a rigid symmetric monoidal functor. Then passing to right adjoints of the vertical functors in \eqref{eq:chernpullback} we obtain a diagram
\begin{equation}
\xymatrix{
\Mod^\dual_\cC \ar^{f_*}[d] \ar^-{\Ch}[r] & (\cL\cC)^{S^1} \ar^{(\cL f)^\R}[d] \\
\Mod^\dual_\cD \ar^-{\Ch}[r] \twocell{ur} & (\cL\cD)^{S^1}
}
\label{eq:GRRdiagram}
\end{equation}
which commutes up to an invertible 2-morphism.
\end{theorem}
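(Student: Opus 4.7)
The plan is to derive Theorem \ref{thm:GRR} from Proposition \ref{compatiblewithpull} by passing to right adjoints in the vertical direction of the square \eqref{eq:chernpullback}. The rigidity hypothesis supplies the necessary right adjoints with the required properties: $f_*\colon \Mod_\cC^\dual \to \Mod_\cD^\dual$ is right adjoint to $f^*$ by Proposition \ref{prop:ambidextrousdualadjoint}, and $(\cL f)^\R\colon (\cL\cC)^{S^1} \to (\cL\cD)^{S^1}$ is right adjoint to $\cL f$ by Proposition \ref{prop:lambdarightadjoint}. The general mates construction then produces a canonical Beck--Chevalley 2-cell $\beta\colon \Ch \circ f_* \Rightarrow (\cL f)^\R \circ \Ch$, and the theorem reduces to showing that $\beta$ is an equivalence, which is what ``commutes strictly'' means in this $(\infty,2)$-categorical setting.

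To analyze $\beta$, I unpack both composites via the definition of $\Ch$ as the trace of the monodromy automorphism. For $\cM \in \Mod_\cC^\dual$, the composite $(\cL f)^\R \circ \Ch$ sends $\cM$ to $(\cL f)^\R\bigl(\Tr_{\Mod_{\cL\cC}}(\Mon_{p_\cC^*\cM})\bigr)$, while $\Ch \circ f_*$ sends $\cM$ to $\Tr_{\Mod_{\cL\cD}}(\Mon_{p_\cD^*f_*\cM})$. Since $\cL f$ is rigid by Proposition \ref{prop:Lfrigid}, the adjunction $(\cL f)^* \dashv (\cL f)_*$ is symmetric monoidal ambidextrous by Proposition \ref{prop:rigidambidextrous}, so $(\cL f)_*$ preserves dualizability with explicit duality data given by Proposition \ref{prop:ambidextrousduality}. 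Combined with the naturality of the monodromy construction under the square of symmetric monoidal functors relating $\cD, \cC, \cL\cD, \cL\cC$, this reduces the verification of $\beta \simeq \id$ to a 2-categorical diagram chase.

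The key technical input is a general trace-comparison identity for symmetric monoidal ambidextrous adjunctions: for $F\colon \cE_1 \rightleftarrows \cE_2\colon G$ such an adjunction and $\sigma$ an automorphism of a dualizable object in $\cE_2$,
\[
\Tr_{\cE_1}(G\sigma) \simeq \lambda^\R\bigl(\Tr_{\cE_2}(\sigma)\bigr),
\]
where $\lambda^\R$ is the right adjoint to the unit of the adjunction supplied by ambidexterity, recovering $(\cL f)^\R$ in our setting via Proposition \ref{prop:lambdarightadjoint}. Its proof is a diagram chase: one expands both sides using the duality data of Proposition \ref{prop:ambidextrousduality}, invokes naturality of the lax monoidal structure $\alpha$, and applies the compatibility modifications \eqref{eq:modification1}, \eqref{eq:modification2}, \eqref{eq:modification3} together with Lemma \ref{lm:modification3intertwiner} to telescope the composition down to the identity.

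The main obstacle is this trace-comparison identity itself, whose verification requires careful tracking of the non-invertible 2-cells between $\alpha$ and $\alpha^\R$ and of the cancellations forced by the swallowtail axioms. Once it is established, applying it to $G = (\cL f)_*$ and $\sigma = \Mon_{p_\cC^*\cM}$ yields the desired strict commutativity, with $S^1$-equivariance of the whole construction automatic from Proposition \ref{prop:S1trace} and from the $S^1$-equivariant naturality of every step in the argument.
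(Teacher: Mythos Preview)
Your overall strategy is reasonable and the ingredients you invoke (ambidexterity, Proposition~\ref{prop:ambidextrousduality}, the modifications \eqref{eq:modification1}--\eqref{eq:modification3}) are indeed the ones used in the paper. However, there is a genuine gap in the final step.

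Applying your trace-comparison identity with $G=(\cL f)_*$ and $\sigma=\Mon_{p_\cC^*\cM}$ yields an equivalence
\[
\Tr_{\Mod_{\cL\cD}}\bigl((\cL f)_*p_\cC^*\cM,\ (\cL f)_*\Mon_{p_\cC^*\cM}\bigr)\ \simeq\ (\cL f)^\R\,\Ch(\cM).
\]
But the other side of the GRR square is $\Ch(f_*\cM)=\Tr_{\Mod_{\cL\cD}}\bigl(p_\cD^*f_*\cM,\ \Mon_{p_\cD^*f_*\cM}\bigr)$, and the two underlying $\cL\cD$-module categories $(\cL f)_*p_\cC^*\cM\simeq \cL\cC\otimes_\cC\cM$ and $p_\cD^*f_*\cM\simeq \cL\cD\otimes_\cD\cM$ are \emph{not} equivalent in general (already for $\cM=\cC$ they are $\cL\cC$ and $\cL\cD\otimes_\cD\cC$ respectively). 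So your identity does not directly compute $\Ch(f_*\cM)$, and the phrase ``naturality of the monodromy construction'' is hiding exactly the difficulty: the two monodromy automorphisms live on different objects and are not identified by any Beck--Chevalley isomorphism.

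This missing comparison is where the paper does substantial work. After the reductions using the (co)evaluation lemmas (which roughly correspond to your trace-comparison identity), the paper reduces to the case $\cM=\cN=\cC$ by $\Prst_k$-linearity, and then must still show that a certain rectangle involving $\Mon_{f_*\cC}$ is right-adjointable. This is done by reinterpreting the rectangle via a cube of pushout squares (Lemma~\ref{lm:pentagonnaturality}) together with the coCartesian property of the $(1,0)$ square (Proposition~\ref{prop:10cocartesian}), which identifies the rectangle with a naturality square for $\Delta\colon \cC\otimes_\cD\cC\to\cC$; only then does rigidity of $\Delta$ conclude the proof. Your proposal omits this monodromy-comparison step entirely, and none of the tools you cite (the modifications, Lemma~\ref{lm:modification3intertwiner}, the swallowtail axioms) can supply it, since it genuinely uses the specific description of $\cL$ as $S^1\otimes(-)$ rather than just abstract ambidexterity.
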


The rest of the section is devoted to the proof of this theorem. Let $\cM$ be a dualizable $\cC$-module category. Without loss of generality (see \cite[Theorem 2.14]{Pstragowski}) we may assume that the duality data for $\cM$ is coherent in the sense of Definition \ref{def:coherentduality}.

The natural transformation in \eqref{eq:GRRdiagram} is obtained as the composite
\[
\Ch(f_*\cM)\longrightarrow (\cL f)^\R (\cL f) \Ch(f_*\cM)\cong (\cL f)^\R \Ch(f^* f_*\cM) \longrightarrow (\cL f)^\R \Ch(\cM).
\]

In turn, this is obtained as the composite 2-morphism in the diagram
\begin{equation}
\xymatrix{
\cL\cD \ar^{\lambda}[r] \ar^{\coev_{f_*\cM}}[d] & (\cL f)_*\cL\cC \ar@{=}[r] \ar^{\coev_{f^*f_*\cM}}[d] & (\cL f)_*\cL\cC \ar^{\coev}[d]
\\
p^*_\cD (f_*\cM\otimes f_*\cM^\vee) \ar^{\Mon_{f_*\cM}\otimes \id}[d] \ar^-{\lambda}[r] & (\cL f)_*p^*_\cC(f^*f_*\cM\otimes f^*f_*\cM^\vee) \ar^{\Mon_{f^*f_*\cM}\otimes \id}[d] \ar^-{\epsilon\otimes (\epsilon^\R)^\vee}[r] \twocell{ur} & (\cL f)_* p^*_\cC(\cM\otimes \cM^\vee) \ar^{\Mon_\cM\otimes \id}[d]
\\
p^*_\cD (f_*\cM\otimes f_*\cM^\vee) \ar^{\ev_{f_*\cM}}[d] \ar^-{\lambda}[r] & (\cL f)_*p^*_\cC(f^*f_*\cM\otimes f^*f_*\cM^\vee) \ar^{\ev_{f^*f_*\cM}}[d] \ar^-{\epsilon\otimes (\epsilon^\R)^\vee}[r] & (\cL f)_*p^*_\cC (\cM\otimes \cM^\vee) \ar^{\ev}[d]
\\
\cL\cD \ar@{=}[d] \ar^{\lambda}[r] & (\cL f)_*\cL\cC \ar^{\lambda^\R}[d] \ar@{=}[r] \twocell{ur} & (\cL f)_*\cL\cC \ar^{\lambda^\R}[d]
\\
\cL\cD \ar@{=}[r] \twocell{ur} & \cL\cD \ar@{=}[r] & \cL\cD
}
\label{eq:GRRdiagram1}
\end{equation}
in $\Mod_{\cL\cD}(\bPrst_k)$, where the columns are given by individual Chern characters. We are going to prove that the composite 2-morphism in \eqref{eq:GRRdiagram1} is a 2-isomorphism. All equalities of 2-morphisms in this section are to be understood in the homotopy 2-category.

As a first step, we are going to analyze the subdiagrams in \eqref{eq:GRRdiagram1} containing $(\epsilon^\R)^\vee$. We have a 2-isomorphism $(\epsilon^\R_\cM)^\vee\cong \epsilon_{\cM^\vee}$ constructed via the following diagram:
\begin{equation}
\xymatrix{
f^*f_*\cM^\vee \ar^{\id\otimes\coev}[d] \ar^{\epsilon}[r] & \cM^\vee \ar_{\id\otimes\coev}[d] \ar@{=}[dr] \\
f^*f_*\cM^\vee\otimes\cM\otimes\cM^\vee \ar^{\id\otimes\epsilon^\R}[d] \ar^{\epsilon\otimes\id\otimes\id}[r] & \cM^\vee\otimes \cM\otimes \cM^\vee \ar^{\epsilon^\R\otimes\id}[d] \ar_{\ev\otimes\id}[r] & \cM^\vee \ar_{\epsilon^\R\otimes\id}[d] \ar@{=}[dr] \\
f^*f_*\cM^\vee\otimes f^*f_*\cM\otimes \cM^\vee \ar^{\alpha\otimes\id}[r] & f^*f_*(\cM^\vee\otimes\cM)\otimes \cM^\vee \ar^-{\ev\otimes\id}[r] & f^*f_*\cC\otimes\cM^\vee \ar_-{\eta^\R\otimes\id}[r] & \cM^\vee
}
\label{eq:epsilonRdual}
\end{equation}
where the bottom-left square has the 2-isomorphism given by \eqref{eq:modification2}.

\subsection{Analyzing the (co)evaluation}

We will first apply the isomorphism $(\epsilon^\R_\cM)^\vee\cong \epsilon_{\cM^\vee}$ to the top part of diagram \eqref{eq:GRRdiagram1}.

\begin{lemma}
Under the identification $(\epsilon^\R_\cM)^\vee\cong \epsilon_{\cM^\vee}$ given by \eqref{eq:epsilonRdual} the diagram
\[
\xymatrix@C=2cm{
\cC \ar@{=}[r] \ar^{\coev_{f^*f_*\cM}}[dd] & \cC \ar^{\coev_\cM}[d] \\
& \cM\otimes_\cC \cM^\vee \ar^{\epsilon^\R\otimes\id}[d] \\
f^*f_*\cM\otimes_\cC f^*f_*\cM^\vee \ar^-{\id\otimes(\epsilon^\R)^\vee}[r] & f^*f_*\cM\otimes_\cC \cM^\vee
}
\]
becomes equal to
\[
\xymatrix@C=2cm{
\cC \ar@{=}[r] \ar^{\eta}[d] & \cC \ar@{=}[d] \\
f^*f_*\cC \ar^{\epsilon}[r] \ar^{\coev_\cM}[d] & \cC \ar^{\coev_\cM}[d] \\
f^*f_*(\cM\otimes_\cC\cM^\vee) \ar^{\alpha^\R}[d] \ar^{\epsilon}[r] & \cM\otimes_\cC \cM^\vee \ar^{\epsilon^\R\otimes\id}[d] \\
f^*f_*\cM\otimes_\cC f^*f_*\cM^\vee \ar^{\id\otimes\epsilon}[r] & f^*f_*\cM\otimes_\cC \cM^\vee
}
\]
in $h_2\Mod_\cC(\bPrst_k)$, where the bottom rectangle is given by \eqref{eq:modification3}.
\label{lm:reduction1}
\end{lemma}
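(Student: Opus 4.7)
The plan is to reduce the first diagram to the second by unfolding $\coev_{f^*f_*\cM}$ and $(\epsilon^\R_\cM)^\vee$ using Proposition~\ref{prop:ambidextrousduality} and diagram~\eqref{eq:epsilonRdual}, respectively, and then recognizing the resulting comparison 2-cell as the pasting of the three 2-cells of the second diagram. First, I would substitute
\[
\coev_{f^*f_*\cM}\simeq \alpha^\R\circ f^*f_*(\coev_\cM)\circ \eta_\cC
\]
from Proposition~\ref{prop:ambidextrousduality}, which factors the left vertical arrow of the first diagram through $f^*f_*\cC$, $f^*f_*(\cM\otimes_\cC\cM^\vee)$, and $f^*f_*\cM\otimes_\cC f^*f_*\cM^\vee$, reproducing the left-hand column of the second diagram. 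Next, I would apply \eqref{eq:epsilonRdual} to replace $\id\otimes (\epsilon^\R_\cM)^\vee$ by $\id\otimes \epsilon_{\cM^\vee}$, producing the bottom row of the second diagram.

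At this point the task is to check that the canonical 2-cell filling the first diagram, which witnesses the coherence between the morphism $\epsilon^\R$ and its dual, agrees with the pasting of the three 2-cells of the second diagram. These 2-cells are: the triangle identity $\epsilon_\cC\circ\eta_\cC\simeq \id_\cC$ (top square), the naturality of the counit $\epsilon\colon f^*f_*\to\id$ against $\coev_\cM\colon \cC\to \cM\otimes_\cC\cM^\vee$ (middle square), and the modification~\eqref{eq:modification3}, which exchanges $(\id\otimes\epsilon)\circ \alpha^\R$ and $(\epsilon^\R\otimes\id)\circ \epsilon$ (bottom rectangle). Pasting these in this order converts the left-bottom path of the first diagram, step by step, into the top-right path: \eqref{eq:modification3} moves the counit from the right tensor factor to an application at the whole $\cM\otimes_\cC\cM^\vee$; naturality then slides this counit past $f^*f_*(\coev_\cM)$ onto the unit object; and the triangle identity finally cancels the $\eta_\cC$ introduced in the first step.

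The main obstacle I expect is the coherence bookkeeping at the end. Both $(\epsilon^\R_\cM)^\vee$ (via~\eqref{eq:epsilonRdual}) and $\coev_{f^*f_*\cM}$ (via Proposition~\ref{prop:ambidextrousduality}) are built from swallowtail zigzags for the ambidextrous adjunction $f^*\dashv f_*$, and one must ensure that the 2-cells chosen in those constructions paste cleanly with the modifications \eqref{eq:modification1}, \eqref{eq:modification2}, and \eqref{eq:modification3}. I expect Lemma~\ref{lm:modification3intertwiner} to be the essential input here, since it governs how \eqref{eq:modification3} interacts with units and counits and is precisely what is needed to absorb the $\coev$-insertion of \eqref{eq:epsilonRdual} into the $\alpha^\R$-factor produced in the first step. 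Once the conventions on triangulators and on $\alpha,\alpha^\R$ are fixed, the verification becomes a diagrammatic collapse of zigzags, but keeping track of the 2-cell matchings is the delicate point.
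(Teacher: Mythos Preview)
Your overall strategy---unfold $\coev_{f^*f_*\cM}$ via Proposition~\ref{prop:ambidextrousduality} and $(\epsilon^\R_\cM)^\vee$ via~\eqref{eq:epsilonRdual}, then collapse---is exactly the paper's approach. Two concrete corrections, however.

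First, Lemma~\ref{lm:modification3intertwiner} is \emph{not} used inside this proof; it is invoked only afterwards, when the lemma is combined with further simplifications of diagram~\eqref{eq:GRRdiagram1}. What the paper actually uses here is: the fact that the modifications~\eqref{eq:modification2} and~\eqref{eq:modification3} are natural (i.e.\ are modifications), that the adjunction triangulator $\tau_1$ is a modification, a cancellation of two instances of $\tau_1$, and naturality of $\epsilon$ and $\epsilon^\R$.

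Second, and more importantly, you have misidentified the source of the final zigzag collapse. The decisive last step is the \emph{swallowtail axiom for the coherent duality data of $\cM$} (Definition~\ref{def:coherentduality}), not for the ambidextrous adjunction $f^*\dashv f_*$. After the adjunction-related 2-cells have been absorbed, one is left with a configuration in which the triangulators $T_1\otimes\id_{\cM^\vee}$ and $\id_\cM\otimes T_2$ both appear, precomposed with $\coev_\cM$; the first swallowtail identity is precisely what makes these agree. This is why the paper insists, just before~\eqref{eq:GRRdiagram1}, that the duality data for $\cM$ be taken coherent. Your outline never invokes this axiom, and without it the ``diagrammatic collapse of zigzags'' you allude to cannot be completed: the residual 2-cell coming from the duality triangulators in~\eqref{eq:epsilonRdual} and in the construction of $\coev_{f^*f_*\cM}$ does not cancel by naturality or by any of the modifications~\eqref{eq:modification1}--\eqref{eq:modification3} alone.
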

\begin{proof}
The original diagram can be expanded to
\begin{center}
\resizebox{!}{0.016\textwidth}{
\xymatrix{
\cC \ar^{\coev}[r] \ar^{\eta}[d] & \cM\otimes \cM^\vee \ar^{\epsilon^\R\otimes\id}[r] \ar^{\eta\otimes\id\otimes\id}[d] & f^*f_*\cM\otimes \cM^\vee \ar@{=}[dr] \ar^{\eta\otimes\id\otimes\id}[d] &&&
\\
f^*f_*\cC \ar^{\coev}[d] \ar^-{\id\otimes\coev}[r] & f^*f_*\cC\otimes \cM\otimes \cM^\vee \ar^{\coev\otimes\id\otimes\id}[d] \ar^{\id\otimes\epsilon^\R\otimes\id}[r] & f^*f_*\cC\otimes f^*f_*\cM\otimes \cM^\vee \ar^{\coev\otimes\id\otimes\id}[d] \ar^{\alpha\otimes\id}[r] & f^*f_*\cM\otimes \cM^\vee \ar@{=}@/^1pc/[dr] \ar^{\coev\otimes\id\otimes\id}[d] &
\\
f^*f_*(\cM\otimes \cM^\vee) \ar^{\alpha^\R}[d] \ar^-{\id\otimes\coev}[r] & f^*f_*(\cM\otimes \cM^\vee)\otimes \cM\otimes \cM^\vee \ar^{\alpha^\R\otimes\id\otimes\id}[d] \ar^-{\id\otimes\id\otimes\epsilon^\R\otimes\id}[r] & f^*f_*(\cM\otimes \cM^\vee)\otimes f^*f_*\cM\otimes \cM^\vee \ar^{\alpha^\R\otimes\id\otimes\id}[d] \ar^{\alpha\otimes\id}[r] & f^*f_*(\cM\otimes \cM^\vee\otimes \cM)\otimes \cM^\vee \ar^-{\id\otimes\ev\otimes\id}[r] \ar^{\alpha^\R\otimes\id}[d] & f^*f_*\cM\otimes \cM^\vee \ar@{=}@/^1pc/[dr] \ar^{\alpha^\R\otimes\id}[d]
\\
f^*f_*\cM\otimes f^*f_*\cM^\vee \ar^-{\coev}[r] \ar_{\id\otimes \epsilon}[dr] & f^*f_*\cM\otimes f^*f_*\cM^\vee\otimes \cM\otimes \cM^\vee \ar^-{\id\otimes\id\otimes\epsilon^\R\otimes\id}[r] \ar^{\id\otimes\epsilon\otimes\id\otimes\id}[dr] & f^*f_*\cM\otimes f^*f_*\cM^\vee\otimes f^*f_*\cM\otimes \cM^\vee \ar^{\id\otimes\alpha\otimes\id}[r] & f^*f_*\cM\otimes f^*f_*(\cM^\vee\otimes \cM)\otimes \cM^\vee \ar^-{\id\otimes\ev\otimes\id}[r] & f^*f_*\cM\otimes f^*f_*\cC\otimes \cM^\vee \ar^-{\id\otimes\eta^\R\otimes\id}[r] & f^*f_*\cM\otimes \cM^\vee
\\
& f^*f_*\cM\otimes \cM^\vee \ar^-{\id\otimes\id\otimes\coev}[r] \ar@/_2pc/@{=}[rr] & f^*f_*\cM\otimes \cM^\vee\otimes \cM\otimes \cM^\vee \ar^{\id\otimes\ev\otimes\id}[r] \ar^{\id\otimes\epsilon^\R\otimes\id}[ur] & f^*f_*\cM\otimes \cM^\vee \ar^{\id\otimes\epsilon^\R\otimes\id}[ur] \ar@{=}[urr] &&
}
}
\end{center}

Using the fact that the 2-isomorphisms \eqref{eq:modification2} and \eqref{eq:modification3} are modifications, we get
\begin{center}
\resizebox{!}{0.016\textwidth}{
\xymatrix{
\cC \ar^{\coev}[r] \ar^{\eta}[d] & \cM\otimes \cM^\vee \ar^{\epsilon^\R\otimes\id}[r] \ar^{\eta\otimes\id\otimes\id}[d] & f^*f_*\cM\otimes \cM^\vee \ar@{=}[dr] \ar^{\eta\otimes\id\otimes\id}[d] &&&
\\
f^*f_*\cC \ar^{\coev}[d] \ar^-{\id\otimes\coev}[r] & f^*f_*\cC\otimes \cM\otimes \cM^\vee \ar^{\coev\otimes\id\otimes\id}[d] \ar^{\id\otimes\epsilon^\R\otimes\id}[r] \ar^{\epsilon\otimes\id\otimes\id}[dr] & f^*f_*\cC\otimes f^*f_*\cM\otimes \cM^\vee \ar^{\alpha\otimes\id}[r] & f^*f_*\cM\otimes \cM^\vee \ar@{=}@/^1pc/[dr] \ar^{\coev\otimes\id\otimes\id}[d] &
\\
f^*f_*(\cM\otimes \cM^\vee) \ar^{\alpha^\R}[d] \ar^-{\id\otimes\coev}[r] & f^*f_*(\cM\otimes \cM^\vee)\otimes\cM\otimes \cM^\vee \ar^{\alpha^\R\otimes\id\otimes\id}[d] \ar^{\epsilon\otimes\id\otimes\id}[dr] & \cM\otimes \cM^\vee \ar^{\epsilon^\R\otimes\id}[ur] \ar^{\coev\otimes\id\otimes\id}[d] & f^*f_*(\cM\otimes \cM^\vee\otimes \cM)\otimes \cM^\vee \ar^-{\id\otimes\ev\otimes\id}[r] \ar^{\alpha^\R\otimes\id}[d] & f^*f_*\cM\otimes \cM^\vee \ar@{=}@/^1pc/[dr] \ar^{\alpha^\R\otimes\id}[d]
\\
f^*f_*\cM\otimes f^*f_*\cM^\vee \ar^-{\coev}[r] \ar_{\id\otimes \epsilon}[dr] & f^*f_*\cM\otimes f^*f_*\cM^\vee\otimes \cM\otimes \cM^\vee \ar^{\id\otimes\epsilon\otimes\id\otimes\id}[dr] & \cM\otimes \cM^\vee\otimes \cM\otimes \cM^\vee \ar^{\epsilon^\R\otimes\id\otimes\id\otimes\id}[d] \ar^{\epsilon^\R\otimes\id}[ur] & f^*f_*\cM\otimes f^*f_*(\cM^\vee\otimes \cM)\otimes \cM^\vee \ar^-{\id\otimes\ev\otimes\id}[r] & f^*f_*\cM\otimes f^*f_*\cC\otimes \cM^\vee \ar^-{\id\otimes\eta^\R\otimes\id}[r] & f^*f_*\cM\otimes \cM^\vee
\\
& f^*f_*\cM\otimes \cM^\vee \ar^-{\id\otimes\id\otimes\coev}[r] \ar@/_2pc/@{=}[rr] & f^*f_*\cM\otimes \cM^\vee\otimes \cM\otimes \cM^\vee \ar^{\id\otimes\ev\otimes\id}[r] \ar^{\id\otimes\epsilon^\R\otimes\id}[ur] & f^*f_*\cM\otimes \cM^\vee \ar^{\id\otimes\epsilon^\R\otimes\id}[ur] \ar@{=}[urr] &&
}
}
\end{center}

Using the fact that the triangulator $\tau_1$ is a modification of natural transformations $\epsilon\eta\cong\id$, we obtain
\begin{center}
\resizebox{!}{0.012\textwidth}{
\xymatrix{
\cC \ar^{\coev}[r] \ar^{\eta}[d] & \cM\otimes \cM^\vee \ar^{\eta\otimes\id\otimes\id}[d] \ar@{=}@/^2pc/[ddr] &&&&
\\
f^*f_*\cC \ar^{\coev}[d] \ar^-{\id\otimes\coev}[r] & f^*f_*\cC\otimes \cM\otimes \cM^\vee \ar^{\coev\otimes\id\otimes\id}[d] \ar^{\epsilon\otimes\id\otimes\id}[dr] && f^*f_*\cM\otimes \cM^\vee \ar@{=}@/^1pc/[dr] \ar^{\coev\otimes\id\otimes\id}[d] &
\\
f^*f_*(\cM\otimes \cM^\vee) \ar^{\alpha^\R}[d] \ar^-{\id\otimes\coev}[r] & f^*f_*(\cM\otimes \cM^\vee)\otimes \cM\otimes \cM^\vee \ar^{\alpha^\R\otimes\id\otimes\id}[d] \ar^{\epsilon\otimes\id\otimes\id}[dr] & \cM\otimes \cM^\vee \ar^{\epsilon^\R\otimes\id}[ur] \ar^{\coev\otimes\id\otimes\id}[d] & f^*f_*(\cM\otimes \cM^\vee\otimes \cM)\otimes \cM^\vee \ar^-{\id\otimes\ev\otimes\id}[r] \ar^{\alpha^\R\otimes\id}[d] & f^*f_*\cM\otimes \cM^\vee \ar@{=}@/^1pc/[dr] \ar^{\alpha^\R\otimes\id}[d]
\\
f^*f_*\cM\otimes f^*f_*\cM^\vee \ar^-{\coev}[r] \ar_{\id\otimes \epsilon}[dr] & f^*f_*\cM\otimes f^*f_*\cM^\vee\otimes \cM\otimes \cM^\vee \ar^{\id\otimes\epsilon\otimes\id\otimes\id}[dr] & \cM\otimes \cM^\vee\otimes \cM\otimes \cM^\vee \ar^{\epsilon^\R\otimes\id\otimes\id\otimes\id}[d] \ar^{\epsilon^\R\otimes\id}[ur] & f^*f_*\cM\otimes f^*f_*(\cM^\vee\otimes \cM)\otimes \cM^\vee \ar^-{\id\otimes\ev\otimes\id}[r] & f^*f_*\cM\otimes f^*f_*\cC\otimes \cM^\vee \ar^-{\id\otimes\eta^\R\otimes\id}[r] & f^*f_*\cM\otimes \cM^\vee
\\
& f^*f_*\cM\otimes \cM^\vee \ar^-{\id\otimes\id\otimes\coev}[r] \ar@/_2pc/@{=}[rr] & f^*f_*\cM\otimes \cM^\vee\otimes \cM\otimes \cM^\vee \ar^{\id\otimes\ev\otimes\id}[r] \ar^{\id\otimes\epsilon^\R\otimes\id}[ur] & f^*f_*\cM\otimes \cM^\vee \ar^{\id\otimes\epsilon^\R\otimes\id}[ur] \ar@{=}[urr] &&
}
}
\end{center}

Cancelling out the two triangulators $\tau_1$ appearing in the modifications
\[(\id\otimes\eta^\R)\circ\alpha^\R\cong\id,\qquad (\id\otimes\eta^\R)\circ (\id\otimes\epsilon^\R)\cong\id\]
we obtain
\begin{center}
\resizebox{!}{0.015\textwidth}{
\xymatrix{
\cC \ar^{\coev}[r] \ar^{\eta}[d] & \cM\otimes_\cC \cM^\vee \ar^{\eta\otimes\id\otimes\id}[d] \ar@{=}@/^2pc/[ddr] &&&&
\\
f^*f_*\cC \ar^{\coev}[d] \ar^-{\id\otimes\coev}[r] & f^*f_*\cC\otimes \cM\otimes \cM^\vee \ar^{\coev\otimes\id\otimes\id}[d] \ar^{\epsilon\otimes\id\otimes\id}[dr] && f^*f_*\cM\otimes \cM^\vee \ar@{=}@/^1pc/[dr] \ar^{\coev\otimes\id\otimes\id}[d] &
\\
f^*f_*(\cM\otimes \cM^\vee) \ar^{\alpha^\R}[d] \ar^-{\id\otimes\coev}[r] & f^*f_*(\cM\otimes \cM^\vee)\otimes\cM\otimes \cM^\vee \ar^{\alpha^\R\otimes\id\otimes\id}[d] \ar^{\epsilon\otimes\id\otimes\id}[dr] & \cM\otimes \cM^\vee \ar^{\epsilon^\R\otimes\id}[ur] \ar^{\coev\otimes\id\otimes\id}[d] & f^*f_*(\cM\otimes \cM^\vee\otimes \cM)\otimes \cM^\vee \ar^-{\id\otimes\ev\otimes\id}[r] & f^*f_*\cM\otimes \cM^\vee
\\
f^*f_*\cM\otimes f^*f_*\cM^\vee \ar^-{\coev}[r] \ar_{\id\otimes \epsilon}[dr] & f^*f_*\cM\otimes f^*f_*\cM^\vee\otimes \cM\otimes \cM^\vee \ar^{\id\otimes\epsilon\otimes\id\otimes\id}[dr] & \cM\otimes \cM^\vee\otimes \cM\otimes_\cC \cM^\vee \ar^{\epsilon^\R\otimes\id\otimes\id\otimes\id}[d] \ar^{\id\otimes\ev\otimes\id}[r] \ar^{\epsilon^\R\otimes\id}[ur] & \cM\otimes\cM^\vee \ar^{\epsilon^\R\otimes\id}[ur]
\\
& f^*f_*\cM\otimes \cM^\vee \ar^-{\id\otimes\id\otimes\coev}[r] \ar@/_2pc/@{=}[rr] & f^*f_*\cM\otimes \cM^\vee\otimes \cM\otimes \cM^\vee \ar^{\id\otimes\ev\otimes\id}[r] & f^*f_*\cM\otimes \cM^\vee \ar@{=}[uur] &&
}
}
\end{center}

Using naturality of $\epsilon^\R$ we get
\begin{center}
\resizebox{!}{0.020\textwidth}{
\xymatrix{
\cC \ar^{\coev}[r] \ar^{\eta}[d] & \cM\otimes \cM^\vee \ar^{\eta\otimes\id\otimes\id}[d] \ar@{=}@/^2pc/[ddr] &&&&
\\
f^*f_*\cC \ar^{\coev}[d] \ar^-{\id\otimes\coev}[r] & f^*f_*\cC\otimes \cM\otimes \cM^\vee \ar^{\coev\otimes\id\otimes\id}[d] \ar^{\epsilon\otimes\id\otimes\id}[dr] &&&
\\
f^*f_*(\cM\otimes \cM^\vee) \ar^{\alpha^\R}[d] \ar^-{\id\otimes\coev}[r] & f^*f_*(\cM\otimes \cM^\vee)\otimes\cM\otimes \cM^\vee \ar^{\alpha^\R\otimes\id\otimes\id}[d] \ar^{\epsilon\otimes\id\otimes\id}[dr] & \cM\otimes \cM^\vee \ar^{\coev\otimes\id\otimes\id}[d] \ar@{=}@/^1pc/[dr]
\\
f^*f_*\cM\otimes f^*f_*\cM^\vee \ar^-{\coev}[r] \ar_{\id\otimes \epsilon}[dr] & f^*f_*\cM\otimes f^*f_*\cM^\vee\otimes \cM\otimes \cM^\vee \ar^{\id\otimes\epsilon\otimes\id\otimes\id}[dr] & \cM\otimes \cM^\vee\otimes \cM\otimes \cM^\vee \ar^{\epsilon^\R\otimes\id\otimes\id\otimes\id}[d] \ar^-{\id\otimes\ev\otimes\id}[r]& \cM\otimes\cM^\vee \ar^{\epsilon^\R\otimes\id}[d]
\\
& f^*f_*\cM\otimes \cM^\vee \ar^-{\id\otimes\id\otimes\coev}[r] \ar@/_2pc/@{=}[rr] & f^*f_*\cM\otimes \cM^\vee\otimes \cM\otimes \cM^\vee \ar^-{\id\otimes\ev\otimes\id}[r] & f^*f_*\cM\otimes \cM^\vee &&
}
}
\end{center}

Using the fact that \eqref{eq:modification3} is a modification we get
\begin{center}
\resizebox{!}{0.020\textwidth}{
\xymatrix{
\cC \ar^{\coev}[r] \ar^{\eta}[d] & \cM\otimes \cM^\vee \ar^{\eta\otimes\id\otimes\id}[d] \ar@{=}@/^2pc/[ddr] &&&&
\\
f^*f_*\cC \ar^{\coev}[d] \ar^-{\id\otimes\coev}[r] & f^*f_*\cC\otimes \cM\otimes \cM^\vee \ar^{\coev\otimes\id\otimes\id}[d] \ar^{\epsilon\otimes\id\otimes\id}[dr] &&&
\\
f^*f_*(\cM\otimes \cM^\vee) \ar^{\alpha^\R}[d] \ar^{\epsilon}[dr] \ar^-{\id\otimes\coev}[r] & f^*f_*(\cM\otimes \cM^\vee)\otimes\cM\otimes \cM^\vee \ar^{\epsilon\otimes\id\otimes\id}[dr] & \cM\otimes \cM^\vee \ar^{\coev\otimes\id\otimes\id}[d] \ar@{=}@/^1pc/[dr]
\\
f^*f_*\cM\otimes f^*f_*\cM^\vee \ar_{\id\otimes \epsilon}[dr] & \cM\otimes\cM^\vee \ar^{\epsilon^\R\otimes\id}[d] \ar^{\id\otimes\id\otimes\coev}[r] & \cM\otimes \cM^\vee\otimes \cM\otimes \cM^\vee \ar^{\epsilon^\R\otimes\id\otimes\id\otimes\id}[d] \ar^-{\id\otimes\ev\otimes\id}[r]& \cM\otimes\cM^\vee \ar^{\epsilon^\R\otimes\id}[d]
\\
& f^*f_*\cM\otimes \cM^\vee \ar^-{\id\otimes\id\otimes\coev}[r] \ar@/_2pc/@{=}[rr] & f^*f_*\cM\otimes \cM^\vee\otimes \cM\otimes \cM^\vee \ar^-{\id\otimes\ev\otimes\id}[r] & f^*f_*\cM\otimes \cM^\vee &&
}
}
\end{center}

Applying naturality of $\epsilon$ we get
\[
\xymatrix{
\cC \ar^{\coev}[r] \ar^{\eta}[d] & \cM\otimes \cM^\vee \ar^{\eta\otimes\id\otimes\id}[d] \ar@{=}@/^2pc/[ddr] &&&&
\\
f^*f_*\cC \ar^{\coev}[d] \ar^-{\id\otimes\coev}[r] \ar^{\epsilon}[dr] & f^*f_*\cC\otimes \cM\otimes_\cC \cM^\vee \ar^{\epsilon\otimes\id\otimes\id}[dr] &&&
\\
f^*f_*(\cM\otimes \cM^\vee) \ar^{\alpha^\R}[d] \ar^{\epsilon}[dr] & \cC \ar^{\coev}[d] \ar^{\coev}[r] & \cM\otimes \cM^\vee \ar^{\coev\otimes\id\otimes\id}[d] \ar@{=}@/^1pc/[dr]
\\
f^*f_*\cM\otimes f^*f_*\cM^\vee \ar_{\id\otimes \epsilon}[dr] & \cM\otimes\cM^\vee \ar^{\epsilon^\R\otimes\id}[d] \ar^-{\id\otimes\id\otimes\coev}[r] \ar@/_2pc/@{=}[rr] & \cM\otimes \cM^\vee\otimes \cM\otimes \cM^\vee \ar^-{\id\otimes\ev\otimes\id}[r] & \cM\otimes\cM^\vee \ar^{\epsilon^\R\otimes\id}[d]
\\
& f^*f_*\cM\otimes \cM^\vee \ar@{=}[rr] && f^*f_*\cM\otimes \cM^\vee &&
}
\]

Using the swallowtail axiom for the coherent duality data for $\cM$ we get the result.
\end{proof}

\ 

From Lemma \ref{lm:reduction1} we obtain that the diagram
\[
\xymatrix{
(\cL f)_*\cL\cC \ar@{=}[r] \ar^{\coev_{f^*f_*\cM}}[d] & (\cL f)_*\cL\cC \ar^{\coev}[d] \\
(\cL f)_*p^*_\cC(f^*f_*\cM\otimes f^*f_*\cM^\vee) \ar^-{\epsilon\otimes (\epsilon^\R)^\vee}[r] \ar@/_2pc/_{\epsilon\otimes\epsilon}[r] \twocell{ur} & (\cL f)_* p^*_\cC (\cM\otimes \cM^\vee)
}
\]
is equal to the one obtained by applying $(\cL f)_*p^*_\cC$ to
\[
\xymatrix{
\cC \ar@{=}[r] \ar^{\eta}[d] & \cC \ar@{=}[d] \\
f^*f_*\cC \ar^{\epsilon}[r] \ar^{\coev_\cM}[d] & \cC \ar^{\coev_\cM}[d] \\
f^*f_*(\cM\otimes\cM^\vee) \ar^{\alpha^\R}[d] \ar^{\epsilon}[r] & \cM\otimes \cM^\vee \ar^{\epsilon^\R\otimes\id}[d] \ar@{=}@/^2pc/[dr] & \\
f^*f_*\cM\otimes f^*f_*\cM^\vee \ar^{\id\otimes\epsilon}[r] & f^*f_*\cM\otimes \cM^\vee \ar^{\epsilon\otimes\id}[r] \twocell{ur} & \cM\otimes \cM^\vee
}
\]

Using the obvious equivalence $\epsilon\otimes\epsilon\cong \epsilon\circ \alpha$ and Lemma \ref{lm:modification3intertwiner} (2) it is equal to
\[
\xymatrix{
& \cC \ar@{=}[r] \ar^{\eta}[d] & \cC \ar@{=}[d] \\
& f^*f_*\cC \ar^{\epsilon}[r] \ar^{\coev_\cM}[d] & \cC \ar^{\coev_\cM}[d] \\
\twocell{dr} & f^*f_*(\cM\otimes\cM^\vee) \ar_{\alpha^\R}@/_2pc/[dl] \ar^{\epsilon}[r] \ar@{=}[d] & \cM\otimes \cM^\vee \ar@{=}[d] \\
f^*f_*\cM\otimes f^*f_*\cM^\vee \ar^{\alpha}[r] & f^*f_*(\cM\otimes \cM^\vee) \ar^{\epsilon}[r] & \cM\otimes \cM^\vee
}
\]

Now we are going to analyze the bottom part of diagram \eqref{eq:GRRdiagram1} in a similar way.

\begin{lemma}
Under the identification $(\epsilon^\R_\cM)^\vee\cong \epsilon_{\cM^\vee}$ given by \eqref{eq:epsilonRdual} the diagram
\[
\xymatrix{
  \cM\otimes_\cC f^*f_* \cM^\vee  \ar^{\epsilon^\R\otimes\id}[d] \ar^-{\id \otimes (\epsilon^\R)^\vee}[r] &  \cM\otimes_\cC\cM^\vee \ar^{\ev_\cM}[dd]  \\
 f^* f_*\cM\otimes_\cC f^*f_* \cM^\vee     \ar^{\ev_{f^*f_*\cM}}[d] & \\
\cC \ar@{=}[r] & \cC \\
}
\]
becomes equal to
\[
\xymatrix{
\cM\otimes_\cC f^*f_* \cM^\vee  \ar^{\epsilon^\R \otimes\id}[d]  \ar^{\id\otimes\epsilon}[r] &  \cM\otimes_\cC \cM^\vee \ar@{=}[dd] \\
 f^*(f_*\cM\otimes_{\cD} f_*\cM^\vee) \ar^{\alpha}[d] &   \\
 f^*f_*(\cM\otimes_\cC\cM^\vee)  \ar^{\ev_\cM}[d] &   \cM\otimes_\cC  \cM^\vee   \ar^{\ev_\cM}[d]   \ar^{\epsilon^\R}[l]\\ 
 f^* f_*\cC  \ar^{\eta^\R}[d] &   \cC   \ar@{=}[d]     \ar^{\epsilon^\R}[l]\\
 \cC \ar@{=}[r] &  \cC 
 }
 \]
where the top rectangle is given by \eqref{eq:modification2}.
\label{lm:reduction2}
\end{lemma}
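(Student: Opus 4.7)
The plan is to prove Lemma \ref{lm:reduction2} by mirroring the strategy used in Lemma \ref{lm:reduction1}, but for the evaluation side of the diagram. The overall structure of the proof will be to first substitute the definition of $(\epsilon^\R_\cM)^\vee \cong \epsilon_{\cM^\vee}$ given by \eqref{eq:epsilonRdual} into the left-hand diagram, then unravel the resulting large pasting diagram using the modifications \eqref{eq:modification1}--\eqref{eq:modification3}, Lemma \ref{lm:modification3intertwiner}, the swallowtail axioms for the ambidextrous adjunction and for the coherent duality data of $\cM$, and the naturality of the units and counits. The target shape on the right-hand side uses \eqref{eq:modification2} as the only non-trivial $2$-cell, so the task is to reduce all other $2$-cells appearing after the substitution to identities.

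First, I would substitute \eqref{eq:epsilonRdual} into the left-hand diagram, obtaining an expanded pasting diagram in which $(\epsilon^\R)^\vee$ is replaced by a composite involving $\coev$, $\epsilon$, $\epsilon^\R$, $\alpha$, $\ev$, and $\eta^\R$. Next, by naturality of $\epsilon$ and the fact that \eqref{eq:modification3} is a modification, I would push the $\epsilon^\R\otimes\id$ in the left column past the $\id\otimes\epsilon$ in the top row, producing an intermediate diagram in which $\alpha$ appears in the expected position between $f^*(f_*\cM\otimes_\cD f_*\cM^\vee)$ and $f^*f_*(\cM\otimes_\cC \cM^\vee)$. I would then use the dual of Lemma \ref{lm:modification3intertwiner} (or Lemma \ref{lm:modification3intertwiner}(1) itself, applied on the appropriate side) to convert the composite $\epsilon\otimes\epsilon$ into $\epsilon\circ\alpha$, which gives the rectangle $\epsilon\colon f^*f_*(\cM\otimes_\cC\cM^\vee)\rightarrow \cM\otimes_\cC\cM^\vee$ commuting with $\ev_\cM$ via naturality of $\epsilon$.

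The final step is to collapse the residual $2$-cells involving $\eta$, $\eta^\R$ and the triangulators. Here I would use the swallowtail axiom for the coherent dual pair $(\cM,\cM^\vee)$ to cancel the $\id\otimes\coev$ -- $\ev\otimes\id$ composite created by the substitution of \eqref{eq:epsilonRdual}, together with the swallowtail axiom for the ambidextrous adjunction (Remark \ref{rm:swallowtail}) to cancel $\eta^\R\circ \eta \simeq \id$ after it has been factored out via naturality. What remains is precisely the right-hand diagram, with its only non-trivial $2$-cell supplied by \eqref{eq:modification2}.

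The main obstacle I anticipate is organizing the pasting diagram so that the modifications \eqref{eq:modification1}--\eqref{eq:modification3} can be applied in the right order, and in particular identifying the correct instance of the swallowtail axiom for $(\cM,\cM^\vee)$ that kills the extra coevaluation--evaluation loop introduced by expanding $(\epsilon^\R_\cM)^\vee$ via \eqref{eq:epsilonRdual}. This is essentially the dual bookkeeping to that carried out in the proof of Lemma \ref{lm:reduction1}, so I expect the pattern of simplifications to match closely, but with the roles of $\coev$/$\ev$ and $\eta$/$\eta^\R$ interchanged and with Lemma \ref{lm:modification3intertwiner}(1) playing the role that Lemma \ref{lm:modification3intertwiner}(2) played there.
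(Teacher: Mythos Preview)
Your proposal is correct and matches the paper's approach: the paper actually omits the proof of Lemma~\ref{lm:reduction2} entirely, implicitly leaving it as the dual diagram chase to the detailed proof of Lemma~\ref{lm:reduction1}, which is exactly what you outline. One small point: the use of Lemma~\ref{lm:modification3intertwiner}(1) in the paper occurs \emph{after} Lemma~\ref{lm:reduction2} (in the subsequent simplification step), not within its proof, so you may want to keep that step separate from the lemma itself.
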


From Lemma \ref{lm:reduction2} we obtain that the diagram
\[
\xymatrix{
(\cL f)_*p^*_\cC(f^*f_*\cM\otimes f^*f_*\cM^\vee) \ar^-{\epsilon\otimes (\epsilon^\R)^\vee}[r] \ar^{\ev_{f^*f_*\cM}}[d] \ar@/^2pc/^{\epsilon\otimes\epsilon}[r] & (\cL f)_* p^*_\cC(\cM\otimes \cM^\vee) \ar^{\ev_\cM}[d] \\
(\cL f)_*\cL\cC \ar@{=}[r] \twocell{ur} & (\cL f)_*\cL\cC
}
\]
is equal to the one obtained by applying $(\cL f)_*p^*_\cC$ to
\[
\xymatrix{
f^*f_*\cM\otimes f^*f_*\cM \ar^{\epsilon\otimes\id}[r] \ar@{=}@/_2pc/[dr] & \cM\otimes f^*f_* \cM^\vee  \ar^{\epsilon^\R \otimes\id}[d]  \ar^{\id\otimes\epsilon}[r] & \cM\otimes \cM^\vee \ar@{=}[dd] \\
\twocell{ur} & f^*(f_*\cM\otimes f_*\cM^\vee) \ar^{\alpha}[d] & \\
& f^*f_*(\cM\otimes \cM^\vee)  \ar^{\ev_\cM}[d] & \cM\otimes \cM^\vee \ar^{\ev_\cM}[d]   \ar^-{\epsilon^\R}[l] \\ 
& f^* f_*\cC \ar^{\eta^\R}[d] & \cC \ar@{=}[d] \ar^-{\epsilon^\R}[l] \\
& \cC \ar@{=}[r] & \cC
}
\]

Using the equivalence $\epsilon\otimes\epsilon\cong \epsilon\circ \alpha$ and Lemma \ref{lm:modification3intertwiner} (1) it is equivalent to
\[
\xymatrix{
f^*f_*\cM\otimes f^*f_* \cM^\vee \ar^{\alpha}[d] \ar^{\alpha}[r] & f^*f_*(\cM\otimes \cM^\vee) \ar@/^2pc/^{\epsilon}[dr] \ar@{=}[d] & \\
f^*f_*(\cM\otimes \cM^\vee) \ar^{\ev_\cM}[d] \ar@{=}[r] & f^*f_*(\cM\otimes\cM^\vee) \twocell{ur} & \cM\otimes\cM^\vee \ar_-{\epsilon^\R}[l] \ar^{\ev_\cM}[d] \\ 
f^* f_*\cC \ar^{\eta^\R}[d] && \cC \ar@{=}[d] \ar^{\epsilon^\R}[ll] \\
\cC \ar@{=}[rr] && \cC
}
\]

\subsection{Reduction to $\cM=\cC$}

Observe that the diagram
\[
\xymatrix{
p^*_\cD(f_*\cM\otimes f_*\cM^\vee) \ar^{\alpha}[d] \ar^-{\lambda}[r] & (\cL f)_* p^*_\cC f^* (f_*\cM\otimes f_*\cM^\vee) \ar^{\alpha}[r] \ar^{\alpha}[d] \twocell{dr} & (\cL f)_* p^*_\cC f^*f_*(\cM\otimes \cM^\vee) \ar^{\epsilon}[d] \\
p^*_\cD f_*(\cM\otimes \cM^\vee) \ar^-{\lambda}[r] \ar^{\ev_\cM}[d] & (\cL f)_*p^*_\cC f^* f_*(\cM\otimes \cM^\vee) \ar^{\ev_\cM}[d] & (\cL f)_*p^*_\cC(\cM\otimes \cM^\vee) \ar^{\ev_\cM}[d] \ar^{\epsilon^\R}[l] \\ 
p^*_\cD f_*\cC \ar^{\lambda}[r] \ar^{\eta^\R}[d] & (\cL f)_*p^*_\cC f_*\cC \ar^{\eta^\R}[d] & (\cL f)_*\cL\cC \ar@{=}[d] \ar^{\epsilon^\R}[l] \\
\cL\cD \ar^{\lambda}[r] \ar@{=}@/_2pc/[dr] & (\cL f)_*\cL\cC \ar@{=}[r] \ar^{\lambda^\R}[d] & (\cL f)_*\cL\cC \ar^{\lambda^\R}[d] \\
\twocell{ur} & \cL\cD \ar@{=}[r] & \cL\cD
}
\]
is equivalent to
\[
\xymatrix{
p^*_\cD(f_*\cM\otimes f_*\cM^\vee) \ar^{\alpha}[d] \ar^-{\lambda}[r] & (\cL f)_* p^*_\cC f^* (f_*\cM\otimes f_*\cM^\vee) \ar^{\alpha}[r] \ar^{\alpha}[d] \twocell{dr} & (\cL f)_* p^*_\cC f^* f_*(\cM\otimes \cM^\vee) \ar^{\epsilon}[d] \\
p^*_\cD f_*(\cM\otimes \cM^\vee) \ar^-{\lambda}[r] \ar@{=}@/_2pc/[dr] & (\cL f)_*p^*_\cC f^*f_*(\cM\otimes \cM^\vee) \ar^{\lambda^\R}[d] & (\cL f)_*p^*_\cC(\cM\otimes \cM^\vee) \ar@{=}[d] \ar^{\epsilon^\R}[l] \\ 
\twocell{ur} & p^*_\cD f_*(\cM\otimes \cM^\vee) \ar^{\ev_\cM}[d] & (\cL f)_*p^*_\cC(\cM\otimes \cM^\vee) \ar^{\ev_\cM}[d] \\
& p^*_\cD f_*\cC \ar^{\eta^\R}[d] & (\cL f)_* \cL\cC \ar^{\lambda^\R}[d] \ar_{\lambda^\R\epsilon^\R}[l] \\
& \cL\cD \ar@{=}[r] & \cL\cD
}
\]

Therefore, applying previous simplifications and removing invertible 2-morphisms from \eqref{eq:GRRdiagram1} we get
\begin{equation*}\label{eq:GRRdiagram2}
\resizebox{!}{0.023\textwidth}{
\xymatrix{
p^*_\cD f_* (\cM\otimes \cM^\vee) \ar^-{\lambda}[r] \ar^{\alpha^\R}[d] & (\cL f)_*p^*_\cC f^*f_* (\cM\otimes \cM^\vee) \ar^{\alpha^\R}[d] \ar@{=}[r] & (\cL f)_*p^*_\cC f^*f_* (\cM\otimes \cM^\vee) \ar^{\epsilon}[r] \ar@{=}[d] & (\cL f)_*p^*_\cC (\cM\otimes \cM^\vee) \ar@{=}[d]
\\
p^*_\cD (f_*\cM\otimes f_*\cM^\vee) \ar^{\Mon_{f_*\cM}\otimes \id}[d] \ar^-{\lambda}[r] & (\cL f)_*p^*_\cC f^*(f_*\cM\otimes f_*\cM^\vee) \ar^{\Mon_{f^*f_*\cM}\otimes \id}[d] \ar^-{\alpha}[r] \twocell{ur} & (\cL f)_*p^*_\cC f^*f_* (\cM\otimes \cM^\vee) \ar^{\epsilon}[r] & (\cL f)_*p^*_\cC (\cM\otimes \cM^\vee) \ar^{\Mon_\cM\otimes\id}[d]
\\
p^*_\cD (f_*\cM\otimes f_*\cM^\vee) \ar^{\alpha}[d] \ar^-{\lambda}[r] & (\cL f)_*p^*_\cC f^*(f_*\cM\otimes f_*\cM^\vee) \ar^{\alpha}[d] \ar^-{\alpha}[r] & (\cL f)_*p^*_\cC f^*f_* (\cM\otimes \cM^\vee) \ar^{\epsilon}[r] \ar@{=}[d] & (\cL f)_*p^*_\cC (\cM\otimes \cM^\vee) \ar^{\epsilon^\R}[d]
\\
p^*_\cD f_*(\cM\otimes \cM^\vee) \ar@{=}@/_2pc/[dr] \ar^-{\lambda}[r] & (\cL f)_*p^*_\cC f^*f_*(\cM\otimes \cM^\vee) \ar^{\lambda^\R}[d] \ar@{=}[r] & (\cL f)_*p^*_\cC f^*f_*(\cM\otimes \cM^\vee) \ar@{=}[r] \twocell{ur} & (\cL f)_*p^*_\cC f^*f_*(\cM\otimes \cM^\vee)
\\
\twocell{ur} & p^*_\cD f_*(\cM\otimes \cM^\vee) &&
}
}
\end{equation*}

We have a diagram
\[
\xymatrix{
p^*_\cD f_* (\cM\otimes \cM^\vee) \ar[r] \ar[d] & (\cL f)_*p^*_\cC (\cM\otimes \cM^\vee) \ar[d] \\
p^*_\cD f_*(\cM\otimes \cM^\vee) \ar@{=}[r] \twocell{ur} & p^*_\cD f_*(\cM\otimes \cM^\vee)
}
\]
which we have to prove commutes up to an invertible 2-morphism. It will be enough to prove that
\[
\xymatrix{
p^*_\cD f_* (\cM\otimes \cN) \ar[r] \ar[d] & (\cL f)_*p^*_\cC (\cM\otimes \cN) \ar[d] \\
p^*_\cD f_*(\cM\otimes \cN) \ar@{=}[r] \twocell{ur} & p^*_\cD f_*(\cM\otimes \cN)
}
\]
commutes up to an invertible 2-morphism for any pair of $\cC$-modules $\cM,\cN$.

Note that all functors are $\Prst_k$-linear and commute with geometric realizations, so by \cite[Theorem 4.8.4.1]{HA} it is enough to prove the assertion for $\cM=\cN=\cC$.

Substituting $\cM=\cN=\cC$ and interchanging the first two columns we obtain a diagram
\[
\xymatrix{
\cL\cD\otimes_\cD \cC \ar@{=}[r] \ar^{\Delta^\R}[d] & \cL\cD\otimes_\cD\cC \ar^{p}[r] \ar@{=}[d] & \cL\cC \ar@{=}[d]
\\
\cL\cD\otimes_\cD(\cC\otimes_\cD\cC) \ar^-{\Delta}[r] \ar^{\Mon_{f_*\cC}\otimes\id}[d] \twocell{ur} & \cL\cD\otimes_\cD \cC \ar^{p}[r] & \cL\cC \ar@{=}[d]
\\
\cL\cD\otimes_\cD(\cC\otimes_\cD\cC) \ar^-{\Delta}[r] \ar^{\Delta}[d] & \cL\cD\otimes_\cD \cC \ar^{p}[r] \ar@{=}[d] & \cL\cC \ar^{p^\R}[d]
\\
\cL\cD\otimes_\cD\cC \ar@{=}[r] & \cL\cD\otimes_\cD\cC \ar@{=}[r] \twocell{ur} & \cL\cD\otimes_\cD\cC
}
\]
in $\bPrst_k$.
By construction this is the mate of the 2-isomorphism in the rectangle
\[
\xymatrix{
\cL\cD\otimes_\cD(\cC\otimes_\cD\cC) \ar^-{\Delta}[r] \ar^{\Mon_{f_*\cC}\otimes\id}_{\sim}[d] & \cL\cD\otimes_\cD \cC \ar^{p}[dd] \\
\cL\cD\otimes_\cD(\cC\otimes_\cD\cC) \ar^{\Delta}[d] & \\
\cL\cD\otimes_\cD \cC \ar^{p}[r] & \cL\cC
}
\]

\subsection{Reduction to rigidity}

We will now simplify the above rectangle to show that it is right adjointable.

Let $\cE$ be an $\infty$-category with finite colimits and consider a diagram
\begin{equation}
\begin{tikzcd}
& B_1 \arrow[rr] \arrow[from=dd] && B_{12} \\
B_0 \arrow[ur] \arrow[rr, crossing over] && B_2 \arrow[ur] & \\
& A_1 \arrow[rr] && A_{12} \arrow[uu] \\
A_0 \arrow[rr] \arrow[uu] \arrow[ur] && A_2 \arrow[uu, crossing over] \arrow[ur] &
\end{tikzcd}
\label{eq:cube}
\end{equation}

The morphism $A_{12}\rightarrow B_{12}$ gives rise to a natural transformation $A_{12}\amalg_{A_2}(-)\rightarrow B_{12}\amalg_{A_2}(-)$ of functors $\cE_{A_2/}\rightarrow \cE$. The universal property of the pushout gives a map $A_2\amalg_{A_0} B_0\rightarrow B_2$. Therefore, we get morphisms
\[A_{12}\amalg_{A_2}(A_2\amalg_{A_0} B_0)\rightarrow B_{12}\amalg_{A_2}(A_2\amalg_{A_0} B_0)\rightarrow B_{12}\amalg_{A_2}B_2\rightarrow B_{12}.\]
Replacing $A_2$ by $A_1$ and $B_2$ by $B_1$ we similarly get morphisms
\[A_{12}\amalg_{A_1}(A_1\amalg_{A_0} B_0)\rightarrow B_{12}\amalg_{A_1}(A_1\amalg_{A_0} B_0)\rightarrow B_{12}\amalg_{A_1}B_1\rightarrow B_{12}.\]
Using the commutativity data of diagram \eqref{eq:cube} we obtain
\begin{equation}
\xymatrix{
A_{12}\amalg_{A_2}(A_2\amalg_{A_0} B_0) \ar[r] \ar^{\sim}[dd] & B_{12}\amalg_{A_2}(A_2\amalg_{A_0}B_0) \ar[r] \ar^{\sim}[dd] & B_{12}\amalg_{A_2} B_2 \ar[dr] & \\
&&& B_{12} \\
A_{12}\amalg_{A_1}(A_1\amalg_{A_0} B_0) \ar[r] & B_{12}\amalg_{A_1} (A_1\amalg_{A_0}B_0) \ar[r] & B_{12}\amalg_{A_1} B_1 \ar[ur] &
}
\label{eq:cubediagram1}
\end{equation}
We can exchange the first two columns to obtain another diagram:
\begin{equation}
\xymatrix{
A_{12}\amalg_{A_2}(A_2\amalg_{A_0} B_0) \ar[r] \ar^{\sim}[dd] & A_{12}\amalg_{A_2} B_2 \ar[r] & B_{12}\amalg_{A_2} B_2 \ar[dr] & \\
&&& B_{12} \\
A_{12}\amalg_{A_1}(A_1\amalg_{A_0} B_0) \ar[r] & A_{12}\amalg_{A_1} B_1 \ar[r] & B_{12}\amalg_{A_1} B_1 \ar[ur] &
}
\label{eq:cubediagram2}
\end{equation}
which we will draw as
\begin{equation}
\xymatrix{
A_{12}\amalg_{A_2}(A_2\amalg_{A_0} B_0) \ar[r] \ar^{\sim}[d] & A_{12}\amalg_{A_2} B_2 \ar[dd] \\
A_{12}\amalg_{A_1}(A_1\amalg_{A_0} B_0) \ar[d] & \\
A_{12}\amalg_{A_1} B_1 \ar[r] & B_{12}
}
\label{eq:pentagondiagram}
\end{equation}
which gives a square in $\cE$, i.e. a functor $\Delta^1\times\Delta^1\rightarrow \cE$.

\begin{lemma}
Suppose that the top and bottom squares in \eqref{eq:cube} are coCartesian. Then the square \eqref{eq:pentagondiagram} is equal to the square
\[
\xymatrix{
A_1\amalg_{A_0}(B_0\amalg_{A_0} A_2) \ar[r] \ar[d] & A_1\amalg_{A_0} B_2 \ar[d] \\
B_1\amalg_{B_0}(B_0\amalg_{A_0} A_2) \ar[r] & B_1\amalg_{B_0} B_2
}
\]
obtained using naturality of the transformation $A_1\amalg_{A_0}(-)\rightarrow B_1\amalg_{B_0}(-)$ of functors $\cE_{B_0/}\rightarrow \cE$ with respect to the morphism $B_0\amalg_{A_0} A_2\rightarrow B_2$.
\label{lm:pentagonnaturality}
\end{lemma}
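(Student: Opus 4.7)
The plan is to use the coCartesian assumption on the top and bottom squares of \eqref{eq:cube} to rewrite every vertex appearing in the pentagon \eqref{eq:pentagondiagram} as an iterated pushout over $A_0$ and $B_0$, and then to recognize the resulting square as the naturality square for the map $B_0\coprod_{A_0}A_2\to B_2$ with respect to the transformation $A_1\coprod_{A_0}(-)\to B_1\coprod_{B_0}(-)$ of functors $\cE_{B_0/}\to\cE$.

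Concretely, I would first install the following identifications, all coming from associativity of pushouts together with $A_{12}\simeq A_1\coprod_{A_0}A_2$ and $B_{12}\simeq B_1\coprod_{B_0}B_2$:
\begin{align*}
A_{12}\coprod_{A_2}(A_2\coprod_{A_0} B_0)&\;\simeq\; A_{12}\coprod_{A_0} B_0\;\simeq\; A_1\coprod_{A_0}(B_0\coprod_{A_0} A_2),\\
A_{12}\coprod_{A_1}(A_1\coprod_{A_0} B_0)&\;\simeq\; A_{12}\coprod_{A_0} B_0\;\simeq\; A_1\coprod_{A_0}(B_0\coprod_{A_0} A_2),\\
A_{12}\coprod_{A_2} B_2&\;\simeq\; A_1\coprod_{A_0} B_2,\\
A_{12}\coprod_{A_1} B_1&\;\simeq\; B_1\coprod_{A_0}A_2\;\simeq\; B_1\coprod_{B_0}(B_0\coprod_{A_0} A_2),\\
B_{12}&\;\simeq\; B_1\coprod_{B_0} B_2.
\end{align*}
The canonical equivalence between the first two lines is precisely the left-hand vertical equivalence in \eqref{eq:pentagondiagram}.

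Under these identifications, the horizontal arrows of \eqref{eq:pentagondiagram} are obtained by applying $A_1\coprod_{A_0}(-)$, respectively $B_1\coprod_{B_0}(-)$, to the canonical morphism $B_0\coprod_{A_0}A_2\to B_2$ supplied by the commutativity of the bottom face of \eqref{eq:cube}; the remaining two vertical arrows are the components at $B_0\coprod_{A_0}A_2$ and $B_2$ of the natural transformation $A_1\coprod_{A_0}(-)\to B_1\coprod_{B_0}(-)$ induced by $A_1\to B_1$ and $A_0\to B_0$ on the underlying pushout diagrams. This is exactly the naturality square asserted in the lemma.

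The only real work lies in the coherence bookkeeping: the square \eqref{eq:pentagondiagram} was extracted from \eqref{eq:cubediagram2}, which in turn was obtained from \eqref{eq:cubediagram1} by swapping two adjacent columns using the commutativity data of the cube. One must verify that the 2-cell witnessing commutativity of the pentagon transports, under the identifications above, to the naturality 2-cell in the target square. By the universal property of pushouts, the space of 2-cells filling the target square at the four corners is contractible once the four edges are fixed, so this reduces to matching the four edges on the nose, which is what the preceding paragraph does. The main potential pitfall is confusing which of the two natural transformations of functors $\cE_{A_2/}\to\cE$ or $\cE_{B_0/}\to\cE$ is being used at each step; this is forced once one observes that after the rewriting, all four corners are functorially pushouts under $B_0$, and the transformation $A_1\coprod_{A_0}(-)\to B_1\coprod_{B_0}(-)$ is the unique one compatible with the cube.
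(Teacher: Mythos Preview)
The paper states this lemma without proof; there is nothing in the paper to compare against.

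Your argument is correct in outline: under the coCartesian hypotheses on the top and bottom faces, the five vertices of \eqref{eq:pentagondiagram} rewrite exactly as you list, and the four edges become the edges of the naturality square for $A_1\coprod_{A_0}(-)\Rightarrow B_1\coprod_{B_0}(-)$ at the map $B_0\coprod_{A_0}A_2\to B_2$. That is the content of the lemma.

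One point deserves tightening. Your sentence ``by the universal property of pushouts, the space of 2-cells filling the target square at the four corners is contractible once the four edges are fixed'' is not literally true as stated: a square in an $\infty$-category with prescribed boundary need not have a contractible space of fillers. What is true, and what you are implicitly using, is that every object and every morphism in sight is a pushout (resp.\ an induced map of pushouts) built functorially from the same piece of data, namely a map of spans under the cube. Both the pentagon \eqref{eq:pentagondiagram} and the naturality square are obtained by applying the pushout functor to a commuting square in the $\infty$-category of span diagrams, and it is \emph{that} square of spans which is uniquely determined by the cube. So the coherence is forced at the level of spans, not at the level of 2-cells in $\cE$. If you phrase the argument that way—both squares are the image under the colimit functor of the same square of spans constructed from the cube—then the contractibility claim is correct and the proof is complete.
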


Now consider the case $\cE=\CAlg(\Prst_k)$. Using the functor $f\colon \cD\rightarrow \cC$ of $k$-linear symmetric monoidal $\infty$-categories we obtain a cube
\[
\begin{tikzcd}
& \cC \arrow[rr] \arrow[from=dd] && \cL\cC \\
\cC\otimes\cC \arrow[ur] \arrow[rr, crossing over] && \cC \arrow[ur] & \\
& \cD \arrow[rr] && \cL\cD \arrow[uu] \\
\cD\otimes\cD \arrow[rr] \arrow[uu] \arrow[ur] && \cD \arrow[uu, crossing over] \arrow[ur] &
\end{tikzcd}
\]
where the top and bottom squares are of type $(1, 0)$ (see Section \ref{sect:uncatchern} for what this means). In this case the isomorphisms
\[A_{12}\amalg_{A_2}(A_2\amalg_{A_0} B_0)\xrightarrow{\sim} A_{12}\amalg_{A_1}(A_1\amalg_{A_0} B_0)\]
and
\[A_{12}\amalg_{A_2}(A_2\amalg_{A_0} B_0)\xrightarrow{\sim} A_{12}\amalg_{A_1}(A_1\amalg_{A_0} B_0)\]
become by Section \ref{ss: Cat-Mon} the categorified monodromy maps
\[\Mon_{f_*\cC}\otimes\id\colon \cL\cD\otimes_\cD(\cC\otimes_\cD\cC)\xrightarrow{\sim} \cL\cD\otimes_\cD(\cC\otimes_\cD\cC)\]
and
\[\Mon_{f^*f_*\cC}\otimes\id\colon \cL\cC\otimes_\cD(\cC\otimes_\cD\cC)\xrightarrow{\sim} \cL\cC\otimes_\cD(\cC\otimes_\cD\cC).\]

The diagram \eqref{eq:cubediagram1} in this case becomes
\[
\xymatrix{
\cL\cD\otimes_\cD(\cC\otimes_\cD\cC) \ar[r] \ar^{\Mon_{f_*\cC}\otimes\id}_{\sim}[dd] & \cL\cC\otimes_\cD(\cC\otimes_\cD\cC) \ar[r] \ar^{\Mon_{f^*f_*\cC}\otimes\id}_{\sim}[dd] & \cL\cC\otimes_\cD\cC \ar[dr] & \\
&&& \cL\cC \\
\cL\cD\otimes_\cD(\cC\otimes_\cD\cC) \ar[r] & \cL\cC\otimes_\cD(\cC\otimes_\cD\cC) \ar[r] & \cL\cC\otimes_\cD\cC \ar[ur]
}
\]

Similarly, the diagram \eqref{eq:cubediagram2} in this case becomes
\[
\xymatrix{
\cL\cD\otimes_\cD(\cC\otimes_\cD\cC) \ar[r] \ar^{\Mon_{f_*\cC}\otimes\id}_{\sim}[dd] & \cL\cD\otimes_\cD\cC \ar[r] & \cL\cC\otimes_\cD\cC \ar[dr] & \\
&&& \cL\cC \\
\cL\cD\otimes_\cD(\cC\otimes_\cD\cC) \ar[r] & \cL\cD\otimes_\cD\cC \ar[r] & \cL\cC\otimes_\cD\cC \ar[ur]
}
\]
which we may draw as a rectangle \eqref{eq:pentagondiagram}:
\[
\xymatrix{
\cL\cD\otimes_\cD(\cC\otimes_\cD\cC) \ar^{\Mon_{f_*\cC}\otimes\id}[d] \ar[r] & \cL\cD\otimes_\cD\cC \ar[dd] \\
\cL\cD\otimes_\cD(\cC\otimes_\cD\cC) \ar[d] & \\
\cL\cD\otimes_\cD\cC \ar[r] & \cL\cC
}
\]
which we have to show is right adjointable.

By Proposition \ref{prop:10cocartesian} the bottom and top squares in the cube are coCartesian, so Lemma \ref{lm:pentagonnaturality} applies and the above rectangle is equivalent to the square
\[
\xymatrix{
\cD\otimes_{\cD\otimes\cD}(\cC\otimes_\cD\cC) \ar^-{\Delta}[r] \ar[d] & \cD\otimes_{\cD\otimes\cD} \cC \ar[d] \\
\cC\otimes_{\cC\otimes\cC}(\cC\otimes_\cD\cC) \ar^-{\Delta}[r] & \cC\otimes_{\cC\otimes\cC} \cC
}
\]
expressing naturality of the transformation $\cD\otimes_{\cD\otimes\cD}(-)\rightarrow \cC\otimes_{\cC\otimes\cC}(-)$ with respect to the morphism of $\cC\otimes\cC$-modules $\Delta\colon\cC\otimes_\cD\cC\rightarrow \cC$. Since $f$ is rigid, $\Delta$ admits a right adjoint in $\Mod_{\cC\otimes_\cD\cC}(\bPrst_k)$ and hence in $\Mod_{\cC\otimes \cC}(\bPrst_k)$, so the square is right adjointable.

\section{Applications to the uncategorified Chern character} 

\subsection{The Ben-Zvi–Nadler Chern character}

Ben-Zvi and Nadler give a construction of a Chern character based on the functoriality properties of traces in symmetric monoidal $(\infty,2)$-categories. Let us recall their definition.

Suppose $\cC$ is a $k$-linear rigid symmetric monoidal $\infty$-category. By Proposition \ref{prop:selfduality}, $\cC$ is dualizable in $\Mod_{\Mod_k}$ and we have an equivalence
\[\cC^\dual\cong \Hom_{\Mod_{\Mod_k}^{\dual}}(\Mod_k, \cC)\]
given by sending a dualizable object $x\in\cC$ to the functor $k\mapsto x$.

Let $\dim\colon \Mod_{\Mod_k}^\dual \to \Mod_k$ be the composite
\[
\Mod_{\Mod_k}^\dual \to \Aut(\Mod_{\Mod_k}) \xrightarrow{\Tr} \Omega\Mod_{\Mod_k} \cong \Mod_k,
\]
where the first functor sends $\cM$ to $\id_\cM$.
Then we have $\dim(\Mod_k)\cong k$ and $\dim(\cC)\cong \Omega\cL\cC$. Therefore, we also get a map
\[\dim\colon \Hom_{\Mod_{\Mod_k}^{\dual}}(\Mod_k, \cC)\rightarrow \Omega\cL\cC.\]

The Chern character defined in \cite{BN3} is given by the composite
\[\cC^\dual\rightarrow \Hom_{\Mod_{\Mod_k}^{\dual}}(\Mod_k, \cC)\xrightarrow{\dim}\Omega\cL\cC.\]

\begin{theorem}
\label{TVchern=BZNchern}
Suppose $\cC$ is a $k$-linear rigid symmetric monoidal $\infty$-category. Then the uncategorified Chern character $\ch\colon \cC^\dual\rightarrow \Omega\cL\cC$ is equivalent to the composite
\[\cC^\dual\rightarrow \Hom_{\Mod_{\Mod_k}^{\dual}}(\Mod_k, \cC)\xrightarrow{\dim}\Omega\cL\cC.\]
\end{theorem}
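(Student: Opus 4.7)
The plan is to invoke the categorified Grothendieck--Riemann--Roch theorem (Theorem~\ref{thm:GRR}) for the symmetric monoidal unit $f\colon\Mod_k\to\cC$, which is rigid since $\cC$ is. The first observation is that $\cL\Mod_k\simeq\Mod_k$ carries trivial $S^1$-action and the monodromy autoequivalence of any dualizable $\Mod_k$-module is the identity, so the formula $\Ch(\cM)=\Tr(\Mon_\cM)=\Tr(\id_\cM)$ identifies the categorified Chern character $\Ch\colon\Mod_{\Mod_k}^{\dual}\to\Mod_k$ with the dimension functor $\dim$ used in the Ben-Zvi--Nadler construction. Consequently the BZN Chern character of $x\in\cC^\dual$ equals $\Ch(\phi_x)\colon k\to\HH(\cC)$, while by Theorem~\ref{thm:cherndecategorified} the To\"en--Vezzosi character $\ch(x)$ equals $\Ch(\mu_x)\in\Omega\cL\cC$, where $\mu_x:=(x\otimes-)\colon\cC\to\cC$ represents $x$ under the equivalence $\cC^\dual\cong\Omega\Mod_\cC^\dual$.

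Next I use the factorization $\phi_x=\mu_x\circ f$ in $\Mod_{\Mod_k}^{\dual}$; functoriality of $\Ch$ then gives $\Ch(\phi_x)=\Ch(\mu_x)\circ\Ch(f)$. Applying Theorem~\ref{thm:GRR} to the endomorphism $\mu_x\in\mathrm{End}_{\Mod_\cC^\dual}(\cC)$ yields the strict equality
\[
\Ch_{\Mod_k}(\mu_x)=\Ch_{\Mod_k}(f_*\mu_x)=(\cL f)^{\R}(\Ch_{\cC}(\mu_x))=(\cL f)^{\R}(\ch(x))
\]
as endomorphisms of $\HH(\cC)$ in $\Mod_k$. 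To finish I identify $\Ch(f)\colon k\to\HH(\cC)$ with the unit $\eta_k$ of the adjunction $\cL f\dashv(\cL f)^{\R}$. Granted this, the triangle identity shows that $(\cL f)^{\R}(\ch(x))\circ\eta_k$ is precisely the image of $\ch(x)$ under the adjunction bijection
\[
\Omega\cL\cC=\Map_{\cL\cC}(\cL f(k),1_{\cL\cC})\cong\Map_{\Mod_k}(k,\HH(\cC)),
\]
which is how one identifies $\Omega\cL\cC$ with $\HH(\cC)=(\cL f)^{\R}(1_{\cL\cC})$ in the BZN target.

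The main obstacle is establishing $\Ch(f)\simeq\eta_k$. This amounts to unpacking the trace of a 1-morphism as described in Proposition~\ref{prophsshsss}(2) for $f$ viewed as a morphism in $\Mod_{\Mod_k}^{\dual}$ with identity automorphisms, using the self-duality of $\cC$ over $\Mod_k$ from Proposition~\ref{prop:selfduality} (with evaluation $f^{\R}\circ\Delta$ and coevaluation $\Delta^{\R}\circ f$). The resulting 2-cell is built from the unit and counit of $f\dashv f^{\R}$, and a diagrammatic check analogous to those carried out in Section~4 shows that it reduces to $\eta_k$. This is where essentially all the bookkeeping lives; the remaining pieces combine via the categorified GRR to give the desired comparison.
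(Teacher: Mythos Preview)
Your approach is correct and is essentially the same as the paper's: apply the categorified GRR to the unit $f\colon\Mod_k\to\cC$, identify $\Ch_{\Mod_k}$ with $\dim$, and use Theorem~\ref{thm:cherndecategorified} to recognize the uncategorified character as $\Omega\Ch$. The paper packages this as a single commutative diagram---the GRR square on $\End(\cC)$ stacked on top of a ``functoriality of $\dim$'' square---while you write it out as the factorization $\phi_x=\mu_x\circ f$ and $\Ch(\phi_x)=\Ch(\mu_x)\circ\Ch(f)$; these are the same argument in different clothes.

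The one place where you diverge in emphasis is your ``main obstacle'' $\Ch(f)\simeq\eta_k$. You propose to verify this by unpacking Proposition~\ref{prophsshsss}(2) and doing a Section~4--style diagram chase. That is unnecessary. Since $\cC$ is rigid, the multiplication $\Delta$ and the unit $f$ are right-adjointable, so $\cC$ is a commutative algebra object in $\Mod_{\Mod_k}^{\dual}$; as $\dim$ is symmetric monoidal, $\dim(\cC)$ is a commutative $k$-algebra with unit $\dim(f)$. The identification $\dim(\cC)\simeq(\cL f)^{\R}(1_{\cL\cC})=\HH(\cC)$ coming from GRR is an identification of algebras, and the unit on the right-hand side is precisely $\eta_k$ (the lax-monoidal unit of the right adjoint $(\cL f)^{\R}$). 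This is what the paper means by ``the bottom square commutes by the functoriality of dimensions'': the right vertical ``evaluation at the identity element'' is precomposition with $\dim(f)$, and $\dim(f)$ \emph{is} the identity element by symmetric monoidality. So your obstacle dissolves into a one-line structural observation rather than a diagrammatic computation.
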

\begin{proof}
The categorified GRR Theorem \ref{thm:GRR} applied to $\Mod_k\rightarrow \cC$ gives a commutative square
\[
\xymatrix{
\Mod_\cC^{\dual} \ar^{\Ch}[r] \ar[d] & \cL\cC \ar[d] \\
\Mod_{\Mod_k}^{\dual} \ar^{\dim}[r] & \Mod_k.
}
\]

Evaluating it on the endomorphisms of $\cC\in\Mod_\cC^{\dual}$, we get the top square in the diagram
\[
\xymatrix{
\cC^\dual \ar^{\sim}[dr] && \\
& \Omega\Mod_\cC^\dual \ar^{\Ch}[r] \ar[d] & \Omega\cL\cC \ar[d] & \\
& \Hom_{\Mod_{\Mod_k}^{\dual}}(\cC, \cC) \ar^{\dim}[r] \ar[d] & \Hom(\Omega\cL\cC, \Omega\cL\cC) \ar[d] & \\
& \Hom_{\Mod_{\Mod_k}^{\dual}}(\Mod_k, \cC) \ar^{\dim}[r] & \Omega\cL\cC
}
\]

Here the morphism $\Omega\cL\cC\rightarrow \Hom(\Omega\cL\cC, \Omega\cL\cC)$ is adjoint to the multiplication map on $\Omega\cL\cC$,
\[\Hom(\Omega\cL\cC, \Omega\cL\cC)\rightarrow \Omega\cL\cC\]
is given by the evaluation on the identity element and
\[\Hom_{\Mod_{\Mod_k}^{\dual}}(\cC, \cC)\rightarrow \Hom_{\Mod_{\Mod_k}^{\dual}}(\Mod_k, \cC)\] is given by precomposition with the unit $\Mod_k\rightarrow \cC$. The bottom square commutes by the functoriality of dimensions.

The composite
\[\cC^\dual\rightarrow \Omega\Mod_\cC^\dual\rightarrow \Omega\cL\cC\]
is the uncategorified Chern character by Theorem \ref{thm:cherndecategorified}, so the claim follows from the commutativity of the diagram.

\end{proof}

\subsection{From the categorified GRR to the classical GRR}
The classical Grothendieck--Riemann--Roch theorem states the functoriality of the uncategorified Chern character with respect to the pushforward functor $f_*\colon \Perf(X)\rightarrow \Perf(Y)$ for a suitable morphism of schemes $f\colon X \to Y$. In this section we will prove its generalization with values in a sheaf of categories.

Let $f\colon \cD\rightarrow \cC$ be a rigid symmetric monoidal functor. Let $\cT$ be a dualizable $\cC$-module category, $\cT'$ a dualizable $\cD$-module category and $g\colon f_*\cT\rightarrow \cT'$ a right adjointable morphism in $\Mod_\cD$, i.e., a morphism in $\Mod_\cD^{\dual}$.

Then we have Chern characters
\[\Ch\colon \Hom_{\Mod_\cC^{\dual}}(\cC, \cT)\rightarrow \Hom_{(\cL\cC)^{S^1}}(\bu_{\cL\cC}, \Ch(\cT))\]
and
\[\Ch\colon \Hom_{\Mod_\cD^{\dual}}(\cD, \cT')\rightarrow \Hom_{(\cL\cD)^{S^1}}(\bu_{\cL\cD}, \Ch(\cT')).\]

Moreover, we can define pushforward maps as follows. The map
\[\Hom_{\Mod_\cC^{\dual}}(\cC, \cT)\rightarrow \Hom_{\Mod_\cD^{\dual}}(\cD, \cT')\]
sends a morphism $x\colon \cC\rightarrow \cT$ to the composite
\[\cD\rightarrow f_*\cC\xrightarrow{x}f_*\cT\xrightarrow{g} \cT'.\]
Similarly, the map
\[\Hom_{(\cL\cC)^{S^1}}(\bu_{\cL\cC}, \Ch(\cT))\rightarrow \Hom_{(\cL\cD)^{S^1}}(\bu_{\cL\cD}, \Ch(\cT'))\]
is given by sending a morphism $\bu_{\cL\cC}\rightarrow \Ch(\cT)$ to the composite
\[\bu_{\cL\cD}\rightarrow (\cL f)^\R \bu_{\cL\cC}\rightarrow (\cL f)^\R \Ch(\cT)\rightarrow \Ch(\cT'),\]
where the last morphism is $\Ch(f_*\cT\rightarrow \cT')\colon (\cL f)^\R\Ch(\cT)\rightarrow \Ch(\cT')$.

\begin{theorem}
We have a commutative diagram of spaces
\[
\xymatrix{
\Hom_{\Mod_\cC^{\dual}}(\cC, \cT) \ar^-{\Ch}[r] \ar[d] & \Hom_{(\cL\cC)^{S^1}}(\bu_{\cL\cC}, \Ch(\cT)) \ar[d] \\
\Hom_{\Mod_\cD^{\dual}}(\cD, \cT') \ar^-{\Ch}[r] & \Hom_{(\cL\cD)^{S^1}}(\bu_{\cL\cD}, \Ch(\cT')).
}
\]
\label{thm:classicalGRR}
\end{theorem}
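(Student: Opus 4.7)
The plan is to deduce the theorem from Theorem~\ref{thm:GRR} by passing to mapping spaces. Since $\Ch$ is symmetric monoidal, $\Ch(\cC)\simeq \bu_{\cL\cC}$ and $\Ch(\cD)\simeq \bu_{\cL\cD}$. Evaluating the commutative square of functors of Theorem~\ref{thm:GRR} at the mapping object $\cC\in\Mod^\dual_\cC$ yields, for each $\cT\in\Mod^\dual_\cC$, a commutative square
\[
\xymatrix{
\Hom_{\Mod^\dual_\cC}(\cC,\cT) \ar^{f_*}[d] \ar^-{\Ch}[r] & \Hom_{(\cL\cC)^{S^1}}(\bu_{\cL\cC},\Ch(\cT)) \ar^{(\cL f)^\R}[d] \\
\Hom_{\Mod^\dual_\cD}(f_*\cC,f_*\cT) \ar^-{\Ch}[r] & \Hom_{(\cL\cD)^{S^1}}((\cL f)^\R\bu_{\cL\cC},(\cL f)^\R\Ch(\cT))
}
\]
where the right-hand column uses the identification $\Ch(f_*\cT)\simeq (\cL f)^\R\Ch(\cT)$ provided by Theorem~\ref{thm:GRR}, applied both at $\cT$ and at $\cC$.

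To recover the square in the statement, I pre-compose the vertical maps with the appropriate units of adjunction and post-compose with $g$ and $\Ch(g)$. The left vertical of the theorem factors as $x\mapsto f_*(x)\mapsto g\circ f_*(x)\circ \eta$, with $\eta\colon\cD\to f_*\cC$ the unit of $(f^*,f_*)$; similarly, the right vertical factors as $\phi\mapsto (\cL f)^\R(\phi)$ followed by pre-composition with the unit $\bu_{\cL\cD}\to (\cL f)^\R\bu_{\cL\cC}$ of $(\cL f,(\cL f)^\R)$ and post-composition with the morphism $(\cL f)^\R\Ch(\cT)\to \Ch(\cT')$ corresponding to $\Ch(g)$ under the GRR equivalence. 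Post-composition compatibility is then immediate from the functoriality of $\Ch$ applied to $g$ together with Theorem~\ref{thm:GRR}: the map $\Ch(g)\colon\Ch(f_*\cT)\to \Ch(\cT')$ is by definition what we use to post-compose on the right, once transported along the GRR equivalence.

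The main step, which I expect to require the most care, is pre-composition compatibility. Concretely, one must show that $\Ch(\eta)\colon \bu_{\cL\cD}\to \Ch(f_*\cC)$ coincides, under the natural equivalence $\Ch(f_*\cC)\simeq (\cL f)^\R\bu_{\cL\cC}$ of Theorem~\ref{thm:GRR}, with the unit of $(\cL f,(\cL f)^\R)$ at the monoidal unit. Morally this holds because both $\Ch\circ f_*$ and $(\cL f)^\R\circ \Ch$ are lax symmetric monoidal (as $f_*$ and $(\cL f)^\R$ are right adjoints to symmetric monoidal functors) and the equivalence of Theorem~\ref{thm:GRR} should respect this structure, sending the lax-unit map on one side to the lax-unit map on the other. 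Verifying this amounts to tracing through the construction of the equivalence in the proof of Theorem~\ref{thm:GRR} in terms of mates and checking that its component at the unit agrees with the mate of $\id_{\bu_{\cL\cC}}$. Once this identification is in hand, the outer square commutes and the theorem follows.
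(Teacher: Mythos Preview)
Your approach is essentially the same as the paper's: stack a square coming from Theorem~\ref{thm:GRR} on hom-spaces together with squares expressing functoriality of $\Ch$ under pre-composition with $\eta\colon\cD\to f_*\cC$ and post-composition with $g\colon f_*\cT\to\cT'$. The paper records this as a single four-row ladder and dispatches the middle and bottom squares with the phrase ``by functoriality of the Chern character $\Ch$''.

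Where you diverge is in flagging the pre-composition step as ``the main step requiring the most care'' and leaving it as something to be verified by tracing through the proof of Theorem~\ref{thm:GRR}. This overstates the difficulty. The GRR isomorphism $\Ch\circ f_*\simeq (\cL f)^\R\circ \Ch$ is by construction the \emph{mate} of the pullback compatibility $\cL f\circ \Ch\simeq \Ch\circ f^*$ of Proposition~\ref{compatiblewithpull}. It is a formal property of the calculus of mates that a mate intertwines the units of the two adjunctions: applying $\Ch$ to the unit $\eta\colon\cD\to f_*f^*\cD=f_*\cC$ and composing with the GRR isomorphism at $\cC$ yields exactly the unit $\bu_{\cL\cD}\to (\cL f)^\R\cL f\,\bu_{\cL\cD}=(\cL f)^\R\bu_{\cL\cC}$ (using that $\Ch$ and $\cL f$ are symmetric monoidal, so the pullback compatibility is the identity on units). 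No inspection of the internal proof of Theorem~\ref{thm:GRR} is needed. With that observation your argument is complete and matches the paper's.
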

\begin{proof}
We have a commutative diagram of spaces
\[
\xymatrix{
\Hom_{\Mod_\cC^{\dual}}(\cC, \cT) \ar^-{\Ch}[r] \ar[d] & \Hom_{(\cL\cC)^{S^1}}(\bu_{\cL\cC}, \Ch(\cT)) \ar^{(\cL f)^\R}[d] \\
\Hom_{\Mod_\cD^{\dual}}(f_*\cC, f_*\cT) \ar^-{\Ch}[r] \ar[d] & \Hom_{(\cL\cD)^{S^1}}((\cL f)^\R \bu_{\cL\cC}, \Ch(f_*\cT)) \ar[d] \\
\Hom_{\Mod_\cD^{\dual}}(\cD, f_*\cT) \ar^-{\Ch}[r] \ar[d] & \Hom_{(\cL\cD)^{S^1}}(\bu_{\cL\cD}, \Ch(f_*\cT)) \ar[d] \\
\Hom_{\Mod_\cD^{\dual}}(\cD, \cT') \ar^-{\Ch}[r] & \Hom_{(\cL\cD)^{S^1}}(\bu_{\cL\cD}, \Ch(\cT'))
}
\]
where the top square commutes by the categorified GRR Theorem \ref{thm:GRR} applied to $f\colon \cD\rightarrow \cC$ and the rest of the squares commute by functoriality of the Chern character $\Ch$.
\end{proof}

\begin{corollary} \label{cor:classicalGRR}
Suppose $f\colon \cD\rightarrow \cC$ is a rigid symmetric monoidal functor which is moreover proper in the sense of Definition \ref{def-proper-smooth}. Then we have a commutative diagram of spaces
\[
\xymatrix{
\cC^\dual\ar_{f^\R}[d] \ar^-{\ch}[r] & (\Omega\cL\cC)^{S^1} \ar[d]^{\int_f} \\
\cD^\dual \ar^-{\ch}[r] & (\Omega\cL\cD)^{S^1}.
}
\]

\end{corollary}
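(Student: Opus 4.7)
The plan is to deduce the corollary by specializing Theorem~\ref{thm:classicalGRR} to the unit modules. I would take $\cT = \cC$ and $\cT' = \cD$, the units of $\Mod_\cC^\dual$ and $\Mod_\cD^\dual$, respectively. Since $\Ch$ is symmetric monoidal, it sends units to units, giving $\Ch(\cT) \simeq \bu_{\cL\cC}$ and $\Ch(\cT') \simeq \bu_{\cL\cD}$. A $\cC$-linear functor $\cC \to \cC$ is determined up to equivalence by its value on $\bu_\cC$, and the right-adjointability condition is exactly dualizability of that value, so $\Hom_{\Mod_\cC^\dual}(\cC,\cC) \simeq \cC^\dual$ and analogously over $\cD$. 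Combined with the evident identifications $\Hom_{(\cL\cC)^{S^1}}(\bu_{\cL\cC},\bu_{\cL\cC}) \simeq (\Omega\cL\cC)^{S^1}$ and likewise for $\cD$, and the comparison of Theorem~\ref{thm:cherndecategorified}, the horizontal arrows in the square of Theorem~\ref{thm:classicalGRR} can be identified with the uncategorified Chern characters $\ch$.

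The next step is to choose a morphism $g\colon f_*\cC \to \cD$ in $\Mod_\cD^\dual$. The natural candidate is $g = f^\R$: rigidity of $f$ (which is implied by properness) makes $f^\R$ into a $\cD$-linear functor, i.e.\ a morphism in $\Mod_\cD$, and the properness hypothesis adds precisely that this $\cD$-linear functor admits a further right adjoint in $\Mod_\cD(\bPrst_k)$, so that $f^\R$ lies in $\Mod_\cD^\dual$ as required.

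Feeding the data $(\cT,\cT',g) = (\cC,\cD,f^\R)$ into Theorem~\ref{thm:classicalGRR}, I would then trace through the definitions of the vertical maps. On the left, an object $x \in \cC^\dual$ corresponds to the functor $\phi_x\colon \cC\to\cC$ sending $\bu_\cC \mapsto x$, whose image under the left vertical is the composite $\cD \to f_*\cC \xrightarrow{\phi_x} f_*\cC \xrightarrow{f^\R} \cD$; evaluating on $\bu_\cD$ gives $f^\R(x)$, so the left vertical is exactly $f^\R\colon \cC^\dual \to \cD^\dual$. On the right, the induced map sends $\alpha\colon \bu_{\cL\cC} \to \bu_{\cL\cC}$ to the composite
\[
\bu_{\cL\cD} \longrightarrow (\cL f)^\R \bu_{\cL\cC} \xrightarrow{(\cL f)^\R\alpha} (\cL f)^\R \bu_{\cL\cC} \xrightarrow{\Ch(f^\R)} \bu_{\cL\cD},
\]
which is the definition of $\int_f$.

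The main (and only nonformal) point is the verification that $f^\R$ belongs to $\Mod_\cD^\dual$, for which the properness hypothesis is essential: rigidity alone gives $\cD$-linearity, but properness is needed for the further right adjointability of $f^\R$ within $\Mod_\cD(\bPrst_k)$. Everything else is a routine unwinding of the mapping-space identifications together with an invocation of Theorems~\ref{thm:classicalGRR} and~\ref{thm:cherndecategorified}.
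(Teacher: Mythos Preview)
Your proposal is correct and follows exactly the paper's approach: specialize Theorem~\ref{thm:classicalGRR} with $\cT=\cC$, $\cT'=\cD$, and $g=f^\R$ (using properness to ensure $f^\R$ is right-adjointable in $\Mod_\cD$), and invoke Theorem~\ref{thm:cherndecategorified} to identify the horizontal maps with $\ch$. The paper's proof consists of precisely these two sentences; your write-up is simply a more detailed unpacking of the same argument.
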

\begin{proof}
The claim is obtained from Theorem \ref{thm:classicalGRR} by setting $\cT=\cC$, $\cT'=\cD$, and $g=f^\R\colon f_*\cC\rightarrow \cD$. The fact that the horizontal maps are the Chern characters follows from Theorem~\ref{thm:cherndecategorified}.
\end{proof}

\begin{remark}
Let $f\colon X\rightarrow Y$ be a morphism of perfect stacks which is representable, proper, of finite Tor-amplitude, and locally almost of finite presentation. Then by Example \ref{ex:smoothproper} the pullback functor $f^*\colon\QCoh(Y)\rightarrow \QCoh(X)$ is rigid and proper. Therefore, the corollary in this case produces a commutative diagram
\[
\xymatrix{
\iota_0 \Perf(X) \ar_{f^\R}[d] \ar^{\ch}[r] & \cO(\cL X) \ar^{\int_f}[d] \\
\iota_0 \Perf(Y) \ar^{\ch}[r] & \cO(\cL Y).
}
\]

If we moreover assume that $X\rightarrow Y$ is a smooth morphism of smooth schemes over a field $k$ of characteristic zero, then by the results of Markarian \cite{M} the HKR isomorphisms $\cO(\cL X)\cong \Omega^{-\bullet}(X)$ intertwine the integration map $\int_f\colon \cO(\cL X)\rightarrow \cO(\cL Y)$ and the integration map $\int_f\colon \Omega^{-\bullet}(X)\rightarrow \Omega^{-\bullet}(Y)$ on differential forms twisted by the relative Todd class $\Td_{X/Y}$. Therefore, we obtain a commutative diagram
\[
\xymatrix{
\iota_0 \Perf(X) \ar_{f^\R}[d] \ar^{\ch}[r] & \cO(\cL X) \ar^{\int_f}[d] \ar^{\sim}[r] & \Omega^{-\bullet}(X) \ar^{\int_f (-)\wedge \Td_{X/Y}}[d] \\
\iota_0 \Perf(Y) \ar^{\ch}[r] & \cO(\cL Y) \ar^{\sim}[r] & \Omega^{-\bullet}(Y).
}
\]
\end{remark}

\subsection{The Grothendieck--Riemann--Roch Theorem for the secondary Chern character}
\label{the grothendieck riemann}
\def\cg{\mathrm{cg}}

In this section we prove a Grothendieck–Riemann–Roch theorem for the secondary Chern character. If $\cC$ is a $k$-linear symmetric monoidal $\infty$-category, we denote by $\Mod^\sat_\cC$ the full subcategory of $\Mod^\dual_\cC$ spanned by the saturated $\cC$-modules (Definition~\ref{def-saturated}).

\begin{definition}
Let $\cC$ be a $k$-linear symmetric monoidal $\infty$-category. We define the \defterm{secondary Chern character} to be the composite
\[\ch^{(2)}\colon \iota_0 \Mod^\sat_{\cC} \to ((\cL \cC)^{\dual})^{S^1} \to \Omega(\cL^2 \cC) ^{(S^1\times S^1)}\]
where the first map is the categorified Chern character for $\cC$ and the second map is the classical Chern character for $\cL\cC$.
\end{definition}

\begin{theorem}[Secondary GRR]
\label{Secondary GRR}
	Let $f \colon\cD\to\cC$ be a smooth and proper functor of symmetric monoidal $\infty$-categories.
	Then the square
	\[
	\xymatrix{ \iota_0\Mod^{\sat}_\cC \ar^-{\ch^{(2)}}[r] \ar[d]_{f_*} & \Omega_\Sp(\cL^2\cC)^{(S^1\times S^1)} \ar[d]^{\int_{\cL f}} \\
		\iota_0 \Mod^{\sat}_\cD \ar^-{\ch^{(2)}}[r]  & \Omega_\Sp(\cL^2\cD)^{(S^1\times S^1)}}
	\]
	commutes.
\end{theorem}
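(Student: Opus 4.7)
The strategy is to realize the square as the vertical pasting of two Grothendieck--Riemann--Roch squares: a categorified GRR for $f$, restricted to saturated modules, stacked above a classical GRR for the loop functor $\cL f$, refined to be $S^1$-equivariant for the loop-rotation action. Both squares are already available in the paper; what remains is to restrict their domains compatibly and to verify the appropriate equivariance.

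First, since $f$ is smooth and proper it is in particular rigid (Definition~\ref{def-proper-smooth}), so Theorem~\ref{thm:GRR} yields a strictly commutative square
\[
\xymatrix{
\Mod^\dual_\cC \ar^-{\Ch}[r] \ar_{f_*}[d] & (\cL\cC)^{S^1} \ar^{(\cL f)^\R}[d] \\
\Mod^\dual_\cD \ar^-{\Ch}[r] & (\cL\cD)^{S^1}.
}
\]
By Lemma~\ref{lem:smoothproper}(3), $f_*$ preserves saturation, so the left column restricts to $f_*\colon \iota_0\Mod^\sat_\cC\to \iota_0\Mod^\sat_\cD$. Moreover, because $\Ch$ is symmetric monoidal and a saturated module is by definition fully dualizable in $\Mod_\cC(\bPrst_k)$, its image in $\cL\cC$ is dualizable, so $\Ch$ refines to a functor $\iota_0\Mod^\sat_\cC\to ((\cL\cC)^\dual)^{S^1}$ (and similarly over $\cD$). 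This produces the upper half of our target square.

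For the lower half, Lemma~\ref{lem:proper} tells us that $\cL f\colon \cL\cD\to \cL\cC$ is proper, so Corollary~\ref{cor:classicalGRR} applied to $\cL f$ gives a commutative square with horizontal arrows the classical Chern characters of $\cL\cC$ and $\cL\cD$ and verticals $(\cL f)^\R$ and $\int_{\cL f}$. I would then upgrade this to an $S^1$-equivariant statement for the loop-rotation action on $\cL\cC$. Since $\cL f$ is obtained from $f$ by applying the functor $S^1\tens(-)$, every morphism entering Corollary~\ref{cor:classicalGRR} for $\cL f$ is natural for this action; combined with the intrinsic $S^1$-equivariance of the classical Chern character (Proposition~\ref{prop:S1trace}) and the monoidal naturality of the integration map, this refines the square to
\[
\xymatrix{
((\cL\cC)^\dual)^{S^1} \ar^-{\ch}[r] \ar_{(\cL f)^\R}[d] & \Omega_\Sp(\cL^2\cC)^{(S^1\times S^1)} \ar^{\int_{\cL f}}[d] \\
((\cL\cD)^\dual)^{S^1} \ar^-{\ch}[r] & \Omega_\Sp(\cL^2\cD)^{(S^1\times S^1)}.
}
\]
Pasting this square below the restricted upper square, and recognizing the composite of the top horizontals as $\ch^{(2)}$ by its very definition, gives the theorem.

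The main technical obstacle is the $S^1$-equivariant refinement of Corollary~\ref{cor:classicalGRR} for $\cL f$: one must track that the two $S^1$-actions in play (the loop rotation on $\cL\cC$ and the $S^1$-equivariance intrinsic to the classical Chern character) commute, and that the integration map $\int_{\cL f}$ together with the Chern character factorization through $S^1$-fixed points are natural for loop rotation. Once this equivariance is in hand the argument is a formal pasting of squares; everything else amounts to routine verification that the restrictions to saturated modules and to dualizable loop objects are compatible with the maps in sight.
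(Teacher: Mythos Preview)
Your proposal is correct and follows essentially the same approach as the paper: restrict the categorified GRR (Theorem~\ref{thm:GRR}) to saturated modules using Lemma~\ref{lem:smoothproper}, observe via Lemma~\ref{lem:proper} that $\cL f$ is proper, and then stack the classical GRR (Corollary~\ref{cor:classicalGRR}) for $\cL f$ underneath. The paper's proof is terser and does not isolate the $S^1$-equivariant refinement of Corollary~\ref{cor:classicalGRR} as a separate obstacle; it treats that compatibility as implicit in the naturality of all constructions in $\cC\in\CAlg(\Prst_k)$, which is a reasonable reading since $\cL f$ and its right adjoint, the Chern character, and the integration map are all obtained functorially from the $S^1$-diagram defining $\cL\cC$.
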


\begin{proof}
	Since $f$ is smooth and proper, the pushforward $f_*\colon \Mod_\cC \to\Mod_\cD$ preserves saturated $\infty$-categories (Lemma~\ref{lem:smoothproper}).	Restricting Theorem~\ref{thm:GRR} to saturated $\infty$-categories, we therefore obtain a commutative square
	\[
	\xymatrix{\Mod_\cC^\sat \ar^-{\Ch}[r] \ar[d]_{f_*} & (\cL\cC )^{S^1} \ar[d]^{\cL f^\R} \\
		\Mod_\cD^\sat \ar^-{\Ch}[r]  & (\cL\cD)^{S^1},}
	\]
 By Lemma  \ref{lem:proper}, $\cL f \colon \cL\cD \to \cL\cC$ is proper.  Hence, composing the above commutative diagram with the classical GRR as in Corollary \ref{cor:classicalGRR}, yields the statement. 
 \end{proof}

 Let us assume  that $\cC$ is compactly generated and rigid in the sense of Definition \ref{def: rigid}. We denote the subcategory of compact objects by $\cC^\omega$. By Corollary \ref{cor:rigid}, we have that $\cC^\dual=\iota_0\cC^\omega$.  
 We will write $\mathrm{mod}_{\cC^\omega}$ for the $\infty$-category of small stable idempotent complete $\cC^\omega$-linear $\infty$-categories. The $\mathrm{Ind}$-completion functor identifies $\mathrm{mod}_{\cC^\omega}$ with a full subcategory of $\Mod_{\cC}^\dual$. In \cite{HSS}, we considered the $\infty$-category $\mathrm{mod}^{\sat}_{\cC^{\omega}}$ of small saturated $\cC^\omega$-linear $\infty$-categories, which is the intersection $\mathrm{mod}_{\cC^\omega}\cap \Mod_\cC^\sat$. As proved in \cite[Theorem 6.20]{HSS}, $\ch^{(2)}$ descends to a morphism of $\bE_\infty$ ring spectra
\[
\ch^{(2)}\colon \K^{(2)}(\cC^\omega) \to \Omega_\Sp(\cL^2\cC)^{(S^1\times S^1)},
\]
where $\Omega_\Sp$ is the spectrum of endomorphisms of the unit object.
This is the diagonal composition in the diagram
\[
\xymatrix{ \iota_0  \mathrm{mod}_{\cC^\omega}^\sat  \ar[r]\ar[d] & \iota_0  (\cL\cC^\omega)^{S^1}  \ar@{.>}[r]  \ar@{.>}[d] & \Omega_\Sp(\cL^2\cC)^{(S^1\times S^1)} \\
	\iota_0 \Mot^\sat(\cC^\omega) \ar[ur] \ar@{.>}[d]  & \K^{S^1}(\cL\cC^\omega)  \ar[ru] & \\
	\K^{(2)}(\cC^\omega)  \ar[ru] & & }\]
where a dotted arrow means a map to the infinite loop space of the target, see \cite[Diagram (6.19)]{HSS}.

 \begin{theorem}[Motivic GRR]
\label{Motivic GRR}
Let us assume that $\cC$ and $\cD$ are compactly generated and rigid, and let $f \colon\cD\to\cC$ be a rigid symmetric monoidal functor.
	Then the square
		\[
		\xymatrix{ \MOT(\cC^\omega) \ar^-{\Ch}[r] \ar[d]_{f_*} & (\cL\cC)^{S^1} \ar[d]^{\cL f^\R} \\
			\MOT(\cD^\omega) \ar^-{\Ch}[r]  & (\cL\cD)^{S^1}}
\]
commutes. If $f$ is smooth and proper, the square
\[	\xymatrix{ \Mot^\sat(\cC^\omega) \ar^-{\Ch}[r] \ar[d]_{f_*} & (\cL\cC^\omega)^{S^1} \ar[d]^{\cL f^\R} \\
		\Mot^\sat(\cD^\omega) \ar^-{\Ch}[r]  & (\cL\cD^\omega)^{S^1}}
	\]
	commutes.
\end{theorem}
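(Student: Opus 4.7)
The plan is to deduce both commutative squares from the categorified GRR Theorem~\ref{thm:GRR} by descent through the universal motivic quotients constructed in \cite{HSS}.

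First I would apply Theorem~\ref{thm:GRR} to the rigid functor $f\colon \cD\to\cC$ to produce the commutative square
\[
\xymatrix{\Mod_\cC^\dual \ar[r]^-\Ch \ar[d]_{f_*} & (\cL\cC)^{S^1} \ar[d]^{(\cL f)^\R} \\ \Mod_\cD^\dual \ar[r]^-\Ch & (\cL\cD)^{S^1}.}
\]
Under the compactly generated and rigid hypotheses, Corollary~\ref{cor:rigid} identifies compact with dualizable objects of $\cC$ and $\cD$, and $\mathrm{Ind}$-completion realizes $\mathrm{mod}_{\cC^\omega}$ (resp.\ $\mathrm{mod}_{\cD^\omega}$) as a full subcategory of $\Mod_\cC^\dual$ (resp.\ $\Mod_\cD^\dual$). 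Since $f$ is rigid, its right adjoint is continuous, so $f$ preserves compact objects, hence $f_*\colon \Mod_\cC\to\Mod_\cD$ restricts to a functor $\mathrm{mod}_{\cC^\omega}\to\mathrm{mod}_{\cD^\omega}$. The square above therefore restricts to a commutative square on $\mathrm{mod}_{\cC^\omega}$.

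For the first square, I would invoke the fact from \cite[Section 6]{HSS} that the categorified Chern character on $\mathrm{mod}_{\cC^\omega}$ is a localizing invariant, i.e.\ sends Verdier sequences of small stable $\cC^\omega$-linear categories to cofiber sequences in $(\cL\cC)^{S^1}$. Moreover, $f_*\colon \mathrm{mod}_{\cC^\omega}\to\mathrm{mod}_{\cD^\omega}$ preserves Verdier sequences, since restriction of scalars along a symmetric monoidal functor leaves the underlying $\infty$-categorical quotient structure unchanged. Postcomposing with the canonical map $\mathrm{mod}_{\cD^\omega}\to\MOT(\cD^\omega)$ then produces another localizing invariant, so by the universal property of $\MOT(\cC^\omega)$ both the top Chern character and the left pushforward descend uniquely to functors out of $\MOT(\cC^\omega)$; the descended square commutes because its lift to $\mathrm{mod}_{\cC^\omega}$ does.

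For the second square, assume $f$ is smooth and proper. Lemma~\ref{lem:smoothproper} then gives a restricted pushforward $f_*\colon \mathrm{mod}^\sat_{\cC^\omega}\to\mathrm{mod}^\sat_{\cD^\omega}$, and the Chern character of a saturated module, being fully dualizable, naturally refines to an object of $(\cL\cC^\omega)^{S^1}$. By Lemma~\ref{lem:proper}, $\cL f$ is proper, so $(\cL f)^\R$ preserves compact objects, making the right vertical map $(\cL\cC^\omega)^{S^1}\to(\cL\cD^\omega)^{S^1}$ well-defined. Restricting the first square to $\mathrm{mod}^\sat_{\cC^\omega}$ and applying the universal property of $\Mot^\sat$ as the target of the universal additive invariant on small saturated categories then yields the second commutative square. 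The main technical input is the localizing invariance of the categorified Chern character, established in \cite{HSS}; the corresponding preservation of Verdier sequences by $f_*$ is elementary, so once these are in hand both squares follow formally from the universal properties of $\MOT$ and $\Mot^\sat$.
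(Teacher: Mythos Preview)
Your proposal is correct and follows essentially the same route as the paper: start from the categorified GRR square on $\Mod^\dual$, restrict to $\mathrm{mod}_{\cC^\omega}$ via $\mathrm{Ind}$-completion, and descend through the universal properties of $\MOT$ and $\Mot^\sat$. You even supply a couple of details the paper leaves implicit, namely that $(\cL f)^\R$ preserves compact objects (via Lemma~\ref{lem:proper}) and that $\Ch$ of a saturated module is dualizable. One small terminological point: the universal property of $\Mot^\sat$ in \cite{HSS} is phrased in terms of preserving \emph{exact sequences} (and zero objects), not merely split-exact ones, so ``additive invariant'' is slightly loose; but this does not affect the argument.
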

\begin{proof}
First note that the functor $f_*\colon \Mod^\dual_{\cC} \to \Mod^\dual_{\cD}$ preserves compactly generated $\infty$-categories. 
The functor $\mathrm{mod}_{\cC^\omega} \to \MOT(\cC^\omega)$ is by definition the universal functor to a presentable stable $\infty$-category that preserves zero objects, exact sequences, and filtered colimits. 
The functors
\[
\mathrm{mod}_{\cC^{\omega}} \xrightarrow{f_*} \mathrm{mod}_{\cD^{\omega}}\xrightarrow{\mathrm{Ind}} \Mod^\dual_{\cD} \xrightarrow{\Ch} (\cL\cD)^{S^1}
\]
and
\[
\mathrm{mod}_{\cC^{\omega}} \xrightarrow{\mathrm{Ind}}  \Mod^\dual_{\cC} \xrightarrow{\Ch} (\cL\cC)^{S^1}
\]
satisfy these conditions and therefore the categorified GRR factors through the commutative square as in the statement. The commutativity of the second square is proved in the same way, using that $\mathrm{mod}^\sat_{\cC^\omega}\to \Mot^\sat(\cC^\omega)$ is the universal functor to a stable idempotent complete $\infty$-category that preserves zero objects and exact sequences.
\end{proof}

\begin{corollary}\label{cor:classicalGRR-for-K-theory}
	Assume that $\cC$ and $\cD$ are compactly generated and rigid, and let $f \colon\cD\to\cC$ be a proper symmetric monoidal functor.
		Then we have a commutative square of spectra
			\[
			\xymatrix{ \K(\cC^\omega) \ar^-{\ch}[r] \ar[d]_{f_*} & \Omega_\Sp(\cL\cC)^{S^1} \ar[d]^{\int_f} \\
				\K(\cD^\omega) \ar^-{\ch}[r]  & \Omega_\Sp(\cL\cD)^{S^1}.}
	\]
\end{corollary}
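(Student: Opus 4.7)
The strategy is to mirror the proof of Corollary~\ref{cor:classicalGRR}, but upgraded to the spectrum level by starting from the Motivic GRR (Theorem~\ref{Motivic GRR}) rather than from the categorified GRR of Theorem~\ref{thm:GRR}.

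First, I would establish a spectrum-valued analogue of Theorem~\ref{thm:classicalGRR}: for any $\cT\in\MOT(\cC^\omega)$, any $\cT'\in\MOT(\cD^\omega)$, and any morphism $g\colon f_*\cT\to\cT'$, there is a commutative square of mapping spectra
\[
\xymatrix{
\Map_{\MOT(\cC^\omega)}(\bu,\cT) \ar^-{\Ch}[r] \ar[d] & \Map_{(\cL\cC)^{S^1}}(\bu_{\cL\cC},\Ch(\cT)) \ar[d] \\
\Map_{\MOT(\cD^\omega)}(\bu,\cT') \ar^-{\Ch}[r] & \Map_{(\cL\cD)^{S^1}}(\bu_{\cL\cD},\Ch(\cT')).
}
\]
The vertical arrows are constructed verbatim as in Theorem~\ref{thm:classicalGRR}: on the left via the adjunction $f^*\dashv f_*$ (noting $f^*\bu=\bu$) followed by post-composition with $g$; on the right by sending $\varphi\colon \bu_{\cL\cC}\to\Ch(\cT)$ to the composite
\[
\bu_{\cL\cD}\longrightarrow (\cL f)^\R \bu_{\cL\cC}\xrightarrow{(\cL f)^\R\varphi} (\cL f)^\R\Ch(\cT)\simeq \Ch(f_*\cT)\xrightarrow{\Ch(g)}\Ch(\cT'),
\]
where the middle equivalence comes from Motivic GRR applied to $\cT$. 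The diagram chase in the proof of Theorem~\ref{thm:classicalGRR} transfers line for line, using Motivic GRR for the top square and naturality of $\Ch$ for the rest.

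Next I would specialize to $\cT=\bu_{\MOT(\cC^\omega)}$ and $\cT'=\bu_{\MOT(\cD^\omega)}$, with $g\colon f_*\bu\to\bu$ the morphism induced by the $\cD$-linear functor $f^\R\colon\cC^\omega\to\cD^\omega$. This functor is well-defined and preserves compact objects because $f$ is proper: compact and dualizable coincide in a rigid compactly generated $\cC$ by Corollary~\ref{cor:rigid}, and $f^\R$ preserves dualizable objects by Proposition~\ref{prop:ambidextrousduality} applied to the ambidextrous adjunction of Proposition~\ref{prop:rigidambidextrous}. By the representability of K-theory in the motivic category due to Blumberg--Gepner--Tabuada \cite{BGT}, the left column becomes the standard K-theoretic pushforward $f_*\colon\K(\cC^\omega)\to\K(\cD^\omega)$; the right column is, by construction, the integration map $\int_f$; and Theorem~\ref{thm:cherndecategorified} identifies the horizontal maps with $\ch$.

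The main obstacle is executing the first step cleanly: the proof of Theorem~\ref{thm:classicalGRR} is phrased in terms of mapping spaces in $\Mod^\dual$ and rests on Theorem~\ref{thm:GRR}, whereas I need mapping spectra in $\MOT$ with Motivic GRR as input. This adaptation should be essentially formal, since every ingredient used in the original argument---symmetric monoidality of $\Ch$, the adjunction $f^*\dashv f_*$, and naturality of the relevant units and counits---is available in the motivic setting, and Motivic GRR is precisely the motivic analogue of Theorem~\ref{thm:GRR} for the pushforward of localizing motives.
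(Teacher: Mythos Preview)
Your approach is correct and is exactly what the paper does: its proof is the one-liner ``Repeat the proof of Corollary~\ref{cor:classicalGRR}, using the first square of Theorem~\ref{Motivic GRR} instead of Theorem~\ref{thm:GRR},'' and you have spelled out precisely that repetition.

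One small correction: your justification that $f^\R$ preserves compact objects is miscited. Proposition~\ref{prop:rigidambidextrous} and Proposition~\ref{prop:ambidextrousduality} concern the adjunction $f^*\dashv f_*$ on module $(\infty,2)$-categories, not the functor $f^\R\colon\cC\to\cD$ itself. The correct (and simpler) argument is that properness of $f$ means $f^\R$ admits a $\cD$-linear, colimit-preserving right adjoint, and hence $f^\R$ preserves compact objects; combined with Corollary~\ref{cor:rigid} this gives $f^\R\colon\cC^\omega\to\cD^\omega$ as needed.
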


\begin{proof}
	Repeat the proof of Corollary~\ref{cor:classicalGRR}, using the first square of Theorem~\ref{Motivic GRR} instead of Theorem~\ref{thm:GRR}.
\end{proof}

 \begin{theorem}[Secondary motivic GRR]
\label{Secondary motivic GRR}
Let $\cC$ and $\cD$ be compactly generated rigid categories and let $f \colon\cD\to\cC$ be a smooth and proper symmetric monoidal functor.
	Then the square
	\[
	\xymatrix{ \K^{(2)}(\cC^\omega) \ar^-{\ch^{(2)}}[r] \ar[d]_{f_*} &  \Omega_\Sp(\cL^2\cC)^{(S^1\times S^1)}\ar[d]^{\int_{\cL f}} \\
		\K^{(2)}(\cD^\omega) \ar^-{\ch^{(2)}}[r]  & \Omega_\Sp(\cL^2\cD)^{(S^1\times S^1)}}
	\]
	commutes.
\end{theorem}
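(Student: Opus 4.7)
The plan is to imitate the proof of Theorem~\ref{Secondary GRR}, but one categorical level deeper: instead of working with $\iota_0\Mod^\sat$ and the plain classical GRR of Corollary~\ref{cor:classicalGRR}, I will work with $\K^{(2)}$ and $\Mot^\sat$, and use the $K$-theoretic classical GRR of Corollary~\ref{cor:classicalGRR-for-K-theory}. The dotted diagram displayed just before the theorem exhibits $\ch^{(2)}$ as the composite
\begin{equation*}
\K^{(2)}(\cC^\omega) \longrightarrow \iota_0 \Mot^\sat(\cC^\omega) \xrightarrow{\Ch} \iota_0 (\cL\cC^\omega)^{S^1} \longrightarrow \K^{S^1}(\cL\cC^\omega) \xrightarrow{\ch} \Omega_\Sp(\cL^2\cC)^{(S^1\times S^1)},
\end{equation*}
interpreted as a composition of maps of $\bE_\infty$-spectra, and analogously for $\cD$.

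I would prove commutativity rectangle by rectangle. The leftmost rectangle, which pairs the factorizations for $\cC$ and $\cD$ along their first two terms, commutes by naturality in $f_*$ of the universal passage $\mathrm{mod}^\sat\to\Mot^\sat\to\K^{(2)}$. The middle rectangle, with horizontal arrows the categorified Chern character on saturated motives and vertical arrows $f_*$ and $\cL f^\R$, commutes by the second square of Theorem~\ref{Motivic GRR}. The rightmost rectangle, with horizontal arrows the classical Chern character $\ch\colon\K^{S^1}(\cL\cC^\omega)\to\Omega_\Sp(\cL^2\cC)^{(S^1\times S^1)}$ (and its $\cD$-analogue) and vertical arrows $\cL f^\R$ and $\int_{\cL f}$, should follow from an $S^1$-equivariant refinement of Corollary~\ref{cor:classicalGRR-for-K-theory} applied to $\cL f\colon\cL\cD\to\cL\cC$, which is proper by Lemma~\ref{lem:proper}. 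Stacking the three rectangles gives the theorem.

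The main obstacle is this last rectangle. To invoke Corollary~\ref{cor:classicalGRR-for-K-theory} for $\cL f$, one must verify that $\cL\cC$ and $\cL\cD$ are themselves compactly generated and rigid, and one must promote the corollary to the $S^1$-equivariant setting. Rigidity of $\cL\cC$ should follow from Propositions~\ref{prop:rigidcomposition} and~\ref{prop:rigidpushout} by writing $\Mod_k\to\cC\to\cL\cC$ as a composition of rigid functors (using that $\cC$ is smooth over $\Mod_k$, which is automatic in the relevant examples); compact generation of $\cL\cC$ follows from the behaviour of compact objects under the tensor products and pushouts involved in $\cL\cC=\cC\otimes_{\cC\otimes\cC}\cC$. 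The $S^1$-equivariance is inherited from that of Theorem~\ref{Motivic GRR} and the trace functor (Propositions~\ref{prop:S1trace} and~\ref{prop:tracedecategorified}), since Corollary~\ref{cor:classicalGRR-for-K-theory} is obtained by restricting the motivic GRR to endomorphisms of the unit, a construction manifestly compatible with the ambient $S^1$-action induced by the loop construction on $\cC$.
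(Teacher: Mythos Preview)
Your overall strategy matches the paper's, but the displayed factorization contains a genuine error: there is no map $\K^{(2)}(\cC^\omega)\to\iota_0\Mot^\sat(\cC^\omega)$. The dotted arrow in the paper's diagram goes the other way (it is a map from the space $\iota_0\Mot^\sat(\cC^\omega)$ to the infinite loop space of $\K^{(2)}(\cC^\omega)$), and one cannot factor a map of spectra through a space in this manner. Your own justification of the ``leftmost rectangle'' via the universal passage $\mathrm{mod}^\sat\to\Mot^\sat\to\K^{(2)}$ suggests you sense this, but as written the factorization is incoherent.

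The paper's remedy is to bypass the space $\iota_0\Mot^\sat$ entirely: apply nonconnective $K$-theory, as a functor on stable $\infty$-categories, directly to the second commutative square of Theorem~\ref{Motivic GRR}. Since $\K^{(2)}(\cC^\omega)=\K(\Mot^\sat(\cC^\omega))$ and $\K((\cL\cC^\omega)^{S^1})\simeq\K^{S^1}(\cL\cC^\omega)$, this immediately yields a commutative square of spectra
\[
\xymatrix{ \K^{(2)}(\cC^\omega) \ar[r] \ar[d]_{f_*} & \K^{S^1}(\cL\cC^\omega) \ar[d]^{\cL f^\R} \\
	\K^{(2)}(\cD^\omega) \ar[r]  & \K^{S^1}(\cL\cD^\omega),}
\]
collapsing your ``leftmost'' and ``middle'' rectangles into a single step. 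The remaining square is then exactly your ``rightmost rectangle'', obtained from Corollary~\ref{cor:classicalGRR-for-K-theory} applied to $\cL f$ (proper by Lemma~\ref{lem:proper}), and the paper concludes by pasting the two. Your concerns about compact generation and rigidity of $\cL\cC$, and about the extra $S^1$-equivariance needed to land in $(S^1\times S^1)$-invariants, are legitimate fine points that the paper itself does not spell out.
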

\begin{proof}
	Applying nonconnective $K$-theory to the second square in Theorem \ref{Motivic GRR}, we get a commutative square of spectra
	\[
	\xymatrix{ \K^{(2)}(\cC^\omega) \ar^-{\Ch}[r] \ar[d]_{f_*} & \K^{S^1}(\cL\cC^\omega) \ar[d]^{\cL f^\R} \\
		\K^{(2)}(\cD^\omega) \ar^-{\Ch}[r]  & \K^{S^1}(\cL\cD^\omega).}
	\]
	  Now $\cL f\colon \cL \cD \to \cL \cC$  is also a rigid symmetric functor which is proper (Lemma~\ref{lem:proper}). Hence, Corollary~\ref{cor:classicalGRR-for-K-theory} applied to $\cL f\colon \cL \cD \to \cL \cC$ 
	  yields the commutative square 
	\[
	\xymatrix{ \K (\cL \cC^\omega) \ar^-{\ch}[r] \ar[d]_{\cL f^\R} &  \Omega_\Sp(\cL^{2} \cC)^{S^1}\ar[d]^{\int_{\cL f}} \\
		\K(\cL \cD^\omega) \ar^-{\ch}[r]  & \Omega_\Sp(\cL^2\cD)^{S^1}.}
	\]
Combining the two squares yields the statement.
\end{proof}

\subsection{Secondary Chern character and the  motivic Chern class}
 In this section we 
 establish a comparison between 
 the  secondary Chern character and Brasselet, Sch\"urmann and Youkura's  \emph{motivic Chern class} \cite{brasselet2005hirzebruch}. 
 The motivic Chern class is an enhancement of MacPherson's total Chern class of singular varieties \cite{macpherson1974chern} and, as explained  in \cite{schuermann2009specialization}, it specializes to  other  well-known invariants of singular varieties. 

Throughout the section  $k$ is a field of characteristic $0.$ A \emph{variety} is an integral separated scheme of finite type over $\mathrm{Spec}(k)$.
For $X$ a variety, we write $\Mot(X)$ for the presentable stable $\infty$-category $\MOT(\Perf(X))$ of localizing $\Perf(X)$-motives \cite[Definition 5.14]{HSS}.

\begin{definition} 
We denote by $\Mot_{\mathrm{BM}}(X)$ the smallest stable idempotent complete full subcategory of $\Mot(X)$ such that for every proper map
$f\colon Y \to X$ from a smooth variety  the pushforward factors as    
$$
\xymatrix{
& \Mot_{\mathrm{BM}}(X) \ar[d] \\
\Mot^\sat(Y) \ar[ru] 
\ar[r]^-{f_*} & \Mot(X).}  
$$
We 
call $\Mot_{\mathrm{BM}}(X)$ the \defterm{$\infty$-category of Borel--Moore noncommutative motives} over $X.$
\end{definition}
 
\begin{remark}
If $f\colon X\to Y$ is a proper morphism of algebraic varieties, there is  a well-defined pushforward functor
$ \,  
f_*\colon \Mot_{\mathrm{BM}}(Y) \to   \Mot_{\mathrm{BM}}(X) \,.    
$ 
The qualifier \emph{Borel--Moore}  alludes to this feature. 
\end{remark}

 \begin{lemma}
 \label{lemmafacbmch}
The restriction 
of $\Ch$ 
to $\Mot_{\mathrm{BM}}(X)$ factors as 
$$
\xymatrix{
& \mathrm{Coh}(\cL X) \ar[d] \\ 
\Mot_{\mathrm{BM}}(X) \ar[r]^-{\Ch} \ar[ru]
& \QCoh(\cL X).}
$$
\end{lemma}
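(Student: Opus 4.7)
The plan is to combine the universal characterization of $\Mot_{\mathrm{BM}}(X)$ with the motivic Grothendieck--Riemann--Roch theorem, reducing the statement to a concrete finiteness assertion on the loop stack $\cL Y$ of a smooth variety $Y$. Since $\mathrm{Coh}(\cL X)$ is a stable idempotent complete full subcategory of $\QCoh(\cL X)$, and $\Ch$ is exact, it suffices by the defining property of $\Mot_{\mathrm{BM}}(X)$ to check that $\Ch(f_*\cM) \in \mathrm{Coh}(\cL X)$ whenever $f \colon Y \to X$ is a proper morphism from a smooth variety and $\cM \in \Mot^\sat(Y)$.

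For such an $f$, the pullback functor $f^* \colon \QCoh(X) \to \QCoh(Y)$ is rigid: $f$ is passable by Example~\ref{ex:smoothproper}, so Proposition~\ref{passableimpliesrigid} applies. Theorem~\ref{Motivic GRR} therefore yields a natural equivalence
\[
\Ch(f_*\cM) \;\simeq\; (\cL f)^\R \Ch(\cM).
\]
Under the identification $\cL\QCoh(-) \simeq \QCoh(\cL(-))$ valid for 1-affine stacks, the right adjoint $(\cL f)^\R$ corresponds to the derived pushforward $\cL f_* \colon \QCoh(\cL Y) \to \QCoh(\cL X)$ along the induced map of loop stacks.

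The rest of the argument is geometric. Since $\cM$ is saturated, i.e.\ fully dualizable in $\Mod_{\QCoh(Y)}(\bPrst_k)$, and $\Ch$ is symmetric monoidal, $\Ch(\cM)$ is a dualizable, hence perfect, complex on $\cL Y$. Because $Y$ is smooth, the diagonal $Y \to Y \times Y$ is a regular closed immersion, so the derived loop scheme $\cL Y \simeq Y \times_{Y \times Y} Y$ is quasi-smooth and locally Noetherian; in this setting perfect complexes are coherent, giving $\Ch(\cM) \in \mathrm{Coh}(\cL Y)$. Properness of $f$ implies properness of the induced map $\cL f \colon \cL Y \to \cL X$ (properness is stable under fiber products of stacks), and proper pushforward preserves coherence, so $\cL f_* \Ch(\cM) \in \mathrm{Coh}(\cL X)$, as required.

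The most delicate step is the coherence of $\Ch(\cM)$ for saturated $\cM$: it bundles together the fact that $\Ch$ sends saturated modules to perfect complexes (formal, from symmetric monoidality) with the geometric identification of perfect complexes on $\cL Y$ as coherent sheaves, which rests on quasi-smoothness of $\cL Y$ for $Y$ smooth. Once this is in place, the reduction via the motivic GRR and the preservation of coherence by proper pushforward are routine.
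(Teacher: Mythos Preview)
Your proof is correct and follows essentially the same approach as the paper's: reduce to the generators of $\Mot_{\mathrm{BM}}(X)$, apply the motivic GRR theorem (Theorem~\ref{Motivic GRR}) to rewrite $\Ch(f_*\cM)$ as $\cL f_*\Ch(\cM)$, observe that $\Ch(\cM)\in\Perf(\cL Y)\subset\Coh(\cL Y)$ since $Y$ is smooth (the paper phrases this as $\cL Y$ being eventually coconnective, citing Lemma~\ref{lem:Gorenstein}), and conclude using that proper pushforward preserves coherence. Your write-up is in fact slightly more explicit than the paper's in spelling out why $f^*$ is rigid and why $\Ch(\cM)$ is perfect (via dualizability of saturated objects under the symmetric monoidal functor $\Ch$).
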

\begin{proof}
Let $f\colon Y \to X$ be a proper map from a smooth variety $Y$. As $f\colon Y \to X$ is proper, $\cL f$ is proper and there is a well-defined  pushforward 
$ \, 
\cL f_*\colon \mathrm{Coh}(\cL Y) \to  \mathrm{Coh}(\cL X)
\, .$
Further, since $Y$ is smooth, $\cL Y$ is eventually coconnective (see Lemma~\ref{lem:Gorenstein}), and therefore there is an inclusion 
$ \, 
\Perf(\cL Y) \subset \mathrm{Coh}(\cL Y). 
\, $
The motivic GRR theorem (Theorem~\ref{Motivic GRR}) yields a commutative square
\begin{equation}
\label{dddddia}
\begin{gathered}
\xymatrix{\Mot^\sat(Y) \ar[d]_-{f_*} \ar[r]^-\Ch & \Perf(\cL Y) \ar[d]^ -{\cL f_*} \\
\Mot_{\mathrm{BM}}(X)  \ar[r]^-\Ch & \QCoh(\cL X). }
\end{gathered}
\end{equation}
By the previous discussion, the upper composition $\cL f_* \circ \Ch$ 
lands in  $\mathrm{Coh}(\cL X)$
 $$
\Mot^{\sat}(Y) \xrightarrow{ \cL f_* \circ \Ch } \mathrm{Coh}(\cL X) \subset \QCoh(\cL X).
$$
Then, since   (\ref{dddddia}) is commutative, the lower composition $\Ch \circ f_*$ also corestricts to $\mathrm{Coh}(\cL X).$

Now, by definition $\Mot_{\mathrm{BM}}(X)$ is generated under fibers, cofibers, and retracts by the images of the pushforward functors $$f_*\colon \Mot^{\sat}(Y) \to \Mot(X)$$ as $Y\to X$ ranges over all proper maps with smooth domain. 
We conclude that $\Ch$ restricted to $\Mot_{\mathrm{BM}}(X)$ lands in  $\mathrm{Coh}(\cL X),$  which is what we wanted to prove.  
\end{proof}
\begin{definition}
We denote by $K^{(2)}_{\mathrm{BM}}(X)$ the algebraic K-theory of 
$\Mot_{\mathrm{BM}}(X),$ 
$$ 
K^{(2)}_{\mathrm{BM}}(X) := K(\Mot_{\mathrm{BM}}(X)).$$
\end{definition}

Let $i_X\colon X \to \cL X$ be the embedding of the trivial loops. By \cite[Proposition 9.2]{barwick2015exact}  the pushforward   in $G$-theory,  
$ \, 
i_{X*}\colon G(X) \longrightarrow G(\cL X),$ 
is an equivalence. 
\begin{definition}
\label{bmchern}
We denote by $\ch^{(2)}_{\mathrm{BM}}$ the map 
$$
\ch^{(2)}_{\mathrm{BM}}\colon K^{(2)}_{\mathrm{BM}}(X)  = K(\Mot_{\mathrm{BM}}(X)) \xrightarrow{K(\Ch)} K(\mathrm{Coh}(\cL X)) \simeq G(X). 
$$
We call $\ch^{(2)}_{\mathrm{BM}}$ the \defterm{BM  secondary Chern character}. 
\end{definition}
The categorified GRR theorem implies a  GRR statement for the BM secondary Chern character. 
\begin{proposition}
\label{grrbmchernch}
Let $f\colon Y \to X$ be a proper map of algebraic varieties. Then there is a commutative square
$$
\xymatrix{
K_{\mathrm{BM}}^{(2)}(Y) \ar[r]^-{\ch^{(2)}_{\mathrm{BM}}} \ar[d]_-{f_*} & G(Y) \ar[d]^-{f_*} \\ 
K_{\mathrm{BM}}^{(2)}(X) \ar[r]^-{\ch^{(2)}_{\mathrm{BM}}} & G(X).
}
$$
\end{proposition}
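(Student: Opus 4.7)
The plan is to lift the claimed commutativity to a square of $\infty$-categories via the motivic GRR theorem, then apply $K$-theory, and finally transport along the equivalence $G(-)\simeq G(\cL -)$. Unwinding Definition~\ref{bmchern}, the map $\ch^{(2)}_{\mathrm{BM}}$ factors as the composite $K^{(2)}_{\mathrm{BM}}(-)\xrightarrow{K(\Ch)} K(\mathrm{Coh}(\cL -))=G(\cL -)\xleftarrow[\simeq]{i_{-*}} G(-)$, so it suffices to exhibit two commutative squares whose horizontal pasting gives the desired square.

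For the first square, I would apply the motivic GRR theorem (Theorem~\ref{Motivic GRR}) to the pullback $f^*\colon \QCoh(X)\to \QCoh(Y)$, which is rigid by Example~\ref{ex:smoothproper}(1) together with Proposition~\ref{passableimpliesrigid}. After composing with the natural functor $\cL\QCoh(-)\to\QCoh(\cL -)$, the right adjoint appearing in GRR becomes $\cL f_*\colon \QCoh(\cL Y)\to \QCoh(\cL X)$. I would then restrict to Borel--Moore motives: since $f\circ g\colon Z\to X$ is proper with smooth source whenever $g\colon Z\to Y$ is, $f_*\colon \Mot(Y)\to\Mot(X)$ carries $\Mot_{\mathrm{BM}}(Y)$ into $\Mot_{\mathrm{BM}}(X)$; by Lemma~\ref{lemmafacbmch}, $\Ch$ on $\Mot_{\mathrm{BM}}$ factors through $\mathrm{Coh}(\cL -)$; and since $f$ proper implies $\cL f$ proper, $\cL f_*$ preserves $\mathrm{Coh}$. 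This yields a commutative square
\[
\xymatrix{
\Mot_{\mathrm{BM}}(Y) \ar[r]^-{\Ch} \ar[d]_-{f_*} & \mathrm{Coh}(\cL Y) \ar[d]^-{\cL f_*} \\
\Mot_{\mathrm{BM}}(X) \ar[r]^-{\Ch} & \mathrm{Coh}(\cL X),
}
\]
which I would then pass through $K$-theory to obtain the first desired square $K^{(2)}_{\mathrm{BM}}(-)\to G(\cL -)$.

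For the second square, the inclusions of constant loops fit into a commutative diagram $\cL f\circ i_Y=i_X\circ f$ of stacks, since $\cL f$ carries the constant loop at $y$ to the constant loop at $f(y)$. Functoriality of proper pushforward in $G$-theory then gives the commutative square
\[
\xymatrix{
G(Y) \ar[r]^-{i_{Y*}}_-{\simeq} \ar[d]_-{f_*} & G(\cL Y) \ar[d]^-{\cL f_*} \\
G(X) \ar[r]^-{i_{X*}}_-{\simeq} & G(\cL X).
}
\]
Pasting these two squares along the right edge produces the proposition.

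The main technical obstacle is the identification, used in constructing the first square, of the right adjoint $(\cL f^*)^\R\colon \cL\QCoh(Y)\to \cL\QCoh(X)$ produced by Theorem~\ref{Motivic GRR} with the loop-stack pushforward $\cL f_*\colon \QCoh(\cL Y)\to \QCoh(\cL X)$ under the comparison functor $\cL\QCoh(-)\to \QCoh(\cL -)$. This identification is already implicitly invoked in the proof of Lemma~\ref{lemmafacbmch} and follows from base change along the pullback presentation $\cL X=X\times_{X\times X} X$ together with the naturality of the comparison map; once it is in place, everything else is bookkeeping.
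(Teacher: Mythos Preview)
Your proposal is correct and matches the paper's approach: the paper simply asserts that ``the categorified GRR theorem implies a GRR statement for the BM secondary Chern character'' and gives no proof of the proposition, but your two-square decomposition (the motivic GRR square restricted to $\Mot_{\mathrm{BM}}$ and $\mathrm{Coh}(\cL-)$ then passed to $K$-theory, pasted with the naturality square for the constant-loop inclusions $i_{(-)}$) is exactly the argument the paper intends and already invokes implicitly in the proof of Lemma~\ref{lemmafacbmch}. The technical point you flag about identifying $(\cL f^*)^\R$ with the geometric $\cL f_*$ under $\cL\QCoh(-)\simeq\QCoh(\cL-)$ is likewise used without comment in that lemma.
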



In \cite{bittner2004universal} Bittner obtains a presentation of the Grothendieck group of varieties over $X,$ $K_0(\Var_X)$, which we recall next. She proves that $K_0(\Var_X)$ is isomorphic to the free abelian group on isomorphism classes of proper maps
$ \, \, 
[Y   \longrightarrow X],$ 
such that $Y$ is smooth and equidimensional, subject to the following two relations 
\begin{enumerate}
\item $[\varnothing \longrightarrow X] = 0$
\item For every diagram 
$$
\xymatrix{E \ar[r]^-{i}  \ar[d]_-p  & \Bl_{Z}(Y) \ar[d]^-q  & \\
Z \ar[r]^-{j} & Y \ar[r]^-f  & X}
$$
where $j$ is a closed embedding of smooth equidimensional algebraic varieties, $\Bl_{Z}(Y)$ is the blow-up of $Y$ along $Z,$ and $E$ is the exceptional divisor, 
$$
[\Bl_ZY   \longrightarrow X] - [E \longrightarrow X] = [Y \longrightarrow X] - [Z \longrightarrow X] 
\quad \text{in} \quad K_0(\Var_X).
$$
\end{enumerate}
 
If $\cC$ is an $\infty$-category over $X$ having the property that its motive lies in $\Mot_{\mathrm{BM}}(X),$ we denote by $[\cC]$ its class in $K_{\mathrm{BM}}^{(2)}(X).$ 
%
%
%
%
%
\begin{proposition}
There is a homomorphism of groups 
$ 
\mu\colon K_0(\Var_X) \to K^{(2)}_{\mathrm{BM}, 0}(X)
$ 
given by the assignment: 
$$  
\quad [Y \stackrel{f} \longrightarrow X] \in K_0(\Var_X) \mapsto f_*[\Perf(Y)] \in K^{(2)}_{\mathrm{BM}, 0}(X).
$$ 
\end{proposition}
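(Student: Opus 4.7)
The strategy is to invoke Bittner's presentation of $K_0(\Var_X)$ recalled above: it suffices to show that the proposed assignment $[Y\xrightarrow{f} X]\mapsto f_*[\Perf(Y)]$ is well-defined on isomorphism classes of proper maps $f\colon Y\to X$ from smooth equidimensional $Y$, and respects the two relations: vanishing on $\varnothing$, and the blow-up formula.

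\emph{Well-definedness on generators.} For any symmetric monoidal $\infty$-category $\cC$, the unit $\cC\in\Mod_\cC$ is automatically saturated: the evaluation and coevaluation maps $\cC\otimes_\cC\cC\to\cC$ and $\cC\to\cC\otimes_\cC\cC$ are both identities, hence right adjointable. In particular $\Perf(Y)\in \mathrm{mod}^{\sat}_{\Perf(Y)}$ and so $[\Perf(Y)]\in K_0(\Mot^{\sat}(Y))$. By the very definition of $\Mot_{\mathrm{BM}}(X)$, the pushforward $f_*\colon \Mot^{\sat}(Y)\to\Mot(X)$ factors through $\Mot_{\mathrm{BM}}(X)$, so $f_*[\Perf(Y)]\in K^{(2)}_{\mathrm{BM},0}(X)$ as required. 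The empty-variety relation is immediate, since $\Perf(\varnothing)=0$.

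\emph{The blow-up relation.} This is the main content, and it reduces to Orlov's semi-orthogonal decomposition for the blow-up of a smooth variety $Y$ along a smooth equidimensional closed subvariety $Z$ of codimension $c$:
\[
\Perf(\Bl_Z Y) = \langle \Perf(Y),\, \Perf(Z)_1,\, \ldots,\, \Perf(Z)_{c-1}\rangle,
\]
together with the projective bundle formula for $E\cong \mathbb{P}_Z(N_{Z/Y})$:
\[
\Perf(E) = \langle \Perf(Z)_0,\, \ldots,\, \Perf(Z)_{c-1}\rangle.
\]
Each semi-orthogonal component is the image of a fully faithful, $\Perf(\Bl_Z Y)$-linear (resp.\ $\Perf(E)$-linear) functor given by a Fourier--Mukai kernel built from $\cO(i)$ and pullback/pushforward along the natural maps. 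Since semi-orthogonal decompositions yield cofiber sequences in the noncommutative motivic category, taking classes in $K_0$ of motives gives
\[
[\Perf(\Bl_Z Y)] = [\Perf(Y)] + (c-1)[\Perf(Z)],\qquad [\Perf(E)] = c\,[\Perf(Z)]
\]
in $K^{(2)}_{\mathrm{BM},0}(\Bl_Z Y)$ and $K^{(2)}_{\mathrm{BM},0}(E)$ respectively (after absorbing the Fourier--Mukai functors into the appropriate pushforwards to $\Bl_Z Y$, resp.\ $E$). Subtracting and pushing forward along the proper maps to $X$ yields
\[
(f\circ g)_*[\Perf(\Bl_Z Y)] - (f\circ j\circ q')_*[\Perf(E)] = f_*[\Perf(Y)] - (f\circ j)_*[\Perf(Z)]
\]
in $K^{(2)}_{\mathrm{BM},0}(X)$, where $g\colon \Bl_Z Y\to Y$, $j\colon Z\to Y$, $q'\colon E\to Z$. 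This is precisely the image of Bittner's blow-up relation.

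\emph{Main obstacle.} The delicate point is to verify that Orlov's decomposition, originally stated as an equivalence of $k$-linear stable $\infty$-categories, can be regarded as a semi-orthogonal decomposition in $\mathrm{mod}_{\Perf(\Bl_Z Y)}$: namely, that the embedded copies of $\Perf(Z)$ carry $\Perf(\Bl_Z Y)$-linear structures for which the embedding functors are $\Perf(\Bl_Z Y)$-linear. This is true because the Fourier--Mukai kernels are $\cO_{\Bl_Z Y}$-linear in the appropriate variable, but it is the only step that requires care; once it is in place, the identities in $K^{(2)}_{\mathrm{BM},0}$ follow formally from additivity of motives.
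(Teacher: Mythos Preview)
Your approach is the same as the paper's: the proof there simply points to the argument of Bondal--Larsen--Lunts for $X=\mathrm{Spec}(k)$, with Orlov's blow-up formula as the key ingredient, and you have correctly spelled out how this goes via Bittner's presentation.

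There is, however, a slip in your ``main obstacle'' paragraph. You ask that the embedded copies of $\Perf(Z)$ in $\Perf(\Bl_Z Y)$ carry $\Perf(\Bl_Z Y)$-linear structures making the embeddings $\Perf(\Bl_Z Y)$-linear, and correspondingly you write the intermediate identities in $K^{(2)}_{\mathrm{BM},0}(\Bl_Z Y)$. But there is no morphism $\Bl_Z Y\to Z$, and the subcategories $\Perf(Z)_i\subset\Perf(\Bl_Z Y)$ are not tensor ideals, so they are \emph{not} $\Perf(\Bl_Z Y)$-modules in any natural way. What is actually needed (and is straightforward) is that Orlov's semi-orthogonal decomposition is $\Perf(Y)$-linear: all the functors involved---$g^*$, $p^*$, pushforward along $E\hookrightarrow \Bl_Z Y$, and tensoring by $\cO(i)$---commute with the $\Perf(Y)$-action coming from the structure maps to $Y$. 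This gives the identity
\[
g_*[\Perf(\Bl_Z Y)] - (j\circ q')_*[\Perf(E)] = [\Perf(Y)] - j_*[\Perf(Z)]
\]
directly in $K^{(2)}_{\mathrm{BM},0}(Y)$, and then one applies $f_*$. With this correction your argument goes through.
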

\begin{proof}
The proof is the same as the one given in  \cite{BLL} for the case $X= \mathrm{Spec}(k).$ The key ingredient is Orlov's formula for the category of perfect complexes of blow-ups, see \cite[Proposition 7.5]{BLL}. The only thing to prove is that the assignment 
$$  
\quad [Y \stackrel{f} \longrightarrow X] \in K_0(\Var_X) \mapsto f_*[\Perf(Y)] \in K^{(2)}_{\mathrm{BM}, 0}(X).
$$ 
is compatible with the relations $(1)$ and $(2)$ coming from Bittner's presentation of $K_0(\Var_X)$.

 Relation $(1)$ reduces to the fact that $\Perf(\varnothing)$ is the $0$ category. Now let $Z \stackrel{j} \to Y$ be as in relation $(2)$, and let $s$ be the codimension of $Z$ in $Y$.  Orlov's formula,  proved in \cite{orlov1993projective}, gives a $\Perf(Y)$-linear semi-orthogonal decomposition of  $\Perf(\Bl_{Z}(Y))$ with $s$ factors: one copy of $\Perf(Y)$ and $s$-$1$ copies of $\Perf(Z)$. The exceptional divisor $E \subset \Bl_{Z}(Y)$ is a  projective bundle over $E$ of rank $s$-$1$. Thus by  \cite{orlov1993projective} $\Perf(E)$ has a $\Perf(Z)$-linear semi-orthogonal decomposition with $s$ factors  all equivalent to $\Perf(Z)$. 

We obtain the following identities in $K^{(2)}_{\mathrm{BM}, 0}(X)$:
$$ 
(f   q)_*[\Perf(\Bl_{Z}(Y))] = f_*[\Perf(Y)] + (s-1)(f  j)_*[\Perf(Z)], \quad (f   q   i)_*[\Perf(E)] = (i f)_*[\Perf(Z)]
$$
This immediately implies that 
$$
f_*[\Perf(\Bl_{Z}(Y))] -  (f q i)_*[\Perf(E)]  =  f_*[\Perf(Y)] -  (fj)_*[\Perf(Z)]  
$$
which can be rewritten as the identity 
$$
\mu(\Bl_{Z}(Y)) - \mu(E) = \mu(Y) - \mu(Z)
$$
This shows that relation $(2)$ is satisfied, and concludes the proof.
\end{proof}

\subsubsection{The motivic Chern class}
Let $X$ be a variety. 
The motivic Chern class was defined in \cite{brasselet2005hirzebruch}. It is the morphism
$$
mC_*\colon K_0(\Var_X) \longrightarrow  G_0(X) \otimes \Z[y] 
$$
which is uniquely determined by the following two properties: 
\begin{enumerate}
\item If $X$ is smooth, $\, \, mC_*([X \xrightarrow{1_X}  X]) = \sum [\Omega^ i_X] \cdot  y^ i \in G_0(X) \otimes \Z[y]$ 
\item If $Y \to X$ is a proper map and $Y$ is a smooth algebraic variety there is a commutative diagram \begin{equation}
\label{grrmotchernbra}
\begin{gathered}
\xymatrix{
K_0(\Var_Y) \ar[r]^-{\ch_\mathrm{mot}} \ar[d]_-{f_*} & G_0(Y) \otimes \Z[y] \ar[d]^-{f_*} \\
K_0(\Var_X) \ar[r]^-{\ch_\mathrm{mot}} & G_0(X) \otimes \Z[y].
}
\end{gathered}
\end{equation}
\end{enumerate}
\begin{theorem}
\label{motchern}
Let $X$ be a variety. Then there is a commutative diagram
\begin{equation}
\begin{gathered}
\label{commutcomparmot}
\xymatrix{
K_0(\Var_X) \ar[r]^-{mC_*} \ar[d]_-\mu & G_0(X) \otimes \Z[y] \ar[d]^-s \\ 
K^{(2)}_{\mathrm{BM}, 0}(X) \ar[r]^-{\ch^{(2)}_\mathrm{BM}} & G_0(X)
}
\end{gathered}
\end{equation}
where the vertical map on the right is the quotient map
$$
G_0(X) \otimes \Z[y] \rightarrow G_0(X) \otimes \Z[y]/(y+1)=G_0(X). 
$$
\end{theorem}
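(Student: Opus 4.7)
The plan is to use Bittner's presentation \cite{bittner2004universal} of $K_0(\Var_X)$ by classes $[Y\xrightarrow{f}X]$ with $Y$ smooth and $f$ proper, together with the uniqueness of the motivic Chern class. Since $mC_*$ is characterized by the two properties listed before the theorem, it suffices to show that the composite $\ch^{(2)}_{\mathrm{BM}}\circ\mu\colon K_0(\Var_X)\to G_0(X)$ satisfies the specializations of those properties at $y=-1$. This splits the problem into (a) a functoriality statement in proper pushforward and (b) a pointwise computation on the Bittner generators.

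Step (a) is almost formal. Given a proper morphism $g\colon X'\to X$, the very definition of $\mu$ gives $\mu\circ g_*=g_*\circ\mu$. Combining this with Proposition~\ref{grrbmchernch}, which provides the Grothendieck--Riemann--Roch statement $g_*\circ\ch^{(2)}_{\mathrm{BM}}=\ch^{(2)}_{\mathrm{BM}}\circ g_*$, shows that $\ch^{(2)}_{\mathrm{BM}}\circ\mu$ commutes with proper pushforward, which is exactly the image of property~(2) under $s$.

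Step (b) reduces to computing $\ch^{(2)}_{\mathrm{BM}}([\Perf(Y)])\in G_0(Y)$ for $Y$ smooth, and checking it equals $\sum_i(-1)^i[\Omega^i_Y]$. Unwinding Definition~\ref{bmchern}, this is the image of $\Ch([\Perf(Y)])\in K_0(\Coh(\cL Y))$ under the inverse of $i_{Y*}\colon G(Y)\xrightarrow{\simeq} G(\cL Y)$. The categorified Chern character of the trivial $\QCoh(Y)$-module is the unit of $\cL\QCoh(Y)\simeq \QCoh(\cL Y)$, that is, $\cO_{\cL Y}$. By the HKR equivalence of \cite{BNlsc}, for $Y$ smooth $\cL Y\simeq \mathbb{T}_Y[-1]=\Spec_Y\Sym_{\cO_Y}(\Omega^1_Y[1])$, whence $\cO_{\cL Y}\simeq \bigoplus_i \Omega^i_Y[i]$ as an object of $\Coh(\cL Y)$; the alternating-sum decomposition yields $[\cO_{\cL Y}]=\sum_i(-1)^i[i_{Y*}\Omega^i_Y]$ in $G_0(\cL Y)$, and applying $i_{Y*}^{-1}$ gives the required class. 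Together with step (a), this identifies the values of $s\circ mC_*$ and $\ch^{(2)}_{\mathrm{BM}}\circ\mu$ on every Bittner generator $[Y\xrightarrow{f}X]$ with the common value $f_*\bigl(\sum_i(-1)^i[\Omega^i_Y]\bigr)$, so that the diagram \eqref{commutcomparmot} commutes.

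The principal obstacle is step (b): one has to verify that the categorified Chern character of the unit produces $\cO_{\cL Y}$ in the appropriate model of $\cL\QCoh(Y)$, track the restriction along the inclusion $\Perf(\cL Y)\hookrightarrow \Coh(\cL Y)$ used in Lemma~\ref{lemmafacbmch}, and then carry out the Koszul-type computation of $[\cO_{\cL Y}]$ on $K_0$ under the devissage equivalence $G_0(Y)\simeq G_0(\cL Y)$. Once the HKR identification is set up carefully, the remainder is routine bookkeeping with the definitions introduced in Section~\ref{the grothendieck riemann}.
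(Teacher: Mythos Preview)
Your proposal is correct and follows essentially the same route as the paper's proof: both reduce to the two characterizing properties of $mC_*$, use Proposition~\ref{grrbmchernch} for compatibility with proper pushforward, check that $\mu$ commutes with pushforward, and compute $\ch^{(2)}_{\mathrm{BM}}([\Perf(Y)])=\sum_i(-1)^i[\Omega^i_Y]$ for $Y$ smooth via the identification of $\Ch(\QCoh(Y))$ with $\cO_{\cL Y}$ and the HKR description of $\cL Y$. Your step~(b) is in fact more carefully articulated than the paper's terse line ``$[i_X^*\cO_{\cL X}]=\sum(-1)^i\Omega^i_X$''; the only cosmetic point is that $\cO_{\cL Y}$ is not literally a direct sum $\bigoplus_i i_{Y*}\Omega^i_Y[i]$ in $\Coh(\cL Y)$ but rather admits a finite filtration with those graded pieces, which is all you need at the level of $G_0$.
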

\begin{proof}
By Proposition \ref{grrbmchernch} the  BM secondary Chern character satisfies a GRR theorem for pushforwards along proper maps. 
Then, in view of the defining properties (1) and (2) of the motivic Chern class, to prove the claim  it is sufficient to verify the following two compatibilities. The first is that, if $X$ is smooth, diagram (\ref{commutcomparmot}) commutes when evaluated on $[X \xrightarrow{1_X} X].$ This holds, since 
$$\ch_{\mathrm{BM}}^{(2)} \circ \mu([X \xrightarrow{1_X} X]) =  \ch_{\mathrm{BM}}^{(2)}([\Perf(X)]) = [i_X^*\cO_{\cL X}] = \sum (-1)^ i \Omega^ i_X = s \circ mC_*([X \xrightarrow{1_X} X]).$$
 Finally, we need to check that if $f\colon Y \rightarrow X$ is a proper map, the square
 $$
 \xymatrix{
K_0(\Var_Y) \ar[r]^-{f_*} \ar[d]_-\mu & K_0(\Var_X)  \ar[d]^-{\mu} \\ 
K^{(2)}_{\mathrm{BM}, 0}(Y) \ar[r]^-{f_*} & K^{(2)}_{\mathrm{BM}, 0}(X)
}
 $$
 commutes. This is clear, and this concludes the proof. 
\end{proof}

%
%
%
%
%
%
%

\section{The categorified Chern character and the de Rham realization}
\label{labelcatcherchar}
In this section we prove that the categorified Chern 
character recovers the de Rham realization. The main technical input will be 
the categorified GRR theorem. We will leverage work of Preygel on the comparison 
between coherent sheaves on the loop stack and crystals \cite{preygel2014ind}.

Throughout this section we will work over a fixed ground field 
$k$ of characteristic zero, and ``derived scheme'' will mean ``derived scheme almost of finite type over $k$''. We write $\Sch$ for the $\infty$-category of derived schemes.
If $X$ is a derived scheme, we denote by $\Sch_X$ the overcategory $\Sch_{/X}$.
Recall that a morphism of derived schemes $Y\to X$ is \emph{smooth} if for every classical scheme $Z$ and every morphism $Z\to X$, the projection $Y\times_XZ\to Z$ is a smooth morphism of classical schemes. We denote by $\Sm_X\subset \Sch_X$ the full subcategory of smooth $X$-schemes.

We will use heavily the theory of ind-coherent sheaves developed in \cite{gaitsgory2013ind,GR}. Recall that for $X$ a derived prestack (locally almost of finite type), there is defined a symmetric monoidal presentable stable $\infty$-category $\IndCoh(X)$, and for any morphism $f\colon Y\to X$ there is a symmetric monoidal pullback functor
\[
f^!\colon \IndCoh(X) \to \IndCoh(Y).
\]
If $f$ is schematic and quasi-compact (more generally, ind-inf-schematic), there is also a pushforward functor
\[
f_*\colon \IndCoh(Y) \to \IndCoh(X)
\]
with the following properties: if $f$ is proper (more generally, ind-proper), then $f_*$ is left adjoint to $f^!$, and if $f$ is an open immersion, then $f_*$ is right adjoint to $f^!$.
Furthermore, there is a canonical action of $\QCoh(X)$ on $\IndCoh(X)$, denoted by $\otimes$. The functor
\[
\Upsilon\colon \QCoh(X) \to \IndCoh(X), \quad \Upsilon(\cF) = \cF\otimes \omega_X,
\]
where $\omega_X\in\IndCoh(X)$ is the unit object,
is symmetric monoidal and intertwines the $*$-pullback of quasi-coherent sheaves and the $!$-pullback of ind-coherent sheaves.

If $X$ is a derived scheme, we have $\IndCoh(X)=\Ind(\Coh(X))$, where $\Coh(X)\subset\QCoh(X)$ is the subcategory of bounded pseudo-coherent complexes or, using the terminology of 
 \cite{SAG}, bounded  \emph{almost perfect} objects. 

\subsection{Ind-coherent sheaves on loop spaces and crystals}
In this section we review definitions and results from Preygel's article \cite{preygel2014ind}. 

 Let $\cC$ be a $k$-linear $\infty$-category equipped with an $S^1$-action. The invariant category 
$\cC^{S^1}$ is linear over 
$C^*(BS^1, k) \simeq k[[u]]$, with $u$ in (homological) degree $-2$, and we set \[\cC^{\mathrm{Tate}}:=\cC^{S^1} \otimes_{k[[u]]} k((u)).\] If $\cC$ is \emph{large} the Tate construction is often not quite the right concept. Under the assumption that $\cC$ is a stable $\infty$-category with a coherent
  t-structure \cite[Definition 4.2.7]{preygel2014ind},
   Preygel introduces the  \emph{tTate construction} $\cC^{\mathrm{tTate}}$ as a better behaved alternative. It is defined by
\[
\cC^\tTate := \Ind(\Coh(\cC)^{S^1})\otimes_{k[[u]]}k((u)),
\]
where $\Coh(\cC)\subset \cC$ is the full subcategory of bounded almost perfect objects\footnote{In \cite{preygel2014ind} Preygel calls almost perfect objects \emph{almost compact}. Note that in the terminology of \cite[Appendix C.5.5]{SAG}, $\Ind(\Coh(\cC))$ is the stabilization of the anticompletion of $\cC_{\geq 0}$.} and $(-)\otimes_{k[[u]]}k((u))$ is extension of scalars for presentable linear $\infty$-categories.
The notation $\Coh(\cC)$ is  motivated by geometric applications. Indeed, let $X$ be a derived scheme with a $S^1$-action.   If $\cC =  \Ind(\Coh(X))$ then $\Coh(\cC)$ coincides with the stable category of coherent complexes on $X$, $\Coh(X)$.  Further by  \cite[Remark 4.5.6]{preygel2014ind} we have that 
	\[
	\IndCoh(X)^\tTate \simeq \Ind(\Coh(X)^{S^1}) \otimes_{k[[u]]} k((u)) \simeq \Ind(\Coh(X)^\Tate).
	\]
  
Let $X$ be a derived scheme and $X_\dR$ the associated de Rham prestack.
Recall that a \emph{crystal} on $X$ is by definition a quasi-coherent sheaf on $X_\dR$, and that the functor
\[
\Upsilon \colon \QCoh(X_\dR) \to \IndCoh(X_\dR)
\]
is an equivalence \cite[Proposition 2.4.4]{GR2}.
The inclusion of the constant loops $X\to \cL X$ is a nil-isomorphism, and hence it induces an equivalence of de Rham prestacks. We therefore have an $S^1$-equivariant map
$$
\pi_X\colon \cL X \rightarrow (\cL X)_{\mathrm{dR}} \simeq X_{\mathrm{dR}},
$$
where the $S^1$-action is given by loop rotation on $\cL X$ and is trivial on $X_{\mathrm{dR}}$.  
The morphism $\pi_X$ is an inf-schematic nil-isomorphism and hence induces an adjunction
$$
\pi_{X,*}: \mathrm{IndCoh}(\cL X) \rightleftarrows \mathrm{IndCoh}(X_{\dR}): \pi_X^!,
$$
where the right adjoint is symmetric monoidal.

\begin{theorem}[\cite{preygel2014ind}, Theorem 1.3.5]
\label{Toly}
For every derived scheme $X$, the morphism $\pi_X$ induces inverse equivalences
of symmetric monoidal $\infty$-categories
$$
(\pi_{X,*})^{\mathrm{tTate}} : \mathrm{IndCoh}(\cL X)^{\mathrm{tTate}} \stackrel{\simeq}  \longleftrightarrow \mathrm{IndCoh}(X_{\dR})^{\mathrm{tTate}}  :  (\pi^!_X)^{\mathrm{tTate}}.
$$
\end{theorem}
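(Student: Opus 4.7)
The plan is to reduce Theorem~\ref{Toly} to an explicit formal-geometric computation near the constant loops, exploiting the principle that the tTate construction annihilates loci with free $S^1$-action (a version of the Atiyah--Segal localization theorem). Since $\pi_X$ is an inf-schematic nil-isomorphism, both sides are controlled by infinitesimal data along the constant loops, so such a localized computation should suffice.

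First I would reduce to the affine case $X = \mathrm{Spec}(A)$ by \'etale descent for $\IndCoh$ and for the tTate construction. Let $i\colon X\hookrightarrow \cL X$ denote the inclusion of constant loops and $j\colon U\hookrightarrow \cL X$ the complementary open immersion. The technical heart of the argument is a vanishing claim $\IndCoh(U)^\tTate \simeq 0$: the fixed locus of loop rotation on $\cL X$ is exactly the constant loops $X$, and away from the fixed locus equivariant localization should force the Tate construction to vanish. Combined with the recollement relating $\IndCoh(\cL X)$ to $\IndCoh(\widehat{\cL X})$ and $\IndCoh(U)$, this would yield $\IndCoh(\cL X)^\tTate \simeq \IndCoh(\widehat{\cL X})^\tTate$, where $\widehat{\cL X}$ denotes the formal completion of $\cL X$ along $X$.

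At this point I would invoke the categorified HKR equivalence (see Remark~\ref{secondhalf}) to identify $\widehat{\cL X}$ with the formal completion $\widehat{\mathbb{T}_X[-1]}$ of the shifted tangent complex at its zero section, matching the loop-rotation $S^1$-action with the natural weight-one grading on $\mathbb{T}_X[-1]$. Using the description of $\IndCoh(\widehat{\mathbb{T}_X[-1]})$ in terms of $\Sym_{\cO_X}(\mathbb{L}_X[1])$-modules, one should see that the $S^1$-invariants assemble into the (completed) de Rham complex and that inverting $u$ yields the $2$-periodic de Rham theory, which is precisely $\IndCoh(X_\dR)^\tTate$ via the standard Koszul-type description of crystals as modules over the de Rham algebra. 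The main obstacle, I expect, will be the free-locus vanishing: because the tTate construction uses $\Ind(\Coh(-)^{S^1})$ rather than passing to $\Ind$ only after taking invariants, one must carefully track almost-compact objects through the recollement, and then match the abstractly constructed equivalence with the specific functor $(\pi_{X,*})^\tTate$ by following the HKR identification step by step.
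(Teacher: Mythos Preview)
The paper does not prove this statement; it is quoted from Preygel's work \cite{preygel2014ind} and used as a black box. There is therefore no proof in the present paper against which to compare your proposal.

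That said, your sketch contains a conceptual error worth flagging. You propose a recollement using the ``complementary open immersion $j\colon U\hookrightarrow \cL X$'' and an equivariant-localization vanishing $\IndCoh(U)^\tTate\simeq 0$ away from the fixed locus. But for a derived scheme $X$, the inclusion of constant loops $X\hookrightarrow \cL X$ is a nil-isomorphism (this is stated in the paper just before the theorem): the underlying reduced schemes of $X$ and $\cL X$ coincide, so there is no nonempty open complement $U$ and no recollement of the kind you describe. The loop space $\cL X = X\times_{X\times X}X$ is already a derived infinitesimal thickening of $X$, and the whole content of the theorem is in controlling this derived structure $S^1$-equivariantly, not in discarding a free locus. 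Your later HKR/Koszul step, identifying the Tate construction with the $2$-periodic de Rham complex, is much closer to what actually happens in Preygel's argument, but the spatial-localization framing you set up in the first half is not the right mechanism in this setting.
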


\begin{definition}
If $\cC$ is a presentable stable $k$-linear $\infty$-category,  we denote by $\cC_{\mathbb{Z}/2}$ its 
 \defterm{$\mathbb{Z}/2$-folding}, 
 $$
 \cC_{\mathbb{Z}/2} := \cC \otimes_k k((u))
 $$
 where $u$ is in degree $-2$. 
 \end{definition}
 
 If $S^1$ acts trivially on a stable $\infty$-category $\cC$ with coherent t-structure, we have
 \[
 \cC^\tTate \simeq  \Ind(\Coh(\cC))_{\Z/2}
 \]
 \cite[Lemma 4.5.4]{preygel2014ind}.
 In particular, Theorem~\ref{Toly} gives equivalences of symmetric monoidal $\infty$-categories
 \begin{equation}\label{eqn:tTate-to-dR}
 	\IndCoh(\cL X)^\tTate \simeq \IndCoh(X_\dR)_{\Z/2} \simeq \QCoh(X_\dR)_{\Z/2}.
 \end{equation}
 We will not distinguish notationally between an object of $\cC$ and its image in the $\Z/2$-folding $\cC_{\Z/2}$, as it will always be clear from the context which is meant.
 
 We now discuss the functoriality of the construction $\cC\mapsto \cC^\tTate$, following \cite[\S4.6]{preygel2014ind}. An exact functor $F\colon \cC\to\cD$ between stable $\infty$-categories with coherent t-structures is called \emph{coherent} if it is left t-exact up to a shift and $F|\cC_{<0}$ preserves filtered colimits.
  For such a functor there is an induced commutative diagram
  \[
  \xymatrix{
  \cC_{<\infty} \ar[r] \ar[d] & \Ind(\Coh(\cC)) \ar[r] \ar[d] & \cC \ar[d]^F \\
   \cD_{<\infty} \ar[r] & \Ind(\Coh(\cD)) \ar[r]& \cD,
  }
  \]
  where $\cC_{<\infty}=\bigcup_n \cC_{\leq n}\subset \cC$ is the subcategory of homologically bounded above objects.
  Note that the functors $f^!\colon \IndCoh(X)\to\< \IndCoh(Y)$ and $f_*\colon \IndCoh(Y)\to\<\IndCoh(X)$ are coherent for any morphism of derived schemes $f\colon Y\to X$.
  Similarly, if $\cC$ has a symmetric monoidal structure whose unit is bounded above and such that $x\otimes (-)$ is coherent for every $x\in\cC_{<0}$, there is an induced symmetric monoidal structure on $\Ind(\Coh(\cC))$ that restricts to the original one on $\cC_{<\infty}$.

  If $\cC$ has an $S^1$-action, the diagram of functors
 \[
 \xymatrix{
 & \cC^{S^1}_{<\infty} \ar[dl] \ar[d] & \\
 \cC^{S^1} & \Ind(\Coh(\cC)^{S^1}) \ar[l] \ar[r] & \cC^\tTate
 }
 \]
 is thus natural in $\cC$ with respect to coherent functors. Replacing the functor $\Ind(\Coh(\cC)^{S^1}) \to\<  \cC^{S^1}$ by its right adjoint, we obtain a canonical functor
 \[
 \Theta \colon \cC^{S^1} \to \cC^\tTate,
 \]
 which is \emph{left-lax} natural in $\cC$, and whose restriction to $\cC^{S^1}_{<\infty}$ is strictly natural by Lemma 4.6.1 and Remark 4.6.3 of \cite{preygel2014ind}. 
 If $\cC$ is symmetric monoidal as before, the above diagram is one of symmetric monoidal functors. It follows that $\Theta$ is right-lax symmetric monoidal, and that its restriction to $\cC^{S^1}_{<\infty}$ is symmetric monoidal; in particular, $\Theta$ is unital. 
 
\begin{lemma}\label{lem:tS^1}
	Suppose that $\cC=\Ind(\Coh(\cC))$ and that $S^1$ acts trivially on $\cC$. Then the functor
	\[\Theta\colon \cC^{S^1} \to \cC^\tTate\simeq \cC_{\Z/2}\] sends an $S^1$-equivariant object $E$ to its Tate construction \[E^{tS^1}=E^{S^1}\otimes_{k[[u]]}k((u)).\]
\end{lemma}

\begin{proof}
	There is a commutative square of left adjoint functors
	 \[
	 \xymatrix{
	 \cC \ar[r] \ar@{=}[d] & \cC^{S^1} \\
	\Ind(\Coh(\cC)) \ar[r] & \Ind(\Coh(\cC)^{S^1}), \ar[u]
	 }
	 \]
	 where the horizontal functors equip an object with trivial $S^1$-action. Under the equivalence \[\Ind(\Coh(\cC)^{S^1})\simeq \cC\otimes_k k[[u]]\] of \cite[Lemma 4.5.4]{preygel2014ind}, the lower horizontal functor is given by extension of scalars. Passing to right adjoints, we deduce that the functor \[\cC^{S^1}\to \Ind(\Coh(\cC)^{S^1})\simeq \cC\otimes_k k[[u]]\] sends an object $E$ to its $S^1$-fixed points $E^{S^1}$ with their $k[[u]]$-module structure.
\end{proof}

  \subsection{The categorified de Rham   Chern character}
  
 For $X$ smooth over $k$, the categorified de Rham Chern character will be a functor
 \[
 \Ch^\dR\colon \Mod^\dual_{\QCoh(X)} \longrightarrow \cD_X\dmod_{\Z/2}
 \]
 associating to every dualizable sheaf of $\infty$-categories on $X$ a $2$-periodic $\cD_X$-module.
 More generally, for $X$ not necessarily smooth, we will define $\Ch^\dR$ as a functor valued in the $\infty$-category $\QCoh(X_\dR)_{\Z/2}$ of $2$-periodic crystals. The relationship between crystals and D-modules will be reviewed in \S\ref{deRhamComparison}.
  


  \begin{definition}
\label{defdRCh}
Let $X$ be a derived scheme. The categorified \defterm{de Rham Chern character} is the functor
$$
\mathrm{Mod}^{\mathrm{dual}}_{\mathrm{QCoh}(X)} \xrightarrow{\mathrm{Ch}} \mathrm{QCoh}(\cL X)^{S^1} \xrightarrow{\Upsilon}\mathrm{IndCoh}(\cL X)^{S^1} \xrightarrow{\Theta} \mathrm{IndCoh}(\cL X)^{\tTate}   \simeq \QCoh(X_{\mathrm{dR}})_{\Z/2}, 
$$
where the last equivalence is~\eqref{eqn:tTate-to-dR}.
We denote the de Rham Chern character by $\mathrm{Ch}^{\mathrm{dR}}$.   
\end{definition}

Note that $\Ch^\dR$ is a unital right-lax symmetric monoidal functor, being a composition of such functors. Moreover, the restriction of $\Ch^\dR$ to fully dualizable $\QCoh(X)$-modules is strictly symmetric monoidal, since $\Upsilon\circ \Ch$ takes such modules to $\Coh(\cL X)^{S^1}$.

When $X$ is a smooth scheme, $\Ch^\dR$ is an enhancement of periodic cyclic homology:
 
 \begin{lemma}\label{lem:Ch=HP}
 	Let $X$ be a smooth scheme and let $\pi\colon X\to X_\dR$ be the canonical map. Then the composite functor
 	\[
 	\mathrm{Mod}^{\mathrm{dual}}_{\mathrm{QCoh}(X)} \xrightarrow{\Ch^\dR} \QCoh(X_\dR)_{\Z/2} \xrightarrow{\pi^*} \QCoh(X)_{\Z/2}
 	\]
 	sends $\cM$ to its relative periodic cyclic homology $\mathrm{HP}(\cM/X)=\HH(\cM/X)^{tS^1}$. 
 \end{lemma}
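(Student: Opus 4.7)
The proof will be a computation that unwinds the definition of $\Ch^\dR$ and tracks how each functor in the composition interacts with pullback along the factorization $\pi = \pi_X \circ i_X$, where $i_X\colon X \hookrightarrow \cL X$ is the inclusion of constant loops. The first preparatory step is to identify $i_X^* \Ch(\cM) \simeq \HH(\cM/X)$ in $\QCoh(X)^{S^1}$, with its canonical $S^1$-action arising from loop rotation. This follows from the construction of the categorified Chern character as the trace of the monodromy on $\cL \QCoh(X) \simeq \QCoh(\cL X)$, together with the fact that restriction along $i_X$ corresponds to the natural $\QCoh(X)$-module structure on $\cL\QCoh(X)$ under this identification.

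Next, I would unwind the definition $\Ch^\dR(\cM) = \Upsilon_{X_\dR}^{-1} (\pi_{X,*})^\tTate \Theta(\Upsilon \Ch(\cM))$ and apply $\pi^*$. Since $X$ is smooth, the functor $\Upsilon_X\colon \QCoh(X) \to \IndCoh(X)$ is an equivalence, and the intertwining $\Upsilon \circ f^* = f^! \circ \Upsilon$ yields
\[
	\pi^* \Ch^\dR(\cM) \simeq \Upsilon_X^{-1} \, \pi^! \, (\pi_{X,*})^\tTate \, \Theta(\Upsilon \, \Ch(\cM)).
\]
Factoring $\pi^! = i_X^! \circ \pi_X^!$ and using Theorem~\ref{Toly} to identify $(\pi_X^!)^\tTate$ as the inverse of $(\pi_{X,*})^\tTate$, this simplifies to $\Upsilon_X^{-1} \, i_X^! \, \Theta(\Upsilon \, \Ch(\cM))$. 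Using the lax naturality of $\Theta$ with respect to coherent functors (strictly natural on bounded objects, which will suffice here), I would commute $i_X^!$ past $\Theta$ and then apply $i_X^! \Upsilon = \Upsilon i_X^*$ to obtain $\Upsilon_X^{-1} \Theta_X (\Upsilon \, \HH(\cM/X))$. Because $S^1$ acts trivially on $X$, the functor $\Theta_X$ agrees with the Tate construction $(-)^{tS^1}$, which commutes with $\Upsilon$; hence $\pi^* \Ch^\dR(\cM) \simeq \HH(\cM/X)^{tS^1} = \mathrm{HP}(\cM/X)$.

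The hard part will be justifying the strict (rather than merely lax) commutation of $i_X^! \circ \Theta$ with $\Theta_X \circ i_X^!$, which requires verifying that $\Upsilon\Ch(\cM)$ lies in the bounded subcategory $\IndCoh(\cL X)^{S^1}_{<\infty}$, where $\Theta$'s lax naturality becomes strict. This boundedness should follow from dualizability of $\cM$ combined with smoothness of $X$, which ensures that the relevant Hochschild invariants are coherent on $\cL X$. A subsidiary technical point is the careful identification of $\Theta_X$ with $(-)^{tS^1}$ in the case of trivial $S^1$-action, and its commutation with the equivalence $\Upsilon_X$, both of which are essentially formal consequences of Preygel's setup.
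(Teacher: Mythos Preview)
Your overall strategy coincides with the paper's: restrict to $X$ via the constant-loop inclusion $e=i_X\colon X\hookrightarrow \cL X$, identify $e^*\Ch(\cM)\simeq \HH(\cM/X)$ with its loop-rotation $S^1$-action, and then recognize $\Theta$ on $X$ (trivial $S^1$-action) as the Tate construction. The identification $e^*\Ch(\cM)\simeq\HH(\cM/X)$ via $p\circ e=\id_X$ is exactly what the paper does.

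The gap is in your justification for commuting $e^!$ past $\Theta$. You propose to use that $\Theta$ is strictly natural on $\IndCoh(\cL X)^{S^1}_{<\infty}$ and then argue that $\Upsilon\,\Ch(\cM)$ is bounded above. This boundedness fails for general dualizable $\cM$. Already for $X=\mathrm{Spec}(k)$ and $\cM=\Mod_{k[x]}$ with $x$ in homological degree $2$ (a compactly generated, hence dualizable, $\Mod_k$-module), one has $\Ch(\cM)=\HH(k[x]/k)\simeq k[x]\oplus k[x]\,dx[1]$, which is unbounded above. So the step you flag as ``the hard part'' is not merely hard but false as stated.

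The paper bypasses this entirely: since $e\colon X\to\cL X$ is a closed immersion, it is proper, and therefore $e^!$ admits the \emph{coherent} left adjoint $e_*$. The functor $\Ind(\Coh(-)^{S^1})\to (-)^{S^1}$ is strictly natural with respect to coherent functors, so it commutes with $e_*$; passing to right adjoints shows that $e^!$ commutes strictly with $\Theta$, with no boundedness hypothesis on the input. Replace your boundedness argument by this properness argument and the rest of your proof goes through.
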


 \begin{proof}
	Let $e\colon X\to \cL X$ be the inclusion of the constant loops. Since $e$ is proper, the functor $e^!$ admits a coherent left adjoint $e_*$, so that it commutes with $\Theta$.
 	Since $X$ is smooth, the functor $\Upsilon\colon \QCoh(X)\to \IndCoh(X)$ is an equivalence.
 	Using these facts, one can identify $\pi^*\circ \Ch^\dR$ with the composition
 	\[
 	\Mod^\dual_{\QCoh(X)} \xrightarrow{\Ch} \QCoh(\cL X)^{S^1} \xrightarrow{e^*} \QCoh(X)^{S^1} \xrightarrow{\Theta} \QCoh(X)_{\Z/2}.
 	\]
 	By definition, 
	$\Ch(\cM)$ is the trace of the monodromy automorphism of $p^*\cM$, where $p\colon \cL X\to\< X$.
	Since $p\circ e=\id_X$, $e^*(\Ch(\cM))$ is the trace of the identity on $\cM$, that is, the Hochschild homology $\HH(\cM/X)$ with its canonical $S^1$-action.
 	By Lemma~\ref{lem:tS^1}, the last functor sends an $S^1$-equivariant object $E$ to its Tate fixed points $E^{tS^1}$, which completes the proof. 
 \end{proof}
 
\begin{remark}\label{rem:ChdR-Mot}
	The functor $\Ch^\dR$ sends localization sequences of dualizable $\QCoh(X)$-modules to cofiber sequences, and its restriction to compactly generated $\QCoh(X)$-modules extends to a functor
	\[\Ch^\dR\colon \Mot(X)=\MOT(\Perf(X)) \to \QCoh(X_\dR)_{\Z/2}\] (since $\Ch$ has both properties). However, Lemma~\ref{lem:Ch=HP} shows that $\Ch^\dR$ does not preserve filtered colimits, so that it is not a localizing invariant in the sense of \cite[Definition 5.16]{HSS}.
\end{remark}

We now investigate the naturality properties of the categorified de Rham Chern character.
If $f\colon Y\to X$ is a morphism of derived schemes, we have a commutative diagram
\begin{equation}\label{ChdR-pullbacks}
\xymatrix@C=15pt{
\mathrm{Mod}^{\mathrm{dual}}_{\QCoh(X)} \ar[r]^-{\Ch} 
\ar[d]_{f^*}
& \QCoh(\cL X)^{S^1} \ar[r]^-{\Upsilon} \ar[d]_-{\cL f^*} & \mathrm{IndCoh}(\cL X)^{S^1} \ar[r]^-{\Theta} \ar[d]_{\cL f^!} & \mathrm{IndCoh}(\cL X)^{\mathrm{tTate}} \twocell{dl} \ar[r]^-{\simeq} \ar[d]_ -{\cL f^!} & 
\mathrm{QCoh}(X_\dR)_{\mathbb{Z}/2} \ar[d]_ -{f_{\dR}^*} \\ 
\mathrm{Mod}^{\mathrm{dual}}_{\QCoh(Y)} \ar[r]^-{\Ch}  & \QCoh(\cL Y)^{S^1} \ar[r]^-{\Upsilon} & \mathrm{IndCoh}(\cL Y)^{S^1} \ar[r]^-{\Theta} & \mathrm{IndCoh}(\cL Y)^{\mathrm{tTate}} \ar[r]^-{\simeq}  & 
\mathrm{QCoh}(Y_\dR)_{\mathbb{Z}/2}.
}
\end{equation}
(For the last square, recall that the horizontal equivalences are inverse to $\pi^!\circ\Upsilon$.)
The 2-cell is invertible if $f$ is proper, since in this case $\cL f^!$ admits a coherent left adjoint.
In fact, each of the component functors of $\Ch^\dR$ is natural on $\Sch^\op$ (for $\Upsilon$, see \cite[II.3.3.2.5]{GR}), except $\Theta$ which is left-lax natural. Hence, $\Ch^\dR$ can be promoted to a left-lax natural transformation
\begin{equation*}\label{eqn:ChdR}
\Ch^\dR\colon \Mod^{\dual}_{\QCoh(-)} \Rightarrow \QCoh((-)_\dR)_{\Z/2}\colon \Sch^\op \to \Cat_{(\infty,1)},
\end{equation*}
which is strictly natural for proper morphisms, and whose restriction to fully dualizable modules is strictly natural.
For any morphism $f\colon Y\to X$, we obtain by passing to right adjoints a canonical transformation
\[
\Ch^\dR \circ f_* \Rightarrow f_{\dR,*} \circ \Ch^\dR.
\]
Following \cite[Definition 7.3.2]{gaitsgory2011ind} we say that a morphism $f\colon Y\to X$ is \defterm{Gorenstein} if it is eventually coconnective, locally almost of finite type, and the image of $\mathcal{O}_X$ under 
$$f^!: \QCoh(X) \to \QCoh(Y)$$ is a graded line bundle. 
\begin{lemma}\label{lem:Gorenstein}
	Let $f\colon Y\to X$ be a smooth morphism of derived schemes. Then the morphism $\cL f\colon \cL Y\to \cL X$ is quasi-smooth and in particular Gorenstein.
\end{lemma}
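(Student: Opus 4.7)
The plan is to compute the relative cotangent complex $\mathbb{L}_{\cL Y/\cL X}$ directly and show that, when $f$ is smooth, it is perfect of Tor-amplitude in cohomological degrees $[-1,0]$. This is precisely the quasi-smoothness condition (the locally-of-finite-presentation hypothesis being inherited from $f$ via the finite colimit $\cL Y = S^1\otimes Y$). The Gorenstein conclusion is the standard consequence: for a quasi-smooth morphism, the relative dualizing sheaf is $\det(\mathbb{L}_{\cL Y/\cL X})$ shifted by the virtual relative dimension, hence invertible.

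The main computation starts from the commutative square of evaluations at the basepoint
\[
\xymatrix{
\cL Y \ar[r]^{\cL f} \ar[d]_{p_Y} & \cL X \ar[d]^{p_X} \\
Y \ar[r]^{f} & X.
}
\]
Applying base change to the pullback presentation $\cL Y = Y\times_{Y\times Y} Y$, together with the identification $\mathbb{L}_{Y/(Y\times Y)} \simeq \mathbb{L}_Y[1]$ (the cofiber of the split surjection $\mathbb{L}_Y\oplus \mathbb{L}_Y \to \mathbb{L}_Y$ associated to the diagonal), yields $\mathbb{L}_{\cL Y/Y}\simeq p_Y^*\mathbb{L}_Y[1]$, and similarly $\mathbb{L}_{\cL X/X}\simeq p_X^*\mathbb{L}_X[1]$. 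Moreover, the compatibility of these identifications with $f$ is automatic from the naturality of the base-change isomorphism.

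Stacking together the cotangent cofiber sequences induced by the rows and columns of the above square, one obtains a commutative diagram
\[
\xymatrix{
p_Y^*f^*\mathbb{L}_X \ar[r]\ar[d] & p_Y^*\mathbb{L}_Y \ar[r]\ar[d] & p_Y^*\mathbb{L}_{Y/X} \ar[d] \\
(\cL f)^*\mathbb{L}_{\cL X} \ar[r]\ar[d] & \mathbb{L}_{\cL Y} \ar[r]\ar[d] & \mathbb{L}_{\cL Y/\cL X} \ar[d] \\
p_Y^*f^*\mathbb{L}_X[1] \ar[r] & p_Y^*\mathbb{L}_Y[1] \ar[r] & p_Y^*\mathbb{L}_{Y/X}[1]
}
\]
in which the first two rows and all three columns are cofiber sequences. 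The $3\times 3$-lemma in the stable $\infty$-category of quasi-coherent sheaves then forces the third row to be a cofiber sequence as well, giving
\[
p_Y^*\mathbb{L}_{Y/X} \longrightarrow \mathbb{L}_{\cL Y/\cL X} \longrightarrow p_Y^*\mathbb{L}_{Y/X}[1].
\]
When $f$ is smooth, $\mathbb{L}_{Y/X}$ is a locally free module of finite rank in degree $0$, and the cofiber sequence exhibits $\mathbb{L}_{\cL Y/\cL X}$ as perfect with cohomology concentrated in degrees $\{-1,0\}$, i.e.\ of Tor-amplitude in $[-1,0]$. This proves quasi-smoothness, whence Gorensteinness.

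The only nontrivial point in this argument is the strict commutativity of the $3\times 3$-diagram above, in particular the coherent identification of the bottom row with the shift of the top row. This is guaranteed by the naturality in $Z$ of the isomorphism $\mathbb{L}_{\cL Z/Z}\simeq p_Z^*\mathbb{L}_Z[1]$ coming from base change along the diagonal, applied to the morphism $f\colon Y\to X$; the rest of the computation is formal.
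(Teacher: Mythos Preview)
Your argument is correct. You compute $\mathbb{L}_{\cL Y/\cL X}$ directly via a $3\times 3$ diagram of cotangent complexes and extract the cofiber sequence $p_Y^*\mathbb{L}_{Y/X}\to \mathbb{L}_{\cL Y/\cL X}\to p_Y^*\mathbb{L}_{Y/X}[1]$, from which quasi-smoothness follows immediately when $f$ is smooth. The paper instead takes a shorter geometric route: it factors $\cL f$ as
\[
\cL Y \longrightarrow \cL X\times_X Y \longrightarrow \cL X,
\]
notes that the second map is a base change of $f$ and the first is a base change of the relative diagonal $\Delta_f\colon Y\to Y\times_X Y$, and concludes since smoothness of $f$ makes both $f$ and $\Delta_f$ quasi-smooth. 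The two approaches are closely related---the transitivity triangle for this factorization is exactly your cofiber sequence---but the paper's version sidesteps the $3\times 3$ bookkeeping and the naturality check you flag at the end. Your approach has the compensating advantage of producing an explicit description of the relative cotangent complex. One minor slip: $\cL Y$ is a finite \emph{limit} (a pullback) in derived schemes, not a colimit; the finite-presentation claim for $\cL f$ is most cleanly justified via the factorization above.
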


\begin{proof}
	The morphism $\cL f$ can be factored as
	\[
	\cL Y \to \cL X\times_{X} Y \to \cL X,
	\]
	where the first morphism is a base change of the diagonal $f$ and the second is a base change of $f$. If $f$ is smooth, both $f$ and its diagonal are quasi-smooth, and the result follows.
\end{proof}

\begin{theorem}\label{deRhamGRR}
	Suppose $f\colon Y\to X$ is a \emph{smooth} morphism of derived schemes. Then the diagram
	\begin{equation*}
	\xymatrix{
	\Mod^\dual_{\QCoh(Y)} \ar^{f_*}[d] \ar^-{\Ch^\dR}[r] & \QCoh(Y_\dR)_{\Z/2} \ar^{f_{\dR,*}}[d] \\
	\Mod^\dual_{\QCoh(X)} \ar^-{\Ch^\dR}[r] \twocell{ur} & \QCoh(X_\dR)_{\Z/2}
	}
	\end{equation*}
	commutes strictly.
\end{theorem}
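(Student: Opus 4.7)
The plan is to decompose the 2-cell in the statement along the four-square factorization~\eqref{ChdR-pullbacks} defining $\Ch^\dR$, and argue that each Beck--Chevalley 2-cell obtained by passing to right adjoints of the vertical arrows is an equivalence when $f$ is smooth.

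First, since $f\colon Y\to X$ is smooth, it is passable by Example~\ref{ex:smoothproper}(1), and hence the symmetric monoidal pullback $f^*\colon\QCoh(X)\to\QCoh(Y)$ is rigid by Proposition~\ref{passableimpliesrigid}. Theorem~\ref{thm:GRR} applied to $f^*$ therefore gives strict commutativity of the leftmost square
\[
\xymatrix{
\Mod^\dual_{\QCoh(Y)} \ar^{f_*}[d] \ar^-{\Ch}[r] & \QCoh(\cL Y)^{S^1} \ar^{\cL f_*}[d] \\
\Mod^\dual_{\QCoh(X)} \ar^-{\Ch}[r] & \QCoh(\cL X)^{S^1},
}
\]
where $\cL f_*$ is the right adjoint of $\cL f^*$ on QCoh of the quasi-smooth morphism $\cL f\colon\cL Y\to\cL X$ (Lemma~\ref{lem:Gorenstein}). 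This disposes of the first square and reduces the theorem to identifying the composite $\Upsilon\circ\Theta\circ(\text{equiv})\circ\cL f_*$ with $f_{\dR,*}\circ\Upsilon\circ\Theta\circ(\text{equiv})$ on the essential image of $\Ch$.

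Next, I would handle the remaining three squares via the Preygel equivalence. The key input is that $\pi_X\circ\cL f=f_\dR\circ\pi_Y$, so $!$-pullback functoriality yields $\cL f^!\circ\pi_X^!\simeq\pi_Y^!\circ f_\dR^!$. Applying Theorem~\ref{Toly} and inverting, the functor $(\cL f^!)^\tTate$ corresponds to $f_\dR^!$ under the identification $\IndCoh(\cL X)^\tTate\simeq \IndCoh(X_\dR)\simeq\QCoh(X_\dR)$; passing to right adjoints, $((\cL f^!)^\tTate)^R$ corresponds to $f_{\dR,*}$. This gives strict commutativity of the rightmost square after passing to right adjoints. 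For the middle two squares involving $\Upsilon$ and $\Theta$, one uses that $\Upsilon$ intertwines $\cL f^*$ and $\cL f^!$ strictly and that $\Theta$ is strictly natural with respect to coherent functors (which $\cL f^!$ is, together with its right adjoint when $\cL f$ is quasi-smooth).

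The hard part is bridging the output of the categorified GRR square, which involves the \emph{QCoh} pushforward $\cL f_*^\QCoh$, with the Preygel step, which involves the right adjoint of $\cL f^!$ on $\IndCoh(\cL X)$. The two differ by a twist by the determinant $\omega_{\cL f}=\det\mathbb{L}_{\cL f}$ from Gorenstein duality for the quasi-smooth map $\cL f$; concretely, the projection formula and the identity $\omega_{\cL Y}=\cL f^*\omega_{\cL X}\otimes\omega_{\cL f}$ give a Beck--Chevalley natural transformation
\[
\Theta\circ\Upsilon\circ\cL f_*^\QCoh\;\Longrightarrow\;(\cL f^!)^R\circ\Theta\circ\Upsilon.
\]
The technical obstacle is to verify that this map is an equivalence: the loop rotation $S^1$-action on $\mathbb{L}_{\cL f}$ trivializes $\omega_{\cL f}$ in the $\tTate$-localized category (closely parallel to the categorified HKR collapse noted in Remark~\ref{secondhalf}), so that after applying $\Theta$ the twist disappears and the two pushforwards become identified. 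Combining the four equivalences then produces the desired strict commutativity.
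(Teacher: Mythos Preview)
Your overall strategy—decompose along the four squares of~\eqref{ChdR-pullbacks} and check right-adjointability of each—is exactly the paper's approach, and your treatment of the first and last squares is correct. The gap is in the middle two squares, specifically in what you call ``the hard part.''

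You claim that the Beck--Chevalley map between $\Upsilon\circ\cL f_*^{\QCoh}$ and $(\cL f^!)^\R\circ\Upsilon$ involves a residual twist by $\omega_{\cL f}$ that must be killed by an $S^1$-trivialization after Tate localization. This is wrong: the twist already cancels at the level of $\IndCoh(\cL(-))^{S^1}$, before any Tate construction. Since $\cL f$ is Gorenstein, the right adjoint of $\cL f^!$ on $\IndCoh$ is $\cF\mapsto \cL f_*(\cK_{\cL f}^{-1}\otimes\cF)$ \cite[Proposition 7.3.8]{gaitsgory2013ind}. Now $\Upsilon(\cG)=\cG\otimes\omega$, and the Gorenstein identity $\omega_{\cL Y}\simeq \cK_{\cL f}\otimes\cL f^*\omega_{\cL X}$ gives
\[
(\cL f^!)^\R(\Upsilon(\cG))\simeq \cL f_*\bigl(\cK_{\cL f}^{-1}\otimes\cG\otimes\cK_{\cL f}\otimes\cL f^*\omega_{\cL X}\bigr)\simeq \cL f_*(\cG\otimes\cL f^*\omega_{\cL X})\simeq \cL f_*(\cG)\otimes\omega_{\cL X}=\Upsilon(\cL f_*\cG),
\]
the last step by the projection formula for $\IndCoh$. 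So the second square is right-adjointable on the nose; no appeal to an $S^1$-action on $\mathbb{L}_{\cL f}$ is needed, and indeed you give no argument for that claimed trivialization.

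For the third square, your sentence ``$\Theta$ is strictly natural with respect to coherent functors'' is not how $\Theta$ works: $\Theta$ is only left-lax natural in general. The correct argument is that the right adjoint~\eqref{eqn:Gorenstein-adj} of $\cL f^!$ is itself coherent (it preserves colimits and is left $t$-exact up to a shift), so it induces a right adjoint on the $\tTate$ side, and since the pullbacks $\cL f^!$ commute strictly with the comparison functors $\Ind(\Coh(\cL(-))^{S^1})\to\IndCoh(\cL(-))^{S^1}$, their right adjoints commute strictly with $\Theta$.
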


\begin{proof}
	We show that each square in~\eqref{ChdR-pullbacks} is right-adjointable. For the first square, this is Theorem~\ref{thm:GRR}. The assumption that $f$ is smooth implies that $\cL f$ is Gorenstein (Lemma~\ref{lem:Gorenstein}). 
	By \cite[Proposition 7.3.8]{gaitsgory2013ind}, it follows that the functor $\cL f^!$ admits a right adjoint given by
	\begin{equation}\label{eqn:Gorenstein-adj}
	\cF \mapsto \cL f_*(\cK_{\cL f}^{-1}\otimes \cF),
	\end{equation}
	where $\cK_{\cL f}\in \QCoh(\cL Y)$ is the relative dualizing sheaf. 
	 The right adjointability of the second square is thus the statement that the canonical map
	\[
	\Upsilon (\cL f_*(\cF)) \to \cL f_*(\cK_{\cL f}^{-1}\otimes \Upsilon(\cF))
	\]
	is an equivalence for every $\cF\in \QCoh(\cL Y)$. Since $\omega_{\cL Y} \simeq \cK_{\cL f}\otimes \cL f^*(\omega_{\cL X})$ in $\IndCoh(\cL Y)$, we can identify this map with the canonical map
	\[
	\cL f_*(\cF) \otimes \omega_{\cL X} \to \cL f_*(\cF \otimes \cL f^*(\omega_{\cL X})),
	\]
	which is indeed an equivalence by \cite[Proposition II.1.3.3.7]{GR}.
	
	For the third square in~\eqref{ChdR-pullbacks}, first note that the functor~\eqref{eqn:Gorenstein-adj} preserves colimits and is left t-exact up to a shift, so that it induces a functor between the tTate constructions which is right adjoint to $(\cL f^!)^\tTate$. Moreover, since the pullback functors commute strictly with the functors $\Ind(\Coh(\cL(-))^{S^1}) \to \IndCoh(\cL(-))^{S^1}$, their right adjoints commute strictly with $\Theta$.
	 Finally, the last square is trivially right-adjointable, since its horizontal maps are equivalences.
\end{proof}

Next, we show that the categorified de Rham Chern character is $\A^1$-homotopy invariant. We start with a lemma generalizing the homotopy invariance of periodic cyclic homology, first proved by Kassel \cite[\S3]{Kassel}.

\begin{lemma}\label{lem:HP-invariance}
	Let $\cC$ be a $k$-linear symmetric monoidal $\infty$-category. For any $E\in \cC^{S^1}$, the map \[E\to E\otimes_k \HH(k[t]/k)\] induces an equivalence on Tate $S^1$-fixed points.
\end{lemma}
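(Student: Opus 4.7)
The strategy is to reduce the lemma to the classical theorem of Goodwillie--Kassel, which asserts that $k\to \HH(k[t]/k)$ becomes an equivalence after applying $(-)^{tS^1}$, and then to propagate this equivalence through the functor $E\otimes_k(-)$.

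First I would form the cofiber sequence
\[
k \longrightarrow \HH(k[t]/k) \longrightarrow F
\]
in $\Mod_k^{BS^1}$. Since $E\otimes_k(-)$ preserves cofibers and $(-)^{tS^1}$ is exact, the cofiber of the map $E^{tS^1}\to (E\otimes_k \HH(k[t]/k))^{tS^1}$ is $(E\otimes_k F)^{tS^1}$, so the lemma reduces to the vanishing of this last object.

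Next, using HKR in characteristic zero, I would identify $\HH(k[t]/k)$ with the mixed complex $(k[t]\oplus k[t]\,dt[1],\, B)$, where $B(f) = f'\,dt$ and $B(f\,dt)=0$, so that $F \simeq (tk[t]\oplus k[t]\,dt[1], B)$. The key observation is that in characteristic zero $B\colon tk[t]\to k[t]\,dt$ is invertible, with inverse $t^{n-1}\,dt\mapsto t^n/n$. This exhibits $F$ as a countable direct sum of two-term mixed complexes of the form $(k\xrightarrow{\sim} k[1])$, each of which lies in the localizing tensor ideal of $\Mod_k^{BS^1}$ generated by the free $S^1$-module $k\otimes_k \Sigma^\infty_+S^1$ (this is where the Tate-acyclicity of Kassel's theorem becomes manifest on the pointset level).

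Finally, since the action functor $E\otimes_k(-)\colon \Mod_k^{BS^1}\to \cC^{BS^1}$ is $\Mod_k^{BS^1}$-linear, it sends this ideal into the localizing ideal in $\cC^{BS^1}$ generated by $E\otimes \Sigma^\infty_+S^1$. But this last object carries a free $S^1$-action, so the norm map on it is an equivalence, yielding $(E\otimes \Sigma^\infty_+S^1)^{tS^1}\simeq 0$. Combining these observations, $(E\otimes_k F)^{tS^1}\simeq 0$, as required.

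The main obstacle is establishing rigorously that Tate-acyclic objects in $\Mod_k^{BS^1}$ are generated as a localizing tensor ideal by free $S^1$-modules, and that this generation is preserved under $E\otimes_k(-)$. The cleanest route is through the equivalence between $\Mod_k^{BS^1}$ and dg $k[[u]]$-modules in characteristic zero: Tate-acyclic objects correspond to those on which $u$ acts invertibly, and such objects are built from shifts of $k((u))$, which in turn correspond on the $S^1$-spectrum side to free $S^1$-modules, whose Tate vanishing is tensor-preserved by the projection formula.
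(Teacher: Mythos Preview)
Your core argument (the first three paragraphs) is correct and reaches the same key point as the paper: the cofiber $F$ of $k\to\HH(k[t]/k)$ is a direct sum of copies of the free $S^1$-module, hence induced, and therefore $E\otimes_k F$ is again induced (by the shearing isomorphism $E\otimes_k\mathrm{Ind}(V)\simeq \mathrm{Ind}(\mathrm{Res}(E)\otimes_k V)$), so Tate-acyclic. The paper arrives at the same description of $F$ by a different computation: it identifies $\HH(k[t]/k)\simeq\bigoplus_{n\geq 0}k[\Sym^n S^1]$ as the free simplicial commutative algebra on $k[S^1]$, then uses the equivariant homeomorphism $\Sym^n S^1\simeq S^1/C_n$ together with $k[S^1/C_n]\simeq k[S^1]$ in characteristic zero. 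Your HKR/mixed-complex route is more direct and purely algebraic; the paper's route is more geometric and makes the free $S^1$-module structure visible on the space level. Both are valid.

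Your final paragraph, however, should be deleted: it is both unnecessary and wrong. You do not need to know that \emph{all} Tate-acyclic objects are generated by free modules; you have already exhibited $F$ itself as a direct sum of free modules, and since induction is a colimit-preserving functor, $F$ (and hence $E\otimes_k F$) is induced on the nose. More seriously, the assertion that ``Tate-acyclic objects correspond to those on which $u$ acts invertibly'' is backwards. For the free module $k[S^1]$ one has $(k[S^1])^{hS^1}\simeq k[1]$, which is bounded and $u$-torsion, not $u$-periodic; it is precisely the $u$-torsion objects that die under $(-)^{tS^1}$, while objects like $k((u))$ survive. Drop that paragraph and the proof is clean.
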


\begin{proof}
	Since Tate fixed points vanish on the image of the left adjoint to the forgetful functor $\cC^{S^1} \to \cC$ \cite[Corollary 10.2]{Klein}, it will suffice to show that the cofiber of $k\to \HH(k[t]/k)$ is induced from the trivial subgroup of $S^1$.
		 Since $k$ has characteristic zero, $\HH(k[t]/k)\in \Mod_k^{S^1}$ is the free simplicial commutative $k$-algebra on the $S^1$-equivariant $k$-module $k[S^1]$:
		 \[
		 \HH(k[t]/k) \simeq \bigoplus_{n\geq 0}\Sym^n_k(k[S^1]) \simeq \bigoplus_{n\geq 0} k[\Sym^n S^1].
		 \]
		 Here, $\Sym^n_k$ is the symmetric power defined in \cite[\S25.2.2]{SAG}, and $\Sym^n$ is the ``strict'' symmetric power of spaces. The second equivalence holds because $\Sym^n_k$ and $\Sym^n$ are left Kan extended from their restrictions to finite free $k$-modules and finite sets, respectively (for the latter, see \cite[\S2]{HoyoisKunneth}).
		  Thus, it suffices to show that $k[\Sym^n S^1]$ is induced for all $n\geq 1$. It is easy to check that the $S^1$-equivariant map $\Sym^n S^1\to S^1/C_n$, $(z_1,\dotsc,z_n)\mapsto \sqrt[n]{z_1\dots z_n}$, is an equivalence \cite{Morton}. Using again that $k$ has characteristic zero, the map $k[S^1] \to k[S^1/C_n]$ is an equivalence for all $n$. This concludes the proof.
\end{proof}

\begin{proposition}[Homotopy invariance]
	 \label{prop:A1-invariance}
	Let $X$ be a derived scheme. For every dualizable $\QCoh(X)$-module $\cM$, the map
	 \[
	 \Ch^\dR(\cM) \to \Ch^\dR(\cM\otimes_{\QCoh(X)}\QCoh(\A^1_X))
	 \]
	 induced by the projection $\A^1_X\to X$ is an equivalence in $\QCoh(X_\dR)_{\Z/2}$.
\end{proposition}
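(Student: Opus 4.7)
The plan is to deduce the statement from Theorem~\ref{deRhamGRR} and the homotopy invariance of derived de Rham cohomology. Let $p\colon \A^1_X\to X$ denote the projection. The identity $\cM\otimes_{\QCoh(X)}\QCoh(\A^1_X)=p_*p^*\cM$ shows that the morphism in question is $\Ch^\dR$ applied to the unit $\eta_\cM\colon \cM\to p_*p^*\cM$ of the adjunction $p^*\dashv p_*$ in $\Mod^\dual_{\QCoh(X)}$.

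Since $p$ is smooth, Theorem~\ref{deRhamGRR} applies and gives a strictly commutative square; combined with the naturality of $\Ch^\dR$ from~\eqref{ChdR-pullbacks}, I will identify the map $\Ch^\dR(\eta_\cM)$ with the unit $\Ch^\dR(\cM)\to p_{\dR,*}p_\dR^*\Ch^\dR(\cM)$ of the adjunction $p_\dR^*\dashv p_{\dR,*}$. The delicate point is that $\Ch^\dR$ is only left-lax natural through the $\Theta$ factor; I expect this laxness to become strict along $\cL p^!$ because $\cL p$ is quasi-smooth by Lemma~\ref{lem:Gorenstein}, so $\cL p^!$ is coherent and preserves bounded coherent objects up to a fixed shift, hence commutes with the formation of the tTate construction.

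The final step is to verify that the unit $\cF\to p_{\dR,*}p_\dR^*\cF$ is an equivalence for every $\cF\in\QCoh(X_\dR)_{\Z/2}$. The de Rham prestack preserves finite products, so $(\A^1_X)_\dR\simeq \A^1_\dR\times X_\dR$ and $p_\dR$ is the projection onto the second factor. Base change together with the projection formula then reduces the claim to showing $\Gamma(\A^1_\dR,\cO_{\A^1_\dR})\simeq k$, which is precisely the homotopy invariance of derived de Rham cohomology; this equivalence passes through the $\Z/2$-folding since tensoring with $k((u))$ is exact.

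The step I expect to be the main obstacle is the naturality argument: rigorously checking that the left-lax naturality of $\Theta$, and hence of $\Ch^\dR$, becomes strict when pulling back along the smooth morphism $p$. If this proves too delicate to run in full generality, a viable alternative is to invoke Lemma~\ref{lem:Ch=HP} together with Lemma~\ref{lem:HP-invariance} after first reducing to the case where $X$ is smooth (where the pullback along $\pi\colon X\to X_\dR$ is conservative and the Künneth formula $\HH((\cM\otimes_k\Mod_{k[t]})/X)\simeq \HH(\cM/X)\otimes_k \HH(k[t]/k)$ combines with Lemma~\ref{lem:HP-invariance} to give the conclusion), and then extending to arbitrary derived schemes via smooth descent on $X$.
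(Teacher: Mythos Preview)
Your fallback route is essentially the paper's proof: reduce to smooth $X$, then invoke Lemma~\ref{lem:Ch=HP}, conservativity of $\pi^*\colon\QCoh(X_\dR)\to\QCoh(X)$, and Lemma~\ref{lem:HP-invariance}. However, the reduction step is not via ``smooth descent on $X$''. A singular derived scheme has no smooth cover by smooth schemes; instead, the paper uses that crystals satisfy h-descent together with the fact that $\Ch^\dR$ is strictly natural for \emph{proper} maps, so one can pull back along a proper surjection from a smooth scheme (available in characteristic zero by resolution of singularities).

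Your main approach has a genuine gap at exactly the point you flag. Coherence of $\cL p^!$ is precisely what produces the \emph{left-lax} naturality of $\Theta$ in the first place; it does not upgrade it to strict naturality. The paper observes that the 2-cell in~\eqref{ChdR-pullbacks} is invertible when $f$ is proper because then $\cL f^!$ has a coherent \emph{left adjoint} $\cL f_*$; for smooth $p$ there is no analogous mechanism. Without strict pullback naturality you cannot identify $\Ch^\dR(\eta_\cM)$ with the unit of $p_\dR^*\dashv p_{\dR,*}$: tracing through the mate construction, the map $\Ch^\dR(\cM)\to p_{\dR,*}\Ch^\dR(p^*\cM)$ factors as the de Rham unit followed by $p_{\dR,*}$ applied to the lax comparison $p_\dR^*\Ch^\dR(\cM)\to\Ch^\dR(p^*\cM)$, and you have no control over the latter. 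So the de Rham homotopy invariance $\Gamma(\A^1_\dR,\cO)\simeq k$, while correct, does not by itself conclude the argument.
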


\begin{proof}
	 Since crystals satisfy h-descent \cite[Proposition 3.2.2]{GR2} and $\Ch^\dR$ is strictly natural with respect to proper maps, we can assume that $X$ is a smooth scheme. By Lemma~\ref{lem:Ch=HP} and the conservativity of the forgetful functor $\QCoh(X_\dR)\to \QCoh(X)$ \cite[Lemma 2.2.6]{GR2}, we are reduced to proving that $\mathrm{HP}(-/X)\colon \Mod_{\QCoh(X)}^\dual \to \QCoh(X)$ is homotopy invariant. This is a special case of Lemma~\ref{lem:HP-invariance}.
\end{proof}

 \subsection{de Rham  Chern character and de Rham realization}
\label{deRhamComparison}

In this section we explain how 
Theorem~\ref{deRhamGRR} implies a comparison between the de Rham Chern character and the classical de Rham realization.
We first explain what we mean by the latter.

The functor $\QCoh((-)_\dR)\colon \Sch^\op\to \Cat_{(\infty,1)}$ classifies a coCartesian fibration
\[
\cQ\to \Sch^\op.
\]
Since the pullbacks $f_{\dR}^*$ admit right adjoints, it is also a \emph{Cartesian} fibration over $\Sch^\op$. For $X\in\Sch$, let $\cQ_{/X}$ denote the restriction of $\cQ$ to $\Sch_X^\op$. Since $X$ is an initial object in $\Sch_X^\op$, the inclusion of the fiber over $X$
\[
 \QCoh(X_\dR) \hookrightarrow \cQ_{/X}
\]
is fully faithful and admits a right adjoint sending any object to its pushforward to $X$.

\begin{definition}
	Let $X$ be a derived scheme.
	The \defterm{de Rham realization} 
	\[
	\dR_X\colon \Sch_X^\op \to \QCoh(X_\dR)
	\]
	is the composition of the unit section $\Sch_X^\op \to \cQ_{/X}$ and the right adjoint to the inclusion.
\end{definition}

By definition, the de Rham realization $\dR_X$ sends an $X$-scheme $f\colon Y\to X$ to the crystal \[f_{\dR,*}(\cO_{Y_\dR}) \in \QCoh(X_\dR).\]

Similarly, consider the coCartesian fibrations
\[
\cM\to \Sch^\op\quad\text{and}\quad \cQ_{\Z/2}\to \Sch^\op
\]
classified by the functors $\Mod^\dual_{\QCoh(-)}$ and $\QCoh((-)_\dR)_{\Z/2}$. Both are also Cartesian fibrations, and hence the inclusions
\[
\Mod^\dual_{\QCoh(X)} \hookrightarrow \cM_{/X} \quad\text{and}\quad \QCoh(X_\dR)_{\Z/2} \hookrightarrow (\cQ_{\Z/2})_{/X}
\]
admit right adjoints. The left-lax natural transformation $\Ch^\dR$ classifies a morphism $\cM \to\< \cQ_{\Z/2}$ over $\Sch^\op$.
The commutative square
\[
\xymatrix{
\cM_{/X}  \ar^-{\Ch^\dR}[r] & (\cQ_{\Z/2})_{/X}  \\
\Mod^\dual_{\QCoh(X)} \ar[u] \ar^-{\Ch^\dR}[r] & \QCoh(X_\dR)_{\Z/2} \ar[u]
}
\]
induces by adjunction a 2-cell
\begin{equation}\label{deRham2cell}
\xymatrix{
\cM_{/X} \ar[d] \ar^-{\Ch^\dR}[r] & (\cQ_{\Z/2})_{/X} \ar[d] \\
\Mod^\dual_{\QCoh(X)} \ar^-{\Ch^\dR}[r] \twocell{ur} & \QCoh(X_\dR)_{\Z/2}.
}
\end{equation}
Given $f\colon Y\to X$ and $\cC\in \Mod^\dual_{\QCoh(Y)}$, the component of this 2-cell at $\cC$ is the canonical map
\[
\Ch^\dR(f_*\cC) \to f_{\dR,*}\Ch^\dR(\cC).
\]
In particular, by Theorem~\ref{deRhamGRR}, it is an equivalence if $f$ is smooth. 

Precomposing the 2-cell~\eqref{deRham2cell} with the unit section
\[
\Sch_X^\op \to \cM_{/X},\quad Y\mapsto \QCoh(Y) \in \Mod^\dual_{\QCoh(Y)},
\]
we obtain a natural transformation
\[
\Ch^\dR \circ \QCoh_X \Rightarrow \dR_X \colon \Sch_X^\op \to \QCoh(X_\dR)_{\Z/2}
\]
comparing the categorified de Rham Chern character with the $\Z/2$-folding of the classical de Rham realization.
By Theorem~\ref{deRhamGRR}, it restricts to an equivalence
\[
\Ch^\dR \circ \QCoh_X \simeq \dR_X \colon \Sm_{X}^\op \to \QCoh(X_\dR)_{\Z/2}
\]
on the category of smooth $X$-schemes. We state this as the next result:

\begin{theorem}
\label{Ch-dR-comparison}
Let $X$ be a derived scheme. Then there is a commutative diagram
\[
\xymatrix{
\mathrm{Sm}_{X}^{\op}  \ar[r]^-{\dR_X} \ar[d]_-{\QCoh_X} & \QCoh(X_\dR)_{\mathbb{Z}/2}.
  \\ 
\mathrm{Mod}^{\mathrm{dual}  }_{\QCoh(X)} \ar[ur]_ - {\Ch^\dR} &
}
\]
\end{theorem}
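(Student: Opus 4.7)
The strategy is first to build a natural comparison transformation $\Ch^\dR\circ\QCoh_X\Rightarrow \dR_X$, and then to verify it is pointwise invertible on $\Sm_X^\op$ by combining Theorem \ref{deRhamGRR} with the unitality of $\Ch^\dR$.

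To produce the transformation, I would encode the left-lax naturality of $\Ch^\dR$ as a functor $\cM\to\cQ_{\Z/2}$ between the coCartesian fibrations over $\Sch^\op$ classifying $\Mod^\dual_{\QCoh(-)}$ and $\QCoh((-)_\dR)_{\Z/2}$. Both of these fibrations are also Cartesian, since the pullbacks $f^*$ and $f_\dR^*$ have right adjoints. Restricting to $\Sch_X^\op$ and applying adjunction to the inclusions of the fibres at $X$ (whose right adjoints are the respective pushforwards to $X$) produces the 2-cell \eqref{deRham2cell}. Precomposing this 2-cell with the unit section $(Y\to X)\mapsto\QCoh(Y)$ of $\cM_{/X}\to\Sch_X^\op$ yields the sought transformation $\Ch^\dR\circ\QCoh_X\Rightarrow \dR_X$.

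To check that it is an equivalence for every smooth $f\colon Y\to X$, observe that the component in question is
\[\Ch^\dR(f_*\QCoh(Y)) \longrightarrow f_{\dR,*}\Ch^\dR(\QCoh(Y)).\]
By Theorem \ref{deRhamGRR}, this arrow is an equivalence. Next I would identify $\Ch^\dR(\QCoh(Y))\simeq \cO_{Y_\dR}$: because $\QCoh(Y)$ is the monoidal unit of $\Mod^\dual_{\QCoh(Y)}$ and is fully dualizable, the restriction of $\Ch^\dR$ to fully dualizable modules (which is strictly symmetric monoidal by the discussion following Definition \ref{defdRCh}) sends it to the monoidal unit of $\QCoh(Y_\dR)_{\Z/2}$, namely $\cO_{Y_\dR}$. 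Combining these yields $\Ch^\dR(\QCoh_X(Y\to X))\simeq f_{\dR,*}(\cO_{Y_\dR}) = \dR_X(Y\to X)$, naturally in $Y\in\Sm_X$.

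I expect the main obstacle to be the rigorous $\infty$-categorical construction of the natural transformation itself — in particular, organising the left-lax $\Theta$-component of $\Ch^\dR$ into the fibration-theoretic setup and ensuring that the adjunction step produces the intended comparison map at each smooth $f$ in a coherently natural way. Once this machinery is in place, the verification at smooth morphisms is a direct application of Theorem \ref{deRhamGRR} together with the unitality and symmetric monoidality of $\Ch^\dR$ on fully dualizable modules.
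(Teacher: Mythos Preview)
Your proposal is correct and follows essentially the same route as the paper: encode $\Ch^\dR$ as a morphism of (co)Cartesian fibrations $\cM\to\cQ_{\Z/2}$ over $\Sch^\op$, pass to right adjoints of the fibre inclusions to obtain the 2-cell \eqref{deRham2cell}, precompose with the unit section to get $\Ch^\dR\circ\QCoh_X\Rightarrow\dR_X$, and invoke Theorem~\ref{deRhamGRR} to see it is invertible on smooth morphisms. The only cosmetic difference is that the paper leaves the identification $\Ch^\dR(\QCoh(Y))\simeq\cO_{Y_\dR}$ implicit in the unit-section formalism (using that $\Ch^\dR$ is unital as a right-lax monoidal functor), whereas you spell it out via the symmetric monoidality on fully dualizable modules; either works.
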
 


Suppose now that $X$ is a smooth scheme. In this case, we can rephrase Theorem~\ref{Ch-dR-comparison} in the more classical language of D-modules. Let $\cD_X$ denote the quasi-coherent sheaf of differential operators from $\cO_X$ to $\cO_X$, viewed as an algebra object of $\QCoh(X)^\heartsuit$ under composition $\circ$. 
Let $\pi\colon X\to X_\dR$ be the canonical map. As shown in \cite[\S5.4]{GR2}, the forgetful functor
\[
\pi^*\colon \QCoh(X_\dR) \to \QCoh(X)
\]
is monadic and the corresponding monad on $\QCoh(X)$ can be identified with $\cD_X\otimes(-)$. We therefore have an equivalence
\[
\QCoh(X_\dR) \simeq \cD_X\dmod,
\]
where $\cD_X\dmod$ is the $\infty$-category of left $\cD_X$-modules in $\QCoh(X)$. By \cite[Lemma III.4.4.1.6]{GR}, for any morphism $f\colon Y\to X$ of smooth schemes, there is a commutative square
\[
\xymatrix{
\QCoh(X_\dR)  \ar^{f_{\dR}^*}[r] \ar_{\simeq}[d] & \QCoh(Y_\dR) \ar^{\simeq}[d]  \\
\cD_X\dmod \ar^{f^\circ}[r] & \cD_Y\dmod,
}
\]
where $f^\circ$ is the naive pullback of left D-modules \cite[VI \S4.1]{Borel}.
By adjunction, there is a commutative square
\[
\xymatrix{
\QCoh(Y_\dR)  \ar^{f_{\dR,*}}[r] \ar_{\simeq}[d] & \QCoh(X_\dR) \ar^{\simeq}[d]  \\
\cD_Y\dmod \ar^{f_\circ}[r] & \cD_X\dmod,
}
\]
where $f_\circ$ is right adjoint to $f^\circ$. If $f$ is smooth and $\cM\in\cD_Y\dmod$, $f_\circ(\cM)$ is the quasi-coherent sheaf
\[
f_*(\cM\otimes_{\cO_Y}\Omega_{Y/X}^{-\bullet})
\]
equipped with the \emph{Gauss--Manin connection} (here, $\Omega_{Y/X}^{-\bullet}$ is the relative de Rham complex, viewed as an object in $\QCoh(Y)_{\leq 0}$).\footnote{If $f$ is smooth of relative dimension $d$, the usual pushforward of left D-modules, denoted by $f_+$ in \cite[VI, \S5]{Borel}, by $\int_f$ in \cite{hotta2007d}, and by $f_*$ in \cite[\S3]{drew2013realisations}, sends $\cM$ to $f_*(\cM\otimes_{\cO_Y}\Omega_{Y/X}^{-\bullet})[d]$, but it is right adjoint to $f^\circ[-d]$.} In particular, the de Rham realization $\dR_X$ sends a smooth morphism $f\colon Y\to X$ to its relative de Rham cohomology $f_*(\Omega_{Y/X}^{-\bullet})$ equipped with the Gauss--Manin connection.

\begin{remark}
Theorem \ref{Ch-dR-comparison} implies in particular that, up to $\mathbb{Z}/2$-folding, the Gauss--Manin connection on the cohomology of the fibers of a smooth morphism $f\colon Y \to X$ is of \emph{non-commutative origin}. That is, it only depends on $\QCoh(Y)$ and its $\QCoh(X)$-linear structure. 
\end{remark}

\begin{remark}
	Let $X$ be a smooth scheme. Then the categorified de Rham Chern character
	\[\Ch^\dR\colon \Mot(X)\to \cD_X\dmod_{\Z/2}\] (see Remark~\ref{rem:ChdR-Mot}) is a categorification of the classical Chern character with values in de Rham cohomology. Indeed, on endomorphisms of the unit objects, it gives a morphism of $\bE_\infty$ ring spectra
	\[
	\ch^\dR\colon\mathbb K(X) \to \prod_{n\in \Z} H^{*+2n}_\dR(X),
	\]
	which is the composition of the Dennis trace map $\mathbb K(X) \to \HH(X/k)^{S^1}$ (see  \cite[Remark 6.12]{HSS}) and the canonical map $\HH(X/k)^{S^1} \to \HH(X/k)^{tS^1} \simeq \mathrm{HP}(X/k)$.
\end{remark}

 \begin{remark}
We state explicitly an important  special case  of  the categorified de Rham Chern character.  
Let $X=\mathrm{Spec}(R)$ be a smooth affine scheme.  
Recall from Lemma~\ref{lem:Ch=HP} that the composite
\begin{equation*}
\label{factthroughch}
\mathrm{Mod}^{\mathrm{dual}}_{\mathrm{QCoh}(X)} \xrightarrow{\mathrm{Ch}^{\mathrm{dR}}} \cD_X\dmod_{\mathbb{Z}/2} \xrightarrow{\mathrm{forget}} \mathrm{QCoh}(X)_{\mathbb{Z}/2}
\end{equation*} 
maps $\cM \in \mathrm{Mod}^{\mathrm{dual}}_{\mathrm{QCoh}(X)}$ to its relative periodic cyclic homology  $\mathrm{HP}(\cM/R)$ viewed as an $R$-module. The fact that $\mathrm{HP}(-/R)$ factors  through the $\infty$-category of D-modules implies that $\mathrm{HP}(\cM/R)$ carries a flat connection, which is a non-commutative analog of the Gauss--Manin connection. 
 
If $A$ is an $R$-algebra, a construction of the Gauss--Manin  connection on $\mathrm{HP}(A/R)$ was proposed by Getzler in \cite{getzler1993cartan}. Our construction has 
the  advantage that it applies to all dualizable sheaves of categories over $X,$ and not just to modules over a sheaf of algebras $A$ over $X$.  We believe, but do not prove, that the categorified de Rham Chern character matches  Getzler's construction in the cases where they overlap. We will return to this question in future work. 
 \end{remark}
 

\begin{remark}
	Suppose $X$ smooth and quasi-projective over $k$. In \cite[Theorem 3.3.9]{drew2013realisations}, Drew constructs a de Rham realization functor
	\[
	\rho_\dR\colon \mathrm{SH}(X) \to \cD_X^\mathrm{h}\dmod
	\]
	where 
	$\mathrm{SH}(X)$ is the stable motivic homotopy $\infty$-category over $X$ and 
	$\cD_X^\mathrm{h}\dmod\subset \cD_X\dmod$ is the full subcategory of holonomic left $\cD_X$-modules. Consider the composite functor
	\[
	\dR_X'\colon \Sch^\mathrm{cl}_{X} \xrightarrow{\mathrm M} \mathrm{SH}(X)^\omega \xrightarrow{\rho_\dR} \cD_X\dmod^\omega \stackrel{\mathbb D}\simeq (\cD_X\dmod^\omega)^\op ,
	\]
	where $\Sch^\mathrm{cl}_X$ is the category of classical $X$-schemes of finite type, $\mathrm M(f\colon Y\to X) = f_!f^!(\mathbf 1_X)$, and $\mathbb D$ is Verdier duality. Using the compatibility of $\rho_\dR$ with the six operations proved by Drew, one can show that, if $Y$ is smooth quasi-projective and $f\colon Y\to X$ is arbitrary,
	\[
	\dR_X'(f\colon Y\to X) \simeq \dR_X(f\colon Y\to X)[\dim(X)].
	\]
	By inspecting the definition of $\rho_\dR$, it is not difficult to show that in fact there is an equivalence of functors $\dR_X'\simeq \dR_X[\dim(X)]\colon \Sm_X^\op\to \cD_X\dmod$. In other words, our de Rham realization is, up to a shift, Verdier dual to Drew's de Rham realization.
\end{remark}

\bibliographystyle{alphamod}

\bibliography{chern}

\end{document}